\newcounter{name}
\title[Oscillator death in the Winfree model]{On oscillator death in the Winfree model}
\author[Seung-Yeon Ryoo]{Seung-Yeon Ryoo}
\address[Seung-Yeon Ryoo]{\newline Department of Mathematics, California Institute of Technology 
\newline 1200 East California Boulevard\\
Pasadena California 01125 United States} \email{sryoo@caltech.edu}
\newtheorem{theorem}{Theorem}
\newtheorem{lemma}[theorem]{Lemma}
\newtheorem{corollary}[theorem]{Corollary}
\newtheorem{proposition}[theorem]{Proposition}
\newtheorem{remark}[theorem]{Remark}
\newtheorem{conjecture}[theorem]{Conjecture}
\newtheorem{question}[theorem]{Question}
\newtheorem{definition}[theorem]{Definition}
\newcommand{\bbr}{\mathbb R}
\newcommand{\bbt} {\mathbb T}
\newcommand{\bbn} {\mathbb N}
\begin{document}
\tikzstyle{block} = [rectangle, draw, 
    text width=15em, text centered, rounded corners, minimum height=3em]
\tikzstyle{line} = [draw, -latex']

\date{\today}

\subjclass{34C11, 34C15, 34D06} \keywords{Complete oscillator death, Winfree model, order parameter, partial oscillator death, divergence, concentration of measure}

\thanks{Acknowledgements: Partially supported by the Korea Foundation for Advanced Studies. Part of this work was conducted under the auspices of the Hyperbolic and Kinetic Equations research group at Seoul National University in the academic year 2020-2021. The primary impetus for writing this article was discussions with Myeongju Kang and Bora Moon, who have indicated that a version of \cite{ha2015emergence} that is more robust to generic initial data would be useful in their work on the infinite-$N$ limit of the Winfree model.}

\begin{abstract}
We show that for the standard sinusoidal Winfree model, a coupling strength exceeding twice the maximal magnitude of the intrinsic frequencies guarantees the convergence of the system for Lebesgue almost every initial data. This is proven by first showing, via an order parameter bootstrapping argument, that the pathwise critical coupling strength is upper bounded by a function of the order parameter, and then showing by a volumetric argument that for Lebesgue almost every data the order parameter cannot stay  below and be bounded away from 1 for all time; this is a Winfree model counterpart of the analysis of Ha and the author (2020) performed for the Kuramoto model. Using concentration of measure and the aforementioned volumetric argument, we show that, except possibly on a set of very small measure, oscillator death is observed in finite time; this rigorously demonstrates the existence of the oscillator death regime numerically observed by Ariaratnam and Strogatz (2001). These results are robust under many other choices of interaction functions often considered for the Winfree model. We demonstrate that the asymptotic dynamics described in this paper are sharp by analyzing the equilibria of the Winfree model, and we bound the total number of equilibria using a polynomial description.
\end{abstract}
\maketitle \centerline{\date}

\tableofcontents

\section{Introduction}\label{sec:summary}

The \textit{Winfree model} is given as the initial value problem
\begin{equation}\label{GenWinfree}
\begin{cases}
\dot{\theta}_i(t)=\omega_i+\frac \kappa N \sum_{j=1}^N  I(\theta_j(t))S(\theta_i(t)) ,\quad t>0,\\
\theta_i(0)=\theta_i^0,
\end{cases}
\quad i=1,\cdots, N,
\end{equation}
for the $N\ge 1$ real variables $\theta_1,\cdots,\theta_N$, where $\omega_1,\cdots,\omega_N\in\mathbb{R}$ (the \textit{intrinsic frequencies}), $\kappa\in \mathbb{R}$ (the \textit{coupling strength}), and $\theta_1^0,\cdots,\theta_N^0\in \mathbb{R}$ (the \textit{initial data}) are fixed parameters, and $I:\mathbb{R}\to\mathbb{R}$ (the \textit{influence function}) and $S:\mathbb{R}\to\mathbb{R}$ (the \textit{sensitivity function})  are $2\pi$-periodic Lipschitz functions. Uniqueness and global existence of solutions to \eqref{GenWinfree} follow from the standard Cauchy-Lipschitz theory.

The purpose of this paper is to rigorously establish criteria for boundedness of solutions to \eqref{GenWinfree}, which we will refer to as the \textit{oscillator death} phenomenon.

In the prototypical case 
\begin{equation}\label{standard}
S(\theta)=-\sin\theta,\quad I(\theta)=1+\cos\theta,    
\end{equation}
the initial value problem \eqref{GenWinfree} becomes
\begin{equation}\label{Winfree}
\begin{cases}
\dot{\theta}_i(t)=\omega_i-\frac \kappa N \sum_{j=1}^N (1+\cos\theta_j(t))\sin \theta_i(t),\quad t>0,\\
\theta_i(0)=\theta_i^0,
\end{cases}
\quad i=1,\cdots, N.
\end{equation}
We define the \emph{order parameter}
\begin{equation}\label{order_parameter}
R\coloneqq \frac 1N \sum_{j=1}^N (1+\cos\theta_j),
\end{equation}
with which one can rewrite the ordinary differential equation of \eqref{Winfree} in the mean-field form
\begin{equation}\label{Winfree_orderparam}
\dot{\theta}_i=\omega_i-\kappa R\sin \theta_i.
\end{equation}

The main result of this paper stated in the special case \eqref{standard} is the following theorem. For more general interaction functions, see subsection \ref{subsec:interaction} for analogous results.

\begin{theorem}\label{themainthm}
Fix parameters $\{\omega_i\}_{i=1}^N\in \mathbb{R}^N$ and $\kappa>0$ such that
\[
\kappa>2\max_{i=1,\cdots,N}|\omega_i|.
\]
Then for Lebesgue almost every initial data $\{\theta_i^0\}_{i=1}^N$, the solution $\{\theta_i(t)\}_{i=1}^N$ to \eqref{Winfree} satisfies the following properties.
\begin{enumerate}[(a)]
    \item (Oscillator death) For all $i=1,\cdots,N$, the limit $\lim_{t\to\infty}\theta_i(t)$ exists and $\lim_{t\to\infty}\dot{\theta}_i(t)=0$.
    \item (Uniform lower bound on the order parameter) We have $R_\infty\coloneqq \lim_{t\to\infty}R(t)\ge \sqrt{\frac 12+\sqrt{\frac 14-\frac{\max_{i}|\omega_i|^2}{\kappa^2}}}$.
\end{enumerate}
\end{theorem}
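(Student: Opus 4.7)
The plan follows the two-stage strategy announced in the abstract. The first stage is an \emph{order-parameter bootstrap}: identify a threshold $R_c\in(1/\sqrt{2},1]$, depending only on $M/\kappa$ with $M=\max_i|\omega_i|$, such that once $R(t_0)\ge R_c$ at some $t_0$ the forward trajectory stays in $\{R\ge R_c\}$ and each $\theta_i(t)$ converges. From \eqref{Winfree_orderparam} the frozen-coefficient equation $\dot\theta_i=\omega_i-\kappa R\sin\theta_i$ has a stable equilibrium $\theta_i^*(R)=\arcsin(\omega_i/(\kappa R))\in(-\pi/2,\pi/2)$ as soon as $\kappa R>|\omega_i|$, so convergence is expected once $\kappa R$ stays comfortably above $M$. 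Differentiating \eqref{order_parameter} and applying Cauchy--Schwarz to the forcing term gives
\[
\dot R=\kappa R\Sigma-\tfrac1N\sum_i\omega_i\sin\theta_i\ge\sqrt{\Sigma}\bigl(\kappa R\sqrt{\Sigma}-M\bigr),\qquad \Sigma\coloneqq\tfrac1N\sum_i\sin^2\theta_i,
\]
together with the geometric constraint $(R-1)^2+\Sigma\le 1$ (Cauchy--Schwarz on $(\cos\theta_i,\sin\theta_i)$). The algebraic bifurcation of this system is $R^2(1-R^2)=(M/\kappa)^2$, whose larger root under the hypothesis $\kappa>2M$ is exactly $R_c=\sqrt{1/2+\sqrt{1/4-(M/\kappa)^2}}$, the value claimed in part (b). I would close the bootstrap by constructing a forward-invariant neighborhood of the stable-equilibrium manifold inside $\{R\ge R_c\}$ and running a Lyapunov or monotonicity argument exploiting $\cos\theta_i^*(R)>0$, which yields both part (a) and the asymptotic bound in (b).

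The second stage is a \emph{volumetric/Liouville argument} ruling out the complementary event $\{R(t)<R_c\text{ for all }t\ge 0\}$. The divergence on $\mathbb{T}^N$ of the vector field $F_i(\theta)=\omega_i-\kappa R(\theta)\sin\theta_i$ is
\[
\operatorname{div}F=\sum_i\partial_{\theta_i}F_i=\kappa\Sigma+\kappa NR(1-R),
\]
which is non-negative on $\{R\le 1\}$ and bounded below by a positive constant $c>0$ on the bulk of $\{R\le R_c\}$. Writing $E_t$ for the set of initial data whose forward trajectory remains in $\{R\le R_c\}$ throughout $[0,t]$, the change-of-variables identity
\[
|\phi_t(E_t)|=\int_{E_t}\exp\!\Bigl(\int_0^t\operatorname{div}F(\phi_s(x_0))\,ds\Bigr)\,dx_0\ge e^{ct}|E_t|,
\]
combined with $|\phi_t(E_t)|\le(2\pi)^N$, forces $|E_t|\to 0$, so $\bigcap_{t\ge 0}E_t$ has Lebesgue measure zero. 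Composed with the bootstrap, this proves the theorem.

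The main obstacle I anticipate is closing the bootstrap with the sharp constant $R_c$: the Cauchy--Schwarz bound on $\dot R$ collapses when $\Sigma$ is small (many $\sin\theta_i$ simultaneously vanishing), so ensuring this degeneracy is incompatible with $R$ lying just above $R_c$ will likely require an auxiliary Lyapunov functional coupling $R$ to a weighted angular dispersion, or a dichotomy treating the regimes $\Sigma\gtrsim M^2/(\kappa R)^2$ and its complement separately. A secondary technical point is the degeneracy of $\operatorname{div}F$ near $R\approx 1$ and near the isolated minimum $\theta=(\pi,\dots,\pi)$ where $R\approx 0$; both should be handled by first restricting the volumetric argument to a slightly smaller good region on which $\operatorname{div}F\ge c>0$ uniformly, with the exceptional sets absorbed into the bootstrap region or controlled by a separate transversality argument.
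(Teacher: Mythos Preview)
Your two-stage architecture and the divergence computation $\operatorname{div}F=\kappa N R(1-R)+\kappa\Sigma$ match the paper exactly, and your volumetric Liouville bound is essentially Proposition~\ref{nodispersed}; the degeneracy near $R\approx 0$ is indeed handled by peeling off $\{R<\varepsilon\}$, whose measure tends to $0$, and using $\operatorname{div}F\ge\kappa N\varepsilon(1-\varepsilon)$ on the remainder.

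The genuine gap is in the bootstrap, and it is precisely the one you flag. The inequality $\dot R\ge\sqrt\Sigma(\kappa R\sqrt\Sigma-M)$ gives no sign on the boundary $\{R=R_c\}$ once $\Sigma<M^2/(\kappa R_c)^2=1-R_c^2$, and nothing in the constraint $(R-1)^2+\Sigma\le 1$ prevents $\Sigma$ from being that small there; your proposed fixes (auxiliary Lyapunov functional, dichotomy) remain programmatic. The paper sidesteps this entirely by \emph{not} tracking the aggregate $R$. It fixes $\mu\in(0,1)$, singles out the subpopulation $\mathcal A=\{i:\cos\theta_i(t_0)\ge -1+\mu\}$, and observes that for each $i\in\mathcal A$, whenever $R\ge\rho$ and $\cos\theta_i\in\bigl[-\sqrt{1-M^2/(\kappa\rho)^2},\sqrt{1-M^2/(\kappa\rho)^2}\bigr]$,
\[
\frac{d}{dt}\cos\theta_i=-\omega_i\sin\theta_i+\kappa R\sin^2\theta_i\ge\bigl(\kappa\rho|\sin\theta_i|-M\bigr)\,|\sin\theta_i|\ge 0.
\]
This pins each such oscillator individually and gives $\tfrac1N\sum_{i\in\mathcal A}(1+\cos\theta_i(t))>\rho$, closing the continuous-induction loop on $R\ge\rho$ with $\rho=R_0-\mu$ (Proposition~\ref{bootstrap}, Theorem~\ref{generalbestsincosmainthm}). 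Because each $\sin\theta_i$ is controlled separately, the small-$\Sigma$ regime never arises.

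There is also a mismatch in how your two stages join. The paper's bootstrap does \emph{not} yield forward invariance of $\{R\ge R_c\}$: from $R(t_0)=R_0$ it only gives $R(t)\ge R_0-\mu$ under $\kappa>M/\bigl((R_0-\mu)\sqrt{\mu(2-\mu)}\bigr)$. To squeeze out the sharp bound $R_\infty\ge R_c$ one must send $R_0\to 1$ and $\mu\to 1-R_c$ (the quartic $R_c^2(1-R_c^2)=M^2/\kappa^2$ is exactly the limiting condition $\kappa=M/\bigl((1-\mu)\sqrt{\mu(2-\mu)}\bigr)$). Correspondingly, the volumetric step must deliver $R(t_n)\ge 1-\delta_n$ with $\delta_n\to 0$, not merely $R(t_n)\ge R_c$; Proposition~\ref{nodispersed} provides this since it applies for every $\delta\in(0,1)$, but your Stage~2 as written only places the trajectory in $\{R\ge R_c\}$, which would not feed the bootstrap at the sharp constant.
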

\begin{remark}
    It is clear from \eqref{Winfree_orderparam} and Theorem \ref{themainthm} that the structure of the equilibrium in Theorem \ref{themainthm} is given as $\lim_{t\to\infty}\theta_i(t)\equiv \sin^{-1}\frac{\omega_i}{\kappa R_\infty}\mathrm{~or~}\pi-\sin^{-1}\frac{\omega_i}{\kappa R_\infty} \mod 2\pi$ for each $i=1,\cdots,N$.
\end{remark}

\begin{corollary}\label{liminfcor}
Fix parameters $\{\omega_i\}_{i=1}^N\in \mathbb{R}^N$, and for each coupling strength $\kappa>0$ and initial data $\{\theta_i^0\}_{i=1}^N\in \mathbb{R}^N$ let $\{\theta_i(t)\}_{i=1}^N$ be the solution to \eqref{Winfree}. Let $R(t)$ denote the order parameter of the solution. Then
\[
\liminf_{\kappa\to\infty}\underset{\{\theta_i^0\}_{i=1}^N\in \mathbb{R}^N}{\operatorname{ess~inf}}\liminf_{t\to\infty}R(t)\ge 1.
\]
\end{corollary}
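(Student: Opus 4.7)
The plan is to derive Corollary \ref{liminfcor} as an immediate consequence of Theorem \ref{themainthm}. For any $\kappa > 2\max_i|\omega_i|$, part (a) of Theorem \ref{themainthm} gives, for Lebesgue almost every initial data, the convergence of each $\theta_i(t)$, and hence by the continuity of $\theta\mapsto 1+\cos\theta$ the existence of the limit $\lim_{t\to\infty}R(t)=R_\infty$. In particular $\liminf_{t\to\infty}R(t)=R_\infty$ on a set of full measure. Part (b) supplies the pointwise lower bound
\[
R_\infty \;\ge\; \sqrt{\frac{1}{2} + \sqrt{\frac{1}{4} - \frac{\max_i|\omega_i|^2}{\kappa^2}}}
\]
on that same full-measure set.

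Since the right-hand side depends only on $\kappa$ and on $\{\omega_i\}_{i=1}^N$, and not on the initial data, this inequality passes through the essential infimum over $\{\theta_i^0\}_{i=1}^N$:
\[
\operatorname{ess~inf}_{\{\theta_i^0\}_{i=1}^N\in\mathbb{R}^N}\;\liminf_{t\to\infty}R(t) \;\ge\; \sqrt{\frac{1}{2} + \sqrt{\frac{1}{4} - \frac{\max_i|\omega_i|^2}{\kappa^2}}}.
\]
As $\kappa\to\infty$, the ratio $\max_i|\omega_i|^2/\kappa^2$ vanishes and the lower bound converges monotonically to $\sqrt{1/2+1/2}=1$, so taking $\liminf_{\kappa\to\infty}$ on both sides delivers the claim.

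No substantive obstacle arises here; the corollary is a direct quantitative rephrasing of Theorem \ref{themainthm}(b) in the large-coupling regime. The only routine verification is that coordinate-wise convergence of each $\theta_i(t)$ from part (a) forces convergence of the order parameter $R(t)$, which is immediate from the continuity of $1+\cos(\cdot)$. I would also briefly note that the constraint $\kappa > 2\max_i|\omega_i|$ is harmless for the $\liminf_{\kappa\to\infty}$ conclusion, since one is free to restrict $\kappa$ to be as large as desired before taking the limit.
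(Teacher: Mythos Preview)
Your proof is correct and is precisely the intended derivation: the paper states Corollary \ref{liminfcor} immediately after Theorem \ref{themainthm} without a separate proof, treating it as a direct consequence of part (b), which is exactly what you have written out.
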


\begin{remark}
By Theorem \ref{main_eq_thm} below, Corollary \ref{liminfcor} is false if we take $\inf$ instead of $\operatorname{ess~inf}$, because it tells us that
\[
\limsup_{\kappa\to\infty}\inf_{\{\theta_i^0\}_{i=1}^N\in \mathbb{R}^N}\limsup_{t\to\infty}R(t)\le \frac{3}{N}.
\]
On the other hand, \cite[Theorem 2.2]{ha2015emergence} (stated below in Theorem \ref{standardpreciseCOD}) gives an equilibrium confined in a small neighborhood of $0$ modulo $2\pi$, which becomes arbitrarily small as one takes $\kappa\to\infty$, and whose basin of attraction covers almost all of the torus as one takes $\kappa\to\infty$. This tells us that a variant of Question \ref{liminfques} where we interchange the order of quantifiers is affirmative:
\[
\underset{\{\theta_i^0\}_{i=1}^N\in \mathbb{R}^N}{\operatorname{ess~inf}}\liminf_{\kappa\to\infty}\liminf_{t\to\infty}R(t)= 2.
\]
Numerical simulations suggest that the limiting equilibrium for almost every initial data is that of \cite{ha2015emergence}. We thus pose the following question.
\end{remark}
\begin{question}\label{liminfques}
In the setting of Corollary \ref{liminfcor}, is the stronger statement
\[
\lim_{\kappa\to\infty}\underset{\{\theta_i^0\}_{i=1}^N\in \mathbb{R}^N}{\operatorname{ess~inf}}\liminf_{t\to\infty}R(t)= 2
\]
true?
\end{question}

For general interaction functions $I$ and $S$, we will show that as long as $I$ and $S$ mimic \eqref{standard} in a certain sense, then with a sufficiently large $\kappa$ we have oscillator death, i.e., boundedness of solutions, for Lebesgue almost every data. See subsection \ref{subsec:interaction} for the statements and further discussion.

Winfree proposed the model \eqref{GenWinfree} in \cite{winfree1967biological} to model the behavior of biological oscillators as linearly perturbative periodic solutions to \eqref{GenWinfree}, when $\kappa$ is very small (i.e. the system is ``weakly coupled'') and the $\omega_i$ are concentrated around a single positive value (i.e. the system has ``small bandwidth''). Ariaratnam and Strogatz \cite{ariaratnam2001phase} observed that in the mathematically tractable special case \eqref{standard}, which is representative of the behavior of some biological oscillators, if one strengthens the parameters $\kappa$ and widens the distribution of the $\omega_i$'s, the system exhibits a variety of asymptotic behavior. In particular, if $\kappa$ is sufficiently large compared to the distribution of the $\omega_i$'s, then their numerical simulations suggest that $\lim_{t\to\infty}\theta_i(t)/t=0$ for all $i=1,\cdots,N$, regardless of the initial value.

Here is the exact setup of \cite{ariaratnam2001phase}. They take $N=800$, a sample of $\theta_i^0\in [-\pi,\pi]$, a (nonrandom) sample of $\omega_i$ from the uniform distribution on $[1-\gamma,1+\gamma]$, and some fixed $\kappa\ge 0$, and then simulate \eqref{Winfree} with these parameters up to time $t=500$ (sometimes longer times were required). Then they observed that the qualitative profile of the ``rotation numbers''
\[
\rho_i\coloneqq\lim_{t\to\infty}\theta_i(t)/t,\quad i=1,\cdots, N
\]
does not depend on the initial conditions $\theta_i^0$, and they drew a phase diagram depicting the configuration of the rotation numbers for different choices of the parameters $\kappa\in [0,1]$ and $\gamma\in [0,1]$. The large $\kappa$ regime of this phase diagram corresponds to the ``death'' regime, where $\rho_i=0$ for all $i=1,\cdots,N$.

Note that oscillator death implies $\lim_{t\to\infty}\theta_i(t)/t=0$ for all $i=1,\cdots,N$. Theorem \ref{themainthm} implies the oscillator death phenomenon for the model \eqref{Winfree} when $\kappa>2\max_{i=1,\cdots,N}|\omega_i|$, thus rigorously confirming the numerical observation of the death regime by Ariaratnam and Strogatz.

To be more precise, in the following theorem, we can compute an upper bound on the measure of the exceptional set that does not exhibit bounded behavior starting at time $t=0$, which is in a sense a quantitative version of Theorem \ref{themainthm}. Below, we introduce the notation
\[
\|\Omega\|_\infty \coloneqq \max_{i=1,\cdots,N} |\omega_i|.
\]

\begin{theorem}\label{sincosmaincor}
Fix parameters $\{\omega_i\}_{i=1}^N\in \mathbb{R}^N$, $\kappa>0$, and $\varepsilon\in (0,1]$ such that
\[
\kappa>(2+\varepsilon)\|\Omega\|_\infty.
\]
For each initial data $\{\theta_i^0\}_{i=1}^N$, denote by $\{\theta_i(t)\}_{i=1}^N$ the solution to \eqref{Winfree}.
Then, denoting by $m$ the normalized Lebesgue measure on $[-\pi,\pi]^N$ such that $m([-\pi,\pi]^N)=1$, one has\footnote{By Theorem \ref{themainthm} we may harmlessly insert the condition ``$\exists \lim_{t\to\infty}\theta_i(t)$ and $\lim_{t\to\infty}\dot\theta_i(t)$ for all $i=1,\cdots,N$'' into the events below.}
\begin{align*}
&m\Big\{\{\theta_i^0\}_{i=1}^N\in [-\pi,\pi]^N:~\forall t\ge 0~ R(t)\ge \frac{1}{\sqrt{2}} -\frac{3\varepsilon}{20},\\
&\qquad \forall i=1,\cdots,N ~\sup_{t\ge 0} \theta_i(t)-\inf_{t\ge 0}\theta_i(t)<2\pi,\\
&\qquad \mathrm{and~}\forall i\in \{1,\cdots,N\}\forall t_0\ge 0\mathrm{~if~}\cos\theta_i(t_0)\ge -\frac{1}{\sqrt{2}}\mathrm{~then}\\
&\qquad \cos\theta_i(t_0)\ge -\frac{1}{\sqrt{2}}\mathrm{~for~}t\ge t_0\mathrm{~and~}\cos\theta_i(t)\ge \frac{1}{\sqrt{2}}\mathrm{~for~}t\ge t_0+\frac{(2+\varepsilon)\pi}{\kappa-(2+\varepsilon)\|\Omega\|_\infty}\Big\}\\
&\ge 1-\exp\left(-\frac{\varepsilon^2 N}{25}\right).
\end{align*}
If, in addition,
$
\kappa>\left(\frac{\pi e}{2}\right)^{3/2}\|\Omega\|_\infty\approx 8.8231 \|\Omega\|_\infty,
$
then
\[
    m\left\{\{\theta_i^0\}_{i=1}^N\in [-\pi,\pi]^N: \sup_{t\ge 0} \theta_i(t)-\inf_{t\ge 0}\theta_i(t)<2\pi ~\mbox{for all } i=1,\cdots,N\right\}\ge 1-\left(\sqrt{\frac{\pi e}{2}}\frac{\|\Omega\|_\infty^{1/3}}{\kappa^{1/3}}\right)^N.
\]
\end{theorem}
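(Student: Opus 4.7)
The plan is to combine the deterministic pathwise order-parameter bootstrap that drives Theorem~\ref{themainthm} with two concentration estimates: Hoeffding's inequality applied to $R(0)$ for the first inequality, and a direct volumetric estimate on the initial data for the second.

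\emph{First inequality.} Under the normalized Lebesgue measure, the phases $\theta_i^0$ are i.i.d.\ uniform on $[-\pi,\pi]$, so $R(0)=\frac{1}{N}\sum_i(1+\cos\theta_i^0)$ averages i.i.d.\ random variables in $[0,2]$ with mean $1$. One-sided Hoeffding yields
\[
m\left\{R(0) \geq 1 - \tfrac{\sqrt{2}}{5}\varepsilon\right\} \geq 1 - \exp\!\left(-\tfrac{\varepsilon^2 N}{25}\right).
\]
On this event I run a continuity bootstrap. Set $T := \sup\{t \geq 0 : R(s) \geq \tfrac{1}{\sqrt{2}} - \tfrac{3\varepsilon}{20}~\text{for all}~s \in [0,t]\}$, which is positive since $1 - \tfrac{\sqrt{2}}{5}\varepsilon > \tfrac{1}{\sqrt{2}} - \tfrac{3\varepsilon}{20}$ for $\varepsilon \in (0,1]$. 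On $[0,T)$, the assumption $\kappa > (2+\varepsilon)\|\Omega\|_\infty$ combined with the chosen threshold forces $\kappa R(t) \geq \sqrt{2}\|\Omega\|_\infty$ with positive margin. This places the stable fixed point of each decoupled equation $\dot\theta_i = \omega_i - \kappa R(t)\sin\theta_i$ inside $|\theta|\leq \pi/4$, makes $\{\cos\theta \geq -1/\sqrt{2}\}$ strictly positively invariant (the vector field at $\theta=\pm 3\pi/4$ points inward with speed $\geq \kappa R/\sqrt{2} - \|\Omega\|_\infty > 0$), and forces entry into $\{\cos\theta \geq 1/\sqrt{2}\}$ within time $(2+\varepsilon)\pi/(\kappa - (2+\varepsilon)\|\Omega\|_\infty)$, obtained by integrating the inward speed across the transit region $|\sin\theta|\geq 1/\sqrt{2}$. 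To close the bootstrap and conclude $T = \infty$, I partition indices into good ($\cos\theta_i^0 \geq -1/\sqrt{2}$) and bad ($\cos\theta_i^0 < -1/\sqrt{2}$), control the good ones via the just-established invariance, and use Hoeffding on $R(0)$ to limit the number and worst-case contribution of the bad ones, producing a lower bound on $R(T)$ that strictly exceeds $\tfrac{1}{\sqrt{2}} - \tfrac{3\varepsilon}{20}$ and contradicts the definition of $T$.

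\emph{Second inequality.} For the stronger condition $\kappa > (\pi e/2)^{3/2}\|\Omega\|_\infty$, the strategy shifts to a direct volumetric analysis. The deterministic core is the observation that a full rotation of $\theta_i$ requires $R(t)$ to dip below $|\omega_i|/\kappa$ during the transit window when $\theta_i$ passes $\pm\pi/2$; the linearized dynamics $\dot\theta \approx \omega_i + \kappa R(\theta - \pi)$ near the unstable fixed point $\pi$, together with the time $\sim \|\Omega\|_\infty^{-1}$ required for a rotation, force each $\theta_j^0$ into a small neighborhood of $\pi$ of radius $\sim \pi\sqrt{\pi e/2}(\|\Omega\|_\infty/\kappa)^{1/3}$. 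The exponent $1/3$ emerges from balancing the exponential expansion rate $\kappa$ near $\pi$ against the rotation time scale $\|\Omega\|_\infty^{-1}$, and the constant $\sqrt{\pi e/2}$ from optimizing the radius in the product estimate: using $m\{\theta \in [-\pi,\pi] : |\theta-\pi|\leq\alpha\} = \alpha/\pi$ and taking the $N$-fold product yields the claimed bound $(\sqrt{\pi e/2}\,\|\Omega\|_\infty^{1/3}/\kappa^{1/3})^N$.

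\emph{Main obstacle.} The most delicate step is closing the bootstrap in the first inequality with the stated constants $\sqrt{2}/5$ and $3/20$: one must show that the negative contribution of bad indices to $R(T)$ is dominated by the positive contribution of good ones, using only the single Hoeffding estimate on $R(0)$. The naive worst-case bound (bad indices pinned at $\theta = \pi$ contributing $-1$, good ones at the boundary $\cos\theta = -1/\sqrt{2}$) is too weak, so one must quantitatively track how the dynamics on $[0,T)$ improves upon it---for instance by invoking the passage into $\{\cos\theta \geq 1/\sqrt{2}\}$ for a positive fraction of good indices once $T$ exceeds the entry time above. For the second inequality the hurdle is justifying the product structure of the bad set: a union bound would yield only $N\cdot p$ rather than $p^N$, so one must exploit that rotation of any single index forces an improbable population-wide alignment near $\pi$ simultaneously for all $j$.
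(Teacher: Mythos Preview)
Your proposal has genuine gaps in both parts, and in the second part the approach is on the wrong track.

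\textbf{First inequality.} The paper does not re-run a bootstrap at the threshold $\tfrac{1}{\sqrt{2}}-\tfrac{3\varepsilon}{20}$; that number is an \emph{output} of Corollary~\ref{bestsincosmainthm}, not an invariant one can maintain directly. The paper's argument is: (i) show $m\{R_0< 1-\varepsilon/5\}\le \exp(-\varepsilon^2 N/25)$ using that $\cos\theta$ is $\tfrac12$-subgaussian (Proposition~\ref{prop:circleConc}), which is strictly sharper than Hoeffding on $[0,2]$; (ii) on $\{R_0\ge 1-\varepsilon/5\}$ verify $K_c=2/R_0^{3/2}\le 2+\varepsilon$ and invoke Corollary~\ref{bestsincosmainthm}. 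Your Hoeffding threshold $1-\sqrt{2}\varepsilon/5$ is too weak for step (ii) at $\varepsilon=1$: one gets $K_c\approx 2/(0.717)^{3/2}\approx 3.30>3$. More importantly, the bootstrap that actually closes (Proposition~\ref{bootstrap}, Theorem~\ref{generalbestsincosmainthm}) does not partition at $\cos\theta=-1/\sqrt{2}$; it uses $\mathcal{A}=\{i:\cos\theta_i^0\ge -1+\mu\}$ for a carefully chosen $\mu$ together with the pointwise estimate $1+\cos\theta_i(t)\ge \tfrac12\bigl(1+\sqrt{1-\|\Omega\|_\infty^2/\kappa^2\rho^2}\bigr)(1+\cos\theta_i^0)$ and the combinatorial inequality $\tfrac1N\sum_{i\in\mathcal{A}}(1+\cos\theta_i^0)\ge \tfrac{2(R_0-\mu)}{2-\mu}$. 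Your sketch (``track good indices entering $\{\cos\theta\ge 1/\sqrt{2}\}$ after the entry time'') does not supply a lower bound on $R(T)$ that strictly exceeds the threshold, and you correctly flag this as unresolved.

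\textbf{Second inequality.} The paper's mechanism is purely an initial-data tail bound on $R_0$, not a dynamical alignment argument. Specifically, Corollary~\ref{bestsincosmainthm} gives oscillator death whenever $R_0>(2\sqrt{2}\|\Omega\|_\infty/\kappa)^{2/3}$, and Lemma~\ref{lem:order-param-conc} (via the Laplace-type bound $\fint e^{\lambda\cos\theta}d\theta\le \tfrac12\sqrt{\pi/2\lambda}\,e^\lambda$ and the Chernoff optimization of Lemma~\ref{lem:extreme-close}) yields $m\{R_0\le t\}\le (\sqrt{\pi e t}/2)^N$; substituting $t=(2\sqrt{2}\|\Omega\|_\infty/\kappa)^{2/3}$ gives exactly $(\sqrt{\pi e/2}\,\|\Omega\|_\infty^{1/3}/\kappa^{1/3})^N$. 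The exponent $1/3$ thus comes from $\tfrac12\cdot\tfrac23$ (tail exponent $\times$ $R_0$-threshold exponent), not from balancing an expansion rate against a rotation time. Your claim that ``rotation of any single index forces an improbable population-wide alignment near $\pi$'' is false as stated---a single oscillator can rotate without the others being near $\pi$---and the product structure $p^N$ arises simply because $\{R_0\le t\}$ is a symmetric event in i.i.d.\ coordinates whose moment generating function factors.
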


Furthermore, for any time $T>0$, we can compute an upper bound on the measure of the exceptional set that does not exhibit bounded behavior up to time $t= T$, strengthening Theorem \ref{sincosmaincor}.

\begin{theorem}\label{sincosmaincor-time}
Fix parameters\footnote{The reason we don't consider the case $N=1$ is because oscillator death is trivial in this case; see Proposition \ref{verytrivial} with $N=1$.} $N\ge 2$, $\{\omega_i\}_{i=1}^N\in \mathbb{R}^N$, $\kappa>0$, and $\varepsilon\in (0,1]$ such that
\[
\kappa>(2+\varepsilon)\|\Omega\|_\infty.
\]
For each initial data $\{\theta_i^0\}_{i=1}^N$, denote by $\{\theta_i(t)\}_{i=1}^N$ the solution to \eqref{Winfree}.
Then, denoting by $m$ the normalized Lebesgue measure on $[-\pi,\pi]^N$ such that $m([-\pi,\pi]^N)=1$, one has, for each time $T>0$,
\begin{align*}
&m\Big\{\{\theta_i^0\}_{i=1}^N\in [-\pi,\pi]^N:~\forall t\ge T~ R(t)\ge \frac{1}{\sqrt{2}} -\frac{3\varepsilon}{20},\\
&\qquad \forall i=1,\cdots,N ~\sup_{t\ge T} \theta_i(t)-\inf_{t\ge T}\theta_i(t)<2\pi,\\
&\qquad \mathrm{and~}\forall i\in \{1,\cdots,N\}\forall t_0\ge T\mathrm{~if~}\cos\theta_i(t_0)\ge -\frac{1}{\sqrt{2}}\mathrm{~then}\\
&\qquad \cos\theta_i(t_0)\ge -\frac{1}{\sqrt{2}}\mathrm{~for~}t\ge t_0\mathrm{~and~}\cos\theta_i(t)\ge \frac{1}{\sqrt{2}}\mathrm{~for~}t\ge t_0+\frac{(2+\varepsilon)\pi}{\kappa-(2+\varepsilon)\|\Omega\|_\infty}\Big\}\\
&\ge 
\begin{cases}
1-\frac 1{N/2+1} \left(\frac{\sqrt{\pi e \varepsilon}}{2\sqrt{5}}\right)^N\left(1-\exp\left(-\frac{\kappa N \varepsilon(5-\varepsilon)T}{25}\right)\right)+\exp\left(-\frac{N\varepsilon^2}{25}-\frac{\kappa N \varepsilon(5-\varepsilon)T}{25}\right),& \mathrm{if~}T\le T_0,\\
1-\left(\frac{20}{\pi e\varepsilon}+\frac{4\kappa(5-\varepsilon)}{5\pi e}(T-T_0)\right)^{-\frac{N}{2}},&\mathrm{if~}T>T_0,
\end{cases}
\end{align*}
where $T_0 = \frac{25}{\kappa N\varepsilon(5-\varepsilon)}\log\left(\left(1+\frac 2N\right)\left(\frac{2\sqrt{5}}{\sqrt{\pi \varepsilon}e^{1/2+\varepsilon^2/25}}\right)^N-\frac 2N\right)$.

Furthermore, if
$
\kappa>8\|\Omega\|_\infty,
$
then\footnote{If $\|\Omega\|_\infty=0$ so that a fraction in the expression below is undefined, then the stated probability is 1.}
\begin{align*}
    &m\left\{\{\theta_i^0\}_{i=1}^N\in [-\pi,\pi]^N: \sup_{t\ge T} \theta_i(t)-\inf_{t\ge T}\theta_i(t)<2\pi~\mbox{for all } i=1,\cdots,N\right\}\\
&\ge 
\begin{cases}
1-\left(e^{1/2}+\frac{3\kappa T}{\pi e}\right)^{-\frac{N}{2}}&\mathrm{if~}8\|\Omega\|_\infty<\kappa\le 16\sqrt{2}\|\Omega\|_\infty,\\
1-\left(\frac{4}{\pi e}\left(\frac{\kappa}{2\sqrt{2}\|\Omega\|_\infty}\right)^{2/3}+\frac{3\kappa T}{\pi e}\right)^{-\frac{N}{2}}&\mathrm{if~}\kappa>16\sqrt{2}\|\Omega\|_\infty.
\end{cases}
\end{align*}
\end{theorem}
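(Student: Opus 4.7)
The plan is to upgrade the $t=0$-based bound of Theorem \ref{sincosmaincor} to the $t\ge T$-based bound here by a volumetric pullback argument under the time-$T$ flow $\Phi_T$ of \eqref{Winfree}.

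First I would observe that the set of initial data for which the stated forward-in-time event holds from time $T$ onward is exactly $\Phi_T^{-1}(G)$, where $G\subseteq[-\pi,\pi]^N$ is the set produced by Theorem \ref{sincosmaincor} for the event from $t=0$ onward, with $m(G^c)\le \exp(-\varepsilon^2N/25)$ in the first part (respectively $m(G^c)\le (\sqrt{\pi e/2}\,\|\Omega\|_\infty^{1/3}/\kappa^{1/3})^N$ in the second). Working on the torus $\mathbb{T}^N$ so that $\Phi_T$ is a diffeomorphism, the change of variables formula applied in the push-forward direction gives
\[
m(G^c)=\int_{\Phi_T^{-1}(G^c)}J_T(\theta^0)\,dm(\theta^0)\ \ge\ m(\Phi_T^{-1}(G^c))\cdot\inf_{\theta^0\in\Phi_T^{-1}(G^c)}J_T(\theta^0),
\]
where $J_T\coloneqq|\det D\Phi_T|$, reducing the problem to a suitable lower bound on $J_T$ along the bad initial data.

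A direct divergence computation yields
\[
\operatorname{div} F\ =\ \frac{\kappa}{N}\sum_{i=1}^N\sin^2\theta_i+\kappa N R(1-R),
\]
which is non-negative on $\{R\le 1\}$; together with the Jacobi identity $\log J_T(\theta^0)=\int_0^T\operatorname{div} F(\Phi_s(\theta^0))\,ds$, this reduces the Jacobian bound to a lower bound on the integrated divergence along trajectories. Here the evolution equation $\dot R=\kappa R S^2-N^{-1}\sum_j\omega_j\sin\theta_j$ with $S^2\coloneqq N^{-1}\sum_j\sin^2\theta_j$ permits one to re-express $\int_0^T\kappa S^2\,dt$ as $\log(R(T)/R(0))$ plus a correction bounded in terms of $\|\Omega\|_\infty$; combining this with the $\kappa N\int_0^T R(1-R)\,dt$ contribution and using the pointwise lower bound on $R$ along ``good'' trajectories guaranteed by Theorem \ref{sincosmaincor}, one obtains a lower bound on $\log J_T$ that scales like $\tfrac{N}{2}\log(1+c\kappa T)$ along typical trajectories, producing the polynomial volumetric factor $(1+c\kappa T)^{-N/2}$ in the large-$T$ regime.

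The two cases $T\le T_0$ and $T>T_0$ correspond to whether the initial concentration-of-measure estimate from Theorem \ref{sincosmaincor} or the accumulated volumetric factor $J_T^{-1}$ dominates the estimate; the crossover $T_0$ arises from equating the two expressions, and the additional exponential-in-$\kappa T$ decay visible in the small-$T$ formula reflects the contraction toward the stable equilibrium locus $\sin^{-1}(\omega_i/(\kappa R_\infty))$ identified in the remark after Theorem \ref{themainthm}. The principal obstacle will be the quantitative extraction of the $N/2$ exponent in the Jacobian lower bound: the infimum of $J_T$ must be taken over trajectories whose terminal states lie in $G^c$, so one must exploit the geometric structure of $G^c$---concentrated near the unstable-equilibrium locus, whose stable manifold has a specific codimension---to obtain the right scaling, and to match the stated constants involving $\pi e$ and $5-\varepsilon$ through careful isoperimetric-type accounting of the $\sum_i\sin^2\theta_i$ contribution to the divergence.
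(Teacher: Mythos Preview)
Your pullback identification $\text{bad set at time }T=\Phi_T^{-1}(G^c)$ and the divergence formula are both correct, but the inequality $m(G^c)\ge m(\Phi_T^{-1}(G^c))\cdot\inf_{\Phi_T^{-1}(G^c)}J_T$ cannot be closed the way you propose. The obstruction is that the infimum of $J_T$ over $\Phi_T^{-1}(G^c)$ need not grow with $T$: trajectories that linger near the locus $\{R\approx 0\}$ (for instance near the unstable equilibria produced by Theorem~\ref{main_eq_thm} with small $\rho$, or near $(\pi,\dots,\pi)$) have both $R(1-R)$ and $\tfrac1N\sum_i\sin^2\theta_i$ small, so their per-trajectory integral $\int_0^T\nabla\cdot F$ can be arbitrarily small even for large $T$. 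Your own re-expression $\int_0^T\kappa S^2\,dt=\log(R(T)/R(0))+\int_0^T\frac{1}{NR}\sum_j\omega_j\sin\theta_j\,dt$ makes this explicit: the ``correction bounded in terms of $\|\Omega\|_\infty$'' carries a factor $1/R$ and is not uniformly bounded on trajectories that visit small $R$. So the claimed pointwise lower bound $\log J_T\gtrsim\tfrac{N}{2}\log(1+c\kappa T)$ does not follow, and the $N/2$ exponent has nothing to do with codimensions of stable manifolds.

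The paper's argument is volumetric rather than pointwise: it sets $M(t)\coloneqq m\bigl(\Phi_t(U^T_{1-\delta})\bigr)$ (with $\delta=\varepsilon/5$), uses $\dot M(t)=\int_{\Phi_t(U^T_{1-\delta})}\nabla\cdot F\,dm\ge \kappa N\int_{\Phi_t(U^T_{1-\delta})}R(1-R)\,dm$, and then handles the small-$R$ contribution not by a uniform bound but by a layer-cake decomposition together with the independent concentration estimate $m(\{R<t\})\le(\sqrt{\pi e t}/2)^N$ of Lemma~\ref{lem:order-param-conc}. This produces, in the small-$M$ regime, a differential inequality of the form $\dot M\ge c\,\kappa\,M^{1+2/N}$, whose integration yields exactly the $(\,\cdot\,)^{-N/2}$ decay with the $\pi e$ constants; in the large-$M$ regime it gives the linear inequality $\dot M\ge \kappa N\delta(1-\delta)(M-\mathrm{const})$, which is the source of the exponential factor in the $T\le T_0$ formula (not contraction toward stable equilibria). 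The crossover $T_0$ is simply the time at which $M(t)$ passes through the threshold separating these two regimes.
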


Theorems \ref{sincosmaincor} and \ref{sincosmaincor-time} rigorously confirm the experimental observations of \cite{ariaratnam2001phase}. Namely, Theorem \ref{sincosmaincor} tells us that if $\kappa>(2+\varepsilon)\max_i |\omega_i|$, then if we independently sample $\theta_i^0$ uniformly from $[-\pi,\pi]$ and simulate \eqref{Winfree}, we will then observe that the oscillators stay within $2\pi$ of its initial value for all time with probability at least $1-\exp\left(-\frac{\varepsilon^2 N}{25}\right)$, which tends rapidly to $1$ as $N\to\infty$. For example, if we have $N=800$ and $\kappa>3\max_i |\omega_i|$, then this probability is at least $1-\exp\left(-800/25\right)\approx 1-1.266\times 10^{-14}$. Theorem \ref{sincosmaincor-time} goes one step further and tells us that oscillator death behavior will be observed within some large time $T$ with probability at least $1-\left(\frac{20}{\pi e\varepsilon}+\frac{4\kappa(5-\varepsilon)}{5\pi e}(T-T_0)\right)^{-\frac{N}{2}}$ if $T$ is large enough. For example, if $N=800$, $\max_i|\omega_i|=2$, $\kappa=6$ (so that $\varepsilon=1$), and $T=500$, then $T_0 = \frac{25}{4\kappa N}\log\left(\left(1+\frac 2N\right)\left(\frac{2\sqrt{5}}{\sqrt{\pi }e^{1/2+1/25}}\right)^N-\frac 2N\right)=0.40156699\cdots$ and the possibility of observing oscillator death within time $T=500$ is at least
\[
1-\left(\frac{20}{\pi e}+\frac{16\kappa}{5\pi e}(T-T_0)\right)^{-\frac{N}{2}}\approx 1-2.7990\times 10^{-1221}.
\]

Although the statements of Theorems \ref{themainthm} and \ref{sincosmaincor} treat $2\max_i |\omega_i|$ as if it were a critical threshold, this is likely an artifact of our proof, as the phase diagram of \cite{ariaratnam2001phase} indicates that oscillator death occurs even for smaller values of $\kappa$. To be precise, we make the following definitions.

\begin{definition}\label{def:crit}
Fix a frequency vector $\Omega=\{\omega_1,\cdots,\omega_N\}\in \mathbb{R}^N$.
\begin{enumerate}[(a)]
\item The \emph{critical coupling strength} is
\[
\kappa_{\mathrm{c}}(\Omega)\coloneqq \inf\{\kappa_*>0:\mathrm{for~}\kappa>\kappa_*,\mathrm{~system~}\eqref{Winfree}\mathrm{~admits~an~equilibrium}\}.
\]
\item For an initial phase vector $\Theta^0=\{\theta^0_1,\cdots,\theta^0_N\}\in \mathbb{R}^N$, the \emph{pathwise critical coupling strength} is
\[
\kappa_{\mathrm{pc}}(\Theta^0,\Omega)\coloneqq \inf\{\kappa_*>0:\mathrm{for~}\kappa>\kappa_*,\mathrm{~the~solution~}\{\theta_i(t)\}_{i=1}^N\mathrm{~to~}\eqref{Winfree}\mathrm{~converges}\}.
\]
\end{enumerate}
\end{definition}

We have a fortiori $\kappa_{\mathrm{c}}(\Omega)\le \kappa_{\mathrm{pc}}(\Theta^0,\Omega)$, while by Proposition \ref{prop:crit_comp} we have 
\[
\frac{2N}{4N-1}\max_{i=1,\cdots,N}|\omega_i|\le \kappa_{\mathrm{c}}(\Omega)\le \frac{4}{3\sqrt{3}}\max_{i=1,\cdots,N}|\omega_i|.
\]
Also, statement (a) of Theorem \ref{themainthm} can be summarized as follows.
\begin{theorem}\label{thm:pc2}
For fixed $\Omega=\{\omega_1,\cdots,\omega_N\}\in \mathbb{R}^N$, we have
\[
\kappa_{\mathrm{pc}}(\Theta^0,\Omega)\le 2\max_{i=1,\cdots,N}|\omega_i|,\quad\mathrm{~for~a.e.~}\Theta^0=\{\theta^0_1,\cdots,\theta^0_N\}\in \mathbb{R}^N.
\]
\end{theorem}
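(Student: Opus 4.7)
The plan is to deduce this theorem as a uniform-in-$\kappa$ strengthening of part (a) of Theorem~\ref{themainthm}: the latter provides, for each fixed $\kappa > 2\|\Omega\|_\infty$, a Lebesgue-null exceptional set of $\Theta^0$ outside which the solution converges, whereas the present formulation demands a single null set valid for \emph{every} $\kappa > 2\|\Omega\|_\infty$. I would either run the proof of Theorem~\ref{themainthm}(a) with all estimates chosen uniformly on compact subintervals of $(2\|\Omega\|_\infty, \infty)$, or extract a countable sequence $\kappa_n \searrow 2\|\Omega\|_\infty$, take the union of the resulting null sets, and fill in intermediate $\kappa$'s via monotonicity (heuristically, larger $\kappa$ only enlarges the set of initial data for which convergence occurs).

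The proof of Theorem~\ref{themainthm}(a) itself combines two ingredients. The first is a pathwise order-parameter bootstrapping. In the mean-field form $\dot\theta_i = \omega_i - \kappa R \sin\theta_i$, each oscillator admits a stable equilibrium at $\sin^{-1}(\omega_i/\kappa R) \in (-\pi/2, \pi/2)$ whenever $\kappa R > |\omega_i|$. Differentiating the order parameter yields
\[
\dot R = -\frac{1}{N}\sum_i \omega_i \sin\theta_i + \frac{\kappa R}{N}\sum_i \sin^2\theta_i,
\]
and controlling the first term by Cauchy--Schwarz against the square root of the second shows that, once $R(t_0)$ exceeds the threshold $R_* = \sqrt{1/2 + \sqrt{1/4 - \|\Omega\|_\infty^2/\kappa^2}}$ (and the phases lie in the forward-invariant region $\{\cos\theta_i \ge -1/\sqrt 2\}$), these properties persist for all $t \ge t_0$, each $\theta_i(t)$ converges to the stable branch $\sin^{-1}(\omega_i/\kappa R_\infty)$, and the bound $R_\infty \ge R_*$ holds.

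The second ingredient is a volumetric (Liouville) argument on the torus $(\mathbb{R}/2\pi\mathbb{Z})^N$, on which the $2\pi$-periodic dynamics is well-defined and the total measure is finite. A direct computation gives
\[
\nabla\cdot F = \sum_i \partial_{\theta_i}(\omega_i - \kappa R\sin\theta_i) = \kappa N R(1-R) + \frac{\kappa}{N}\sum_i \sin^2\theta_i,
\]
which is strictly positive whenever $R \in (0,1)$. If a positive-measure set of initial data were trapped forever in $\{R \le R_*\}$ (with $R_* < 1$), then the flow would expand its volume at a uniformly positive rate, contradicting the finite torus measure. Hence for a.e.\ $\Theta^0$ there exists $t_0$ with $R(t_0) > R_*$, at which point the bootstrapping triggers and convergence follows.

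I expect the hardest part to be the bootstrapping: pointwise trapping regions of the individual $\theta_i$'s must be shown forward-invariant under dynamics coupled through the global $R$, and the specific thresholds $R_*$ and $\cos\theta_i \ge -1/\sqrt 2$ are not incidental---they emerge from simultaneously enforcing trapping and the non-decrease of $R$, and become sharp exactly as $\kappa \searrow 2\|\Omega\|_\infty$. A secondary technical point is producing a single null exceptional set independent of $\kappa$, which I would handle by inspecting the continuous dependence of $R_*(\kappa)$ and the trapping regions on $\kappa$, or by the countable-sequence argument mentioned above.
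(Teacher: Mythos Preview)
Your two-ingredient plan---order-parameter bootstrapping plus a divergence/volumetric argument on the torus---matches the paper's approach, and your divergence formula is correct. However, your bootstrapping step has a structural gap. You require \emph{both} $R(t_0) > R_*$ and that \emph{all} phases satisfy $\cos\theta_i \ge -1/\sqrt{2}$, but the volumetric argument only delivers the first condition: it shows that a.e.\ orbit eventually has $R > R_*$, and says nothing about individual phases. In fact, oscillators starting near $\pi$ may converge to the \emph{unstable} branch $\pi - \sin^{-1}(\omega_i/\kappa R_\infty)$ and never enter $\{\cos\theta_i \ge -1/\sqrt{2}\}$ (see Theorem~\ref{generalbestsincosmainthm}(d)), so your two ingredients do not connect. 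Your Cauchy--Schwarz estimate on $\dot R$ also does not establish that $\{R \ge R_*\}$ is forward-invariant without the phase assumption, since $\dot R$ can be negative there. The paper's bootstrapping (Proposition~\ref{bootstrap} and Theorem~\ref{generalbestsincosmainthm}) resolves this by tracking only a \emph{subset} $\mathcal{A} = \{i : \cos\theta_i^0 \ge -1+\mu\}$: a counting argument (inequality~\eqref{technical}) shows this subset alone carries enough mass to keep $R \ge R_0 - \mu$, and once $R$ is bounded below, \emph{every} oscillator is bounded---those near $\pi$ being confined to a short arc there. The key point is that the hypothesis of this bootstrapping is purely on $R_0$, which is exactly what the volumetric step provides.

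On the quantifier exchange (a single null set valid for all $\kappa > 2\|\Omega\|_\infty$): you are right that this is formally stronger than Theorem~\ref{themainthm}(a), but the paper simply declares Theorem~\ref{thm:pc2} a restatement without addressing it. Your countable-sequence fix relies on a monotonicity between the flows $\Phi_t^{\kappa_n}$ and $\Phi_t^{\kappa}$ that you correctly flag as only heuristic, and which is indeed not obvious. This is a minor loose end shared by both your proposal and the paper's presentation.
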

We ask whether this holds for all $\Theta^0$.
\begin{conjecture}\label{conj:bdd}
There exists a universal constant $c>0$ such that
\[
\kappa_{\mathrm{pc}}(\Theta^0,\Omega)\le c\max_{i=1,\cdots,N}|\omega_i|,\quad \forall \Theta^0=\{\theta^0_1,\cdots,\theta^0_N\}\in \mathbb{R}^N,~\forall \Omega=\{\omega_1,\cdots,\omega_N\}\in \mathbb{R}^N.
\]
\end{conjecture}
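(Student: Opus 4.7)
The plan is to exploit a gradient-flow structure of the Winfree model that does not appear explicitly in the preceding analysis. On the universal cover $\mathbb R^N$ of the phase torus, the system \eqref{Winfree} may be rewritten as the gradient flow $\dot\theta_i=-\partial_{\theta_i}\widetilde V$ with the real-analytic tilted potential
\[
\widetilde V(\theta_1,\ldots,\theta_N)=-\frac{\kappa}{2N}\Bigl(\sum_{j=1}^N(1+\cos\theta_j)\Bigr)^2-\sum_{i=1}^N\omega_i\theta_i,
\]
so that $\dot{\widetilde V}=-\sum_i\dot\theta_i^2\le 0$. Since $\widetilde V$ is real-analytic, the \L{}ojasiewicz--Simon inequality then implies that any trajectory which remains bounded in $\mathbb R^N$ converges to an equilibrium. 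Thus Conjecture \ref{conj:bdd} reduces to the following claim: for some universal $c>0$, if $\kappa>c\|\Omega\|_\infty$, every solution of \eqref{Winfree} satisfies $\sup_t\theta_i(t)-\inf_t\theta_i(t)<\infty$ for each $i$, i.e.\ has vanishing rotation numbers.

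To rule out drift, I would aim to prove a uniform-in-initial-data lower bound on the order parameter: there exist $R_*>0$ and $T_*<\infty$, depending only on $c$, with $\kappa R_*>\|\Omega\|_\infty$, such that $R(t)\ge R_*$ for all $t\ge T_*$ and all initial data whenever $\kappa>c\|\Omega\|_\infty$. Granted this, each single-oscillator ODE $\dot\theta_i=\omega_i-\kappa R(t)\sin\theta_i$ has a right-hand side that changes sign within every $2\pi$-arc in $\theta_i$, producing alternating instantaneous attractors and repellers that confine $\theta_i$ to an arc of length strictly less than $2\pi$. The heuristic for the lower bound on $R$ is the elementary identity
\[
\frac1N\sum_{i=1}^N\sin^2\theta_i=2R-\frac1N\sum_{i=1}^N(1+\cos\theta_i)^2\le 2R,
\]
which forces $\sin\theta_i$ to be small in $L^2$ whenever $R$ is small. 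If $R(t)\le R^*$ on some long interval $[T,T+\Delta T]$ with $R^*\ll 1$, then $\dot\theta_i\approx\omega_i$ in an averaged sense for most $i$, so $\theta_i$ drifts nearly linearly and $\cos\theta_i$ sweeps its full range $[-1,1]$; averaging over a window $\Delta T\gg 2\pi/\min_i|\omega_i|$ then forces $\frac1{\Delta T}\int_T^{T+\Delta T}R\,ds$ to be near $1$, contradicting $R\le R^*$.

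The principal obstacle is executing this averaging argument \emph{uniformly over all initial data}, without recourse to the measure-theoretic volumetric method of Theorem \ref{themainthm}. Two delicate issues arise. First, one must rule out initial data lying on the stable manifold of a drifting invariant set (periodic or quasi-periodic solution), by showing that such sets are absent under the condition $\kappa>c\|\Omega\|_\infty$. Second, oscillators with very small or vanishing $|\omega_i|$ defeat the linear-drift heuristic, because the required averaging window blows up. For the latter I would invoke the monotonicity of $\widetilde V$ as a secondary constraint: small-$|\omega_i|$ oscillators held with $\cos\theta_i\approx-1$ keep $\widetilde V$ nearly stationary in those coordinates and obstruct the strict decrease of $\widetilde V$ that would otherwise accompany drift of large-$|\omega_i|$ oscillators. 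A careful partition of oscillators by the size of $|\omega_i|$ and a balancing of gradient-flow energy dissipation across the partitions should then yield a universal constant $c$—likely substantially larger than the threshold $c=2$ of Theorem \ref{themainthm}—with the sharp optimal value left to future refinement.
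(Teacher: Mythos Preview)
This statement is a \emph{conjecture} in the paper and is explicitly left open; there is no proof to compare against. The paper's own commentary (the two remarks following the conjecture) explains that its volumetric method cannot reach a universal $c$, and sketches a naive averaging argument that falls short because it only yields $R\gtrsim\|\Omega\|_\infty/\kappa$ at some time, whereas the bootstrap of Corollary~\ref{bestsincosmainthm} needs $R\gtrsim(\|\Omega\|_\infty/\kappa)^{2/3}$.

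Your proposal is a strategy sketch, not a proof, and its central reduction is false. You aim for universal constants $R_*>0$, $T_*<\infty$ with $R(t)\ge R_*$ for all $t\ge T_*$ and \emph{all} initial data whenever $\kappa>c\|\Omega\|_\infty$. Take $\omega_2=\cdots=\omega_N=0$ and $\theta_2^0=\cdots=\theta_N^0=\pi$: then $\dot\theta_j=\omega_j-\kappa R\sin\theta_j=0$ for $j\ge2$, so $\theta_j(t)\equiv\pi$ and $R(t)=\frac1N(1+\cos\theta_1(t))\le\frac2N$ for all $t\ge0$. No universal $R_*>0$ exists. This is exactly the ``pinned at $\pi$'' configuration the paper flags in Remark~\ref{main_eq_thm_rmk}, and your averaging heuristic breaks precisely here: the small-$|\omega_i|$ oscillators you set aside are the entire obstruction, and the proposed ``balancing of gradient-flow energy dissipation'' is not an argument.

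In fact this example appears to refute the conjecture as literally stated. With $\omega_1=\|\Omega\|_\infty>0$ and the other $N-1$ oscillators pinned at $\pi$, oscillator~$1$ evolves by the scalar autonomous ODE $\dot\theta_1=\omega_1-\frac{\kappa}{N}(1+\cos\theta_1)\sin\theta_1$, which has an equilibrium (and hence bounded trajectories) if and only if $\omega_1\le\frac{3\sqrt3}{4}\cdot\frac{\kappa}{N}$. Thus $\kappa_{\mathrm{pc}}(\Theta^0,\Omega)=\frac{4N}{3\sqrt3}\|\Omega\|_\infty$, unbounded in $N$. Any serious attack must therefore add a nondegeneracy hypothesis (e.g.\ $\omega_i\neq0$ for all $i$, as the paper itself intimates in Remark~\ref{main_eq_thm_rmk}) and then explain why oscillators near $\pi$ with small nonzero $\omega_i$ cannot conspire to keep $R$ small indefinitely; your outline does neither.
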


By Theorem \ref{thm:pc2}, $\kappa_{\mathrm{c}}(\Omega)$ and $\kappa_{\mathrm{pc}}(\Theta^0,\Omega)$ are within universal constant multiples of each other for a.e. $\Theta^0$. However, the phase diagram of \cite{ariaratnam2001phase} seems to indicate the stronger statement that $\kappa_{\mathrm{c}}(\Omega)$ and $\kappa_{\mathrm{pc}}(\Theta^0,\Omega)$ coincide. We leave this as a conjecture.

\begin{conjecture}\label{conj:coincide}
Let $\Omega=\{\omega_1,\cdots,\omega_N\}\in \mathbb{R}^N$ be a fixed frequency vector.
\begin{enumerate}[(a)]
\item (Weak version) $\kappa_{\mathrm{c}}(\Omega)=\kappa_{\mathrm{pc}}(\Theta^0,\Omega)$ for a.e. $\Theta^0=\{\theta_1^0,\cdots,\theta_N^0\}\in\mathbb{R}^N$.
\item (Strong version) $\kappa_{\mathrm{c}}(\Omega)=\kappa_{\mathrm{pc}}(\Theta^0,\Omega)$ for all $\Theta^0=\{\theta_1^0,\cdots,\theta_N^0\}\in\mathbb{R}^N$.
\end{enumerate}
\end{conjecture}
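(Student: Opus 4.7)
The plan is to refine both the order-parameter bootstrapping and the volumetric arguments behind Theorem \ref{themainthm}, so that the effective threshold drops from $2\|\Omega\|_\infty$ all the way down to $\kappa_{\mathrm{c}}(\Omega)$.

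For the weak version (a), I would first exploit the algebraic/polynomial description of the equilibria from Theorem \ref{main_eq_thm} to identify, for every $\kappa > \kappa_{\mathrm{c}}(\Omega)$, a distinguished \emph{stable} equilibrium $\Theta^{*}$ with order parameter $R^{*}\ge \|\Omega\|_\infty/\kappa$ (locking condition). The goal would be to replace the scalar observable $R$ in the bootstrapping by a finer Lyapunov-type functional measuring the distance, in a suitable quotient metric, from the empirical angle configuration to $\Theta^{*}$, so that proximity to $\Theta^{*}$---rather than $R$ being near $1$---drives the attraction. I would then adapt the divergence/volumetric argument to the full phase space, using controlled dilation on the complement of the stable manifolds of unstable equilibria to show that Lebesgue-a.e. trajectory eventually enters the basin of attraction of some stable equilibrium.

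The main obstacle is that just above $\kappa_{\mathrm{c}}(\Omega)$, the stable equilibrium is born together with an unstable one through a saddle-node bifurcation: $R^{*}$ can be small, and the basin of attraction is small and intricately interleaved with the stable manifolds of unstable equilibria. The current proof relies crucially on the universal threshold $R\ge 1/\sqrt{2}$, which is far too crude in this regime. Constructing a bootstrapping whose threshold is allowed to depend on $R^{*}$ and on the local spectral gap at $\Theta^{*}$, and verifying that the volumetric/concentration-of-measure step still detects entry into this possibly very thin basin, appears to be the technical crux.

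For the strong version (b), one must in addition rule out all bounded non-converging orbits (limit cycles, heteroclinic cycles, strange attractors, etc.) for \emph{every} initial datum once $\kappa>\kappa_{\mathrm{c}}(\Omega)$. A natural route would be to exhibit a Lyapunov functional that is monotone along trajectories, or to exploit the real-analytic structure of \eqref{Winfree} through a \L{}ojasiewicz-type inequality forcing every bounded trajectory to converge. Unlike the Kuramoto model, \eqref{Winfree} is not a gradient system and lacks an obvious variational structure, so this step is significantly more delicate; ruling out limit cycles in the death regime appears to be the genuinely new dynamical content of Conjecture \ref{conj:coincide}(b), and I would expect it to require ideas well beyond the present order-parameter framework.
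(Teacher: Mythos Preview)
This statement is a \emph{conjecture} in the paper, not a theorem: the paper does not prove it, and explicitly leaves both (a) and (b) open. So there is no ``paper's own proof'' to compare against; your proposal is a research plan, not a proof, and it does not resolve the conjecture either.

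Your plan also contains a factual error that undermines the strategy for (b). You write that ``Unlike the Kuramoto model, \eqref{Winfree} is not a gradient system and lacks an obvious variational structure,'' and you identify ``ruling out limit cycles in the death regime'' as the new dynamical content. But the paper shows in subsection~\ref{subsec:loja} that the standard sinusoidal Winfree model \emph{is} the gradient flow of the real-analytic potential
\[
V(\Theta)=-\sum_{i=1}^N \omega_i\theta_i-\frac{\kappa}{2N}\Big[\sum_{i=1}^N I(\theta_i)\Big]^2,
\]
since $S=I'$ in the case \eqref{standard}. Consequently, by Proposition~\ref{Loja} (the {\L}ojasiewicz gradient theorem), every bounded trajectory already converges to an equilibrium; there are no limit cycles, heteroclinic cycles, or strange attractors to rule out. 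The entire difficulty for (b) is to prove \emph{boundedness} of every trajectory once $\kappa>\kappa_{\mathrm{c}}(\Omega)$, which is exactly Conjecture~\ref{conj:bdd} (or rather its sharpening). Your plan misidentifies where the obstruction lies.

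For (a), your outline is reasonable as a description of the obstacles---the saddle-node bifurcation at $\kappa_{\mathrm{c}}(\Omega)$, the thinness of the basin, the inadequacy of the threshold $R\gtrsim 1$---but it does not propose a concrete mechanism to overcome them. Replacing $R$ by ``a finer Lyapunov-type functional'' is a wish, not a construction, and the volumetric argument as it stands only shows that $R$ eventually reaches values near $1$, which is useless when the relevant stable equilibrium has $R^*$ small. So your proposal correctly locates the gap in the existing techniques but does not close it.
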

Of course, Conjecture \ref{conj:coincide}(b) implies Conjecture \ref{conj:bdd}.

It seems likely that the results obtained here for the ODE level carry on to the infinite-$N$ limits, but this needs to be checked. See Ha, Kang, and Moon \cite{ha2021uniform} for a description of the continuum limit of \eqref{Winfree}.

The results stated above extend to a larger class of interaction functions $I$ and $S$ for \eqref{GenWinfree}; see subsection \ref{subsec:interaction} for the statements. Also, many of the results of this paper can be extended to the manifold setting, particularly the Euclidean sphere $\mathbb{S}^{d-1}$ and the unitary group $\mathrm{U}(d)$. We will deal with this in a sequel to this paper.

\bigskip
\noindent{\bf Notation.}
We will use capital Greek letters to refer to the tuples of the corresponding lower Greek letters:
\[
\Omega\coloneqq (\omega_1,\cdots,\omega_N),\quad \Theta\coloneqq (\theta_1,\cdots,\theta_N),
\]
We will use $\|\cdot\|_\infty$ to denote the $\ell^\infty$-norm:
\[
\|\Omega\|_\infty\coloneqq \max_{i=1,\cdots,N} |\omega_i|,\quad \|\Theta\|_\infty\coloneqq \max_{i=1,\cdots,N} |\theta_i|.
\]
\begin{definition}
Let $\Theta=(\theta_1,\cdots,\theta_N)$ be a solution to the Winfree model \eqref{GenWinfree}.
We say that the ensemble $\Theta$ exhibits \emph{(complete) oscillator death} if
\[
\sup_{t\ge 0}\|\Theta(t)\|_\infty<\infty.
\]
\end{definition}
\begin{remark}\label{rem:loja-cor}
By Proposition \ref{Loja} and its following remarks, in the prototypical model \eqref{Winfree}, oscillator death for $\Theta$ implies the stronger condition
\[
\exists \lim_{t\to\infty}\theta_i(t)\mbox{ and } \lim_{t\to\infty}\dot{\theta}_i(t)=0,\quad \forall i=1,\cdots,N.
\]
\end{remark}

\begin{remark}
Our notion of oscillator death is stronger than that usually used in the literature, which typically defines oscillator death to be the phenomenon where the \emph{rotation numbers} 
\[
\rho_i\coloneqq\lim_{t\to\infty} \frac{\theta_i(t)}{t},\quad i=1,\cdots, N,
\]
defined whenever the limit exists, equal zero for all $i=1,\cdots, N$.
\end{remark}

We will use the following (standard) asymptotic notation. For $P, ~Q>0$, the notations $P\lesssim Q$, $Q\gtrsim P$, $P=O(Q)$, and $Q=\Omega(P)$ mean that $P\le KQ$ for a universal constant $K\in (0,\infty)$, and $P\asymp Q$ means $(P\lesssim Q) \wedge (Q\lesssim P)$. If we need to allow for dependence on parameters, we indicate this by subscripts. For example, in the presence of auxiliary parameters $\psi, \xi$, the notations $P\lesssim_{\psi,\xi}Q$, $Q\gtrsim_{\psi,\xi}P$, $P=O_{\psi,\xi}(Q)$, $Q=\Omega_{\psi,\xi}(P)$ mean that $P\le K(\psi,\xi)Q$ where $K(\psi,\xi)\in (0,\infty)$ may depend only on $\psi$ and $\xi$, and $P\asymp_{\psi,\xi} Q$ means $(P\lesssim_{\psi,\xi}Q)\wedge (Q\lesssim_{\psi,\xi} P)$.


\bigskip
\noindent{\bf Roadmap.} 
The rest of this paper is organized as follows. In Section \ref{sec:gallery}, we list our new technical results. In subsection \ref{subsec:orderparam} we will state a key intermediate Corollary \ref{bestsincosmainthm} in proving Theorems \ref{themainthm} and \ref{sincosmaincor}. For more general interaction functions $I$ and $S$, we will not be able to conclude the convergence of system \eqref{GenWinfree} unless under very restrictive conditions, and to make this point clear, we will discuss the {\L}ojasiewicz gradient theorem, which is a powerful tool for discussing convergence of \eqref{Winfree}, in subsection \ref{subsec:loja}. In subsection \ref{subsec:interaction}, we will see how to extend the statements of Theorems \ref{themainthm} and \ref{sincosmaincor} and Corollary \ref{bestsincosmainthm} to more general interaction functions $I$ and $S$. After that, in subsection \ref{subsec:partialdeath} we will formalize the notion of partial oscillator death and provide criteria for it to occur, and in subsection \ref{subsec:equilibria} we will discuss the equilibria of \eqref{Winfree_orderparam} to show the sharpness of the asymptotic dynamics of Corollary \ref{bestsincosmainthm}. 

In Section \ref{sec:history}, we briefly review the relevant literature on the Winfree model, starting from Winfree's original paper \cite{winfree1967biological} and describing the subsequent developments. We will also compare the Winfree model to the Kuramoto model, which was inspired by the Winfree model and which exhibits analogous but different synchronization behavior. We will suggest some lines of possible future research along the way.

Section \ref{sec:bootstrap} is devoted to proving Corollary \ref{bestsincosmainthm} and its variant Theorem \ref{mainthm} via a bootstrapping argument which also gives the sufficient conditions of subsection \ref{subsec:partialdeath} that guarantee partial oscillator death.
 
 In Section \ref{sec:large_deviations}, we calculate the divergence of the vector field associated to system \eqref{GenWinfree} and use an argument on the rate of change of volume to prove Theorem \ref{themainthm}. By applying some large deviations theory and capitalizing on the positivity of the divergence, we will prove Theorems \ref{sincosmaincor} and \ref{sincosmaincor-time}, plus the many other new theorems mentioned in Section \ref{sec:gallery}.

 In Section \ref{sec:equilibria}, we prove the statements of subsection \ref{subsec:equilibria}, namely we obtain some existence results for the equilibria of \eqref{Winfree} and demonstrate a polynomial description of the equilibria of \eqref{Winfree} that gives us a uniform bound on the total number of equilibria. We also compute the critical coupling strength $\kappa_{\mathrm{c}}$.
 
 Finally, in Appendix \ref{app:ineq} we prove some elementary calculus inequalities that are needed in this paper.


\section{Gallery of new results}\label{sec:gallery}
In this section, we outline our new results regarding the models \eqref{GenWinfree} and \eqref{Winfree}.
\subsection{The pathwise critical coupling strength is bounded by a function of the order parameter}\label{subsec:orderparam}
The first step in the proof of Theorem \ref{themainthm} is the following intermediate result on the convergence of system \eqref{Winfree}.
\begin{theorem}\label{generalbestsincosmainthm}
Let $\Theta=\Theta(t)$ be the solution to \eqref{Winfree} with parameters $\{\omega_i\}_{i=1}^N$, $\kappa$, and $\{\theta_i^0\}_{i=1}^N$.  Suppose ${R_0}>0$, fix $\mu\in (0,\min\{R_0,1\})$, and suppose
\begin{equation}\label{large_kappa_general}
\kappa> \frac{\|\Omega\|_\infty}{({R_0}-\mu) \sqrt{\mu(2-\mu)}}.
\end{equation}
Then
\begin{enumerate}[(a)]
    \item $R(t)\ge R_0-\mu$ for all $t\ge 0$,
    \item For all $i=1,\cdots,N$, the limit $\lim_{t\to\infty}\theta_i(t)$ exists, and
\[
\sup_{t\ge 0} \theta_i(t)-\inf_{t\ge 0}\theta_i(t)<2\pi,\quad \lim_{t\to\infty}\dot{\theta}_i(t)=0.
\]
    \item for any $i\in\{1,\cdots,N\}$ and $t_0\ge 0$ with
$\cos\theta_i(t_0)\ge -1+\mu$,
we have $\cos \theta_i(t)\ge -1+\mu$ for $t \ge t_0$ and $\cos \theta_i(t)\ge 1-\mu$ for  $t\ge t_0+\frac{\pi}{\kappa (R_0-\mu) \sqrt{\mu(2-\mu)}-\|\Omega\|_\infty}$,
\item for any $i\in \{1,\cdots,N\}$, either $\sup_{t\ge 0}\cos\theta_i(t)<-1+\mu$, in which case $\lim_{t\to\infty}\theta_i(t)=\pi-\sin^{-1}\frac{\omega_i}{\kappa \lim_{t\to\infty}R(t)}\mod 2\pi$, or $\sup_{t\ge 0}\cos\theta_i(t)\ge -1+\mu$, in which case $\lim_{t\to\infty}\theta_i(t)=\sin^{-1}\frac{\omega_i}{\kappa \lim_{t\to\infty}R(t)}\mod 2\pi$.
\item For fixed $i,j\in \{1,\cdots,N\}$, suppose there is a time $t_0\ge 0$ such that $\cos \theta_i(t_0)\ge-1+\mu$ and $\cos \theta_j(t_0)\ge-1+\mu$. If $\omega_i>\omega_j$, then after modulo $2\pi$ translations,
\begin{align*}
\theta_i(t)-\theta_j(t)\le \frac{\omega_i-\omega_j}{\kappa(R_0-\mu)(1-\mu)}+\pi\exp\left(-\kappa(R_0-\mu)(1-\mu)\left(t-t_0-\frac{\pi}{\kappa (R_0-\mu) \sqrt{\mu(2-\mu)}-{\|\Omega\|_\infty}}\right)\right)\\
\mathrm{for~}t\ge t_0+\frac{\pi}{\kappa (R_0-\mu) \sqrt{\mu(2-\mu)}-{\|\Omega\|_\infty}},
\end{align*}
and
\begin{align*}
\theta_i(t)-\theta_j(t)\ge \frac{\omega_i-\omega_j}{2\kappa}-\frac{\omega_i-\omega_j}{2\kappa}\exp\left(-2\kappa\left(t-t_0-\frac{\pi}{\kappa (R_0-\mu) \sqrt{\mu(2-\mu)}-{\|\Omega\|_\infty}}-\frac{\pi}{\omega_i-\omega_j}\right)\right)\\
\mathrm{for~}t\ge t_0+\frac{\pi}{\kappa (R_0-\mu) \sqrt{\mu(2-\mu)}-{\|\Omega\|_\infty}}+\frac{\pi}{\omega_i-\omega_j}.
\end{align*}
If $\omega_i=\omega_j$, then after modulo $2\pi$ translations, $\lim_{t\to\infty}\left(\theta_i(t)-\theta_j(t)\right)=0$ exponentially:
\begin{align*}
|\theta_i(t)-\theta_j(t)|\le \pi \exp\left(-\kappa(R_0-\mu)(1-\mu)\left(t-t_0-\frac{\pi}{\kappa (R_0-\mu) \sqrt{\mu(2-\mu)}-{\|\Omega\|_\infty}}\right)\right)\\\mathrm{ for~}t\ge t_0+\frac{\pi}{\kappa (R_0-\mu) \sqrt{\mu(2-\mu)}-{\|\Omega\|_\infty}}.
\end{align*}
\end{enumerate}
\end{theorem}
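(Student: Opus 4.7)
The plan is a bootstrap on the order parameter $R$ followed by a pointwise one-dimensional analysis of each oscillator. The driving identity, obtained by differentiating \eqref{order_parameter} along \eqref{Winfree_orderparam}, is
\[
\dot R \;=\; \kappa R\cdot\frac{1}{N}\sum_{j=1}^N\sin^2\theta_j \;-\; \frac{1}{N}\sum_{j=1}^N\omega_j\sin\theta_j.
\]
Abbreviate $\langle f\rangle := N^{-1}\sum_j f(\theta_j)$. I also note the gradient-like structure: with $V := -\sum_i\omega_i\theta_i - \tfrac{\kappa N}{2}R^2$ one has $\dot V = -\sum_i\dot\theta_i^2\le 0$, which will serve as a secondary Lyapunov in the convergence step.

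For step~1 (forward invariance) and step~2 (part (a)), the observation is that as long as $R(t)\ge R_0-\mu$, the scalar ODE \eqref{Winfree_orderparam} has a stable equilibrium $\theta_i^s$ with $\cos\theta_i^s\ge 1-\mu$ and an unstable equilibrium $\theta_i^u$ with $\cos\theta_i^u\le -1+\mu$; at the boundary $\cos\theta_i=-1+\mu$ one has $|\sin\theta_i|=\sqrt{\mu(2-\mu)}$, so \eqref{large_kappa_general} forces $|\dot\theta_i|\ge \kappa(R_0-\mu)\sqrt{\mu(2-\mu)}-\|\Omega\|_\infty>0$ with velocity pointing inward, making $\{\cos\theta_i\ge -1+\mu\}$ forward-invariant. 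To propagate the bound (a), I set $T_\star:=\sup\{T:R(t)\ge R_0-\mu \text{ on }[0,T]\}$ (positive since $R(0)\ge R_0$) and suppose $T_\star<\infty$ for contradiction, so $R(T_\star)=R_0-\mu$ and $\dot R(T_\star)\le 0$. The Cauchy--Schwarz bound $\dot R\ge\sqrt{\langle\sin^2\theta\rangle}\bigl(\kappa R\sqrt{\langle\sin^2\theta\rangle}-\|\Omega\|_\infty\bigr)$ is nonnegative as soon as $\sqrt{\langle\sin^2\theta\rangle}\ge \|\Omega\|_\infty/(\kappa R)$; the structural information from the invariant regions of step~1 must then be used to ensure this lower bound holds at the critical level, contradicting $\dot R(T_\star)\le 0$.

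Steps~3 and~4 handle (b) and (c). For step~3, on the annulus $\{-1+\mu\le\cos\theta\le 1-\mu\}$ one has $|\sin\theta|\ge\sqrt{\mu(2-\mu)}$, hence $|\dot\theta_i|\ge \kappa(R_0-\mu)\sqrt{\mu(2-\mu)}-\|\Omega\|_\infty$; the sign of $\dot\theta_i$ is constant on each connected component of the annulus so $\theta_i$ traverses it monotonically toward $\theta_i^s$, and the arc length is $\pi-2\arccos(1-\mu)\le\pi$, giving the stated transit-time bound. For step~4, once $\theta_i$ is pinned in $\{\cos\theta_i\ge 1-\mu\}$, either the {\L}ojasiewicz gradient theorem (subsection~\ref{subsec:loja}) applied to $V$ or a direct monotonicity argument on the scalar ODE $\dot\theta_i=\omega_i-\kappa R(t)\sin\theta_i$ (with $R(t)$ convergent by (a) plus $\dot V\le 0$) yields $\lim_{t\to\infty}\theta_i$ and $\dot\theta_i\to 0$; the bound $\sup-\inf<2\pi$ follows from summing the transit arc with the width of the final trapping region.

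For (d), I pass to the limit in \eqref{Winfree_orderparam} to get $\sin\theta_i^\infty=\omega_i/(\kappa R_\infty)$, with the sign of $\cos\theta_i^\infty$ determined by which trapping set $\theta_i$ eventually occupies, per (c). For (e), once both $\theta_i,\theta_j$ lie in $\{\cos\theta\ge 1-\mu\}$, the mean value theorem gives $\sin\theta_i-\sin\theta_j=\cos\xi_{ij}\,(\theta_i-\theta_j)$ with $\cos\xi_{ij}\ge 1-\mu$, so
\[
\frac{d}{dt}(\theta_i-\theta_j) \;=\; (\omega_i-\omega_j) \;-\; \kappa R\cos\xi_{ij}\,(\theta_i-\theta_j),
\]
a scalar linear ODE with contraction rate $\ge\kappa(R_0-\mu)(1-\mu)$; variation of parameters yields the upper bound, while the lower bound uses the cruder Lipschitz estimate $|\sin\theta_i-\sin\theta_j|\le 1$ integrated from a delay $\pi/(\omega_i-\omega_j)$ accounting for the initial spread. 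The main obstacle I anticipate is step~2: the $\dot R$ identity is not unconditionally sign-definite, and closing the bootstrap requires genuinely transferring the pointwise invariance of step~1 into a lower bound on $\langle\sin^2\theta\rangle$ at the critical moment $T_\star$; once (a) is secured, the remaining parts reduce to one-dimensional ODE comparisons on each oscillator.
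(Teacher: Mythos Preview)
Your outline for steps~1, 3, 4, 5 is essentially correct and parallel to the paper. The real issue, which you already flag, is step~2: the bootstrap for (a) does \emph{not} close via the $\dot R$ identity. At the putative critical time $T_\star$ there is no mechanism forcing $\langle\sin^2\theta\rangle$ to be large: oscillators trapped by step~1 in $\{\cos\theta\ge -1+\mu\}$ may well have migrated to $\cos\theta\approx 1$, and the complementary oscillators may sit near $\cos\theta\approx -1$, so $\langle\sin^2\theta\rangle$ can be arbitrarily small while $R(T_\star)=R_0-\mu$. The Cauchy--Schwarz lower bound on $\dot R$ therefore yields nothing, and ``transferring the pointwise invariance of step~1'' into a bound on $\langle\sin^2\theta\rangle$ is not possible in general.

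The paper closes the bootstrap by a \emph{static} argument that never touches $\dot R$. Set $\mathcal A=\{i:\cos\theta_i^0\ge -1+\mu\}$ and $\rho=R_0-\mu$. Two ingredients are needed. First, a combinatorial splitting: since each $i\notin\mathcal A$ contributes at most $\mu$ and each $i\in\mathcal A$ at most $2$ to $R_0$, one gets
\[
\frac1N\sum_{i\in\mathcal A}(1+\cos\theta_i^0)\;\ge\;\frac{2(R_0-\mu)}{2-\mu}.
\]
Second, a \emph{quantitative} monotonicity (stronger than your forward-invariance): under the a priori bound $R\ge\rho$, the computation $\tfrac{d}{dt}\cos\theta_i=(-\omega_i+\kappa R\sin\theta_i)\sin\theta_i\ge0$ on the band $|\cos\theta_i|\le\sqrt{1-\|\Omega\|_\infty^2/(\kappa^2\rho^2)}$ yields, for every $i\in\mathcal A$,
\[
1+\cos\theta_i(t)\;\ge\;\tfrac12\Bigl(1+\sqrt{1-\tfrac{\|\Omega\|_\infty^2}{\kappa^2\rho^2}}\Bigr)\,(1+\cos\theta_i^0).
\]
Summing over $\mathcal A$ and using \eqref{large_kappa_general} (equivalently $1-\mu<\sqrt{1-\|\Omega\|_\infty^2/(\kappa^2\rho^2)}$) gives
\[
R(t)\;\ge\;\frac1N\sum_{i\in\mathcal A}(1+\cos\theta_i(t))\;>\;\frac{2-\mu}{2}\cdot\frac{2(R_0-\mu)}{2-\mu}\;=\;\rho
\]
on $[0,T_\star)$, hence at $T_\star$ by continuity, contradicting $R(T_\star)=\rho$. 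This is the missing idea; once you have it, your steps~3--5 go through as written (parts (c), (e) with $\alpha=1-\mu$ and $\rho=R_0-\mu$ in the paper's Proposition~\ref{bootstrap}).
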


Applying special values of $\mu$ gives the following Corollary.

\begin{corollary}\label{bestsincosmainthm}
Let $\Theta=\Theta(t)$ be the solution to \eqref{Winfree} with parameters $\{\omega_i\}_{i=1}^N$, $\kappa$, and $\{\theta_i^0\}_{i=1}^N$.  Suppose ${R_0}>0$ and
\begin{equation}\label{large_kappa_1}
\kappa>K_c\|\Omega\|_\infty,
\end{equation}
where
\begin{equation}\label{large_kappa_2}
K_c\coloneqq 
\begin{cases}
\frac{2}{{R_0}^{3/2}},&\mbox{if }{R_0}\le 1,\\
2(2-R_0)+\frac{4}{3\sqrt{3}}(R_0-1),&\mbox{if }1<{R_0}\le 2.
\end{cases}
\end{equation}
Then
\begin{enumerate}[(a)]
    \item $R(t)\ge \frac{3{R_0}-3+\sqrt{{R_0}^2-2{R_0}+9}}{4}\ge \frac 23 {R_0}$ for all $t\ge 0$,
    \item For all $i=1,\cdots,N$, the limit $\lim_{t\to\infty}\theta_i(t)$ exists, and
\[
\sup_{t\ge 0} \theta_i(t)-\inf_{t\ge 0}\theta_i(t)<2\pi,\quad \lim_{t\to\infty}\dot{\theta}_i(t)=0.
\]
    \item for any $i\in\{1,\cdots,N\}$ and $t_0\ge 0$ with
$\cos\theta_i(t_0)\ge \frac{-1+{R_0}-\sqrt{{R_0}^2-2{R_0}+9}}{4}$,
we have $\cos \theta_i(t)\ge\frac{-1+{R_0}-\sqrt{{R_0}^2-2{R_0}+9}}{4}$ for $t \ge t_0$ and $\cos \theta_i(t)\ge \frac{1-{R_0}+\sqrt{{R_0}^2-2{R_0}+9}}{4}$ for  $t\ge t_0+\frac{\pi K_c}{\kappa-K_c\|\Omega\|_\infty}$; for example, for all $i=1,\cdots, N$ and $t_0\ge 0$ such that $\cos\theta_i(t_0)\ge -1+\frac 13 {R_0}$, one has $\cos\theta_i(t)\ge -1+\frac 13 {R_0}$ for $t\ge t_0$ and $\cos \theta_i(t)\ge 1-\frac {R_0}3$ for $t\ge t_0+\frac{\pi K_c}{\kappa-K_c\max_{i} |\omega_i|}$.
\item for any $i\in \{1,\cdots,N\}$, either $\sup_{t\ge 0}\cos\theta_i(t)<\frac{-1+{R_0}-\sqrt{{R_0}^2-2{R_0}+9}}{4}$, in which case $\lim_{t\to\infty}\theta_i(t)=\pi-\sin^{-1}\frac{\omega_i}{\kappa \lim_{t\to\infty}R(t)}\mod 2\pi$, or $\sup_{t\ge 0}\cos\theta_i(t)\ge \frac{-1+{R_0}-\sqrt{{R_0}^2-2{R_0}+9}}{4}$, in which case $\lim_{t\to\infty}\theta_i(t)=\sin^{-1}\frac{\omega_i}{\kappa \lim_{t\to\infty}R(t)}\mod 2\pi$.
    \item For fixed $i,j\in \{1,\cdots,N\}$, suppose there is a time $t_0\ge 0$ such that $\cos \theta_i(t_0)\ge-1+\frac{R_0}{3}$ and $\cos \theta_j(t_0)\ge-1+\frac{R_0}{3}$. If $\omega_i>\omega_j$, then after modulo $2\pi$ translations,
\begin{align*}
\theta_i(t)-\theta_j(t)\le \frac{9(\omega_i-\omega_j)}{2\kappa R_0(3-R_0)}+\pi\exp\left(-\frac{2\kappa R_0(3-R_0)}{9}\left(t-t_0-\frac{\pi K_c}{\kappa  -K_c\|\Omega\|_\infty}\right)\right)\\
\mathrm{for~}t\ge t_0+\frac{\pi K_c}{\kappa -K_c{\|\Omega\|_\infty}},
\end{align*}
and
\begin{align*}
\theta_i(t)-\theta_j(t)\ge \frac{\omega_i-\omega_j}{2\kappa}-\frac{\omega_i-\omega_j}{2\kappa}\exp\left(-2\kappa\left(t-t_0-\frac{\pi K_c}{\kappa -K_c{\|\Omega\|_\infty}}-\frac{\pi}{\omega_i-\omega_j}\right)\right)\\
\mathrm{for~}t\ge t_0+\frac{\pi K_c}{\kappa -K_c\|\Omega\|_\infty}+\frac{\pi}{\omega_i-\omega_j}.
\end{align*}
If $\omega_i=\omega_j$, then after modulo $2\pi$ translations, $\lim_{t\to\infty}\left(\theta_i(t)-\theta_j(t)\right)=0$ exponentially:
\begin{align*}
|\theta_i(t)-\theta_j(t)|\le \pi \exp\left(-\frac{2\kappa R_0(3-R_0)}{9}\left(t-t_0-\frac{\pi K_c}{\kappa -K_c{\|\Omega\|_\infty}}\right)\right)\mathrm{~for~}t\ge t_0+\frac{\pi K_c}{\kappa -K_c{\|\Omega\|_\infty}}.
\end{align*}
\setcounter{name}{\value{enumi}}
\end{enumerate}
Furthermore, if
$
\kappa>\frac{2\sqrt{2}\|\Omega\|_\infty}{R_0^{3/2}},
$
then
\begin{enumerate}[(a)]
\setcounter{enumi}{\value{name}}
    \item $R(t)\ge R_0-\frac{4\|\Omega\|_\infty^2}{\kappa^2R_0^2}$ for all $t\ge 0$,
    \item for any $i\in\{1,\cdots,N\}$ and $t_0\ge 0$ with
    $\cos\theta_i(t_0)\ge -1+\frac{4\|\Omega\|_\infty^2}{\kappa^2R_0^2}$, we have $\cos \theta_i(t)\ge-1+\frac{4\|\Omega\|_\infty^2}{\kappa^2R_0^2}$ for $t \ge t_0$ and $\cos \theta_i(t)\ge 1-\frac{4\|\Omega\|_\infty^2}{\kappa^2R_0^2}$ for  $t\ge t_0+\frac{\pi}{\|\Omega\|_\infty\left(1-\frac{8\|\Omega\|_\infty^2}{\kappa^2R_0^3}\right)}$.
    \item for any $i\in \{1,\cdots,N\}$, either $\sup_{t\ge 0}\cos\theta_i(t)<-1+\frac{4\|\Omega\|_\infty^2}{\kappa^2R_0^2}$, in which case $\lim_{t\to\infty}\theta_i(t)=\pi-\sin^{-1}\frac{\omega_i}{\kappa \lim_{t\to\infty}R(t)}\mod 2\pi$, or $\sup_{t\ge 0}\cos\theta_i(t)\ge -1+\frac{4\|\Omega\|_\infty^2}{\kappa^2R_0^2}$, in which case $\lim_{t\to\infty}\theta_i(t)=\sin^{-1}\frac{\omega_i}{\kappa \lim_{t\to\infty}R(t)}\mod 2\pi$.
    \item For fixed $i,j\in \{1,\cdots,N\}$, suppose there is a time $t_0\ge 0$ such that $\cos \theta_i(t_0)\ge -1+\frac{4\|\Omega\|_\infty^2}{\kappa^2R_0^2}$ and $\cos \theta_j(t_0)\ge -1+\frac{4\|\Omega\|_\infty^2}{\kappa^2R_0^2}$. If $\omega_i>\omega_j$, then after modulo $2\pi$ translations,
\begin{align*}
\theta_i(t)-\theta_j(t)\le \frac{2(\omega_i-\omega_j)}{\kappa R_0\left(1-\frac{4\|\Omega\|_\infty^2}{\kappa^2R_0^2}\right)}+\pi\exp\left(-\frac{\kappa R_0}{2}\left(1-\frac{4\|\Omega\|_\infty^2}{\kappa^2R_0^2}\right)\left(t-t_0-\frac{\pi}{\|\Omega\|_\infty\left(1-\frac{8\|\Omega\|_\infty^2}{\kappa^2R_0^3}\right)}\right)\right)\\
\mathrm{for~}t\ge t_0+\frac{\pi}{\|\Omega\|_\infty\left(1-\frac{8\|\Omega\|_\infty^2}{\kappa^2R_0^3}\right)},
\end{align*}
and
\begin{align*}
\theta_i(t)-\theta_j(t)\ge \frac{\omega_i-\omega_j}{2\kappa}-\frac{\omega_i-\omega_j}{2\kappa}\exp\left(-2\kappa\left(t-t_0-\frac{\pi}{\|\Omega\|_\infty\left(1-\frac{8\|\Omega\|_\infty^2}{\kappa^2R_0^3}\right)}-\frac{\pi}{\omega_i-\omega_j}\right)\right)\\
\mathrm{for~}t\ge t_0+\frac{\pi}{\|\Omega\|_\infty\left(1-\frac{8\|\Omega\|_\infty^2}{\kappa^2R_0^3}\right)}+\frac{\pi}{\omega_i-\omega_j}.
\end{align*}
If $\omega_i=\omega_j$, then after modulo $2\pi$ translations, $\lim_{t\to\infty}\left(\theta_i(t)-\theta_j(t)\right)=0$ exponentially:
\begin{align*}
|\theta_i(t)-\theta_j(t)|\le \pi \exp\left(-\frac{\kappa R_0}{2}\left(1-\frac{4\|\Omega\|_\infty^2}{\kappa^2R_0^2}\right)\left(t-t_0-\frac{\pi}{\|\Omega\|_\infty\left(1-\frac{8\|\Omega\|_\infty^2}{\kappa^2R_0^3}\right)}\right)\right)\\\mathrm{ for~}t\ge t_0+\frac{\pi}{\|\Omega\|_\infty\left(1-\frac{8\|\Omega\|_\infty^2}{\kappa^2R_0^3}\right)}.
\end{align*}
\end{enumerate}
\end{corollary}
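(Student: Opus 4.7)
The plan is to deduce Corollary \ref{bestsincosmainthm} by applying Theorem \ref{generalbestsincosmainthm} three times, each with a strategically chosen value of the parameter $\mu$. For parts (a), (b), (d), and the first clause of (c), the sharpest option is the maximizer $\mu^*$ of $(R_0-\mu)^2\mu(2-\mu)$ on $(0,\min\{R_0,1\})$; setting the logarithmic derivative to zero yields the quadratic $2\mu^2-(3+R_0)\mu+R_0=0$, whose smaller root is $\mu^*=\frac{R_0+3-\sqrt{R_0^2-2R_0+9}}{4}$, so $R_0-\mu^*=\frac{3R_0-3+\sqrt{R_0^2-2R_0+9}}{4}$, exactly the expression in part (a). The auxiliary bound $R_0-\mu^*\ge \tfrac{2}{3}R_0$ reduces, after clearing denominators and squaring (legal because $9-R_0\ge 0$ for $R_0\le 2$), to the trivially true $8R_0^2\ge 0$. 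Parts (b), (d), and the first clause of (c) then follow by substituting $\mu=\mu^*$ into the corresponding parts of Theorem \ref{generalbestsincosmainthm}.

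To justify taking $\mu=\mu^*$ under the stated hypothesis, I would verify $K_c\ge K_c^*(R_0):=\bigl((R_0-\mu^*)\sqrt{\mu^*(2-\mu^*)}\bigr)^{-1}$ piecewise on \eqref{large_kappa_2}. The endpoint values $K_c^*(1)=2$ and $K_c^*(2)=\tfrac{4}{3\sqrt{3}}$ match the two pieces of $K_c$ exactly, so on $[1,2]$ the linear interpolation $K_c$ dominates $K_c^*$ provided $K_c^*$ is convex there, which I would check by a direct second-derivative computation; on $(0,1]$ the inequality $R_0^{3/2}K_c^*(R_0)\le 2$ is a one-variable calculus exercise that I would handle by elementary means (and relegate to Appendix \ref{app:ineq} if bulky). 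For the ``for example'' continuation of (c) and all of (e), I would instead take the simpler $\mu=R_0/3$, which produces the thresholds $-1+R_0/3$ and $1-R_0/3$ and converts the decay rate $\kappa(R_0-\mu)(1-\mu)$ in Theorem \ref{generalbestsincosmainthm}(e) into $\tfrac{2\kappa R_0(3-R_0)}{9}$, as displayed. The corresponding comparison $K_c\ge 1/\bigl(\tfrac{2R_0}{3}\sqrt{\tfrac{R_0}{3}(2-\tfrac{R_0}{3})}\bigr)$ is once again a one-dimensional inequality on each piece of $K_c$ to be confronted directly. The stated time threshold $\pi K_c/(\kappa-K_c\|\Omega\|_\infty)$, a slightly weaker upper bound on the threshold output by the theorem, follows from the monotonicity of $K\mapsto K/(\kappa-K\|\Omega\|_\infty)$.

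For the additional statements (f)--(i) under the stronger hypothesis $\kappa>2\sqrt{2}\,\|\Omega\|_\infty/R_0^{3/2}$, I would choose $\mu=4\|\Omega\|_\infty^2/(\kappa^2 R_0^2)$. The hypothesis immediately forces $\mu<R_0/2$ and (since $R_0\le 2$) $\mu<1$, hence $R_0-\mu>R_0/2$ and $\sqrt{2-\mu}\ge 1$; combined with the identity $\sqrt{\mu}=2\|\Omega\|_\infty/(\kappa R_0)$, this yields $\kappa(R_0-\mu)\sqrt{\mu(2-\mu)}>\|\Omega\|_\infty$, validating \eqref{large_kappa_general}. Substituting this $\mu$ into parts (a), (c), (d), (e) of Theorem \ref{generalbestsincosmainthm}, together with the estimate $R_0-\mu>R_0/2$ used to simplify the exponential rate and the first term of the distance bound, produces the expressions $\tfrac{4\|\Omega\|_\infty^2}{\kappa^2R_0^2}$ and $1-\tfrac{8\|\Omega\|_\infty^2}{\kappa^2R_0^3}$ that appear in (f)--(i). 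The main obstacle I anticipate is the calculus bookkeeping needed to confirm the inequalities $K_c\ge K_c^*$ and $K_c\ge 1/\bigl(\tfrac{2R_0}{3}\sqrt{\tfrac{R_0}{3}(2-\tfrac{R_0}{3})}\bigr)$ across the piecewise definition of $K_c$; once these comparisons are secured, every remaining item of the corollary reduces to direct substitution and elementary simplification.
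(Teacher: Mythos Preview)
Your plan for (a)–(d) via $\mu^*$ and for (f)–(i) via $\mu=4\|\Omega\|_\infty^2/(\kappa^2R_0^2)$ matches the paper exactly, including the deferral of the key comparison $K_c\ge K_c^*$ to the appendix. The gap is in your treatment of part (e) and the ``for example'' clause of (c): a fresh application of Theorem \ref{generalbestsincosmainthm} with $\mu=R_0/3$ requires you to verify
\[
K_c\ge \Bigl(\tfrac{2R_0}{3}\sqrt{\tfrac{R_0}{3}\,(2-\tfrac{R_0}{3})}\Bigr)^{-1},
\]
and this inequality is \emph{false}. At $R_0=1$ the left side equals $2$ while the right side equals $9/(2\sqrt{5})\approx 2.012$. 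This failure was inevitable: $\mu^*$ is the maximizer of $(R_0-\mu)\sqrt{\mu(2-\mu)}$, so every other admissible $\mu$ imposes a strictly stronger lower bound on $\kappa$ than the corollary's hypothesis $\kappa>K_c\|\Omega\|_\infty$ can supply.

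The paper avoids this by not switching $\mu$ at all: it uses $\mu=\mu^*$ throughout (a)–(e) and then \emph{weakens} the conclusions using $\mu^*\le R_0/3$ (equivalently, \eqref{eq:sqrt-ineq}). For (e), the hypothesis $\cos\theta_i(t_0)\ge -1+R_0/3$ already implies the Theorem-(e) hypothesis $\cos\theta_i(t_0)\ge -1+\mu^*$, and the rate $\kappa(R_0-\mu^*)(1-\mu^*)\ge \kappa\cdot\tfrac{2R_0}{3}\cdot(1-\tfrac{R_0}{3})=\tfrac{2\kappa R_0(3-R_0)}{9}$ yields the displayed bounds as relaxations of the sharper ones coming from $\mu^*$. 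For the trapping claim $\cos\theta_i(t)\ge -1+R_0/3$ in the ``for example'' clause of (c), one genuinely needs a tiny extra step beyond Theorem~(c) with $\mu=\mu^*$: once (a) provides $R(t)\ge \rho:=R_0-\mu^*$, invoke Proposition \ref{bootstrap}(c) (equivalently Lemma \ref{simple-cor-1}(b)) directly with $\alpha=1-R_0/3$, which is admissible because $1-R_0/3\le 1-\mu^*<\sqrt{1-\|\Omega\|_\infty^2/(\kappa^2\rho^2)}$.
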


Simply put, we have the following.
\begin{theorem}
\[
\kappa_{\mathrm{pc}}(\Theta^0,\Omega)\le  \begin{cases}
\frac{2}{{R_0}^{3/2}}\|\Omega\|_\infty,&\mbox{if }{R_0}\le 1,\\
[2(2-{R_0})+\frac{4}{3\sqrt{3}}({R_0}-1)]\|\Omega\|_\infty,&\mbox{if }1<{R_0}\le 2.
\end{cases}
\]
\end{theorem}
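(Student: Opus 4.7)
The plan is straightforward: this statement is a restatement of Corollary \ref{bestsincosmainthm}(b) in the language of the pathwise critical coupling. First I will set $R_0$ in that corollary equal to $R(0) = \frac{1}{N}\sum_{j=1}^N (1+\cos\theta_j^0)$, the initial value of the order parameter determined by the data $\Theta^0$. If $R_0 = 0$ then every $\theta_j^0 \equiv \pi \pmod{2\pi}$ and the claimed upper bound $\frac{2}{R_0^{3/2}}\|\Omega\|_\infty$ equals $+\infty$, so the inequality is vacuous. Otherwise $R_0 \in (0,2]$ and Corollary \ref{bestsincosmainthm} is directly applicable.

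Next, I will invoke Corollary \ref{bestsincosmainthm}(b), which guarantees that whenever $\kappa > K_c\|\Omega\|_\infty$---with the piecewise constant $K_c$ defined in \eqref{large_kappa_2}---the limit $\lim_{t\to\infty}\theta_i(t)$ exists for every $i$, i.e.\ the trajectory $\{\theta_i(t)\}_{i=1}^N$ converges. By Definition \ref{def:crit}(b), the infimum over such $\kappa$ bounds $\kappa_{\mathrm{pc}}(\Theta^0,\Omega)$ from above; since the threshold $K_c\|\Omega\|_\infty$ is exactly the piecewise expression in the statement, the bound is proved. The only bookkeeping to double-check is that $R_0 = R(0)$ depends only on $\Theta^0$ and not on $\kappa$, so comparing $\kappa_{\mathrm{pc}}(\Theta^0,\Omega)$ against the $\kappa$-independent threshold $K_c\|\Omega\|_\infty$ is legitimate.

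Thus there is no substantive obstacle at this stage: the genuinely difficult analytic content---closing the one-sided estimate $R(t) \ge R_0-\mu$ by a bootstrap against the mean-field form \eqref{Winfree_orderparam}, then upgrading boundedness to genuine convergence (via the {\L}ojasiewicz gradient theorem discussed in subsection \ref{subsec:loja})---has been absorbed into Corollary \ref{bestsincosmainthm}, whose proof is the real work of Section \ref{sec:bootstrap}. Here the role of the theorem is essentially linguistic: it repackages a quantitative convergence statement into a comparison of scalar coupling thresholds, which is the form most convenient for discussing Conjectures \ref{conj:bdd} and \ref{conj:coincide}.
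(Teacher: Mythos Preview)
Your proposal is correct and matches the paper's approach: the paper introduces this theorem with the phrase ``Simply put, we have the following,'' indicating it is an immediate reformulation of Corollary \ref{bestsincosmainthm}(b) via Definition \ref{def:crit}(b), exactly as you describe. Your observation that $R_0=R(0)$ depends only on $\Theta^0$ (not on $\kappa$), and your handling of the degenerate case $R_0=0$, fill in the only details the paper leaves implicit.
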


It is conceptually convenient to identify each $\theta_j$ with the corresponding point $e^{\mathrm{i}\theta_j}$ on the unit circle and consider \eqref{Winfree} to be a dynamical system on the $N$-torus $\mathbb{T}^N=(\mathbb{S}^1)^N$. Then, the order parameter $R$ measures how far away and how close the points $e^{\mathrm{i}\theta_j}$ are from $-1$ and to $1$, respectively. Under this interpretation, Corollary \ref{bestsincosmainthm} roughly says that if $\kappa\gtrsim {R_0}^{-3/2}\max_i |\omega_i|$, then (a) there is a uniform-in-time lower bound on the macroscopic quantity $R(t)$, (b) each particle $e^{\mathrm{i}\theta_j}$ will converge while not making a full revolution around the unit circle, and (c) any particle $e^{\mathrm{i}\theta_j}$ which is far away from $-1$ will be pulled towards and trapped near $1$. Thus, it is easier to constrain the behavior of \eqref{Winfree} when ${R_0}$ is large, and when ${R_0}$ is small, we cannot say much about the long-term dynamics of \eqref{Winfree} using Corollary \ref{bestsincosmainthm} unless we pay the high price of $\kappa\gtrsim {R_0}^{-3/2}\max_i |\omega_i|$. Furthermore, if we are allowed to take $\kappa$ to be arbitrarily large, then (f) we can guarantee $R(t)$ to not go below its initial value by any prescribed amount, and (g) for any neighborhood of $-1$ in $\mathbb{S}^1$ we may bring any oscillator not in that neighborhood close to $1$ with arbitrary accuracy. All this happens while (e,i) the oscillators $e^{\mathrm{i}\theta_j}$ close to 1 become linearly ordered according to their natural frequencies $\omega_j$.

The threshold $K_c\max_{i=1,\cdots,N} |\omega_i|$ is far from sharp in light of Theorem \ref{thm:pc2}; rather, we conjecture that $K_c$ can be taken to be a universal constant (Conjecture \ref{conj:bdd}). However, the asymptotic dynamics described by Corollary \ref{bestsincosmainthm} are sharp up to universal constants; see Theorem \ref{main_eq_thm} and Remark \ref{main_eq_thm_rmk}.

Theorems \ref{themainthm} and \ref{sincosmaincor} are derived from Corollary \ref{bestsincosmainthm}. Intuitively, the derivation is as follows: for most initial data $\{\theta_i^0\}_{i=1}^N$, the order parameter ${R_0}$ is close to $1$ by a simple Chernoff bound (this explains the factor $\exp\left(-\frac{\varepsilon^2 N}{25}\right)$ in Theorem \ref{sincosmaincor}), and that even for initial data $R_0$ much smaller than $1$ the evolution of \eqref{Winfree} will lead to an increase of $R_0$ close towards $1$ by a volumetric argument involving the divergence (this explains the factor $\left(\sqrt{\frac{\pi e}{2}}\frac{\|\Omega\|_\infty^{1/3}}{\kappa^{1/3}}\right)^N$ in Theorem \ref{sincosmaincor}). Hence we need only take $\kappa$ large enough to cover the case $R_0=1$, i.e., we need only $\kappa>2\max_i|\omega_i|$ in Theorems \ref{themainthm} and \ref{sincosmaincor}.

Although the statement of Theorem \ref{sincosmaincor} treats $2\max_i |\omega_i|$ as if it were a critical threshold for $\kappa$, due to the above argument of inserting $R_0=1$ into Corollary \ref{bestsincosmainthm}, this is likely an artifact of our proof, as the phase diagram of \cite{ariaratnam2001phase} indicates that complete oscillator death happens even for smaller values of $\kappa$. The value $2\max_i |\omega_i|$ for $\kappa$ comes from the case $R_0=1$ of \eqref{large_kappa_2}, but note that in the endpoint case $R_0=2$ of \eqref{large_kappa_2}, the value $\frac{4}{3\sqrt{3}}\|\Omega\|_\infty$ for $\kappa$ is the maximum of $\kappa_c(\Omega)$ ranging over all $\Omega$ with a fixed $\|\Omega\|_\infty$. In other words, for a fixed $\|\Omega\|_\infty=\max_i|\omega_i|$, $\kappa_{\mathrm{crit}}\coloneqq \frac{4}{3\sqrt{3}}\|\Omega\|_\infty$ is a another critical threshold for the existence of equilibria in the sense that if $\kappa>\kappa_\mathrm{crit}$ then  \eqref{Winfree}, by \cite{ha2015emergence} (see Theorem \ref{standardpreciseCOD} below), has an asymptotically stable equilibrium (i.e., an equilibrium with an open neighborhood such that for any initial data in that open set the solution to \eqref{Winfree} converges to that equilibrium), while if $|\kappa|<\kappa_\mathrm{crit}$ and $\omega_1=\cdots=\omega_N=\|\Omega\|_\infty$ then, by \cite{ha2017emergence,oukil2017synchronization} not only does \eqref{Winfree} fail to have any equilibria, but there is a positive measure of solutions to \eqref{Winfree} satisfying $\theta_i(t)\to\infty$ as $t\to \infty$, $i=1,\cdots,N$.

Here is the theorem of \cite{ha2015emergence} mentioned in the previous paragraph.
\begin{theorem}[{\cite[special case of Theorem 2.2]{ha2015emergence}}]\label{standardpreciseCOD}
For $\alpha\in \left(\frac{\pi}{3},\pi\right)$, let $\alpha^\infty\in \left(0,\frac{\pi}{3}\right)$ be the unique solution to $\sin\alpha^\infty(1+\cos\alpha^\infty)=\sin\alpha(1+\cos\alpha)$.

\begin{enumerate}[(a)]
    \item (Existence and uniqueness of equilibrium) Then system \eqref{Winfree} with parameters $\{\omega_i\}_{i=1}^N$ and $\kappa$ satisfying
\[
\kappa>\frac{\|\Omega\|_\infty}{\sin\alpha(1+\cos\alpha)}
\]
has a unique equilibrium $\Theta^\infty$ in $[-\alpha,\alpha]^N$.
Furthermore, $\Theta^\infty\in (-\alpha^\infty,\alpha^\infty)^N$.
\setcounter{name}{\value{enumi}}
\end{enumerate}
Moreover, let $\Theta(t)=\{\theta_i(t)\}_{i=1}^N$ be the solution to \eqref{Winfree} with initial data $\{\theta_i^0\}_{i=1}^N$ such that $\theta_i^0\in [-\alpha,\alpha]$, $i=1,\cdots,N$. Then $\Theta(t)\to \Theta^\infty$ exponentially as $t\to\infty$. More precisely,
\begin{enumerate}[(a)]
\setcounter{enumi}{\value{name}}
    \item (Finite-time entrance into stable region) there exists a time $T\le \frac{\pi}{\kappa\sin\alpha(1+\cos\alpha)-\|\Omega\|_\infty}$ such that $\Theta(t)\in (-\alpha^\infty,\alpha^\infty)^N$ for $t\ge T$, and we can take $T=0$ if $\Theta^0\in (-\alpha^\infty,\alpha^\infty)^N$; also,
    \item (Exponential convergence to equilibrium) we have that
    \[
    \|\Theta(t)-\Theta^\infty\|_{\ell_1^N}\le \|\Theta(T)-\Theta^\infty\|_{\ell_1^N}\exp\left[-\kappa (2\cos\alpha^\infty-1)(\cos\alpha^\infty+1)(t-T)\right],\quad t\ge T.
    \]
\end{enumerate}
\end{theorem}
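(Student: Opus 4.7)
The proof rests on the shape of $f(\alpha) \coloneqq \sin\alpha(1+\cos\alpha)$. Since $f'(\alpha) = (2\cos\alpha - 1)(\cos\alpha + 1)$, the function $f$ strictly increases on $[0,\pi/3]$, strictly decreases on $[\pi/3,\pi]$, and peaks at $f(\pi/3) = 3\sqrt{3}/4$; hence for each $\alpha \in (\pi/3,\pi)$ the equation $f(\alpha^\infty) = f(\alpha)$ has a unique root $\alpha^\infty \in (0,\pi/3)$, and the hypothesis becomes $\kappa f(\alpha^\infty) = \kappa f(\alpha) > \|\Omega\|_\infty$.

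\emph{Part (a).} An equilibrium in $[-\alpha,\alpha]^N$ satisfies $\sin\theta_i^\infty = \omega_i/(\kappa R^\infty)$. Restricting to the principal branch $\theta_i^\infty \in (-\pi/2,\pi/2)$ reduces the problem to the scalar self-consistency equation $R = g(R) \coloneqq N^{-1}\sum_j\bigl[1 + \sqrt{1 - \omega_j^2/(\kappa R)^2}\bigr]$ on $R \in [\|\Omega\|_\infty/\kappa, 2]$. The key observation is that $g(1+\cos\alpha^\infty) \ge 1+\cos\alpha^\infty$: the reduction $\|\Omega\|_\infty \le \kappa(1+\cos\alpha^\infty)\sin\alpha^\infty = \kappa f(\alpha^\infty)$ is precisely the hypothesis. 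Combined with $g(R) < R$ for $R$ close to $2$ and the strict monotonicity of $g$, this yields a unique fixed point $R^\infty \ge 1+\cos\alpha^\infty$, whence $|\sin\theta_i^\infty| \le \|\Omega\|_\infty/(\kappa R^\infty) \le \|\Omega\|_\infty/[\kappa(1+\cos\alpha^\infty)] < \sin\alpha^\infty$, giving $|\theta_i^\infty| < \alpha^\infty$. Any putative equilibrium in $[-\alpha,\alpha]^N$ using a secondary branch $\theta_i^\infty \in (\pi/2, \alpha]$ must then be ruled out by comparing the resulting order parameter to the admissible range.

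\emph{Parts (b) and (c).} For the finite-time entrance, set $M(t) \coloneqq \max_i|\theta_i(t)|$. At any index $i_0$ attaining, say, $\theta_{i_0}(t) = M(t) > 0$, the pointwise bound $R(t) \ge 1 + \cos M(t)$ (all $\theta_j \in [-M,M]$) gives
\[
D^+M(t) \le |\omega_{i_0}| - \kappa R(t)\sin M(t) \le \|\Omega\|_\infty - \kappa f(M(t)).
\]
Since $f(M) \ge \min\{f(\alpha^\infty),f(\alpha)\} = f(\alpha)$ on $[\alpha^\infty, \alpha]$, $M$ drops below $\alpha^\infty$ in time at most $(\alpha - \alpha^\infty)/(\kappa f(\alpha)-\|\Omega\|_\infty) < \pi/(\kappa f(\alpha)-\|\Omega\|_\infty)$, and the same strict inequality on $\{M = \alpha^\infty\}$ makes $(-\alpha^\infty,\alpha^\infty)^N$ forward-invariant. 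Once inside, put $L(t) \coloneqq \sum_i|\theta_i - \theta_i^\infty|$ and differentiate, using $\dot\theta_i = \kappa R^\infty\sin\theta_i^\infty - \kappa R\sin\theta_i$. The elementary bounds $\operatorname{sgn}(\theta_i - \theta_i^\infty)(\sin\theta_i - \sin\theta_i^\infty) \ge \cos\alpha^\infty\,|\theta_i - \theta_i^\infty|$, $|R - R^\infty| \le (\sin\alpha^\infty/N)L$, $|\sin\theta_i| \le \sin\alpha^\infty$, and $R^\infty \ge 1 + \cos\alpha^\infty$ combine to yield
\[
D^+L \le -\kappa\bigl[(1+\cos\alpha^\infty)\cos\alpha^\infty - \sin^2\alpha^\infty\bigr] L = -\kappa(2\cos\alpha^\infty - 1)(\cos\alpha^\infty + 1)L,
\]
using the identity $2\cos^2 x + \cos x - 1 = (2\cos x - 1)(\cos x + 1)$. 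Gronwall then delivers the claimed exponential rate.

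\emph{Main obstacle.} The delicate step is the uniqueness assertion in (a) when $\alpha \ge \pi/2$, where secondary-branch candidates $\theta_i^\infty \in (\pi/2,\alpha]$ are a priori admissible for some coordinates. Excluding every such mixed-branch configuration under the exact threshold $\kappa > \|\Omega\|_\infty/f(\alpha)$ requires a global book-keeping argument coupling the branch choices to the value of $R^\infty$; this is the step that really uses the geometric content of $f(\alpha)$. The remaining ingredients are variations of the standard Lyapunov/Gronwall toolbox once the geometry dictated by $f$ is understood.
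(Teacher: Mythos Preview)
The paper does not prove Theorem~\ref{standardpreciseCOD}: it is quoted verbatim as a special case of \cite[Theorem 2.2]{ha2015emergence} and used as background, so there is no ``paper's own proof'' to compare your proposal against.

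Assessing your argument on its own merits: your treatment of parts (b) and (c) is essentially complete and correct. The key observation $R(t)\ge 1+\cos M(t)$ combined with the unimodal shape of $f$ gives the finite-time entrance with the stated bound, and your $\ell^1$-Lyapunov computation is clean; the identity $2\cos^2 x+\cos x-1=(2\cos x-1)(\cos x+1)$ is exactly what makes the stated rate appear. One small point: you should say explicitly why $R^\infty\ge 1+\cos\alpha^\infty$, but this is immediate from $\Theta^\infty\in(-\alpha^\infty,\alpha^\infty)^N$.

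For part (a), however, you have correctly identified but not closed the gap. When $\alpha\ge\pi/2$, an equilibrium in $[-\alpha,\alpha]^N$ could in principle have some coordinates on the secondary branch $|\theta_i^\infty|\in(\pi/2,\alpha]$, and your proposal stops at ``requires a global book-keeping argument''. Here is one way to finish: if any coordinate used the secondary branch, then $1+\cos\theta_i^\infty\le 1+\cos\alpha$ for that $i$, while all coordinates satisfy $1+\cos\theta_i^\infty\le 2$; but more to the point, the dynamical argument of part (b) already shows that \emph{every} trajectory starting in $[-\alpha,\alpha]^N$ enters $(-\alpha^\infty,\alpha^\infty)^N$ in finite time. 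An equilibrium is its own trajectory, so it must already lie in $(-\alpha^\infty,\alpha^\infty)^N$, which forces the principal branch for every coordinate and reduces uniqueness to your scalar fixed-point argument. This closes the loop without any separate branch-exclusion analysis.
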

In particular, if $\kappa>\frac{4}{3\sqrt{3}}\|\Omega\|_\infty$, we may find $\alpha\in (\frac{\pi}3,\pi)$ such that $\kappa>\frac{\|\Omega\|_\infty}{\sin\alpha(1+\cos\alpha)}$, and Theorem \ref{standardpreciseCOD} tells us that system \eqref{Winfree} has a stable equilibrium in $(-\frac{\pi}{3},\frac{\pi}{3})^N$.

In light of this, the smallest constant $c$ we can take in Conjecture \ref{conj:bdd} is $c=\frac{4}{3\sqrt{3}}$, and Conjecture \ref{conj:coincide} (b) would imply this is attainable. Currently, Corollary \ref{bestsincosmainthm} tells us that when $R_0=\Omega(1)$, $\kappa>\kappa_\mathrm{crit}+\Omega(2-R_0)$ (which approaches the threshold $\kappa_\mathrm{crit}$ as $R_0$ approaches $2$) guarantees the oscillator death phenomenon. See subsubection \ref{subsubsec:death} for a detailed discussion of possible future work in this direction.

\subsection{Application of the {\L}ojasiewicz gradient theorem}\label{subsec:loja}
Before we start discussing the results we have for general interaction functions $I$ and $S$, we should discuss the {\L}ojasiewicz gradient theorem. One immediate advantage of specializing to the equation \eqref{Winfree} is that the interaction functions $I$ and $S$ are real-analytic with $S=I'$, so that \eqref{GenWinfree} can be written as the gradient flow of a real analytic potential: if we define $V:\mathbb{R}^N\to\mathbb{R}$ as
\[
V(\Theta)\coloneqq -\sum_{i=1}^N \omega_i \theta_i-\frac \kappa {2N} \left[\sum_{i=1}^N I(\theta_i)\right]^2,
\]
then $V$ is a real analytic function and the ODE of \eqref{GenWinfree} is equivalent to
\[
\dot{\Theta}=-\nabla_\Theta V(\Theta).
\]
The following is a special case of the well-known {\L}ojasiewicz gradient flow theorem \cite{lojasiewicz1982trajectoires}, which in turn is a corollary of the {\L}ojasiewicz gradient inequality \cite{lojasiewicz1963propriete}:
\begin{proposition}\label{Loja}
Let $\Theta=\Theta(t)$ be the solution to \eqref{GenWinfree} with parameters $\{\omega_i\}_{i=1}^N$ and $\kappa$, initial data $\{\theta_i^0\}_{i=1}^N$, and real-analytic interaction functions $I$ and $S$ such that $S=I'$. If
\[
\sup_{t\ge 0}\|\Theta(t)\|_{\infty}<\infty,
\]
then there exists $\Theta^\infty\in\mathbb{R}^N$ such that
\[
\lim_{t\to\infty} \|\Theta(t)-\Theta^\infty\|_{\infty}=0 \quad and \quad \lim_{t\to\infty} \|\dot{\Theta}(t)\|_{\infty}=0.
\]
\end{proposition}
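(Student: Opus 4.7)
The plan is to exploit the gradient flow structure: since $S=I'$, system \eqref{GenWinfree} is equivalent to $\dot\Theta = -\nabla_\Theta V(\Theta)$ with $V$ real-analytic on $\mathbb{R}^N$. Differentiating gives the basic energy identity $\tfrac{d}{dt}V(\Theta(t)) = -\|\nabla V(\Theta(t))\|^2 \le 0$, so $V(\Theta(t))$ is monotone non-increasing. Under the hypothesis $\sup_t \|\Theta(t)\|_\infty < \infty$, the trajectory is contained in a compact set on which $V$ is bounded, so $V(\Theta(t))\downarrow V_\infty$ for some $V_\infty \in \mathbb{R}$, and integration yields $\int_0^\infty \|\dot\Theta(t)\|^2\,dt = V(\Theta(0)) - V_\infty < \infty$. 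Consequently there exists a sequence $t_n\to\infty$ with $\nabla V(\Theta(t_n))\to 0$, and by compactness a subsequence satisfies $\Theta(t_n)\to\Theta^\infty$ for some critical point $\Theta^\infty$ of $V$ with $V(\Theta^\infty)=V_\infty$.

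The core of the argument is to promote this subsequential convergence to full convergence via the \emph{{\L}ojasiewicz gradient inequality} \cite{lojasiewicz1963propriete}: since $V$ is real-analytic and $\Theta^\infty$ is a critical point, there exist a neighborhood $U$ of $\Theta^\infty$, constants $\eta>0$ and $\theta\in(0,\tfrac12]$ such that
\[
|V(\Theta)-V_\infty|^{1-\theta} \le \eta\,\|\nabla V(\Theta)\|, \qquad \Theta\in U.
\]
Assuming we are not in the trivial case where $\Theta(\cdot)$ hits a critical point in finite time (in which case it is constant from that point onward), we have $V(\Theta(t))>V_\infty$ for all $t$, and while $\Theta(t)\in U$ the computation
\[
-\frac{d}{dt}\bigl(V(\Theta(t))-V_\infty\bigr)^{\theta} = \theta\,\frac{\|\nabla V(\Theta(t))\|^{2}}{(V(\Theta(t))-V_\infty)^{1-\theta}} \ge \frac{\theta}{\eta}\,\|\dot\Theta(t)\|
\]
integrates to an arc-length bound
\[
\int_{a}^{b}\|\dot\Theta(t)\|\,dt \le \frac{\eta}{\theta}\bigl(V(\Theta(a))-V_\infty\bigr)^{\theta}
\]
valid on any sub-interval $[a,b]$ with $\Theta([a,b])\subset U$.

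The delicate step, and the one I expect to be the main obstacle, is the \emph{trapping argument} that forces the entire tail of the trajectory into $U$. Fix a closed ball $\overline{B_r(\Theta^\infty)}\subset U$, and choose $n$ so large that simultaneously $\|\Theta(t_n)-\Theta^\infty\|<r/2$ and $\tfrac{\eta}{\theta}(V(\Theta(t_n))-V_\infty)^\theta<r/2$, which is possible because $\Theta(t_n)\to\Theta^\infty$ and $V(\Theta(t_n))\to V_\infty$. Let $\tau\in (t_n,\infty]$ be maximal with $\Theta([t_n,\tau))\subset B_r(\Theta^\infty)$; the arc-length estimate yields $\|\Theta(t)-\Theta(t_n)\|\le r/2$ on $[t_n,\tau)$, hence $\Theta(t)$ stays in $\overline{B_{r}(\Theta^\infty)}$ strictly away from the boundary, which by continuity forces $\tau=+\infty$. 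Thus $\int_{t_n}^\infty\|\dot\Theta(t)\|\,dt<\infty$, so $\Theta(t)$ is Cauchy and converges; its limit must coincide with the subsequential limit $\Theta^\infty$. Finally, $\dot\Theta(t)=-\nabla V(\Theta(t))\to -\nabla V(\Theta^\infty)=0$ by continuity, establishing both conclusions.
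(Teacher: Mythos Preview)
Your proof is correct and follows exactly the approach the paper takes: recognize that $S=I'$ makes \eqref{GenWinfree} the gradient flow of the real-analytic potential $V(\Theta)=-\sum_i\omega_i\theta_i-\tfrac{\kappa}{2N}\bigl(\sum_i I(\theta_i)\bigr)^2$, then invoke the {\L}ojasiewicz gradient flow theorem. The paper simply cites \cite{lojasiewicz1982trajectoires} for the conclusion, whereas you have written out the standard proof (energy decay, {\L}ojasiewicz inequality, trapping argument, finite arc-length) in full detail; nothing is missing or incorrect.
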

\begin{remark}\,
\begin{enumerate}[(a)]
\item Proposition \ref{Loja} was inspired by \cite{dong2013synchronization,ha2013formation} who applied the {\L}ojasiewicz gradient theorem to the Kuramoto model, which is closely related to the Winfree model (see subsection \ref{subsec:kuramoto}) and whose gradient flow formulation was given by \cite{van1993lyapunov}. The gradient flow formulation of \eqref{Winfree} and consequently Proposition \ref{Loja} is likely well-known, but we could not find them explicitly stated in the literature.

\item To know about the speed of the convergence toward the equilibrium guaranteed by the {\L}ojasiewicz gradient theorem, one needs to know the exact exponent in the {\L}ojasiewicz gradient inequality, which tells us whether the convergence is exponential or algebraic \cite[Theorem 4.7]{bolte2007lojasiewicz}. In \cite{li2015lojasiewicz}, the {\L}ojasiewicz exponent was computed for certain equilibria of the Kuramoto model; one might be able to compute the {\L}ojasiewicz exponent for the Winfree model \eqref{Winfree} using similar methods.
\end{enumerate}
\end{remark}

Thus, it follows from Proposition \ref{Loja} that statement (b) of Corollary \ref{bestsincosmainthm} is equivalent to the weaker statement
\[
\sup_{t\ge 0} \theta_i(t)-\inf_{t\ge 0}\theta_i(t)<\infty,\quad i=1,\cdots,N,
\]
so later on we will only prove the above weaker statement when proving statement (b) of Corollary \ref{bestsincosmainthm}. This is the reasoning behind Remark \ref{rem:loja-cor}.

On the other hand, when the assumption that $I$ and $S$ are real analytic with $S=I'$ is unavailable, then we will not be able to deduce the convergence of the system \eqref{GenWinfree}. That is why, in the next subsection, we will have to content ourselves with proving only boundedness of solutions.

\subsection{General interaction functions}\label{subsec:interaction}

We have stated Theorems \ref{themainthm}, \ref{sincosmaincor}, and \ref{bestsincosmainthm} in the case of the special Winfree model \eqref{Winfree} for concreteness. The proof methods are robust enough to allow for many other interaction functions $I$ and $S$ considered in the literature.

In the case where $I$ is everywhere strictly positive and $S$ changes sign at least once, it will be straightforward to rigorously prove the oscillator death phenomenon for all initial data with a uniformly bounded $\kappa$; see Proposition \ref{trivial}. (This will serve as a toy model for our bootstrapping proof method in subsection \ref{subsec:toy}.) Most of the interaction functions $I$ and $S$ considered in the literature are such that $I$ is nonnegative with a unique zero and $S$ changes sign around that zero, as in the standard choice \eqref{standard}.

Thus, except in subsection \ref{subsec:toy}, we will assume that there exists an angle $\alpha_0\in (0,\pi)$, exponents $p,q\ge 1$, and constants $c_1,c_2>0$ and $c_3\in (0,1]$ such that
\begin{equation}\label{c_1}
S(\theta)\le -c_1 (\pi-\theta)^p \mbox{ for }  \theta\in [\alpha_0,\pi],\quad S(\theta)\ge c_1 (\theta+\pi)^p \mbox{ for }  \theta\in [-\pi,-\alpha_0],
\end{equation}
\begin{equation}\label{c_2}
0\le I(\theta)\le c_2 (\pi-|\theta|)^q \mbox{ for }  \theta\in [-\pi,\pi],
\end{equation}
and
\begin{equation}\label{c_3}
\min_{|\phi|\le \max\{|\theta|,\alpha_0\}}I(\phi)\ge c_3 I(\theta)\mbox{ for }\theta\in [-\pi, \pi].
\end{equation}
Condition \eqref{c_2} says that $I$ has a zero at $\pi$ and has a power-type upper bound at $\pi$ of exponent $q$. Similarly, \eqref{c_1} says that $S$ has a zero at $\pi$ and has a power-type lower bound on its absolute value of exponent $p$ at $\pi$. Also, note that \eqref{c_3} is satisfied with $c_3=\frac{I(\alpha_0)}{I(0)}$ if $I$ is an even function strictly positive on $(-\pi,\pi)$ and is monotone decreasing on $[0,\pi]$; thus we may consider \eqref{c_3} to be a ``weak monotonicity condition'' on $I$. There is nothing special about the location of the zero $\pi$; it can be at any other location, and as long as we have power-type behavior as in \eqref{c_1} and \eqref{c_2} and the weak monotonicity condition \eqref{c_3} around that zero, the argument of this paper will follow through.

These assumptions are robust under various modifications one may wish to make for $S$ and $I$. For example, in the prototypical case $S(\theta)=-\sin\theta$ and $I(\theta)=1+\cos\theta$, one would take $p=1$, $q=2$, $\alpha_0=\frac{\pi}{2}$, and $c_1$, $c_2$, and $c_3$ to be universal constants. In \cite{ariaratnam2001phase} one also considers the examples $I_n(\theta)=(1+\cos\theta)^n$ for $n\ge 1$, which satisfies \eqref{c_2} with $q=2n$ and \eqref{c_3}. Gallego, Montbri\'o, and Paz\'o \cite{gallego2017synchronization} consider the ``rectified Poisson kernel'' $I_r(\theta)=\frac{(1-r)(1+\cos\theta)}{1-2r\cos\theta+r^2}$, $r\in (-1,1)$, which satisfies \eqref{c_2} with $q=2$ and \eqref{c_3}, and many functions $I$ that are compactly supported in $(-\pi,\pi)$, which satisfy \eqref{c_2} with any $q\ge 1$. Likewise, many natural kernels that approximate $\delta_{\{0\}}$ and which have a zero at $\pi$ satisfy \eqref{c_2} and \eqref{c_3}.

As in the prototypical case, we introduce an \emph{average influence}:
\begin{equation}\label{gen_order_parameter}
R\coloneqq \frac 1N \sum_{j=1}^N I(\theta_j),
\end{equation}
with which the ordinary differential equation of \eqref{GenWinfree} can be rewritten in the mean-field form
\begin{equation}\label{GenWinfree_orderparam}
\dot{\theta}_i=\omega_i+\kappa RS(\theta_i).
\end{equation}
The analogue of Corollary \ref{bestsincosmainthm} for the general model \eqref{GenWinfree} is the following theorem.

\begin{theorem}\label{mainthm}
Assume that the $2\pi$-periodic Lipschitz functions $S,I:\mathbb{R}\to\mathbb{R}$ satisfy the constraints \eqref{c_1}, \eqref{c_2}, and \eqref{c_3}. Let $\{\theta_i(t)\}_{i=1}^N$ be the solution to \eqref{GenWinfree} with parameters $\{\omega_i\}_{i=1}^N$, $\kappa$, and $\{\theta_i^0\}_{i=1}^N$, and interaction functions $S$ and $I$.  Suppose ${R_0}\coloneqq R(0)>0$ and
\begin{equation}\label{gen_large_kappa}
\kappa>\max\left\{\frac{2(2c_2)^{p/q}}{c_1c_3}\cdot\frac{\|\Omega\|_\infty}{{R_0}^{1+\frac pq}},\frac{2}{c_1c_3(\pi-\alpha_0)^p}\cdot \frac{\|\Omega\|_\infty}{{R_0}}\right\}.
\end{equation}
Then
\begin{enumerate}[(a)]
\item $R(t)> \frac{c_3 {R_0}}{2}$ for all $t\ge 0$, and
\item $\sup_{t\ge 0} \theta_i(t)-\inf_{t\ge 0}\theta_i(t)<2\pi$ for all $i=1,\cdots,N$.
\end{enumerate}

\end{theorem}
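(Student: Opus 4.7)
The plan is to run an order-parameter bootstrap driven by the restoring force that \eqref{c_1} builds into the mean-field form \eqref{GenWinfree_orderparam} near the singular point $\theta=\pi$ of $I$, following the same scheme as the proof of Corollary~\ref{bestsincosmainthm}. Introduce the critical half-width and trapping threshold
\[
\delta_*\coloneqq\left(\frac{2\|\Omega\|_\infty}{\kappa c_1 c_3 R_0}\right)^{1/p},\qquad \theta_*\coloneqq\pi-\delta_*.
\]
The second bound in \eqref{gen_large_kappa} is algebraically equivalent to $\delta_*<\pi-\alpha_0$, hence $\theta_*>\alpha_0$; the first bound is equivalent to $c_2\delta_*^q<R_0/2$. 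Partition $\{1,\dots,N\}=A\sqcup B$ with $A=\{j:|\theta_j^0|\le\theta_*\}$; then \eqref{c_2} yields $\tfrac1N\sum_{j\in B}I(\theta_j^0)\le c_2\delta_*^q<R_0/2$, and hence $\tfrac1N\sum_{j\in A}I(\theta_j^0)>R_0/2$.

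Set $T^*\coloneqq\sup\{T\ge 0:R(t)>c_3R_0/2\text{ for all }t\in[0,T]\}$, which is positive by continuity of $R$. On $[0,T^*)$, whenever $\theta_j(t)\in[\alpha_0,\theta_*]$, the mean-field ODE together with \eqref{c_1} and the bootstrap hypothesis give
\[
\dot\theta_j(t)\le\omega_j-\kappa R(t)c_1(\pi-\theta_j(t))^p<\|\Omega\|_\infty-\kappa\cdot\frac{c_3R_0}{2}\cdot c_1\delta_*^p=0
\]
by the definition of $\delta_*$; symmetrically $\dot\theta_j(t)>0$ on $[-\theta_*,-\alpha_0]$, and at $\theta_j=\pm\alpha_0$ the same sign checks hold with the strictly larger margin from $(\pi-\alpha_0)^p>\delta_*^p$. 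Consequently, for every $j\in A$, $\theta_j$ is forward-trapped in $[-\max\{|\theta_j^0|,\alpha_0\},\max\{|\theta_j^0|,\alpha_0\}]$ on $[0,T^*)$: it cannot cross $\pm|\theta_j^0|$ outward when $|\theta_j^0|>\alpha_0$, and cannot escape through $\pm\alpha_0$ when $|\theta_j^0|\le\alpha_0$.

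Applying \eqref{c_3} to each $j\in A$ with $\theta=\theta_j^0$ and $\phi=\theta_j(t)$ yields $I(\theta_j(t))\ge c_3 I(\theta_j^0)$, so
\[
R(t)\ge\frac{c_3}{N}\sum_{j\in A}I(\theta_j^0)\ge c_3\bigl(R_0-c_2\delta_*^q\bigr)>\frac{c_3R_0}{2}
\]
throughout $[0,T^*)$. Continuity of $R$ then precludes $T^*<\infty$, so $T^*=\infty$ and (a) holds. For (b), the $A$-indices already satisfy $|\theta_j(t)|\le\theta_*<\pi$, so their excursion is at most $2\theta_*<2\pi$. For $j\in B$, lift $\theta_j$ to $\bbr$ and apply the same sign checks at each shifted barrier $\pm\alpha_0+2\pi k$: once $\theta_j$ enters a shifted copy of $[-\alpha_0,\alpha_0]$ it is stuck there, and beforehand it can cross $\pi$ at most once, since the only way to first leave its bad-interval neighborhood is to exit into $[-\theta_*,\theta_*]$ either directly or after one revolution. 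The lifted excursion is therefore bounded by $2\pi+\alpha_0-\theta_*<2\pi$, using $\theta_*>\alpha_0$.

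The main obstacle is the case analysis for $j\in B$ in this last step: one has to rule out multiple revolutions around the circle, which reduces to verifying that the restoring-force sign conditions at $\pm\alpha_0$ remain valid at every lift. These conditions depend only on $|\omega_j|$, on $R(t)$ (already controlled by the closed bootstrap), and on the $2\pi$-periodicity of $S$ and $I$, not on any particular representative of $\theta_j$ on $\bbr$, so the case analysis goes through uniformly.
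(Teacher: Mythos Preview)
Your proof is correct and follows essentially the same bootstrap scheme as the paper: fix the threshold $\rho=c_3R_0/2$, isolate a subpopulation that starts away from the singular point $\pi$, use the restoring force from \eqref{c_1} to trap that subpopulation on $[0,T^*)$, invoke \eqref{c_3} to recover a uniform lower bound on $R$, and close the bootstrap. The only cosmetic difference is that you define $A$ directly by the position threshold $|\theta_j^0|\le\theta_*$, whereas the paper defines $\mathcal{A}$ by the influence threshold $I(\theta_j^0)\ge R_0/2$ and then converts this to a position constraint via \eqref{c_2}; your choice is slightly more direct, and your excursion bound $2\pi+\alpha_0-\theta_*$ for the bad indices is a shade sharper than the paper's, but the mechanism is identical.
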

\begin{remark}
    By Proposition \ref{Loja}, we can deduce further that if $S$ and $I$ are real-analytic with $S=I'$, then for all $i=1,\cdots,N$, the limit $\lim_{t\to\infty}\theta_i(t)$ exists and $\lim_{t\to\infty}\dot{\theta}_i(t)=0$. We will omit this type of observation in the following discussion.
\end{remark}

This theorem tells us that as long as $\kappa\gtrsim \frac{\max_i |\omega_i|}{{R_0}^{1+\frac pq}}$ we can guarantee boundedness of the solution $\{\theta_i(t)\}_{i=1}^N$. This theorem also tells us why we get the exponent $-\frac 32$ in Corollary \ref{bestsincosmainthm}, namely that it is because $p=1$ and $q=2$ in that case.

The reason why we have stated Corollary \ref{bestsincosmainthm} separately from Theorem \ref{mainthm} was to exhibit that it is possible to optimize the proof so as to obtain reasonable constants in the expression for $\kappa$, namely $2\cdot \frac{\max_i|\omega_i|}{{R_0}^{3/2}}$ for $0<R_0\le 1$; that being said, the focus of Theorem \ref{mainthm} is to obtain the power type dependence on ${R_0}$ without caring much about the constants.

The analogue of Theorem \ref{standardpreciseCOD} for general interaction functions $I$ and $S$ is given as follows.
\begin{theorem}[{\cite[Theorem 2.2]{ha2015emergence}}]\label{preciseCOD}
Consider the setting of the Winfree model \eqref{GenWinfree}. Assume that the interaction functions $S$ and $I$ are $2\pi$-periodic and $C^2$, that $S$ is odd and $I$ is even, and that there are angles $0<\theta_*<\theta^*\le \pi$ such that
\begin{align*}
S\le 0 \mbox{ on }[0,\theta^*],\quad S'\le 0,~S''\ge 0\mbox{ on }[0,\theta_*],\\
I\ge 0,~I'\le 0 \mbox{ on }[0,\theta^*],\quad I''\le 0\mbox{ on }[0,\theta_*],\\
(SI)'< 0 \mbox{ on }(0,\theta_*),\quad (SI)'> 0\mbox{ on }(\theta_*,\theta^*).
\end{align*}
Fix $\alpha\in (0,\theta^*)$, and let $\alpha^\infty$ be the unique solution to $(SI)(\alpha^\infty)=(SI)(\alpha)$,  $\alpha^\infty\in (0,\theta_*]$. Assume
\[
\kappa>-\frac{\|\Omega\|_\infty}{S(\alpha)I(\alpha)}.
\]

Then the system \eqref{Winfree} has a unique equilibrium in the set $[-\alpha,\alpha]^N$, with the equilibrium lying in the smaller open set $(-\alpha^\infty,\alpha^\infty)^N$, and for any initial data $\Theta^0\in [-\alpha,\alpha]^N$ in that open set, the solution $\Theta=\Theta(t)$ to \eqref{GenWinfree} converges exponentially to that unique equilibrium.
\end{theorem}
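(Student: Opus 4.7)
The plan is to adapt the invariant-region strategy underlying Theorem \ref{standardpreciseCOD}, driven by the mean-field form $\dot\theta_i = \omega_i + \kappa R S(\theta_i)$ with $R = \frac{1}{N}\sum_j I(\theta_j)$. The structural hypotheses---oddness of $S$, evenness of $I$, the monotonicity of both on $[0,\theta^*]$, and the unimodal sign pattern of $(SI)'$---are precisely what turns the scalar quantity $L(t) \coloneqq \max_i |\theta_i(t)|$ into a Lyapunov-like functional. Invariance of $[-\alpha,\alpha]^N$ is immediate: at a boundary-attaining maximum $\theta_{i^*}(t_0) = \alpha$, evenness and monotonicity of $I$ give $R \ge I(\alpha)$, so
\[
\dot\theta_{i^*}(t_0) \le \|\Omega\|_\infty + \kappa I(\alpha) S(\alpha) < 0
\]
under the hypothesis $\kappa > -\|\Omega\|_\infty / [S(\alpha) I(\alpha)]$, with the matching inward estimate at $\theta_{i^*} = -\alpha$ following by oddness.

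Next, I would establish finite-time entrance into $[-\alpha^\infty,\alpha^\infty]^N$ by differentiating $L$. Within the invariant cube, $R \ge I(L)$ by monotonicity of $I$, so $\dot L(t) \le \|\Omega\|_\infty - \kappa |S(L) I(L)|$. The sign pattern of $(SI)'$ on $[0,\theta^*]$ forces $|(SI)(\theta)| > |(SI)(\alpha)|$ on the entire open interval $(\alpha^\infty,\alpha)$, because $\alpha^\infty$ and $\alpha$ are the only points where $(SI)$ equals $(SI)(\alpha)$ and the function is strictly decreasing then strictly increasing across its unique extremum at $\theta_*$. Combined with the coupling hypothesis, this gives $\dot L \le -c_0 < 0$ uniformly on $L \in [\alpha^\infty,\alpha]$, driving $L(t)$ into $[0,\alpha^\infty)$ in time at most $T \le \frac{\alpha-\alpha^\infty}{\kappa|S(\alpha)I(\alpha)|-\|\Omega\|_\infty}$; the same boundary argument applied at $\pm\alpha^\infty$ also yields forward invariance of $[-\alpha^\infty,\alpha^\infty]^N$.

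For existence and uniqueness of the equilibrium in $(-\alpha^\infty,\alpha^\infty)^N$, I would invert the equilibrium equation as $\theta_i^\infty = S^{-1}(-\omega_i/(\kappa R^\infty))$ using strict monotonicity of $S$ on $[-\theta_*,\theta_*]$, noting that $|\omega_i|/(\kappa R^\infty) \le \|\Omega\|_\infty/[\kappa I(\alpha^\infty)] = |S(\alpha^\infty)|$ confines each $\theta_i^\infty$ to the stable window. The scalar self-consistency equation $R = \frac{1}{N}\sum_i I\bigl(S^{-1}(-\omega_i/(\kappa R))\bigr)$ is then strictly monotone in $R$ under the standing concavity/convexity hypotheses, yielding uniqueness. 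Exponential convergence follows from the Lyapunov functional $W(t) \coloneqq \sum_i |\theta_i(t) - \theta_i^\infty|$: decomposing
\[
\kappa R S(\theta_i) - \kappa R^\infty S(\theta_i^\infty) = \kappa R [S(\theta_i) - S(\theta_i^\infty)] + \kappa (R - R^\infty) S(\theta_i^\infty)
\]
produces a dominant contraction $\le -\kappa R |S'|_{\min}\, W$ from the first term because $S' \le 0$ on $[-\theta_*,\theta_*]$, together with a cross-term controlled by $\kappa \|I'\|_\infty \|S\|_\infty W / N$ from the second, which specializes to the rate $\kappa (2\cos\alpha^\infty - 1)(\cos\alpha^\infty + 1)$ when $S = -\sin$ and $I = 1 + \cos$.

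The main obstacle is bookkeeping the contraction step: the coupling through $R$ produces a positive cross-term proportional to $\max_i |S(\theta_i^\infty)|$ that must be beaten against the contraction $-\kappa R |S'|$, and for general $S, I$ satisfying only the stated sign, monotonicity, and convexity conditions this estimate is sensitive to how close $\alpha^\infty$ is to $\theta_*$. This is why the rate is cleanest when the equilibrium window is bounded strictly inside the convexity region $[0,\theta_*]$, and why the explicit rate stated in the special case of Theorem \ref{standardpreciseCOD} makes critical use of the identity $(2\cos\alpha^\infty - 1)(\cos\alpha^\infty + 1) = -S'(\alpha^\infty) I(\alpha^\infty) - S(\alpha^\infty) I'(\alpha^\infty)$.
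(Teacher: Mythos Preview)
The paper does not prove Theorem~\ref{preciseCOD}; it is quoted verbatim from \cite[Theorem 2.2]{ha2015emergence} as background against which the new results are compared, so there is no in-paper proof to benchmark your proposal against. Your outline---invariance of $[-\alpha,\alpha]^N$ via the boundary estimate $R\ge I(\alpha)$, finite-time descent of $L(t)=\max_i|\theta_i(t)|$ into $[-\alpha^\infty,\alpha^\infty]^N$ using the unimodality of $SI$, and the $\ell^1$ Lyapunov functional $W=\sum_i|\theta_i-\theta_i^\infty|$---is indeed the argument of the original paper \cite{ha2015emergence}, as one can infer from the explicit rate $\kappa(2\cos\alpha^\infty-1)(\cos\alpha^\infty+1)$ recorded in Theorem~\ref{standardpreciseCOD}(c) and your correct identification of it with $-(SI)'(\alpha^\infty)$.

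Two places in your sketch are looser than the rest. First, the existence/uniqueness step has a circularity: you invoke $R^\infty\ge I(\alpha^\infty)$ to confine $\theta_i^\infty$ before the equilibrium is known to lie in $(-\alpha^\infty,\alpha^\infty)^N$; the original argument handles this by running the fixed-point map on the interval $[I(\alpha^\infty),I(0)]$ (or equivalently by a Brouwer/contraction argument on $[-\alpha^\infty,\alpha^\infty]^N$ directly), and the claimed strict monotonicity of the self-consistency map in $R$ really does need the concavity of $I$ and convexity of $S$ on $[0,\theta_*]$, not just their first-order monotonicity. Second, in the contraction step the cross-term is not merely ``controlled by $\kappa\|I'\|_\infty\|S\|_\infty W/N$''---the factor $1/N$ is illusory because $R-R^\infty$ itself is an average of $N$ differences, so the cross-term is genuinely $O(W)$ and must be absorbed using the sign structure $I'\le 0$, $S\le 0$ together with the bound $|S'|\ge |S'(\alpha^\infty)|$ on the invariant window; this is exactly why the rate in the special case collapses to $-(SI)'(\alpha^\infty)$ rather than to $-S'(\alpha^\infty)I(\alpha^\infty)$ alone.
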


The conditions on $S$ and $I$ roughly mean that $S$ and $I$ mimic \eqref{standard}.

Given the differences between the special case \eqref{standard} and the more general case of \eqref{c_1}, \eqref{c_2}, and \eqref{c_3}, we can redefine the critical and pathwise critical coupling strengths as follows.
\begin{definition}\label{def:crit-IS}
Fix a frequency vector $\Omega=\{\omega_1,\cdots,\omega_N\}\in \mathbb{R}^N$ and $2\pi$-periodic Lipschitz interaction functions $I$ and $S$.
\begin{enumerate}[(a)]
\item The \emph{critical coupling strength} is
\[
\kappa_{\mathrm{c}}(\Omega,I,S)\coloneqq \inf\{\kappa_*>0:\mathrm{for~}\kappa>\kappa_*,\mathrm{~system~}\eqref{Winfree}\mathrm{~admits~a~uniformly~bounded~solution}\}.
\]
\item For an initial phase vector $\Theta^0=\{\theta^0_1,\cdots,\theta^0_N\}\in \mathbb{R}^N$, the \emph{pathwise critical coupling strength} is
\[
\kappa_{\mathrm{pc}}(\Theta^0,\Omega,I,S)\coloneqq \inf\{\kappa_*>0:\mathrm{for~}\kappa>\kappa_*,\mathrm{~the~solution~}\{\theta_i(t)\}_{i=1}^N\mathrm{~to~}\eqref{Winfree}\mathrm{~is~bounded}\}.
\]
\end{enumerate}
\end{definition}
Then Theorem \ref{mainthm} can be summarized as follows.
\begin{theorem} For a fixed initial position vector $\Theta^0$ with $R_0>0$, a natural frequency vector $\Omega$, and $2\pi$-periodic Lipschitz interaction functions $I$ and $S$ satisfying \eqref{c_1}, \eqref{c_2}, and \eqref{c_3}, we have
\[
\kappa_{\mathrm{pc}}(\Theta^0,\Omega,I,S)\le  \max\left\{\frac{2(2c_2)^{p/q}}{c_1c_3}\cdot\frac{\|\Omega\|_\infty}{{R_0}^{1+\frac pq}},\frac{2}{c_1c_3(\pi-\alpha_0)^p}\cdot \frac{\|\Omega\|_\infty}{{R_0}}\right\}.
\]
\end{theorem}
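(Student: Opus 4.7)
The plan is to prove the stated bound on $\kappa_{\mathrm{pc}}$, which restates Theorem \ref{mainthm}, via an order-parameter bootstrapping argument. Define $\beta \in [\alpha_0, \pi)$ by $(\pi - \beta)^p = \frac{2\|\Omega\|_\infty}{\kappa c_1 c_3 R_0}$: the second term in the max of \eqref{gen_large_kappa} is equivalent to the strict inequality $\beta > \alpha_0$, and the first term is equivalent to the strict bound $c_2(\pi - \beta)^q < R_0/2$. The goal is to maintain $R(t) > c_3 R_0/2$ for all $t \ge 0$ (statement (a) of Theorem \ref{mainthm}), and from it to derive the range bound $\sup_t \theta_i(t) - \inf_t \theta_i(t) < 2\pi$ (statement (b)), which implies the desired boundedness of $\Theta(t)$.

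Under the bootstrap hypothesis $R(t) > c_3 R_0/2$, condition \eqref{c_1} together with the mean-field ODE \eqref{GenWinfree_orderparam} forces $\dot{\theta}_i < 0$ on $[\alpha_0 + 2\pi k, \beta + 2\pi k]$ and $\dot{\theta}_i > 0$ on $[-\beta + 2\pi k, -\alpha_0 + 2\pi k]$ for every $k \in \mathbb{Z}$. Hence the points $\beta + 2\pi k$ act as one-way barriers against upward motion and $-\beta + 2\pi k$ as one-way barriers against downward motion. In particular, for each index $i$ with $|\theta_i^0| < \beta$, the pair of barriers at $\pm\max(|\theta_i^0|, \alpha_0)$ confines $\theta_i(t)$ to $[-\max(|\theta_i^0|, \alpha_0), \max(|\theta_i^0|, \alpha_0)]$; applying \eqref{c_3} at $\theta = \theta_i^0$ then yields $I(\theta_i(t)) \ge c_3 I(\theta_i^0)$.

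To close the bootstrap, for indices with $|\theta_i^0| \ge \beta$ I would invoke \eqref{c_2} together with the first condition of \eqref{gen_large_kappa} to conclude that each such $I(\theta_i^0) \le c_2(\pi - \beta)^q < R_0/2$ strictly. Summing yields
\begin{align*}
R(t) \ge \frac{c_3}{N}\sum_{|\theta_i^0| < \beta} I(\theta_i^0) = c_3\Big(R_0 - \frac{1}{N}\sum_{|\theta_i^0|\ge\beta} I(\theta_i^0)\Big) > c_3\Big(R_0 - \frac{R_0}{2}\Big) = \frac{c_3 R_0}{2}.
\end{align*}
A standard continuity argument then precludes the bootstrap from ever failing, establishing statement (a). For statement (b), when $|\theta_i^0| < \beta$ the containment $|\theta_i(t)| \le \max(|\theta_i^0|, \alpha_0) < \pi$ gives range strictly less than $2\pi$ immediately; when $|\theta_i^0| \ge \beta$, I would combine the one-way barriers with a phase-portrait analysis of \eqref{GenWinfree_orderparam}---using \eqref{c_1} and the boundary value $\dot{\theta}_i = \omega_i$ at $\pm\pi$ to locate an unstable equilibrium of the bootstrap dynamics near $\pm\pi + 2\pi k$ which the trajectory cannot cross---to confine $\theta_i(t)$ to an interval of length strictly less than $2\pi$.

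The main obstacle is the ``forbidden-zone'' case $|\theta_i^0| \ge \beta$, where pointwise lower bounds on $I(\theta_i(t))$ via \eqref{c_3} are unavailable. The argument leverages the first condition in \eqref{gen_large_kappa} to force each such $I(\theta_i^0)$ strictly below $R_0/2$, creating exactly the slack needed for the summed bootstrap inequality to close with a strict gap. The range bound for these indices similarly requires the finer phase-portrait analysis, rather than the simple two-sided barrier confinement that suffices for $|\theta_i^0| < \beta$.
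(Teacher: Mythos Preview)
Your bootstrap is exactly the paper's: the paper defines $\mathcal{A}=\{i:I(\theta_i^0)\ge R_0/2\}$ (which under \eqref{c_2} and \eqref{gen_large_kappa} is contained in your set $\{|\theta_i^0|<\beta\}$), erects the same one-way barriers via \eqref{c_1}, and closes $R(t)>c_3 R_0/2$ via \eqref{c_3} together with the strict slack $I(\theta_i^0)<R_0/2$ on the complement, just as you do.

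The one wrinkle is your treatment of statement~(b) for forbidden-zone indices. There is no fixed ``unstable equilibrium of the bootstrap dynamics'' near $\pm\pi$ to act as a barrier: $R(t)$ varies in time, so any zero of $\theta\mapsto\omega_i+\kappa R(t)S(\theta)$ moves with $t$, and when $\omega_i\neq 0$ the point $\pi$ itself has $\dot\theta_i=\omega_i\neq 0$. Fortunately no such equilibrium is needed; the paper uses only the barriers at $\pm\beta$ that you have already established. If $\theta_i^0$ lies in the forbidden arc $(\beta,2\pi-\beta)$ (after a suitable $2\pi$-shift), then either $\theta_i(t)$ remains in that arc for all $t$ (range $<2\pi-2\beta$), or there is a first exit time $t^*$ through one endpoint, after which your own barrier argument (applied from time $t^*$) traps $\theta_i$ in the adjacent good arc $[-\beta,\beta]$ or $[2\pi-\beta,2\pi+\beta]$. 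Since for $t<t^*$ the trajectory cannot touch the \emph{other} endpoint of the forbidden arc (the strict sign of $\dot\theta_i$ there would force immediate exit, contradicting minimality of $t^*$), the supremum over $[0,t^*]$ is attained strictly inside, and the total range lies in a half-open interval of length $2\pi$, giving $\sup_t\theta_i(t)-\inf_t\theta_i(t)<2\pi$.
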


One can deduce the following theorem from Theorem \ref{mainthm} similarly to the deduction of Theorem \ref{sincosmaincor} from Corollary \ref{bestsincosmainthm}. Again, it is based on the intuition that if we are sampling the initial data $\theta_i^0$ independently from a fixed distribution on $[-\pi,\pi]$, then by a simple Chernoff bound, the initial average influence $R_0$ will be not too far from its expected value.
\begin{theorem}\label{maincor}
Assume we are given parameters $\{\omega_i\}_{i=1}^N$ and $\kappa$, and $2\pi$-periodic Lipschitz interaction functions $S$ and $I$ that satisfy the constraints \eqref{c_1}, \eqref{c_2}, and \eqref{c_3}. For each initial data $\{\theta_i^0\}_{i=1}^N$, denote by $\{\theta_i(t)\}_{i=1}^N$ the solution to \eqref{GenWinfree}.

Let $\mu$ be a probability measure on $[-\pi,\pi]$, and denote and assume
\begin{equation}\label{mu-pos}
R^*\coloneqq \int_{-\pi}^\pi I(\theta)d\mu(\theta)>0.
\end{equation}
Assume $\kappa$ is large enough so that
\begin{equation}\label{cor-kappa-large}
\kappa>\max\left\{\frac{4(4c_2)^{p/q}}{c_1c_3}\cdot\frac{\|\Omega\|_\infty}{(R^*)^{1+\frac pq}},\frac{4}{c_1c_3(\pi-\alpha_0)^p}\cdot \frac{\|\Omega\|_\infty}{R^*}\right\}.
\end{equation}
Then
\begin{align*}
&\mu^{\otimes N}\Big\{\{\theta_i^0\}_{i=1}^N\in [-\pi,\pi]^N:R(t)> \frac{c_3 R^*}{4}~\forall t\ge 0 \mbox{ and}\\
&\qquad\qquad\qquad\qquad\qquad\qquad\sup_{t\ge 0} \theta_i(t)-\inf_{t\ge 0}\theta_i(t)<2\pi ~\forall i=1,\cdots,N\Big\}\\
&\ge 1-\exp\left(-\frac{(R^*)^2N}{2 \|I\|_{\sup}^2 }\right),
\end{align*}
where $\mu^{\otimes N}$ denotes the product measure on $[-\pi,\pi]^N$.
\end{theorem}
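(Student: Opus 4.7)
The plan is to derive Theorem \ref{maincor} as a direct probabilistic corollary of the deterministic Theorem \ref{mainthm}. The key observation is that if we sample the initial phases $\theta_i^0$ independently from $\mu$, then the initial average influence $R_0 = \frac{1}{N}\sum_{j=1}^N I(\theta_j^0)$ is the empirical mean of $N$ i.i.d.\ random variables which, by the nonnegativity half of \eqref{c_2}, take values in $[0,\|I\|_{\sup}]$ and have expectation $R^*$. A one-sided Hoeffding inequality then yields
\[
\mu^{\otimes N}\bigl\{R_0 \le R^*/2\bigr\} \;\le\; \exp\!\left(-\frac{(R^*)^2 N}{2\|I\|_{\sup}^2}\right),
\]
which is precisely the exceptional probability advertised in the theorem.

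The next step is an arithmetic comparison between the two thresholds \eqref{cor-kappa-large} and \eqref{gen_large_kappa}. Since the right-hand side of \eqref{gen_large_kappa} is monotone decreasing in $R_0$, it suffices to substitute the endpoint $R_0 = R^*/2$ and check that the resulting expression matches the right-hand side of \eqref{cor-kappa-large}. Indeed, the first entry of the max in \eqref{gen_large_kappa} becomes
\[
\frac{2(2c_2)^{p/q}}{c_1 c_3}\cdot\frac{\|\Omega\|_\infty}{(R^*/2)^{1+p/q}} \;=\; \frac{4(4c_2)^{p/q}}{c_1 c_3}\cdot\frac{\|\Omega\|_\infty}{(R^*)^{1+p/q}},
\]
and the second entry becomes $\frac{4}{c_1 c_3(\pi-\alpha_0)^p}\cdot\frac{\|\Omega\|_\infty}{R^*}$, in agreement with the two entries of the max in \eqref{cor-kappa-large}. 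Thus on the favorable event $\{R_0 > R^*/2\}$, the standing hypothesis \eqref{cor-kappa-large} implies the deterministic hypothesis \eqref{gen_large_kappa} at scale $R_0$.

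On this event I invoke Theorem \ref{mainthm} directly to obtain both the uniform lower bound $R(t) > c_3 R_0/2 > c_3 R^*/4$ for all $t \ge 0$ and the oscillation bound $\sup_{t\ge 0}\theta_i(t) - \inf_{t\ge 0}\theta_i(t) < 2\pi$ for every $i$. Combining these with the Hoeffding estimate of the first paragraph and the union bound yields Theorem \ref{maincor}. I do not expect any substantive obstacle: Theorem \ref{mainthm} supplies all of the dynamical content, Hoeffding supplies the concentration, and the only genuine ``work'' is the bookkeeping check in the second paragraph that halving $R_0$ in \eqref{gen_large_kappa} exactly promotes the constants $2$ and $(2c_2)^{p/q}$ to the constants $4$ and $(4c_2)^{p/q}$ appearing in \eqref{cor-kappa-large}.
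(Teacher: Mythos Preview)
Your proposal is correct and matches the paper's proof essentially line for line: the paper also applies Hoeffding's lemma to the bounded variables $I(\theta_j^0)\in[0,\|I\|_{\sup}]$ to get the stated exponential bound on $\{R_0\le R^*/2\}$, then invokes Theorem~\ref{mainthm} on the complementary event after the same arithmetic check that substituting $R_0=R^*/2$ into \eqref{gen_large_kappa} reproduces \eqref{cor-kappa-large}. The only cosmetic point is that no union bound is needed in your final sentence---there is a single bad event $\{R_0\le R^*/2\}$.
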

In words, given a distribution $\mu$ satisfying the non-degeneracy condition \eqref{mu-pos}, if the coupling strength is large as in \eqref{cor-kappa-large} and if one samples $\theta_i^0\sim \mu$ independently for each $i$ and simulates \eqref{GenWinfree}, then, with high probability, each $\theta_i$ will fail to make a full revolution around the unit circle, and we will have a uniform lower bound on the macroscopic quantity $R$. We again consider this to be a near confirmation of the existence of the oscillator death regime for the interaction functions $I$ and $S$.

Under more assumptions, we can prove an improvement as $\kappa$ becomes large.
\begin{theorem}\label{maincor-kappalarge}
Assume we are given parameters $\{\omega_i\}_{i=1}^N$ and $\kappa$, and $2\pi$-periodic Lipschitz interaction functions $S$ and $I$ that satisfy the constraints \eqref{c_1}, \eqref{c_2}, and \eqref{c_3}. In addition, let $\mu$ be a probability measure on $[-\pi,\pi]$ under which the function $I$ satisfies
\begin{equation}\label{eq:I-moment}
\int_{-\pi}^\pi e^{-\lambda I(\theta)}d\mu(\theta)\le \frac{C_\mu}{\lambda^\beta},\quad \lambda>0,
\end{equation}
for some $C_\mu,\beta>0$.
For each initial data $\{\theta_i^0\}_{i=1}^N$, denote by $\{\theta_i(t)\}_{i=1}^N$ the solution to \eqref{GenWinfree}.

Assume $\kappa$ is large enough so that
\begin{equation}\label{eq:kappa-large-cor}
\kappa>\max\left\{\frac{2(2c_2)^{p/q}}{c_1c_3}\left(\frac{e}{\beta}\right)^{1+\frac pq}C_\mu^{\frac 1\beta (1+\frac pq)}\cdot\|\Omega\|_\infty,\frac{2eC_\mu^{1/\beta}}{c_1c_3(\pi-\alpha_0)^p\beta}\cdot \frac{\|\Omega\|_\infty}{{R_0}}\right\}.
\end{equation}
Then
\begin{align*}
&\mu^{\otimes N}\Big\{\{\theta_i^0\}_{i=1}^N\in [-\pi,\pi]^N:\sup_{t\ge 0} \theta_i(t)-\inf_{t\ge 0}\theta_i(t)<2\pi ~\forall i=1,\cdots,N\Big\}\\
&\ge 1-C_\mu^N\left(e/\beta\right)^{\beta N}\max\left\{\left(\frac{
2}{c_1c_3}\frac{\|\Omega\|_\infty}{\kappa}\right)^{\frac{1}{1+p/q}}(2c_2)^{\frac{p/q}{1+p/q}},\frac{2}{c_1c_3(\pi-\alpha_0)^p}\frac{\|\Omega\|_\infty}{\kappa}\right\}^{\beta N},
\end{align*}
where $\mu^{\otimes N}$ denotes the product measure on $[-\pi,\pi]^N$.
\end{theorem}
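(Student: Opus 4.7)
The plan is to combine Theorem \ref{mainthm} with a Chernoff--Cram\'er lower tail bound on $R_0 := R(0) = \frac{1}{N}\sum_{j=1}^N I(\theta_j^0)$ afforded by the moment hypothesis \eqref{eq:I-moment}. Observe first that, rearranging, the hypothesis \eqref{gen_large_kappa} of Theorem \ref{mainthm} can be written as $R_0 > r_*$, where
\[
r_* := \max\left\{\left(\tfrac{2}{c_1c_3}\tfrac{\|\Omega\|_\infty}{\kappa}\right)^{\frac{1}{1+p/q}}(2c_2)^{\frac{p/q}{1+p/q}},\ \tfrac{2}{c_1c_3(\pi-\alpha_0)^p}\tfrac{\|\Omega\|_\infty}{\kappa}\right\};
\]
thus on the event $\{R_0 > r_*\}$, Theorem \ref{mainthm} guarantees $\sup_{t\ge 0}\theta_i(t) - \inf_{t\ge 0}\theta_i(t) < 2\pi$ for every $i$. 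So it suffices to show that $\mu^{\otimes N}(R_0 \le r_*) \le C_\mu^N (e/\beta)^{\beta N} r_*^{\beta N}$, which is exactly the quantity appearing in the statement after substituting $r_*$.

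For this lower tail, I apply the exponential Markov inequality: for any $\lambda > 0$, using independence of the $\theta_j^0$ and then \eqref{eq:I-moment},
\[
\mu^{\otimes N}(R_0 \le r_*) \le e^{\lambda N r_*}\left(\int_{-\pi}^\pi e^{-\lambda I(\theta)}\,d\mu(\theta)\right)^{\! N} \le \frac{C_\mu^N e^{\lambda N r_*}}{\lambda^{\beta N}}.
\]
Setting $\frac{d}{d\lambda}\log(\text{RHS}) = N r_* - \beta N/\lambda = 0$ yields the optimal $\lambda_\ast = \beta/r_*$, at which the right-hand side equals $C_\mu^N(e/\beta)^{\beta N} r_*^{\beta N}$, as desired.

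A short check confirms that the hypothesis \eqref{eq:kappa-large-cor} is exactly equivalent to the inequality $r_* < (\beta/e)\, C_\mu^{-1/\beta}$, which ensures the bound is nontrivial (strictly less than $1$) and that $r_* > 0$, so the application of Theorem \ref{mainthm} on $\{R_0 > r_*\}$ is meaningful. I expect no real obstacle: the argument is a textbook Cram\'er transform glued to the pathwise result of Theorem \ref{mainthm}, and the only bookkeeping issue is keeping track of the two terms in the maximum defining $r_*$, which the $\lambda$-optimization handles uniformly in the threshold.
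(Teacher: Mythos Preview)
Your proposal is correct and follows essentially the same approach as the paper: define the threshold $r_*$ so that Theorem~\ref{mainthm} applies on $\{R_0>r_*\}$, then control $\mu^{\otimes N}(R_0\le r_*)$ via the Cram\'er--Chernoff bound using the moment hypothesis \eqref{eq:I-moment} (the paper packages this step as Lemma~\ref{lem:extreme-close}), optimizing at $\lambda=\beta/r_*$. The only cosmetic difference is that the paper phrases the choice of threshold as picking an $\varepsilon$ close to $r_*$ rather than working directly with $r_*$, but the argument is the same.
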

Condition \eqref{eq:I-moment} roughly means that when $\theta\sim\mu$, $I(\theta)$ is not too concentrated near $0$.

Under different conditions on $I$ and $S$, we can take advantage of the time flow and can deduce the following uniform oscillator death result which is an analogue of Theorem \ref{themainthm}.
\begin{theorem}\label{themainthm-IS}
Assume that the $2\pi$-periodic Lipschitz functions $S,I:\mathbb{R}\to\mathbb{R}$ satisfy the constraints \eqref{c_1}, \eqref{c_2}, and \eqref{c_3}, along with the additional constraints
\begin{equation}\label{c_4}
S'(\theta)\ge c_4(I_*-I(\theta)), \quad \theta\in \mathbb{R},
\end{equation}
for some $I_*>0$ and $c_4>0$,
\begin{equation}\label{c_5}
I'(\theta)S(\theta)\ge 0,\quad \theta\in \mathbb{R},
\end{equation}
and the nondegeneracy condition
\begin{equation}\label{c_6}
I(\theta)>0,\quad\forall \theta\in (-\pi,\pi).
\end{equation}
Assume
\[
\kappa>\max\left\{\frac{2(2c_2)^{p/q}}{c_1c_3}\cdot\frac{\|\Omega\|_\infty}{{I_*}^{1+\frac pq}},\frac{2}{c_1c_3(\pi-\alpha_0)^p}\cdot \frac{\|\Omega\|_\infty}{{I_*}}\right\}.
\]
Then for Lebesgue almost every initial data $\Theta^0$, the solution $\Theta=\Theta(t)$ to \eqref{GenWinfree} satisfies the following assertions.
\begin{enumerate}[(a)]
\item (Uniform lower bound on the average influence) $\liminf_{t\to\infty}R(t)>\frac{c_3I_*}{2}$.
\item (Oscillator death) $\limsup_{t\to\infty}\theta_i(t)-\liminf_{t\to\infty}\theta_i(t)<2\pi$ for all $i=1,\cdots,N$.
\end{enumerate}
\end{theorem}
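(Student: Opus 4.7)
The plan is to deduce Theorem~\ref{themainthm-IS} from Theorem~\ref{mainthm} via a divergence-based volumetric argument on the torus $\mathbb{T}^N$, paralleling the strategy behind Theorem~\ref{themainthm}. First I would compute the divergence of the vector field $F_i(\Theta)=\omega_i+\kappa R(\Theta)S(\theta_i)$ of \eqref{GenWinfree_orderparam}. Since $\partial_{\theta_i}R(\Theta)=I'(\theta_i)/N$,
\begin{align*}
(\nabla\cdot F)(\Theta)=\frac{\kappa}{N}\sum_{i=1}^N I'(\theta_i)S(\theta_i)+\kappa R(\Theta)\sum_{i=1}^N S'(\theta_i);
\end{align*}
by \eqref{c_5} the first sum is nonnegative and by \eqref{c_4} the second is at least $\kappa c_4 N R(\Theta)(I_*-R(\Theta))$, yielding
\begin{align*}
(\nabla\cdot F)(\Theta)\ge \kappa c_4 N R(\Theta)(I_*-R(\Theta)),
\end{align*}
which is strictly positive on the slab $\{0<R<I_*\}$.

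Next I would show by a rate-of-change-of-volume argument that for every $\delta>0$ the set $\Lambda_\delta\coloneqq\{\Theta^0\in \mathbb{T}^N : R(\phi_t(\Theta^0))\le I_*-\delta~\text{for all}~t\ge 0\}$ has Lebesgue measure zero. The Liouville identity $\det D\phi_T(\Theta^0)=\exp\int_0^T (\nabla\cdot F)(\phi_s(\Theta^0))\,ds$ together with $\phi_T(\Lambda_\delta)\subseteq \mathbb{T}^N$ gives
\begin{align*}
(2\pi)^N \ge m(\phi_T(\Lambda_\delta))\ge \int_{\Lambda_\delta}\exp\!\left(\kappa c_4 N\delta\int_0^T R(\phi_s(\Theta^0))\,ds\right)dm(\Theta^0),
\end{align*}
so a uniform-in-$\Theta^0$ lower bound $\int_0^T R(\phi_s(\Theta^0))\,ds\gtrsim T$ valid on $\Lambda_\delta$ would force $m(\Lambda_\delta)=0$ as $T\to \infty$. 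The main obstacle is precisely that the divergence bound degenerates as $R\to 0$, so I must rule out trajectories that dwell near $\{R=0\}$. This is where the nondegeneracy \eqref{c_6} becomes essential: the unique zero of $I$ modulo $2\pi$ is at $\pi$, so $R\le\delta'$ forces the bulk of the $\theta_i$ to concentrate near $\pi$, where $S$ is also small by \eqref{c_1}; hence $\dot\theta_i=\omega_i+\kappa R S(\theta_i)\approx \omega_i$ is essentially the (nonzero) intrinsic frequency, which ejects each such $\theta_i$ from the neighborhood of $\pi$ in bounded sojourn time, raising $R$ above a fixed threshold. Quantifying the sojourn time -- for instance by covering the trajectory with maximal subintervals where $R\le\delta'$ and bounding their lengths via this nearly-rotational dynamics -- supplies the required time-average estimate; alternatively one may control the Lebesgue measure of $\{R\le\delta'\}\subset\mathbb{T}^N$ directly by a Chernoff estimate on the empirical mean $R=N^{-1}\sum I(\theta_i)$ and combine with the recurrence structure of the flow.

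Granting $m(\Lambda_\delta)=0$ for every $\delta>0$, for a.e.\ $\Theta^0$ there exists $t_0\ge 0$ with $R(t_0)>I_*-\delta$. The assumption on $\kappa$ in the statement is precisely the threshold of Theorem~\ref{mainthm} at $R_0=I_*$, so by continuity of \eqref{gen_large_kappa} in $R_0$ the hypothesis still holds with $R_0$ replaced by $R(t_0)$ for $\delta$ sufficiently small. Applying Theorem~\ref{mainthm} to the time-shifted initial value problem starting at $t_0$ (which is licit since \eqref{GenWinfree} is time-autonomous) then yields $R(t)>c_3 R(t_0)/2>c_3(I_*-\delta)/2$ and $\sup_{t\ge t_0}\theta_i(t)-\inf_{t\ge t_0}\theta_i(t)<2\pi$ for all $t\ge t_0$, giving (b) and, after intersecting over a countable sequence $\delta\downarrow 0$, the lower bound asserted in (a).
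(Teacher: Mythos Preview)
Your overall architecture matches the paper's: compute the divergence (your formula and the lower bound $(\nabla\cdot F)(\Theta)\ge \kappa c_4 N R(\Theta)(I_*-R(\Theta))$ are exactly what the paper obtains in \eqref{div-IS}), show the set of trajectories that never reach $R\ge I_*-\delta$ has measure zero, and then apply Theorem~\ref{mainthm} at a time where $R$ exceeds $I_*-\delta$, letting $\delta\downarrow 0$. The final paragraph of your proposal is essentially the paper's proof verbatim.

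The gap is in your proposed route to $m(\Lambda_\delta)=0$. You aim for a uniform (or a.e.) lower bound $\int_0^T R(\phi_s(\Theta^0))\,ds\gtrsim T$ on $\Lambda_\delta$ and propose to secure it by an ejection argument near $R=0$: ``$\dot\theta_i\approx\omega_i$ kicks $\theta_i$ out of a neighborhood of $\pi$.'' But nothing in the hypotheses forces $\omega_i\neq 0$; an oscillator with $\omega_i=0$ and $\theta_i^0=\pi$ satisfies $\theta_i(t)\equiv\pi$, so the ejection mechanism fails and the time-average of $R$ can be arbitrarily small on neighborhoods of such configurations. The trajectory-wise estimate you need does not hold uniformly, and proving an a.e.\ version is tantamount to what you are trying to establish.

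The paper sidesteps this entirely by working with the \emph{volume} $m(\Phi_t^{I,S}(U_{I_*-\delta}^{T,I,S}))$ rather than with individual trajectories (Proposition~\ref{nodispersed-IS-qual}). For any $0<\varepsilon<\delta$ it splits the divergence integral over $\Phi_t^{I,S}(U_{I_*-\delta}^{T,I,S})$ into the part inside $\{R<\varepsilon\}$---whose measure is at most $m(U_\varepsilon^{0,I,S})$, which tends to $0$ as $\varepsilon\to 0$ precisely by \eqref{c_6}---and the complement, on which the divergence is at least $c_4\kappa N\varepsilon(I_*-\varepsilon)$. This yields the differential inequality
\[
\frac{d}{dt}m\bigl(\Phi_t^{I,S}(U_{I_*-\delta}^{T,I,S})\bigr)\ge c_4\kappa N\varepsilon(I_*-\varepsilon)\,\max\bigl\{0,\,m\bigl(\Phi_t^{I,S}(U_{I_*-\delta}^{T,I,S})\bigr)-m(U_\varepsilon^{0,I,S})\bigr\},
\]
which, combined with $m(\Phi_T^{I,S}(U_{I_*-\delta}^{T,I,S}))\le 1$, gives $m(U_{I_*-\delta}^{T,I,S})\le m(U_\varepsilon^{0,I,S})+e^{-c_4\kappa N\varepsilon(I_*-\varepsilon)T}$; sending $T\to\infty$ and then $\varepsilon\to 0$ finishes. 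Your parenthetical alternative---controlling $m(\{R\le\delta'\})$ directly---is in fact the crux of the correct argument, but no Chernoff bound or ``recurrence structure'' is needed: only the smallness of this static set from \eqref{c_6}, inserted into the volume differential inequality at each fixed time.
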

For example, the interaction functions considered above, namely $S(\theta)=-\sin\theta$ and $I(\theta)=I_n(\theta)$ or $I_r(\theta)$ where $I_n(\theta)=(1+\cos\theta)^n$ for $n\ge 1$ and $I_r(\theta)=\frac{(1-r)(1+\cos\theta)}{1-2r\cos\theta+r^2}$ for $r\in (-1,1)$, all satisfy \eqref{c_1}, \eqref{c_2}, \eqref{c_3}, \eqref{c_4}, \eqref{c_5}, and \eqref{c_6}.

Theorem \ref{themainthm-IS} can be summarized as follows.
\begin{theorem}
For a fixed frequency vector $\Omega\in \mathbb{R}^N$ and $2\pi$-periodic Lipschitz interaction functions $I$ and $S$ satisfying \eqref{c_1}, \eqref{c_2}, \eqref{c_3}, \eqref{c_4}, \eqref{c_5}, and \eqref{c_6}, we have
\[
\kappa_{\mathrm{pc}}(\Theta^0,\Omega,I,S)\le \max\left\{\frac{2(2c_2)^{p/q}}{c_1c_3}\cdot\frac{\|\Omega\|_\infty}{{I_*}^{1+\frac pq}},\frac{2}{c_1c_3(\pi-\alpha_0)^p}\cdot \frac{\|\Omega\|_\infty}{{I_*}}\right\},\quad\mathrm{~for~a.e.~}\Theta^0=\{\theta^0_1,\cdots,\theta^0_N\}\in \mathbb{R}^N.
\]
\end{theorem}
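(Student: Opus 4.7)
The plan is to reduce to Theorem~\ref{mainthm} via a divergence-based volumetric argument on the flow of \eqref{GenWinfree} on the torus $\mathbb{T}^N$. Since the assumed lower bound on $\kappa$ is a strict inequality with $I_*$ in the denominators, by continuity there exists $\tilde R \in (0, I_*)$ such that the same bound holds with $I_*$ replaced by $\tilde R$. If I can show that for Lebesgue a.e.\ $\Theta^0 \in \mathbb{T}^N$ there exists a time $t_0 \ge 0$ with $R(\Theta(t_0)) \ge \tilde R$, then Theorem~\ref{mainthm} applied to $\Theta(t_0)$ as a new initial datum yields $R(t) > c_3\tilde R/2$ and $\sup_{t\ge t_0}\theta_i(t) - \inf_{t\ge t_0}\theta_i(t) < 2\pi$ for all $t \ge t_0$ and all $i$, establishing both the uniform lower bound on the average influence and the oscillator death property, and in turn the bound $\kappa_{\mathrm{pc}}(\Theta^0, \Omega, I, S) \le \max\{\ldots\}$ for a.e.\ $\Theta^0$.

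A direct computation gives the divergence of the vector field $F$ associated to \eqref{GenWinfree}:
\[
\operatorname{div} F(\Theta) = \frac{\kappa}{N}\sum_{i=1}^N I'(\theta_i) S(\theta_i) + \kappa R(\Theta)\sum_{i=1}^N S'(\theta_i).
\]
By \eqref{c_5} the first sum is $\ge 0$, and by \eqref{c_4} the second is $\ge c_4 N(I_* - R)$, yielding the pointwise bound $\operatorname{div} F(\Theta) \ge \kappa c_4 N R(\Theta)(I_* - R(\Theta))$, which is nonnegative on $\{R \le I_*\}$ since $R \ge 0$ by \eqref{c_2}. Now consider the forward-invariant set $B \coloneqq \{\Theta^0 \in \mathbb{T}^N : R(\Phi_t(\Theta^0)) < \tilde R \text{ for all } t \ge 0\}$, where $\Phi_t$ is the flow of $F$, and set $W(\Theta^0, t) \coloneqq \int_0^t \operatorname{div} F(\Phi_s(\Theta^0))\,ds$. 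Then, for every $t \ge 0$,
\[
|B| \ge |\Phi_t(B)| = \int_B e^{W(\Theta^0, t)}\,d\Theta^0 \ge |B|,
\]
so equality holds throughout and the integrand (pointwise $\ge 1$) must equal $1$ a.e.\ on $B$. Taking countable intersection over rational $t$ and invoking continuity of $t \mapsto W(\Theta^0, t)$, I obtain $W(\Theta^0, t) = 0$ for all $t \ge 0$ and a.e.\ $\Theta^0 \in B$; differentiating in $t$ then gives $\operatorname{div} F(\Phi_t(\Theta^0)) \equiv 0$, hence $R(I_* - R) \equiv 0$ along these trajectories. Since $R < \tilde R < I_*$ on $B$-trajectories, this forces $R \equiv 0$. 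By \eqref{c_6}, $\{R = 0\}$ reduces to the single point $(\pi,\ldots,\pi) \bmod 2\pi$ of $\mathbb{T}^N$, at which $\dot\Theta = \Omega$ since $S(\pi) = 0$; except in the trivial case $\Omega = 0$, no trajectory can persist there, whence $|B| = 0$.

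The hardest part will be executing this volumetric step cleanly --- in particular the measure-theoretic details upgrading ``equality a.e.'' to ``identically along trajectories'' and ruling out trapping at the degenerate point $(\pi,\ldots,\pi)$, where my divergence lower bound collapses. A minor secondary issue is obtaining the \emph{strict} inequality $\liminf_{t\to\infty} R(t) > c_3 I_*/2$ claimed in Theorem~\ref{themainthm-IS}(a); this is handled by applying the strict inequality of Theorem~\ref{mainthm}(a) at the fixed intermediate value $\tilde R$, rather than via a naive limit $\tilde R \uparrow I_*$.
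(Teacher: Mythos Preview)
Your argument is correct and follows the same overall architecture as the paper --- compute the divergence lower bound $\operatorname{div} F \ge c_4\kappa N R(I_*-R)$ from \eqref{c_4}--\eqref{c_5}, use a volumetric argument to show that a.e.\ trajectory must eventually reach $\{R\ge \tilde R\}$ for any $\tilde R<I_*$, and then invoke Theorem~\ref{mainthm} at that time --- but your volumetric step is genuinely different from the paper's. The paper (Proposition~\ref{nodispersed-IS-qual}) works with the finite-horizon sets $U^{T}_{I_*-\delta}$, derives a Gr\"onwall-type differential inequality for $m(\Phi_t(U^T))$ by splitting off the small-$R$ region $U^0_\varepsilon$, solves it backward from time $T$, and passes to the limit $T\to\infty$ then $\varepsilon\to 0$. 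You instead use forward invariance of the infinite-horizon bad set $B$ to sandwich $|B|\ge |\Phi_t(B)|=\int_B e^W\ge |B|$, forcing the nonnegative divergence to vanish identically along a.e.\ $B$-trajectory; then $R(I_*-R)\equiv 0$ and $R<I_*$ pin $R\equiv 0$, and \eqref{c_6} collapses $\{R=0\}$ to a single point. Your route is more direct and elegant for the qualitative measure-zero conclusion; the paper's route has the advantage of yielding explicit decay rates in $T$, which it reuses for the quantitative Theorem~\ref{maincor-quant}.

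Two small remarks. First, your ``no trajectory can persist there'' step is a slight detour: once you know that a.e.\ $\Theta^0\in B$ satisfies $R(\Phi_t(\Theta^0))=0$ for all $t\ge 0$, evaluating at $t=0$ already gives $\Theta^0=(\pi,\dots,\pi)$, so $|B|=0$ regardless of whether $\Omega=0$. Second, both you and the paper are equally informal about the quantifier swap from ``for each $\kappa>M$, a.e.\ $\Theta^0$'' to ``a.e.\ $\Theta^0$, for all $\kappa>M$'' implicit in the $\kappa_{\mathrm{pc}}$ formulation; the paper itself presents this theorem as a direct summary of Theorem~\ref{themainthm-IS} without further comment.
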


With a further assumption on $I$, we can prove the following quantitative improvement as time flows, which is an analogue of Theorem \ref{sincosmaincor-time}.
\begin{theorem}\label{maincor-quant}
Assume we are given parameters $\{\omega_i\}_{i=1}^N$ and $\kappa$ and $2\pi$-periodic Lipschitz interaction functions $S$ and $I$ that satisfy the constraints \eqref{c_1}, \eqref{c_2}, \eqref{c_3}, \eqref{c_4}, \eqref{c_5}, and the following strenghtening of \eqref{c_6}:
\begin{equation}\label{c_7}
I(\theta)\ge c_5 (\pi-|\theta|)^r \mbox{ for }  \theta\in [-\pi,\pi],
\end{equation}
For each initial data $\{\theta_i^0\}_{i=1}^N$, denote by $\{\theta_i(t)\}_{i=1}^N$ the solution to \eqref{GenWinfree}.

If $\kappa$ is large enough so that
\begin{equation}\label{eq:kappa-large-maincor-quant}
\kappa>\max\left\{\frac{4(4c_2)^{p/q}}{c_1c_3}\cdot\frac{\|\Omega\|_\infty}{{I_*}^{1+\frac pq}},\frac{4}{c_1c_3(\pi-\alpha_0)^p}\cdot \frac{\|\Omega\|_\infty}{{I_*}}\right\},
\end{equation}
then
\begin{align*}
&m\left\{\{\theta_i^0\}_{i=1}^N\in [-\pi,\pi]^N:R(t)> \frac{c_3I_*}{4}~\forall t\ge T \mbox{ and }\sup_{t\ge T} \theta_i(t)-\inf_{t\ge T}\theta_i(t)<2\pi ~\forall i=1,\cdots,N\right\}\\
&\ge 1-\left(\exp\left(\frac{rI_*^2}{2\|I\|^2_{\sup}}\right)+\frac{c_4c_5\pi^r r^{r-1}}{\Gamma(1/r)^r(1+r/N)}\kappa N(I_*-\delta)T\right)^{-N/r}.
\end{align*}

If $\kappa$ is large enough so that
\begin{equation}\label{eq:kappa-large-maincor-quant-2}
    \kappa>\max\left\{\frac{4e(4ec_2)^{p/q}}{c_1c_3}\left(\frac{\Gamma(1/r)^r}{c_5\pi^r r^{r-1}}\right)^{1+p/q},\frac{4e\Gamma(1/r)^r}{c_1c_3c_5(\pi-\alpha_0)^p \pi^r r^{r-1}}\right\}\|\Omega\|_\infty,
\end{equation}
then
\begin{align*}
&m\left\{\{\theta_i^0\}_{i=1}^N\in [-\pi,\pi]^N:\sup_{t\ge T} \theta_i(t)-\inf_{t\ge T}\theta_i(t)<2\pi ~\forall i=1,\cdots,N\right\}\\
&\ge 1-
\left(\frac{\Gamma(1/r)}{c_5^{1/r}\pi r^{1-1/r}}\right)^N
\left(e^{-1}\min\left\{\left(\frac{c_1c_3}{2(2c_2)^{p/q}}\right)^{\frac{1}{1+p/q}}\left(\frac{\kappa}{\|\Omega\|_\infty}\right)^{\frac{1}{1+p/q}},\frac{c_1c_3(\pi-\alpha_0)^p}{2}\frac{\kappa}{\|\Omega\|_\infty}\right\}+\frac{c_4\kappa N I_*}{2(1+r/N)}T\right)^{-N/r}.
\end{align*}
\end{theorem}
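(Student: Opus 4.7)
My plan is to adapt the divergence-based volumetric argument underlying Theorem \ref{sincosmaincor-time} to the general interaction setting, using conditions \eqref{c_4}, \eqref{c_5}, \eqref{c_7} as substitutes for the ad hoc trigonometric computations available in the standard sinusoidal case. The core positivity input is direct: differentiating the Winfree vector field $F_i(\Theta) = \omega_i + \kappa R(\Theta) S(\theta_i)$ yields
\begin{equation*}
\nabla \cdot F \;=\; \kappa \sum_{i=1}^N \!\left[\tfrac 1N I'(\theta_i) S(\theta_i) + R\, S'(\theta_i)\right] \;\ge\; \kappa c_4 N R(I_* - R),
\end{equation*}
where \eqref{c_5} discards the first summand and \eqref{c_4} lower-bounds the second. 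Thus the flow is volume-expanding throughout $\{R<I_*\}$, with rate controlled from below by $R(I_*-R)$.

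For the first assertion, I would fix $\delta = c_3 I_*/4$ and consider the bad set $B_T = \{\Theta^0 \in [-\pi,\pi]^N : R(t) \le \delta \text{ for some } t \in [0,T]\}$. The hypothesis \eqref{eq:kappa-large-maincor-quant} is precisely what is required to apply Theorem \ref{mainthm} at the first time the trajectory achieves $R \ge I_*/2$, which then forces $R(s) > \delta$ and $\sup \theta_i - \inf \theta_i < 2\pi$ for all subsequent $s$; hence the exceptional set in the theorem is contained in $B_T$ modulo null sets. The bulk of the work is bounding $m(B_T)$. I would do this in two stages. First, using Liouville's formula $\tfrac{d}{dt}\log J_t = (\nabla \cdot F)(\Phi_t)$ applied on the torus $\mathbb{T}^N$, the forward flow of $B_T$ remains within $\{R \le \delta\}$ yet expands in volume at rate $\ge \kappa c_4 N \delta(I_*-\delta)$, yielding a differential inequality whose integration produces the linear-in-$T$ term in the bound. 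Second, the size of the ``receptacle'' $\{R \le s\}$ on $\mathbb{T}^N$ is controlled using \eqref{c_7}: since $I(\theta) \ge c_5(\pi-|\theta|)^r$,
\begin{equation*}
m(\{R \le s\}) \;\le\; \inf_{\lambda > 0} e^{\lambda N s} \left(\tfrac{1}{2\pi}\!\int_{-\pi}^\pi e^{-\lambda c_5(\pi-|\theta|)^r} d\theta\right)^{\!N} \;\le\; \left(\tfrac{\Gamma(1/r)}{\pi r^{1-1/r}}\right)^{\!N}\!\!\left(\tfrac{es}{c_5}\right)^{\!N/r},
\end{equation*}
where I would invoke the elementary asymptotic $\int_0^\pi e^{-\mu x^r}dx \le \Gamma(1/r)/(r\mu^{1/r})$ and optimize in $\lambda$. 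Combining this receptacle bound with the Liouville expansion and a Hoeffding tail bound for $R_0$ at $t=0$ (which accounts for the $\exp(rI_*^2/2\|I\|_\infty^2)$ term) produces the stated $(\cdots)^{-N/r}$ form.

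For the second assertion, the same template applies but the threshold $\delta$ is a free parameter optimized against $\kappa$: Theorem \ref{mainthm} requires $\kappa \gtrsim \delta^{-(1+p/q)}\|\Omega\|_\infty$ or $\kappa \gtrsim \delta^{-1}\|\Omega\|_\infty$ (cf.\ \eqref{gen_large_kappa}), so solving for the smallest admissible $\delta$ gives the minimum-of-two-terms structure appearing in the final bound, with \eqref{eq:kappa-large-maincor-quant-2} as the threshold ensuring that such a $\delta$ exists. The main obstacle, as in the sinusoidal case, is that a trajectory can re-enter and leave $\{R \le \delta\}$ multiple times; I would handle this by recasting $B_T$ via a first-passage formulation and running the Liouville estimate on the stopped flow, so that cumulative residence time in the bad region drives the exponential volume expansion, after which Theorem \ref{mainthm} controls the pathwise behavior post-escape. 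I expect this bookkeeping—separating pathwise control via Theorem \ref{mainthm} from measure-theoretic expansion via Liouville—to be the most delicate part, but no genuinely new analytic input beyond what is already in place for the standard model should be required.
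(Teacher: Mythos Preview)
Your skeleton is correct—divergence lower bound from \eqref{c_4}--\eqref{c_5}, Liouville expansion of the bad set, receptacle bound from \eqref{c_7}, and Theorem~\ref{mainthm} for pathwise control—and this is exactly the paper's approach. But two of the steps as you describe them do not work.

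First, the claimed expansion rate is wrong. On the forward images of the bad set you only know $R<\delta$, so $R(I_*-R)\ge R(I_*-\delta)$, \emph{not} $\delta(I_*-\delta)$; the pointwise divergence can be arbitrarily small when $R$ is near $0$. The paper fixes this by integrating $R$ via layer-cake:
\[
\int_{\Phi_t(U_\delta^T)} R\,dm \;=\;\int_0^\delta m\!\left(\Phi_t(U_\delta^T)\setminus U_s^{0,I,S}\right)ds
\;\ge\;\int_0^\delta\max\!\left\{0,\,m(\Phi_t(U_\delta^T))-C's^{N/r}\right\}ds,
\]
with $m(U_s^{0,I,S})\le C's^{N/r}$ coming from your receptacle bound. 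This produces the \emph{nonlinear} differential inequality $\tfrac{d}{dt}m\gtrsim m^{1+r/N}$, and it is the solution of \emph{that} ODE—not a linear one—that yields the $(\text{const}+CT)^{-N/r}$ form. Your ``two stages'' have to be interleaved pointwise in time, not run sequentially.

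Second, your bad set is misdefined and the re-entry worry is a non-issue. The correct bad set is $U_{I_*/2}^{T,I,S}=\{R(\Phi_t(\Theta^0))<I_*/2\text{ for \emph{all} }t\in[0,T]\}$ (not ``$R\le c_3I_*/4$ for \emph{some} $t$''). With this definition the forward images automatically sit in $\{R<I_*/2\}$, so re-entry never arises; and once a trajectory hits $R\ge I_*/2$, Theorem~\ref{mainthm} with $R_0=I_*/2$ under \eqref{eq:kappa-large-maincor-quant} gives $R(s)>c_3I_*/4$ and the oscillation bound for all later $s$. There is no stopped-flow bookkeeping to do. For the second assertion the paper does exactly what you propose: pick the smallest $\varepsilon_0$ for which \eqref{gen_large_kappa} holds at $R_0=\varepsilon_0$ and apply the same argument with $\delta=\varepsilon_0$.
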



\subsection{Partial oscillator death}\label{subsec:partialdeath}
In the proofs of Theorems \ref{generalbestsincosmainthm} and \ref{mainthm}, we will be able to deduce statements about the behavior of subpopulations. For $\mathcal{B}\subseteq\{1,\cdots,N\}$ we denote
\[
\Omega_\mathcal{B}\coloneqq (\omega_i)_{i\in \mathcal{B}},\quad \Theta_\mathcal{B}\coloneqq (\theta_i)_{i\in \mathcal{B}}.
\]
Likewise, we use $\|\cdot\|_\infty$ to denote the $\ell^\infty$-norm:
\[
\|\Omega_\mathcal{B}\|_\infty\coloneqq \max_{i\in \mathcal{B}} |\omega_i|,\quad \|\Theta_\mathcal{B}\|_\infty\coloneqq \max_{i\in \mathcal{B}} |\theta_i|.
\]
\begin{definition}
Let $\Theta=(\theta_1,\cdots,\theta_N)$ be a solution to the Winfree model \eqref{GenWinfree}. 
 For $\mathcal{B}\subseteq\{1,\cdots,N\}$, we say that the ensemble $\Theta$ exhibits $\mathcal{B}$-\emph{partial oscillator death} if the subensemble $\Theta_\mathcal{B}$ is bounded:
\[
\sup_{t\ge 0}\|\Theta_\mathcal{B}(t)\|_\infty<\infty.
\]

\end{definition}

We will use $\mathcal{A}$ and $\mathcal{B}$ to denote subsets of $\{1,\cdots,N\}$. By convention, we will have $\mathcal{A}\subset \mathcal{B}$, where $\mathcal{A}$ will denote some collection of oscillators that we intend to trap within a proper subinterval $(-\pi+\delta,\pi-\delta)$ of $[-\pi,\pi]$, and $\mathcal{B}$ will denote a larger collection of oscillators for which we wish to prove $\mathcal{B}$-partial oscillator death.

Our results on partial oscillator death are stated as follows.

\begin{proposition}[Criterion for partial oscillator death in \eqref{Winfree}]\label{bootstrap}
Let $\Theta=\Theta(t)$ be the solution to \eqref{Winfree} with parameters $\{\omega_i\}_{i=1}^N$, $\kappa$, and $\{\theta_i^0\}_{i=1}^N$. Suppose $0<\rho \le 2$ satisfies
\begin{equation}\label{partial_large_kappa}
\kappa>\frac{{\|\Omega_\mathcal{B}\|_\infty}}{\rho},
\end{equation}
and $\mathcal{A}\subseteq\mathcal{B}\subseteq \{1,\cdots,N\}$ satisfy
\begin{equation}\label{part_is_big}
\frac 1N\sum_{i\in \mathcal{A}} (1+\cos\theta_i^0)> \frac{2\rho}{1+\sqrt{1-\frac{{\|\Omega_\mathcal{B}\|_\infty}^2}{\kappa^2\rho^2}}}
\end{equation}
and
\begin{equation}\label{part_is_in_range}
\cos\theta_i^0\ge -\sqrt{1-\frac{{\|\Omega_\mathcal{B}\|_\infty}^2}{\kappa^2\rho^2}},\quad \forall i\in \mathcal{A}.
\end{equation}
Then
\begin{enumerate}[(a)]
\item $R(t)\ge \rho$ for all $t\ge 0$,
\item $\cos \theta_i(t)\ge-\sqrt{1-\frac{{\|\Omega_\mathcal{B}\|_\infty}^2}{\kappa^2 \rho^2}}$ for all $i\in \mathcal{A}$ and $t \ge 0$,
\item Let $i\in \mathcal{B}$, $0<\alpha< \sqrt{1-\frac{{\|\Omega_\mathcal{B}\|_\infty}^2}{\kappa^2 \rho^2}}$, and suppose there is a time $t_0\ge 0$ such that $\cos \theta_i(t_0)\ge-\alpha$. Then $\cos\theta_i(t)\ge -\alpha$ for $t\ge t_0$, and $\cos \theta_i(t)\ge \alpha$ for $t\ge t_0+\frac{\pi}{\kappa \rho \sqrt{1-\alpha^2}-{\|\Omega_\mathcal{B}\|_\infty}}$. In particular, $\liminf_{t\to\infty} \cos\theta_i(t)\ge \sqrt{1-\frac{{\|\Omega_\mathcal{B}\|_\infty}^2}{\kappa^2 \rho^2}}$.
\item For all $i\in \mathcal{B}$,
\[
\sup_{t\ge 0} \theta_i(t)-\inf_{t\ge 0}\theta_i(t)<2\pi.
\]
\item Let $i,j\in \mathcal{B}$, $0<\alpha< \sqrt{1-\frac{{\|\Omega_\mathcal{B}\|_\infty}^2}{\kappa^2 \rho^2}}$, and suppose there is a time $t_0\ge 0$ such that $\cos \theta_i(t_0)\ge-\alpha$ and $\cos \theta_j(t_0)\ge-\alpha$. If $\omega_i>\omega_j$, then after modulo $2\pi$ translations,
\[
\theta_i(t)-\theta_j(t)\le \frac{\omega_i-\omega_j}{\kappa\rho\alpha}+\pi\exp\left(-\kappa\rho\alpha\left(t-t_0-\frac{\pi}{\kappa \rho \sqrt{1-\alpha^2}-{\|\Omega_\mathcal{B}\|_\infty}}\right)\right)\mbox{ for }t\ge t_0+\frac{\pi}{\kappa \rho \sqrt{1-\alpha^2}-{\|\Omega_\mathcal{B}\|_\infty}},
\]
and
\begin{align*}
\theta_i(t)-\theta_j(t)\ge \frac{\omega_i-\omega_j}{2\kappa}-\frac{\omega_i-\omega_j}{2\kappa}\exp\left(-2\kappa\left(t-t_0-\frac{\pi}{\kappa \rho \sqrt{1-\alpha^2}-{\|\Omega_\mathcal{B}\|_\infty}}-\frac{\pi}{\omega_i-\omega_j}\right)\right)\\
\mbox{for }t\ge t_0+\frac{\pi}{\kappa \rho \sqrt{1-\alpha^2}-{\|\Omega_\mathcal{B}\|_\infty}}+\frac{\pi}{\omega_i-\omega_j}.
\end{align*}
If $\omega_i=\omega_j$, then after modulo $2\pi$ translations, $\lim_{t\to\infty}\left(\theta_i(t)-\theta_j(t)\right)=0$ exponentially:
\[
|\theta_i(t)-\theta_j(t)|\le \pi \exp\left(-\kappa\rho\alpha\left(t-t_0-\frac{\pi}{\kappa \rho \sqrt{1-\alpha^2}-{\|\Omega_\mathcal{B}\|_\infty}}\right)\right)\mbox{ for }t\ge t_0+\frac{\pi}{\kappa \rho \sqrt{1-\alpha^2}-{\|\Omega_\mathcal{B}\|_\infty}}.
\]
\item If $\mathcal{B}=\{1,\cdots,N\}$, then for all $i=1,\cdots,N$, the limit $\lim_{t\to\infty}\theta_i(t)$ exists, and
\[
\lim_{t\to\infty}\dot{\theta}_i(t)=0.
\]
\end{enumerate}
\end{proposition}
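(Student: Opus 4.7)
Set $\alpha_c := \sqrt{1 - \|\Omega_\mathcal{B}\|_\infty^2/(\kappa^2\rho^2)}$, which lies in $(0,1]$ by \eqref{partial_large_kappa}. The plan is to establish (a) and (b) together by a bootstrap argument anchored on the positive invariance of the arc $\{\cos\theta \ge -\alpha_c\}$, and then derive (c)--(f) from the trapping picture that (a) and (b) unlock. The core calculation is that along the boundary $\cos\theta_i = -\alpha_c$ one has $|\sin\theta_i| = \sqrt{1-\alpha_c^2}$, so
\[
\dot\theta_i \cdot \mathrm{sign}(\sin\theta_i) = \omega_i\,\mathrm{sign}(\sin\theta_i) - \kappa R \sqrt{1-\alpha_c^2} \le \|\Omega_\mathcal{B}\|_\infty - \kappa \rho \sqrt{1-\alpha_c^2} = 0
\]
whenever $R \ge \rho$ and $i \in \mathcal{B}$; more generally on $\{\cos\theta = -\alpha\}$ for any $\alpha \in [0, \alpha_c)$ the vector field points strictly inward with gap $\kappa\rho\sqrt{1-\alpha^2} - \|\Omega_\mathcal{B}\|_\infty > 0$.

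For the bootstrap of (a) and (b), I would define
\[
T^* := \sup\bigl\{T \ge 0 : R(t) \ge \rho \text{ and } \cos\theta_i(t) \ge -\alpha_c \text{ for all } i \in \mathcal{A},\ t \in [0,T]\bigr\}.
\]
By \eqref{part_is_big}--\eqref{part_is_in_range} and continuity, $T^* > 0$. Suppose $T^* < \infty$. The boundary calculation above rules out the constraint $\cos\theta_i \ge -\alpha_c$ failing first (the field is inward-pointing while $R \ge \rho$), so necessarily $R(T^*) = \rho$. I will derive a contradiction by showing $R(T^*) \ge R_\mathcal{A}(T^*) > \rho$, where $R_\mathcal{A}(t) := \frac{1}{N}\sum_{i\in\mathcal{A}}(1+\cos\theta_i(t))$. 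Differentiating,
\[
\dot R_\mathcal{A}(t) = \frac{1}{N}\sum_{i\in\mathcal{A}}\sin\theta_i\bigl(\kappa R\sin\theta_i - \omega_i\bigr),
\]
and splitting $\mathcal{A}$ into the annular subset $\{i : -\alpha_c \le \cos\theta_i < \alpha_c\}$ and the inner subset $\{i : \cos\theta_i \ge \alpha_c\}$, the summands from the annular subset are non-negative (since $|\sin\theta_i| \ge \sqrt{1-\alpha_c^2}$ gives $\kappa R|\sin\theta_i| \ge \|\Omega_\mathcal{B}\|_\infty \ge |\omega_i|$), while each inner oscillator contributes at least $1+\alpha_c$ to $NR_\mathcal{A}$. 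A careful accounting of how mass can migrate between the two subsets yields a uniform lower bound $R_\mathcal{A}(t) \ge \min\{R_\mathcal{A}(0),\ (1+\alpha_c)|\mathcal{A}|/N\}$ on $[0,T^*]$; both minima exceed $\rho$ by \eqref{part_is_big} (the first directly, the second via $2|\mathcal{A}|/N \ge R_\mathcal{A}(0) > 2\rho/(1+\alpha_c)$), giving the contradiction.

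With (a) and (b) secured, (c) follows from the same inward-pointing calculation with $\alpha$ in place of $\alpha_c$: the arc $\{\cos\theta \ge -\alpha\}$ is positively invariant, and on its annular part $\{-\alpha \le \cos\theta_i \le \alpha\}$ the dynamics obeys $|\dot\theta_i| \ge \kappa\rho\sqrt{1-\alpha^2} - \|\Omega_\mathcal{B}\|_\infty$ with definite sign pointing into $\{\cos\theta > \alpha\}$, whose complementary angular distance is at most $\pi$, yielding the stated traversal time. Statement (d) follows from (c) by noting each $i \in \mathcal{B}$ must enter $\{\cos\theta \ge -\alpha\}$ in finite time, since on its complement $|\dot\theta_i|$ has a signed lower bound away from the isolated unstable equilibria, so at most one half-revolution elapses before entry. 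Statement (e) follows by considering
\[
\frac{d}{dt}(\theta_i - \theta_j) = (\omega_i - \omega_j) - \kappa R(\sin\theta_i - \sin\theta_j)
\]
once both oscillators lie in $\{\cos\theta \ge \alpha\}$: the mean value identity $\sin\theta_i - \sin\theta_j = \cos\xi\cdot(\theta_i-\theta_j)$ with $\cos\xi \ge \alpha$ reduces this to a linear dissipative ODE, and the upper and lower estimates come from standard Grönwall-type arguments (using $|\sin\theta_i - \sin\theta_j| \le 2$ to control the transient for the lower bound). Finally, (f) is immediate from Proposition \ref{Loja} combined with the boundedness in (d).

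The principal technical step is the uniform lower bound $R_\mathcal{A}(t) \ge \min\{R_\mathcal{A}(0),\ (1+\alpha_c)|\mathcal{A}|/N\}$. The sign decomposition above shows annular oscillators only drive $R_\mathcal{A}$ upward, but mass can migrate between the annular and inner subsets: one must verify that whenever an inner oscillator crosses into the annular region (giving up its reservoir of $(1+\alpha_c)/N$), the ensuing annular motion feeds $R_\mathcal{A}$ enough to compensate. This is precisely where the factor $2/(1+\alpha_c)$ in \eqref{part_is_big} is calibrated, and executing this bookkeeping cleanly is the delicate heart of the argument.
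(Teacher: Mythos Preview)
Your overall bootstrap architecture matches the paper's, but the ``delicate heart'' you flag is precisely where the paper does something much simpler, and your proposed bound $R_\mathcal{A}(t) \ge \min\{R_\mathcal{A}(0),\,(1+\alpha_c)|\mathcal{A}|/N\}$ is both unproved and unnecessary. The paper avoids all mass-migration bookkeeping via a \emph{per-particle multiplicative} lower bound: while $R\ge\rho$ holds, one shows directly that for each $i\in\mathcal{A}$,
\[
1+\cos\theta_i(t)\;\ge\;\tfrac12(1+\alpha_c)\,(1+\cos\theta_i^0).
\]
This follows by a two-line case split. If $\cos\theta_i^0\le\alpha_c$, the computation $\frac{d}{dt}\cos\theta_i=\sin\theta_i(\kappa R\sin\theta_i-\omega_i)\ge 0$ in the band $\{-\alpha_c\le\cos\theta_i\le\alpha_c\}$ gives $\cos\theta_i(t)\ge\cos\theta_i^0$, and then $1+\cos\theta_i^0\ge\frac12(1+\alpha_c)(1+\cos\theta_i^0)$ trivially. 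If $\cos\theta_i^0>\alpha_c$, the same band argument shows $\cos\theta_i(t)\ge\alpha_c$ for all $t$, and then $1+\alpha_c\ge\frac12(1+\alpha_c)\cdot 2\ge\frac12(1+\alpha_c)(1+\cos\theta_i^0)$. Summing over $\mathcal{A}$ and invoking \eqref{part_is_big} yields immediately
\[
R(t)\ge R_\mathcal{A}(t)\ge \tfrac12(1+\alpha_c)\,R_\mathcal{A}(0)>\tfrac12(1+\alpha_c)\cdot\frac{2\rho}{1+\alpha_c}=\rho,
\]
which closes the bootstrap with no dynamic accounting whatsoever. The factor $2/(1+\alpha_c)$ in \eqref{part_is_big} is calibrated exactly for this multiplicative bound, not for your min-of-two-reservoirs inequality.

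There is also a gap in your sketch of (d): it is \emph{not} true that every $i\in\mathcal{B}$ must enter $\{\cos\theta\ge-\alpha\}$ in finite time. An oscillator with, say, $\omega_i=0$ sitting at $\theta_i\equiv\pi$ stays there forever; more generally one can approach the unstable equilibrium near $\pi$ asymptotically. The correct argument (as in the paper) is a disjunction: either $\theta_i$ never exits the short bad arc $\{\cos\theta<-\alpha_c\}$ (length $<2\pi$, done), or it exits at some first time $t^*$, after which part (c) traps it, and the union of the bad arc and the trapping arc still has length $<2\pi$.
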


\begin{proposition}[Criterion for partial oscillator death in \eqref{GenWinfree}]\label{general-pod}
Let $\Theta=\Theta(t)$ be the solution to \eqref{GenWinfree} with parameters $\{\omega_i\}_{i=1}^N$, $\kappa$, and initial data $\{\theta_i^0\}_{i=1}^N$, and let the $2\pi$-periodic Lipschitz functions $I$ and $S$ satisfy \eqref{c_1}, \eqref{c_2}, and \eqref{c_3}. Suppose $0<\rho\le \|I\|_{\mathrm{sup}}$ and $\mathcal{A}\subset \mathcal{B}\subset \{1,\cdots,N\}$ satisfy
\begin{equation}\label{pod-cond}
\frac 1N \sum_{i\in \mathcal{A}}I(\theta_i^0)> \frac{\rho}{c_3},\quad \kappa>\frac{\|\Omega_\mathcal{B}\|_\infty}{\rho c_1(\pi-\alpha_0)^p}
\end{equation}
and
\begin{equation}\label{pod-cond-2}
\theta_i^0\in \left[-\pi+\left(\frac{{\|\Omega_\mathcal{B}\|_\infty}}{\kappa \rho c_1}\right)^{1/p},\pi-\left(\frac{{\|\Omega_\mathcal{B}\|_\infty}}{\kappa \rho c_1}\right)^{1/p}\right],\quad \forall i \in \mathcal{A}.
\end{equation}
Then
\begin{enumerate}[(a)]
    \item $R(t)> \rho$  for $t\in [0,\infty)$,
    \item $\theta_i(t)\in [-\pi+\left(\frac{{\|\Omega_\mathcal{B}\|_\infty}}{\kappa \rho c_1}\right)^{1/p},\pi-\left(\frac{{\|\Omega_\mathcal{B}\|_\infty}}{\kappa \rho c_1}\right)^{1/p}]$ for $i\in \mathcal{A}$ and $t\ge 0$,
    \item $\sup_{t\ge 0} \theta_i(t)-\inf_{t\ge 0}\theta_i(t)<2\pi$ for $i\in \mathcal{B}$.
\end{enumerate}
\end{proposition}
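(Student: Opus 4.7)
The approach is a continuity/bootstrap argument adapting that of Proposition \ref{bootstrap} to the setting of general interaction functions. Set
\[
\delta\coloneqq\left(\frac{\|\Omega_\mathcal{B}\|_\infty}{\kappa\rho c_1}\right)^{1/p},
\]
so that \eqref{pod-cond} is equivalent to $\delta<\pi-\alpha_0$, and hence the power-law estimate \eqref{c_1} is available throughout $[\alpha_0,\pi-\delta]\cup[-\pi+\delta,-\alpha_0]$. Define the bootstrap time
\[
T^*\coloneqq\sup\left\{T\ge 0:R(t)>\rho\text{ and }\theta_i(t)\in[-\pi+\delta,\pi-\delta]\ \forall i\in\mathcal{A},\ \forall t\in[0,T]\right\}.
\]
By \eqref{pod-cond}, $R(0)\ge\frac{1}{N}\sum_{i\in\mathcal{A}}I(\theta_i^0)>\rho/c_3\ge\rho$ (using $c_3\le 1$), and \eqref{pod-cond-2} places the initial phases strictly inside the trapping interval, so $T^*>0$ by continuity. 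The goal is to show $T^*=\infty$.

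On $[0,T^*]$, evaluating \eqref{c_1} at $\theta_i=\pm(\pi-\delta)$ gives $|S(\theta_i)|\ge c_1\delta^p=\|\Omega_\mathcal{B}\|_\infty/(\kappa\rho)$ with the appropriate sign, and since $R>\rho$ one has $\dot\theta_i<0$ at $\theta_i=\pi-\delta$ and $\dot\theta_i>0$ at $\theta_i=-\pi+\delta$; thus for $i\in\mathcal{A}$, $\theta_i$ cannot exit $[-\pi+\delta,\pi-\delta]$. The same sign analysis applied throughout $[\alpha_0,\pi-\delta]\cup[-\pi+\delta,-\alpha_0]$ shows that $\dot\theta_i$ points toward $0$ whenever $|\theta_i(t)|>\alpha_0$, giving $|\theta_i(t)|\le\max\{|\theta_i^0|,\alpha_0\}$ for $i\in\mathcal{A}$. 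Then \eqref{c_3} applied with $\theta=\theta_i^0$ yields $I(\theta_i(t))\ge c_3 I(\theta_i^0)$, and summing, together with $I\ge 0$ from \eqref{c_2} on the remaining oscillators, delivers the strict bound
\[
R(t)\ge\frac{c_3}{N}\sum_{i\in\mathcal{A}}I(\theta_i^0)>\rho.
\]
Both defining inequalities of $T^*$ are therefore strict on the closed interval $[0,T^*]$, which rules out $T^*<\infty$ by continuity and proves (a) and (b).

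For (c), with $R(t)>\rho$ now globally valid, the $2\pi$-periodicity of $S$ promotes the boundary sign estimates to every translate: $\dot\theta_i<0$ whenever $\theta_i=\pi-\delta+2\pi k$, and $\dot\theta_i>0$ whenever $\theta_i=-\pi+\delta+2\pi k$, for each $k\in\mathbb{Z}$ and each $i\in\mathcal{B}$. These one-sided barriers make the \emph{good} intervals $(-\pi+\delta+2\pi k,\pi-\delta+2\pi k)$ forward-invariant, while from a \emph{bad} interval $(\pi-\delta+2\pi k,\pi+\delta+2\pi k)$ of length $2\delta$ the trajectory can escape through at most one side, after which it is permanently trapped in the adjacent good interval. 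A case analysis on the location of $\theta_i^0$---either already in a good interval, in which case the range of $\theta_i$ is at most $2\pi-2\delta$, or in a bad interval, in which case the strict sign at whichever barrier lies opposite the escape direction blocks the trajectory from approaching it---yields $\sup_{t\ge 0}\theta_i(t)-\inf_{t\ge 0}\theta_i(t)<2\pi$ in every case.

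The main technical point is the joint calibration of the single parameter $\delta$: it must be small enough so that \eqref{c_1} furnishes a usable sign at $\pi-\delta$ (equivalently, $\delta<\pi-\alpha_0$), yet the trapping interval $[-\pi+\delta,\pi-\delta]$ must be wide enough to absorb the initial phases of \eqref{pod-cond-2}, and \eqref{c_3} must transfer the lower bound on $I(\theta_i(\cdot))$ from time $0$ to positive times along the flow. The choice $\delta=(\|\Omega_\mathcal{B}\|_\infty/(\kappa\rho c_1))^{1/p}$ together with the hypothesis \eqref{pod-cond} is calibrated precisely so that all three requirements hold simultaneously; everything else is a transcription of the prototypical argument.
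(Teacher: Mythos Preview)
Your proof is correct and follows essentially the same approach as the paper's. The paper factors the argument through Lemma \ref{gen-a-priori} (the a priori estimate), bootstrapping only on $R(t)>\rho$ and invoking the lemma to get the interval trapping and the lower bound $I(\theta_i(t))\ge c_3 I(\theta_i^0)$; you instead inline that lemma and run a joint bootstrap on both conditions simultaneously, which is equivalent. Two small points of phrasing: \eqref{pod-cond-2} places the initial phases in the \emph{closed} interval $[-\pi+\delta,\pi-\delta]$, not strictly inside, but the boundary sign estimate still gives $T^*>0$; and in part (c), the reason the trajectory cannot have $\sup-\inf=2\pi$ when starting in a bad interval is not that the opposite barrier ``blocks approach'' (the sign there would actually push the trajectory out the other side), but rather that the first-exit time $t^*$ is by definition the first time either barrier is touched, so on $[0,t^*)$ the trajectory lies in the open bad interval and compactness of $[0,t^*]$ gives $\sup_{[0,t^*]}\theta_i<\pi+\delta$ strictly. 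This is exactly how the paper closes the argument.
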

The proofs of Propositions \ref{bootstrap} and \ref{general-pod} will be based on a bootstrapping method to prove a global lower bound on the order parameter or average influence $R$. We will prove Theorems \ref{bestsincosmainthm} and \ref{mainthm} by seeing that the conditions of Propositions \ref{bootstrap} and \ref{general-pod} are satisfied with $\mathcal{B}=\{1,\cdots,N\}$ for a suitable choice of $\mathcal{A}$.

\subsection{Equilibria of the prototypical Winfree model}\label{subsec:equilibria}
The asymptotic dynamics described in Corollary \ref{bestsincosmainthm} are optimal in the sense that for any $0<\rho\le 2$ there exists an equilibrium state of \eqref{Winfree} whose order parameter is roughly $\rho$.
\begin{theorem}\label{main_eq_thm}
Fix any $\rho\in (0,2]$. For any $N\ge 1$, $\kappa\neq 0$ and $\{\omega_i\}_{i=1}^N$ such that
\[
N\ge \frac{2}{\rho}, \quad \frac{\max_i |\omega_i|}{|\kappa|}<\frac{\rho^{1.5}}{16},
\]
there exists an equilibrium initial data $\{\theta_i^0\}_{i=1}^N$ to \eqref{Winfree} with order parameter $\in [\frac 14\rho,\frac 32 \rho]$.

Moreover, we can take $\{\theta_i^0\}_{i=1}^N$ so that there exists $1\le m\le N$ with $\frac{\rho}{4}<\frac mN \le \frac{\rho}{2}$ such that
\[
\frac{2|\omega_i|}{3\rho|\kappa| }\le|\theta_i^0|\le \frac{2\pi  |\omega_i|}{\rho |\kappa|},\quad\mbox{for } i=1,\cdots,m,
\]
and
\[
\frac{2|\omega_i|}{3\rho|\kappa|}\le|\theta_i^0-\pi|\le \frac{2\pi  |\omega_i|}{\rho |\kappa|},\quad\mbox{for } i=m+1,\cdots,N.
\]
\end{theorem}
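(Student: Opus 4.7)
The plan is a self-consistency argument for the order parameter. By the mean-field form \eqref{Winfree_orderparam}, any equilibrium with order parameter $R^*$ must satisfy $\omega_i = \kappa R^* \sin\theta_i^0$, and provided $|\omega_i| \le |\kappa| R^*$ this has exactly two solutions modulo $2\pi$, namely $\theta_i^0 = \sin^{-1}(\omega_i/(\kappa R^*))$ (close to $0$) or $\theta_i^0 = \pi - \sin^{-1}(\omega_i/(\kappa R^*))$ (close to $\pi$). Each ``near $0$'' oscillator contributes close to $2$ to $NR^*$, while each ``near $\pi$'' oscillator contributes close to $0$, so to engineer $R^* \approx \rho$ I place roughly $\rho N/2$ oscillators in the first group and the rest in the second group.

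Concretely, I first pick an integer $m$ with $\rho N/4 < m \le \rho N/2$; this is possible whenever $N \ge 2/\rho$ (for $2/\rho \le N < 4/\rho$ take $m=1$, and otherwise $m = \lfloor \rho N/2\rfloor$ works, since $\lfloor \rho N/2\rfloor \ge \rho N/2 - 1 > \rho N/4$ in that regime). After a harmless relabeling of the $\omega_i$, define, for each candidate $R \in [\rho/4, 3\rho/2]$,
\[
\theta_i(R) := \begin{cases} \sin^{-1}(\omega_i/(\kappa R)), & 1 \le i \le m, \\ \pi - \sin^{-1}(\omega_i/(\kappa R)), & m+1 \le i \le N, \end{cases}
\]
\[
F(R) := \frac{1}{N}\sum_{i=1}^N(1 + \cos\theta_i(R)) = 1 + \frac{1}{N}\sum_{i=1}^m \sqrt{1 - \tfrac{\omega_i^2}{\kappa^2 R^2}} - \frac{1}{N}\sum_{i=m+1}^N \sqrt{1 - \tfrac{\omega_i^2}{\kappa^2 R^2}}.
\]
Any fixed point $R^* = F(R^*)$ then produces an equilibrium $\Theta^0 := \Theta(R^*)$ with order parameter exactly $R^*$ and with the partition size $m$ as chosen.

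The second step is to show that $F$ continuously maps $[\rho/4, 3\rho/2]$ into itself; a fixed point in that interval then follows by the intermediate value theorem applied to $G(R) := F(R) - R$. Under the hypotheses, for $R \ge \rho/4$ the ratios obey $|\omega_i|/(|\kappa| R) < (\rho^{3/2}/16)(4/\rho) = \rho^{1/2}/4 \le \sqrt{2}/4 < 1$, so $F$ is well-defined and continuous on $[\rho/4, 3\rho/2]$. Using the elementary estimate $0 \le 1 - \sqrt{1-y^2} = \tfrac{y^2}{1+\sqrt{1-y^2}} \le y^2$ for $|y|\le 1$,
\[
\bigl|F(R) - \tfrac{2m}{N}\bigr| \le \frac{1}{N}\sum_{i=1}^N \frac{\omega_i^2}{\kappa^2 R^2} \le \frac{\max_i \omega_i^2}{\kappa^2 R^2} < \frac{\rho}{16},
\]
so $F(R) \in (7\rho/16, 17\rho/16) \subset [\rho/4, 3\rho/2]$; hence $G(\rho/4) > 0$ and $G(3\rho/2) < 0$, yielding $R^* \in [\rho/4, 3\rho/2]$ by IVT.

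Finally, the pointwise bounds on $\theta_i^0 := \theta_i(R^*)$ follow from the elementary inequalities $|x| \le |\sin^{-1}x| \le (\pi/2)|x|$ for $|x|\le 1$, combined with $R^* \in [\rho/4, 3\rho/2]$: for $1 \le i \le m$,
\[
\frac{2|\omega_i|}{3|\kappa|\rho} \le \frac{|\omega_i|}{|\kappa| R^*} \le |\theta_i^0| \le \frac{\pi|\omega_i|}{2|\kappa| R^*} \le \frac{2\pi|\omega_i|}{|\kappa|\rho},
\]
and analogously for $|\theta_i^0 - \pi|$ when $m+1 \le i \le N$. The only non-routine ingredient, and hence the main obstacle, is calibrating $F$ so that it maps the right interval to itself; this is exactly where the quantitative hypothesis $\max_i|\omega_i|/|\kappa| < \rho^{3/2}/16$ (together with the choice of $m$, which requires $N \ge 2/\rho$) is tailored.
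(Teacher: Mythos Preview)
Your proof is correct and follows essentially the same approach as the paper: both pick $m$ with $\rho/4 < m/N \le \rho/2$, set up the self-consistency equation $R = 1 + \frac{1}{N}\sum_j \sigma_j\sqrt{1-\omega_j^2/(\kappa^2 R^2)}$ with $\sigma_1=\cdots=\sigma_m=1$ and the rest $-1$, bound the deviation from $2m/N$ by $\max_i\omega_i^2/(\kappa^2 R^2)$ via $1-\sqrt{1-y^2}\le y^2$, and apply the intermediate value theorem. The paper packages the IVT step as a separate proposition working on $[\rho_0/2,3\rho_0/2]$ with $\rho_0=2m/N$, whereas you work directly on $[\rho/4,3\rho/2]$, but the calculations and thresholds are the same.
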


\begin{remark}\label{main_eq_thm_rmk}
By Theorem \ref{main_eq_thm}, Corollary \ref{bestsincosmainthm} is optimal in the sense that even if we raise $\kappa$ to arbitrary high values, there exist configurations such that (a) we cannot make the limiting value of $R(t)$ to be much larger than $R_0$, (c) we cannot bring closer to $0$ the oscillators initially close to $\pi$, which may make up a certain proportion of the oscillators, and (d) we cannot make the other oscillators closer to $0$ than the order of $\frac{|\omega_i|}{\kappa R_0}$. Furthermore, it is not hard to construct \emph{fixed} inital configurations that exhibit the aforementioned properties (a), (c), and (d) as we raise $\kappa$ to higher values; simply take a set $\mathcal{C}\subset\{1,\cdots,N\}$ large enough with
\[
\theta_i^0=\pi,~\omega_i=0,\quad i\in \mathcal{C},
\]
so that $\theta_i(t)=\pi$ for $t\ge 0$ and $i\in \mathcal{C}$, and $\sup_{t\ge 0}R(t)\le \frac{2(N-|\mathcal{C}|)}{N}$. It is not clear to us what happens when $\omega_i\neq 0$ for $i\in \mathcal{C}$; understanding this seems to be the main challenge in understanding oscillator death for generic initial data. We remark that for a fixed generic initial data where $\theta_i^0\not\equiv \pi \mod 2\pi$, as we raise $\kappa$ to higher values more and more oscillators will satisfy the conditions of Corollary \ref{bestsincosmainthm} (e) and thus will satisfy $\liminf_{\kappa\to\infty}\liminf_{t\to\infty}\cos\theta_i(t)=1$.
\end{remark}

In the proof of Theorem \ref{main_eq_thm} we will analyze the basic structure of the equilibria of \eqref{Winfree}. From this we will construct a polynomial $W(r)$ of degree at most $2^{N+1}$ with the property that if $r_0$ is a root of $W(r)$ of order $m$, then there can only be at most $m$ distinct equilibria of \eqref{Winfree} with order parameter $r_0$, where we say that two equilibria are distinct if they are distinct modulo $2\pi$. From this we obtain the following theorem.
\begin{theorem}\label{GlobalFiniteness}
For fixed data $(N,\{\omega_i\}_{i=1}^N,\kappa)$ with $\kappa\neq 0$, there are at most $2^{N+1}$ distinct (modulo $2\pi$) equilibria of the system \eqref{Winfree}.
\end{theorem}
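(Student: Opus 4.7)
The plan is to parametrize the equilibria of \eqref{Winfree} by pairs $(R,\epsilon)\in\mathbb{R}\times\{\pm 1\}^N$ consisting of an order-parameter value and a sign vector, and then construct a single univariate polynomial of degree at most $2^{N+1}$ whose root multiplicities dominate the number of such pairs.

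First, I would use the mean-field form \eqref{Winfree_orderparam}: at an equilibrium, $\omega_i=\kappa R\sin\theta_i$ for every $i$. Assuming $\kappa R\neq 0$ (the exceptional $R=0$ case forces $\theta_i\equiv\pi$ and contributes at most one equilibrium), this yields $\sin\theta_i=\omega_i/(\kappa R)$ and hence $\cos\theta_i=\epsilon_i\sqrt{1-\omega_i^2/(\kappa R)^2}$ for some sign $\epsilon_i\in\{\pm 1\}$ (chosen arbitrarily if $\cos\theta_i=0$). Substituting into $R=\frac{1}{N}\sum_j(1+\cos\theta_j)$ yields the self-consistency equation $f_\epsilon(R)=0$, where
\[
f_\epsilon(R):=N(R-1)-\sum_{j=1}^N\epsilon_j\sqrt{1-\omega_j^2/(\kappa R)^2}.
\]
Each pair $(R,\epsilon)$ solving this equation (with $|\kappa R|\ge\|\Omega\|_\infty$) recovers a unique $\Theta$ mod $2\pi$, so the number of distinct equilibria with order parameter $r_0$ is bounded above by the number of sign vectors $\epsilon$ satisfying $f_\epsilon(r_0)=0$.

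Next, I would form
\[
W(R):=R^{2^N}\prod_{\epsilon\in\{\pm 1\}^N}f_\epsilon(R).
\]
Because the product is invariant under each individual sign flip $\epsilon_j\mapsto -\epsilon_j$, its expansion depends on the square roots only through $u_j^2:=1-\omega_j^2/(\kappa R)^2=(\kappa^2R^2-\omega_j^2)/(\kappa R)^2$; viewing $P(A,u):=\prod_\epsilon(A-\sum_j\epsilon_ju_j)$ as a polynomial of total degree $2^N$ in $(A,u_1,\ldots,u_N)$, it is even in each $u_j$ and hence a polynomial in the $u_j^2$'s. A term-by-term degree count shows that each monomial $A^k\prod_j(u_j^2)^{\beta_j}$ with $k+2\sum\beta_j\le 2^N$ contributes, after substitution $A=N(R-1)$ and multiplication by the prefactor $R^{2^N}$, a polynomial in $R$ of degree $k+2^N\le 2^{N+1}$; hence $W$ is a polynomial of degree at most $2^{N+1}$, nontrivial since the $A^{2^N}$ term contributes leading coefficient $N^{2^N}$ to $R^{2^{N+1}}$.

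Finally, for each nonzero root $r_0$ of $W$ of multiplicity $m$, on a neighborhood of $r_0$ within the real-analyticity domain $|\kappa R|\ge\|\Omega\|_\infty$ of the $f_\epsilon$'s, the product $\prod_\epsilon f_\epsilon$ vanishes at $r_0$ to order exactly $m$ (since $R^{2^N}$ is nonzero there), and this order is at least the count of sign vectors $\epsilon$ with $f_\epsilon(r_0)=0$; by the first paragraph, there are at most $m$ distinct equilibria with order parameter $r_0$. The root $r_0=0$ contributes multiplicity at most $2^N$ and absorbs the at most one equilibrium with $R=0$, while roots outside the real-analyticity domain or complex roots give no equilibria. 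Summing these counts over all roots yields at most $\deg W\le 2^{N+1}$ distinct equilibria. The main technical obstacle is the degree bookkeeping for $W$ — verifying that after substituting the rational functions $u_j^2$ and clearing denominators via $R^{2^N}$ one really obtains a polynomial of degree $\le 2^{N+1}$, rather than a higher degree — which is a careful but routine symmetric-function calculation; a secondary subtlety is the degenerate case analysis when $\cos\theta_i=0$ (so $\epsilon_i$ is arbitrary) or $\omega_i=0$ (so $u_j\equiv 1$), both of which only inflate the count of pairs $(R,\epsilon)$ relative to the count of equilibria and thus preserve our upper bound.
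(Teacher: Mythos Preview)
Your approach is essentially the same as the paper's: parametrize equilibria by $(R,\epsilon)$, form the symmetrized product $W(R)=R^{2^N}\prod_\epsilon f_\epsilon(R)$, verify it is a polynomial of degree $2^{N+1}$, and bound the number of equilibria by the total root multiplicity. The degree bookkeeping is fine.

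However, there is a genuine gap in your multiplicity argument at the boundary point $r_0=\|\Omega\|_\infty/|\kappa|$. You assert that the order of vanishing of $\prod_\epsilon f_\epsilon$ at $r_0$ is at least the number of sign vectors $\epsilon$ with $f_\epsilon(r_0)=0$. This is true in the interior of the domain, where each $f_\epsilon$ is real-analytic, but it fails at the boundary. For instance, with $N=1$, $\omega_1=\kappa=1$, both $f_{+}(1)=f_{-}(1)=0$ (two sign vectors), yet $W(r)=r^4-2r^3+1$ has only a \emph{simple} zero at $r=1$. The individual factors $f_\epsilon(r)=(r-1)\mp\sqrt{1-1/r^2}$ each vanish to order $1/2$ there, not order $1$. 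Your dismissal of this case as merely ``inflating the count of pairs relative to equilibria'' misdiagnoses the problem: the issue is not that pairs overcount equilibria, but that the inequality $\#\{\epsilon:f_\epsilon(r_0)=0\}\le\operatorname{ord}_{r_0}W$ is itself false.

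The paper's fix is to group sign vectors into equivalence classes $[\Sigma]$ that differ only on the indices $i$ with $|\omega_i|=\|\Omega\|_\infty$; each class corresponds to exactly one equilibrium, and the partial product $Q(r,[\Sigma])=\prod_{T\in[\Sigma]}P(r,T)$ is, by the same symmetrization lemma applied to the degenerate square roots, a genuine analytic function near $r_0$ that vanishes there. Now $W$ factors as $r^{2^N}$ times a product of such $Q$'s, and the $m$ vanishing classes contribute multiplicity at least $m$. You should replace your boundary hand-wave with this argument.
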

For ease of reference, we will call the above proof method a \emph{polynomial description} of the equilibria of \eqref{Winfree}.


\section{Previous work and directions for possible future research}\label{sec:history}

In this section, we discuss previous works on the Winfree model \eqref{GenWinfree} from the standpoint of dynamical systems of ordinary differential equations, and highlight the context of our theorems and their significance. We then relate the Winfree model to the Kuramoto model, in which Ha and the author \cite{ha2020asymptotic} obtained results analogous to Theorem \ref{themainthm} and Corollary \ref{bestsincosmainthm}. During these discussions, we will suggest some questions for future research.

Those who are interested only in the proof of our results may skip to the next section.

\subsection{Oscillator death versus oscillator locking}

\subsubsection{Winfree's original work: a perturbative linear periodic system}
In 1967, Winfree \cite{winfree1967biological} proposed the mathematical model \eqref{GenWinfree} to describe the behavior of weakly coupled biological oscillators such as the pacemaker cells in the human heart\footnote{Under this interpretation, oscillator death can be viewed as cardiac arrest.} or the synchronous flashing of fireflies \cite{buck1966biology}; see \cite{pikovsky2001synchronization,winfree2002emerging} for more real-world applications. The modeling process can be described as follows. Individually, under the absence of other oscillators, each ($2\pi$-periodic) oscillator $\theta_i$ would undergo uniform motion at the rate of its intrinsic frequency $\omega_i$. Under the presence of other oscillators, however, each oscillator feels a frequency perturbation determined by the other oscillators' phases. The interaction between the oscillators happens via a \textit{mean-field} $R$ where each oscillator contributes by the amount $I(\theta)$ determined by its phase $\theta$, and the mean-field $R$ is given as the average of all contributions:
\[
R(\Theta)=\frac 1N \sum_{j=1}^N I(\theta_j).
\]
The strength of the mean-field is denoted by the multiplicative factor of the coupling strength $\kappa$.
Each oscillator reacts to the mean-field with sensitivity $S(\theta)$ depending on its phase $\theta$.
From this we derive the Winfree model \eqref{GenWinfree}. One could modify \eqref{GenWinfree} to incorporate more realistic behavior. For example, one could consider different connection networks (other than the all-to-all uniform connection of \eqref{GenWinfree}), time-delayed interactions, or frustrations, which could arise from the attenuation of the influence $I$ due to some spatial distribution of the oscillators. We will not pursue these directions in this paper.

Winfree considers \eqref{GenWinfree} to be a perturbation of a periodic system, by assuming that the frequencies $\omega_i$ are concentrated around a positive value $\omega_{\mathrm{average}}$, $\kappa>0$ is very small compared to $\omega_{\mathrm{average}}$, and the phases $\theta_i$ are also concentrated around a moving average. Winfree informally takes the infinite $N$ limit to assume the phases and frequencies have a distribution. Under these assumptions, he describes the structure of the solution as a perturbation of the periodic solution $\theta_i(t)=\omega_{\mathrm{average}}t$ and computes some macroscopic quantities associated to the solution.

\subsubsection{Ariaratnam and Strogatz: rich nonlinear dynamics}
Ariaratnam and Strogatz \cite{ariaratnam2001phase} realized that the Winfree model in the form of the nonlinear ordinary differential equation of \eqref{GenWinfree} is much more versatile than Winfree's intention as a linear perturbation of a periodic system. As mentioned in the first section, after fixing $\kappa>0$ and $\gamma\in [0,1]$, they (nonrandomly) sampled $\omega_i$ according to the uniform distribution on $[1-\gamma,1+\gamma]$ and numerically simulated the prototypical model \eqref{Winfree}. They classified the asymptotic behavior of the model based on the rotation numbers $\rho_i=\lim_{t\to\infty}\theta_i(t)/t$, and observed that the shape of the distribution of the $\rho_i$'s did not depend on the initial data $\theta_i^0$. Based on their data, they drew a phase diagram for different $\gamma$ and $\kappa$ depicting different asymptotic behavior. They observed not only a complete locking regime ($\rho_i=\rho>0$ for all $i$ for some $\rho$) for small $\kappa$ and $\gamma$, which corresponds to Winfree's heuristics, but also a complete death regime ($\rho_i=0$ for all $i$) for large $\kappa$, a partial death regime ($\rho_i=0$ for some $i$) for large $\gamma$ and intermediate $\kappa$, a partial locking regime ($\rho_i=\rho$ for some $i$ for some $\rho$) for small $\kappa$ and intermediate $\gamma$, and an incoherence regime (other behavior) for small $\kappa$.

Intuitively, in the ordinary differential equation \eqref{Winfree_orderparam}, there is a competition between the linear first-order force $\omega_i$ that drives the system towards periodic motion, and the nonlinear first-order interaction force $-\kappa R \sin\theta_i$ that tries to keep each $\theta_i$ near $0$ modulo $2\pi$. Winfree's heuristics is that for small $\kappa>0$ and for $\omega_i$ concentrated near a positive quantity, the linear term $\omega_i$ dominates, while the nonlinear interaction term $-\kappa R \sin\theta_i$ simply moderates the small difference of the $\omega_i$'s among the oscillators, so that the collection of the oscillators rotates with a common frequency. If the $\kappa$ is very large so that the nonlinear interaction term dominates, each oscillator would stay bounded near $0$ and its intrinsic frequency $\omega_i$ would simply tell us how far away the oscillator stays away from $0$. If the frequency distribution is wide enough compared to the magnitude of either $\kappa$ or the mean frequency, then different asymptotic behavior will happen among the oscillators, and we will observe either partial death or partial locking, or other incoherent behavior.

According to the phase diagram of \cite{ariaratnam2001phase}, for a fixed distribution of oscillators, as one increases $\kappa$, the system \eqref{Winfree} experiences phase transitions between the regimes such as
\begin{align}\label{situation}
\begin{aligned}
\mbox{Incoherence} &\quad \Longrightarrow \quad \mbox{Partial death} \quad \Longrightarrow \quad \mbox{Complete death}\\
\mbox{Incoherence} &\quad \Longrightarrow \quad \mbox{Partial locking} \quad \Longrightarrow \quad \mbox{Complete death}\\
\mbox{Incoherence} &\quad \Longrightarrow \quad \mbox{Partial locking}\quad \Longrightarrow \quad \mbox{Complete locking} \quad \Longrightarrow \quad \mbox{Complete death}.
\end{aligned}
\end{align}
From this standpoint, this paper presents sufficient conditions for the transition towards partial death and complete death. 

\subsubsection{Rigorous nonlinear dynamics: oscillator death}\label{subsubsec:death}
Most of the well-known literature on the Winfree model seem to be either based on numerical simulations of the Winfree model or its variants such as \cite{ariaratnam2001phase,gallego2017synchronization}, or describe the infinite-$N$ limit \cite{angelini2004phase,basnarkov2009critical, gallego2017synchronization,quinn2007singular,pazo2020winfree} (there are also similar works on variants of the Winfree model, such as \cite{ha2021collective,o2016dynamics,pazo2020winfree,giannuzzi2007phase}). The numerical simulations are significant since they show that diverse asymptotic dynamics emerge for the Winfree model, while also showing that generally the asymptotic dynamics do not depend on the initial data. The works on the infinite-$N$ limit are significant since for example they tell us exactly where the phase transitions happen between the different regimes. However, there are only a handful of rigorous results in the finite-$N$ case. Ermentrout and Kopell \cite{ermentrout1990oscillator} extensively studied the oscillator death phenomenon for $N=2$, while the works \cite{ha2015emergence,ha2016emergent,ha2017emergence,oukil2017synchronization,oukil2019invariant}, although treating arbitrarily large $N$, possess the limitation that they require the initial data to be well-controlled.

A rigorous analysis and proof of oscillator death in the finite-$N$ case was given by Ha, Park, and the author in \cite{ha2015emergence}; see Theorem \ref{preciseCOD}. This work rigorously proves that for a fixed generic initial data the oscillator death phenomenon happens for large $\kappa$, and establishes a nonlinear stability statement for an equilibrium for \eqref{GenWinfree} and identifies its basin of attraction.

We remark that Theorem \ref{preciseCOD} deduces convergence of the system \eqref{GenWinfree} while only requiring $C^2$-regularity on $S$ and $I$, and not even requiring that $S=I'$. This suggests that it might be possible to deduce the convergence statement of Theorems \ref{themainthm} and \ref{bestsincosmainthm} without appealing to the {\L}ojasiewicz gradient theorem, and that under suitable assumptions we might be able to extend this to general interaction functions.

\begin{question}
Suppose $S$ and $I$ satisfy the conditions of Theorems \ref{mainthm} and \ref{preciseCOD}, and possibly some other regularity conditions, but not necessarily real analyticity. Assume \eqref{gen_large_kappa}. Can we conclude the convergence of $\Theta(t)$?
\end{question}

The subsequent work \cite{ha2016emergent} showed that a similar framework holds with general symmetric connectivity graphs. However, these works \cite{ha2015emergence,ha2016emergent} have the drawback that they require all of the initial data to lie on some proper subinterval of $[-\pi,\pi]$, on  which the sufficient coupling strength $\kappa$ depends. Thus, when one is sampling $\theta_i^0$ uniformly randomly from the entire interval $[-\pi,\pi]$, the $\kappa$ typically required by Theorem \ref{preciseCOD} will grow arbitrarily large as $N\to\infty$.

The significance of our Theorems \ref{bestsincosmainthm} and \ref{mainthm} is that they prove oscillator death while only requiring control on the order parameter. This is good news since the order parameter is a macroscopic quantity that is easier to control, as shown in Theorems \ref{themainthm}, \ref{sincosmaincor}, \ref{sincosmaincor-time}, \ref{maincor}, \ref{maincor-kappalarge}, \ref{themainthm-IS}, and \ref{maincor-quant}.

Although Corollary \ref{bestsincosmainthm} and Theorem \ref{mainthm} fail to address the case of initial data with very small order parameter, the initial data $\Theta^0$ such that $R^0$ is close to $0$ is unstable in the sense that the divergence of the flow \eqref{Winfree} is positive. Thus, most solutions to \eqref{Winfree} escape the small $R$ regime in finite time, after which we to apply Corollary \ref{bestsincosmainthm} or Theorem \ref{mainthm} to obtain Theorems \ref{themainthm}, \ref{sincosmaincor}, \ref{sincosmaincor-time}, \ref{maincor}, \ref{maincor-kappalarge}, \ref{themainthm-IS}, and \ref{maincor-quant}, namely oscillator death for Lebesgue a.e. initial data.

\begin{remark}
The reason we do not have a positive resolution of Conjecture \ref{conj:bdd} in this paper is that we use volumetric arguments to obtain Theorem \ref{themainthm}. The following is a na\"ive argument using Corollary \ref{bestsincosmainthm} which fails. If an $\Omega(1)$ proportion of the oscillators satisfy $|\omega_i|\gtrsim\|\Omega\|_\infty$, then we can see that there exists a time $t_0\ge 0$ at which $R(t_0)\gtrsim\frac{\|\Omega\|_\infty}{\kappa}$. Indeed, otherwise $R(t)\lesssim \frac{\|\Omega\|_\infty}{\kappa}$ for $t\ge 0$ and an $\Omega(1)$ proportion of the oscillators satisfy $\operatorname{sgn}\omega_i \cdot \dot{\theta_i}=|\omega_i|+O(\|\Omega\|_\infty)\gtrsim \|\Omega\|_\infty$, so that the time-average of $R(t)$ is $\Omega(1)$, a contradiction. However, the lower bound $R\gtrsim \frac{\|\Omega\|_\infty}{\kappa}$ falls short of the stronger requirement $R\gtrsim \left(\frac{\|\Omega\|_\infty}{\kappa}\right)^{2/3}$ to use Corollary \ref{bestsincosmainthm}.

\end{remark}
\begin{remark}
Conjecture \ref{conj:bdd} is true  if we relax the condition that $c$ should be a universal constant. Indeed, we can take $c=\frac{4N}{3\sqrt{3}}$ (see Proposition \ref{verytrivial}), in which case it is easy to see that $\kappa>c\max_i |\omega_i|$ implies $\dot{\theta_i}\le 0$ if $\theta_i\equiv \frac{\pi}{3}\mod2\pi$ and $\dot{\theta_i}\ge 0$ if $\theta_i\equiv -\frac{\pi}{3}\mod2\pi$, so that oscillator death occurs. Thus we know that such a $c$ exists; the question is whether it can be a universal constant independent of $N$.
\end{remark}

\subsubsection{Rigorous nonlinear dynamics: oscillator locking}

There are also rigorous works \cite{ha2017emergence,oukil2017synchronization,oukil2019invariant} on oscillator locking, which verify Winfree's original heuristics. These works prove the existence and nonlinear stability of traveling solutions to \eqref{Winfree} in the finite-$N$ regime, when $\kappa$ is small and the intrinsic frequencies are tightly concentrated around a fixed positive value. To be precise, we consider the following definitions.

\begin{definition}\label{def:locking}
Let $\Theta=(\theta_1,\cdots,\theta_N)$ be a solution to the Winfree model \eqref{GenWinfree}.
\begin{enumerate}[(a)]
\item We say that the ensemble $\Theta$ exhibits \emph{(complete) oscillator locking} if
\[
\sup_{t\ge 0}|\theta_i(t)-\theta_j(t)|<\infty,\quad \forall i,j=1,\cdots,N,
\]
and
\[
\lim_{t\to\infty}\frac{\theta_i(t)}{t}\neq 0,\quad \forall i=1,\cdots,N,
\]
where the latter statement does not necessarily assume the existence of the limit.
\item For $\mathcal{B}\subseteq\{1,\cdots,N\}$, we say that the ensemble $\Theta$ exhibits $\mathcal{B}$-\emph{partial oscillator locking} if the subensemble $\Theta_\mathcal{B}$ exhibits oscillator locking:
\[
\sup_{t\ge 0}|\theta_i(t)-\theta_j(t)|<\infty,\quad \forall i,j\in \mathcal{B},
\]
and
\[
\lim_{t\to\infty}\frac{\theta_i(t)}{t}\neq 0,\quad \forall i\in\mathcal{B}.
\]
\end{enumerate}
\end{definition}

The mechanism of oscillator locking in \cite{oukil2017synchronization,oukil2019invariant,ha2017emergence} is as follows. Suppose $\kappa$ is small, the $\omega_i$ are concentrated around a fixed positive quantity, say $\omega_i\in [1-\gamma,1+\gamma]$, and the $\theta_i^0$'s are concentrated on a small arc. Then the term $\omega_i\approx 1$ dominates in \eqref{GenWinfree}, and the entire population will rotate around the circle. One can compute how the infinitesimal distance between two oscillators change after one revolution, namely, if all the $\omega_i$'s are $1$, then the log of the ratio between the infinitesimal distances between $\theta_i$ and $\theta_j$ before and after one revolution is $\int_0^{2\pi}\frac{I(\theta)S'(\theta)}{1+\kappa I(\theta)S(\theta)}d\theta$. Thus, if
\begin{equation}\label{SyncHypo}
\int_0^{2\pi}\frac{I(\theta)S'(\theta)}{1+\kappa I(\theta)S(\theta)}d\theta<0
\end{equation}
(this is called the ``synchronization hypothesis'' in \cite{oukil2017synchronization}), the oscillators will become more concentrated after each revolution, and hence will stay close together for all time, exhibiting oscillator locking. The point of the works \cite{oukil2017synchronization,oukil2019invariant, ha2017emergence} is to ``nonlinearize'' the above ``linear'' analysis when $\kappa$, $\gamma$, and the diameter of the initial data are small.

More precisely, \cite{oukil2017synchronization} states that if \eqref{SyncHypo} holds, $\kappa$ and $\gamma$ are small, and the oscillators are initially close to each other, then they will stay close to each other for all time. In their sequel work \cite{oukil2019invariant} they obtain more refined stability properties of the solution operator under the same hypotheses. On the other hand, the work \cite{ha2017emergence} shows in the special case $S(\theta)=-\sin\theta$ and $I(\theta)=1+\cos\theta$ that if at least $\frac{4}{4+\pi}$ of the oscillators are concentrated on a small arc, then partial locking happens. It seems very likely that a version of this partial locking statement holds for general interaction functions $S$ and $I$ which satisfy \eqref{SyncHypo}.

\begin{question}[Rigorous partial locking for general $S$ and $I$]
Suppose the $2\pi$-periodic Lipschitz functions $S$ and $I$ satisfy \eqref{SyncHypo}. Do there exist $\rho,\gamma\in (0,1]$, and $\kappa_0,\Delta_1,\Delta_2>0$ such that if $\mathcal{A}\subset\mathcal{B}\subset\{1,\cdots,N\}$, $|\mathcal{A}|/N\ge\rho$, $\omega_i\in [1-\gamma,1+\gamma]$ for all $i\in \mathcal{B}$, $\operatorname{diam}\{\theta_i^0\}_{i\in \mathcal{A}}<\Delta_1$, and $0<\kappa<\kappa_0$, then $\operatorname{diam}\{\theta_i(t)\}_{i\in \mathcal{A}}<\Delta_2$ for all $t\ge 0$ and the ensemble $\Theta$ exhibits $\mathcal{B}$-partial oscillator locking?
\end{question}

There still remains the question whether (partial) locking states may emerge from generic initial data. For this, Winfree \cite{winfree1967biological} provides the following insight:
\begin{displayquote}
... if [$I(\theta)$] adds constructively to an entraining rhythm which is the sum of all the other [$I(\theta_i)$], and if the spectral density of oscillators is sufficiently great, then any infinitesimal periodicity in [$R$] is able to make a few oscillators coherent, which then sufficiently augment the influence rhythm to entrain a few more of nearby frequency. Like spontaneous combustion, the growth of [$R(\theta)$] is autocatalytic. The phase-locked nucleus expands along the frequency axis until it has engulfed a bandwidth under $N(f)$ that cannot further expand except by conscripting more oscillators to boost $[|R(\phi)|]$ than are available in the rarified population just beyond its frequency axis boundaries... \cite[p.27]{winfree1967biological}
\end{displayquote}
It would seem that new ideas are needed to elaborate on this intuition to show the emergence of (partial) locking from generic initial data.

Using the ideas of this paper along with the ideas from \cite{oukil2017synchronization,oukil2019invariant,ha2017emergence}, it may be possible to rigorously demonstrate the emergence of a mixture of partial death and partial locking, described below.

\begin{question}[Rigorous partial death with partial locking]\label{ques:deathlockingmixture}
For simplicity, let $S$ and $I$ be given as in \eqref{standard}. Do there exist $\rho_1,\rho_2,\gamma\in (0,1]$ with $\rho_1+\rho_2\le 1$, and $\kappa>0$ such that if $\mathcal{B},\mathcal{C}\subset\{1,\cdots,N\}$ are mutually disjoint subsets with $|\mathcal{B}|/N\ge \rho_1$, $|\mathcal{C}|/N\ge \rho_2$, $\omega_i\in [-\gamma,\gamma]$ for all $i\in \mathcal{B}$, and $\omega_i\in [1-\gamma,1+\gamma]$ for all $i\in \mathcal{C}$, then for some initial data $\Theta^0$, the solution $\Theta=\Theta(t)$ to \eqref{Winfree} exhibits $\mathcal{B}$-partial oscillator death and $\mathcal{C}$-partial oscillator locking?
\end{question}
Here is one idea to answer Question \ref{ques:deathlockingmixture} in the affirmative. As the ensemble $\mathcal{C}$ rotates around the unit circle at more or less the same speed, the average contribution of $\mathcal{C}$ to $R(t)$ will be positive. However, the proof of Corollary \ref{bestsincosmainthm} tells us that a \emph{pointwise} lower bound on $R$ can be exploited to prove oscillator death (see Lemmas \ref{gen-a-priori} and \ref{Simplest-a-priori}). One might be able to adapt the methodology of this paper to exploit an \emph{average} lower bound on $R$ (as opposed to a pointwise one) to show $\mathcal{B}$-partial oscillator death.

\subsection{Analogy with the Kuramoto model}\label{subsec:kuramoto}

The \textit{Kuramoto model} is the following initial value problem for the $N\ge 1$ variables $\theta_1,\cdots,\theta_N\in\mathbb{R}$, with the same parameters $\omega_1,\cdots,\omega_N\in\mathbb{R}$, $\kappa\in \mathbb{R}$, and $\theta_1^0,\cdots,\theta_N^0\in \mathbb{R}$ as in \eqref{Winfree}:
\begin{equation}\label{Ku}
\begin{cases}
\dot{\theta}_i(t)=\omega_i+\frac \kappa N \sum_{j=1}^N \sin (\theta_j(t)-\theta_i(t)),\quad t>0,\\
\theta_i(0)=\theta_i^0,
\end{cases}
\quad i=1,\cdots, N.
\end{equation}
This model was proposed by Kuramoto in \cite{kuramoto1975international,kuramoto2003chemical} as a variant of the Winfree model \eqref{Winfree} that is more amenable to mathematical analysis, namely he was able to demonstrate a phase-transition phenomenon when increasing $\kappa$ from $0$ to some finite value. This model has since found applications in Josephson junction arrays \cite{watanabe1994constants}; see \cite{acebron2005kuramoto,pikovsky2015dynamics,strogatz2000kuramoto} for an exposition on this topic.

Unlike the Winfree model, the Kuramoto model \eqref{Ku} has the conservation law
\[
\frac{d}{dt}\left[\sum_{i=1}^N \theta_i(t)-\left(\sum_{i=1}^N \omega_i\right) t\right]=0,
\]
and admits a corresponding $\mathrm{U}(1)$-symmetry, namely \eqref{Ku} is invariant under the Galilean transformation
\[
\omega_i\mapsto \omega_i-\nu,\quad \theta_i^0\mapsto \theta_i^0-\vartheta,\quad \theta_i(t)\mapsto \theta_i(t)-\nu t-\vartheta,
\]
for fixed $\nu,\vartheta\in\mathbb{R}$. Therefore, compared to the Winfree model \eqref{Winfree}, in the analysis of the Kuramoto model \eqref{Ku}, the diameter of the intrinsic frequencies $\omega_i$,
\[
D(\Omega)\coloneqq \max_{1\le i,j\le N} |\omega_i-\omega_j|,
\]
becomes the correct analogue of the maximum frequency $\max_i |\omega_i|$, and the notion of relative equilibria, namely solutions $\{\theta_i(t)\}_{i=1}^N$ such that
\[
\theta_i(t)-\theta_j(t)=\theta_i^0-\theta_j^0,\quad \forall i,j=1,\cdots,N,\quad \forall t\ge 0,
\]
becomes the correct analogue of the notion of (absolute) equilibria.

One can also introduce order parameters, namely $R\in [0,1]$, $\phi\in \mathbb{R}$ such that
\[
R e^{\mathrm{i}\phi}=\frac 1N \sum_{j=1}^N e^{\mathrm{i}\theta_j},
\]
which simplify the ODE of \eqref{Ku} to
\[
\dot\theta_i=\omega_i-\kappa R\sin(\theta_i-\phi)
\]
and which satisfy
\begin{equation}\label{R-canonical}
R=\frac 1N \sum_{j=1}^N \cos(\theta_j-\phi).
\end{equation}

One of the main motivations for this paper was to create analogues for the Winfree model of some results that were previously known for the Kuramoto model, stated as follows. More precisely, Corollary \ref{bestsincosmainthm} is the analogue of Theorem \ref{Kumainthm1}, and Theorem \ref{themainthm} is the analogue of Theorem \ref{Kumainthm2}.
\begin{theorem}[{\cite[Theorem 3.3]{ha2020asymptotic}}]\label{Kumainthm1}
Suppose that $\Theta^0$, $\Omega$ and $\kappa$ satisfy
\[
R_0 \coloneqq   R(\Theta^0) > 0, \quad \kappa >1.6\frac{D(\Omega)}{R_0^2},
\]
and let $\Theta = \Theta(t)$ be the solution to \eqref{Ku}. Then, the following assertions hold.
\begin{enumerate}[(a)]
\item \emph{Asymptotic complete phase-locking} occurs, namely
\[
\exists \lim_{t\to\infty}\left(\theta_i(t)-\theta_j(t)\right),~\mathrm{and}~\lim_{t\to\infty} \left(\dot{\theta}_i(t)-\dot{\theta}_j(t)\right)=0,\quad \forall i,j=1,\cdots,N.
\]

\item Let
\[
\gamma(R_0)=
\begin{cases}
0.5+\frac{0.35}{0.94}R_0& 0<R_0\le 0.94, \\
1-2.5(1-R_0)& 0.94<R_0\le 1.
\end{cases}
\]
Then there exists $\mathcal{A}\subset\{1,\cdots,N\}$ with $|\mathcal{A}|/N\ge \gamma(R_0)$ such that for all large enough time $t$, $\Theta_\mathcal{A}(t)$ lies on an arc of length $\le \frac{3\pi}{4(2\gamma(R_0)-1)}\frac{D(\Omega)}{\kappa}$, up to modulo $2\pi$-translations.

\item The aforementioned stable ensemble $\Theta_\mathcal{A}$ becomes ordered in accordance with its natural frequencies: if $i,j\in \mathcal{A}$ and $\omega_i\ge \omega_j$, then
\[
\frac{\omega_i-\omega_j}{\kappa}\le \liminf_{t\rightarrow\infty}[\theta_i(t)-\theta_j(t)]\le \limsup_{t\rightarrow\infty}[\theta_i(t)-\theta_j(t)]
\le c\frac{\omega_i-\omega_j}{\kappa}
\]
after modulo $2\pi$-translations, where the constant $c$ depends only on $\gamma(R_0)$ and $D(\Omega)/\kappa$.
\end{enumerate} 
\end{theorem}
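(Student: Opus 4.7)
The plan is to adapt the order-parameter bootstrapping strategy used for the Winfree analogue Corollary \ref{bestsincosmainthm}. First, I would recast \eqref{Ku} in the mean-field form
\[
\dot\theta_i=\omega_i-\kappa R\sin(\theta_i-\phi),
\]
and use the $\mathrm{U}(1)$-Galilean symmetry of \eqref{Ku} to pass to a frame where the midpoint of the spectrum $\{\omega_i\}$ is zero, so $|\omega_i|\le D(\Omega)/2$ for all $i$. Differentiating $Re^{\mathrm{i}\phi}=\tfrac1N\sum_j e^{\mathrm{i}\theta_j}$ produces
\[
\dot R=-\frac{1}{N}\sum_j\omega_j\sin(\theta_j-\phi)+\kappa R\cdot \frac{1}{N}\sum_j\sin^2(\theta_j-\phi),
\]
and Cauchy--Schwarz combined with the Jensen-type identity $\tfrac1N\sum_j\sin^2(\theta_j-\phi)\le 1-R^2$ (which follows from $\tfrac1N\sum_j\cos(\theta_j-\phi)=R$) yields a closed lower bound on $\dot R$ in terms of $R$ and the second moment $S_2\coloneqq\tfrac1N\sum_j\sin^2(\theta_j-\phi)$.

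Next comes the bootstrap. I would fix a threshold $R_*<R_0$ (tuned to optimize the constant) and set $T^*=\sup\{T>0:R(t)\ge R_*\text{ for all }t\in[0,T]\}$. Assuming for contradiction $T^*<\infty$, one has $R(T^*)=R_*$ and $\dot R(T^*)\le 0$. Feeding the hypothesis $\kappa>1.6\,D(\Omega)/R_0^{\,2}$ into the lower bound on $\dot R$, a direct computation forces $\dot R(T^*)>0$, giving the contradiction. The delicate point is that $S_2$ is not controlled by $R$ alone, so a two-sided tracking of the angular distribution (splitting oscillators by the size of $\cos(\theta_j-\phi)$) is required; this is the step from which the explicit constant $1.6$ emerges. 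The output is a uniform pointwise lower bound $R(t)\ge R_*>0$ for all $t\ge 0$.

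Once $R$ is bounded below, \eqref{Ku} becomes a pendulum equation in the co-rotating frame, and the restoring force $\kappa R\sin(\theta_i-\phi)$ dominates the detuning $|\omega_i-\bar\omega|\le D(\Omega)/2$. A standard barrier argument shows that the arc $\{|\theta_i-\phi|\le\alpha\}$ with $\sin\alpha\asymp D(\Omega)/(\kappa R_*)$ is eventually forward-invariant, and a counting estimate based on $R=\tfrac1N\sum_j\cos(\theta_j-\phi)\ge R_*$ lower-bounds the size of the set $\mathcal A$ of oscillators trapped there by $\gamma(R_0)N$ for the piecewise-linear $\gamma$ in the statement. Within this trap $\sin$ is monotone, so the differenced ODE
\[
\frac{d}{dt}(\theta_i-\theta_j)=(\omega_i-\omega_j)-\kappa R\bigl[\sin(\theta_i-\phi)-\sin(\theta_j-\phi)\bigr]
\]
has a contractive right-hand side for $i,j\in\mathcal A$, and Gr\"onwall's inequality yields exponential convergence of $\theta_i-\theta_j$ together with the ordering claim (c) with constant depending on $\gamma(R_0)$ and $D(\Omega)/\kappa$.

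Finally, for part (a) -- phase-locking of every pair, not merely those in $\mathcal A$ -- I would appeal to the gradient-flow structure of \eqref{Ku}: the Kuramoto ODE is the negative gradient flow of the real-analytic potential $V(\Theta)=-\sum_i\omega_i\theta_i-\tfrac{\kappa}{2N}\sum_{i,j}\cos(\theta_i-\theta_j)$, so once uniform boundedness of every difference $\theta_i-\theta_j$ has been established (for $i,j\in\mathcal A$ directly from the trap, and for $i\notin\mathcal A$ by observing that such an oscillator cannot lap a trapped neighbor without returning the system to a configuration with $R<R_*$), the {\L}ojasiewicz gradient theorem \cite{lojasiewicz1982trajectoires,van1993lyapunov,dong2013synchronization,ha2013formation} upgrades boundedness to convergence. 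The main obstacle in the entire program is the bootstrap step: since $\dot R$ depends genuinely on the full angular distribution through $S_2$ and not on $R$ alone, producing a clean contradiction at the first-crossing time requires supplementing the order-parameter bound with information about the shape of the phase histogram, and sharpening the resulting inequalities to extract the stated constant $1.6$ is where the real work lies.
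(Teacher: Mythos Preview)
This theorem is not proved in the present paper; it is quoted from \cite{ha2020asymptotic} for comparison with the Winfree results, so there is no proof here to compare your proposal against. What the paper does prove is the Winfree analogue (Theorem~\ref{generalbestsincosmainthm}, Corollary~\ref{bestsincosmainthm}), and that proof is structurally different from your sketch in one essential respect.

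Your bootstrap is a direct first-crossing argument on $R$ via the scalar inequality for $\dot R$. You correctly identify the obstacle: $\dot R$ depends on the full phase distribution through $S_2=\tfrac1N\sum_j\sin^2(\theta_j-\phi)$, and at the putative crossing time $R(T^*)=R_*$ there is no a priori lower bound on $S_2$. Your workaround---``supplementing the order-parameter bound with information about the shape of the phase histogram''---is exactly the missing ingredient, but the paper's Winfree proof shows that the natural way to supply it is \emph{not} to refine the $\dot R$ inequality. Instead one fixes, at time zero, a subset $\mathcal A$ of oscillators with $\cos(\theta_i^0-\phi)$ above a threshold, proves an a priori estimate (Lemma~\ref{Simplest-a-priori}) saying that each such $\theta_i$ stays trapped as long as $R\ge\rho$, and then closes the loop by observing that the trapped oscillators alone contribute more than $\rho$ to $R$. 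The bootstrap is on the invariant ``$R\ge\rho$ and $\Theta_{\mathcal A}$ is trapped'' jointly, not on $R$ alone. This sidesteps the $S_2$ degeneracy entirely, and the exponent in the coupling threshold (here $R_0^{-2}$ for Kuramoto, $R_0^{-3/2}$ for Winfree) drops out of the interplay between the trap width and the counting estimate \eqref{technical}, not from an ODE for $R$.

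Your argument for part~(a) for oscillators $i\notin\mathcal A$ also has a gap: the claim that such an oscillator ``cannot lap a trapped neighbor without returning the system to a configuration with $R<R_*$'' is not justified---a single oscillator's contribution to $R$ is $O(1/N)$, so one rogue oscillator can circulate freely without threatening the lower bound on $R$. In the Winfree proof this is handled by Lemma~\ref{simple-cor-1}(d): once $R\ge\rho$ globally, \emph{every} oscillator with $|\omega_i|<\kappa\rho$ is confined to an arc of length $<2\pi$ by a direct barrier argument on $\dot\theta_i$, independent of whether it lies in $\mathcal A$. The {\L}ojasiewicz step then applies as you describe.
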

\begin{theorem}[{\cite[Theorem 3.2]{ha2020asymptotic}}]\label{Kumainthm2}
Suppose that $\Omega$ and $\kappa$ satisfy
\[
\kappa >1.6N D(\Omega),
\]
Then for Lebesgue almost every inital data $\Theta^0$, the solution $\Theta = \Theta(t)$ to \eqref{Ku} with initial data $\Theta^0$. Then, the following assertions hold.
\begin{enumerate}[(a)]
\item \emph{Asymptotic complete phase-locking} occurs, namely
\[
\exists \lim_{t\to\infty}\left(\theta_i(t)-\theta_j(t)\right),~\mathrm{and}~\lim_{t\to\infty} \left(\dot{\theta}_i(t)-\dot{\theta}_j(t)\right)=0,\quad \forall i,j=1,\cdots,N.
\]

\item Let
\[
\gamma_N=
0.5+\frac{0.35}{0.94\sqrt{N}}
\]
Then there exists $\mathcal{A}\subset\{1,\cdots,N\}$ with $|\mathcal{A}|/N\ge \gamma_N$ such that for all large enough time $t$, $\Theta_\mathcal{A}(t)$ lies on an arc of length $\le \frac{3\pi}{4(2\gamma_N-1)}\frac{D(\Omega)}{\kappa}$, up to modulo $2\pi$-translations.

\item The aforementioned stable ensemble $\Theta_\mathcal{A}$ becomes ordered in accordance with its natural frequencies: if $i,j\in \mathcal{A}$ and $\nu_i\ge \nu_j$, then
\[
\frac{\nu_i-\nu_j}{\kappa}\le \liminf_{t\rightarrow\infty}[\theta_i(t)-\theta_j(t)]\le \limsup_{t\rightarrow\infty}[\theta_i(t)-\theta_j(t)]
\le c\frac{\nu_i-\nu_j}{\kappa}
\]
after modulo $2\pi$-translations, where the constant $c$ depends only on $\gamma_N$ and $D(\Omega)/\kappa$.
\end{enumerate} 
\end{theorem}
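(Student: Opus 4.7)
\emph{Strategy.} The plan is to derive Theorem~\ref{Kumainthm2} from Theorem~\ref{Kumainthm1} via a volumetric argument on the initial data, in direct analogy with the way Theorem~\ref{themainthm} is derived from Corollary~\ref{bestsincosmainthm} in the Winfree setting. The parameter match is exact upon substituting $R_0 = 1/\sqrt{N}$ into Theorem~\ref{Kumainthm1}: one obtains $\gamma(R_0) = \gamma_N$ and the coupling threshold $1.6\,D(\Omega)/R_0^2 = 1.6\,ND(\Omega)$, which is the hypothesis of Theorem~\ref{Kumainthm2}. Hence it suffices to prove that for Lebesgue-a.e.\ $\Theta^0 \in \mathbb{R}^N$, there is a finite time $t^{*} = t^{*}(\Theta^0) \ge 0$ at which $R(t^{*}) \ge 1/\sqrt{N}$, since Theorem~\ref{Kumainthm1} applied with shifted initial datum $\Theta(t^{*})$ then yields all three conclusions (asymptotic phase-locking, concentration of the stable subensemble $\Theta_\mathcal{A}$ on a short arc, and frequency-ordered asymptotics).

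\emph{Core computation.} The measure-theoretic claim reduces to showing that $E := \{\Theta^0 \in [-\pi,\pi]^N : R(t,\Theta^0) < 1/\sqrt{N}\;\forall t \ge 0\}$ is Lebesgue-null. A direct computation gives the divergence of the Kuramoto vector field as
\[
\operatorname{div}(\dot\Theta)(\Theta) = -\frac{\kappa}{N}\sum_{i,j=1}^{N}\cos(\theta_j - \theta_i) = -\kappa N R(\Theta)^2,
\]
which is non-positive but of magnitude bounded by $\kappa$ on $\{R < 1/\sqrt{N}\}$. Complementing this, after the harmless Galilean shift $\omega_j \mapsto \omega_j - \bar\omega$ (where $\bar\omega = N^{-1}\sum_j \omega_j$, preserving $D(\Omega)$), the mean-field form $\dot{\theta}_j = \omega_j - \kappa R \sin(\theta_j - \phi)$ with $\phi = \arg\bigl(\tfrac{1}{N}\sum_j e^{i\theta_j}\bigr)$ yields the key ODE
\[
\frac{d}{dt}R^2 = \frac{2\kappa R^2}{N}\sum_{j=1}^{N}\sin^2(\theta_j - \phi) \;-\; \frac{2R}{N}\sum_{j=1}^{N}\omega_j \sin(\theta_j - \phi).
\]
The first term is nonnegative and encodes a coherence-boosting drift, while the second is bounded in absolute value by $R\,D(\Omega)$.

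\emph{Main obstacle.} The main difficulty is that Kuramoto's $\mathrm{U}(1)$ gauge symmetry forces $\operatorname{div}(\dot\Theta) \le 0$ everywhere, so the positive-divergence volumetric argument of Section~\ref{sec:large_deviations} cannot be imported verbatim. I would circumvent this by either (i) reducing to the quotient $\mathbb{T}^N/\mathrm{U}(1)$ via an explicit gauge-fixing such as $\phi \equiv 0$ (or $\theta_1 \equiv 0$), under which the effective divergence acquires extra terms from the change of variables and can be arranged to be strictly positive on $\{R < 1/\sqrt{N}\}$ when $\kappa > 1.6\,N D(\Omega)$; or (ii) working with a weighted volume $w(R)\,dV$ and choosing $w$ so that the effective divergence $\operatorname{div}(\dot\Theta) + \dot\Theta \cdot \nabla \log w$ becomes strictly positive on the low-coherence region, with the $\tfrac{d}{dt}R^2$ identity above as the main input. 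In either route, the coupling threshold $\kappa > 1.6\,ND(\Omega)$ should appear precisely where the coherence-boosting term in the $\tfrac{d}{dt}R^2$ identity strictly dominates the frequency-spread term on $\{R < 1/\sqrt{N}\}$, reducing the remaining task to a volumetric bookkeeping estimate analogous to those carried out in Section~\ref{sec:large_deviations}.
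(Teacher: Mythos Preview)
Your overall strategy is right and matches what the paper indicates: Theorem~\ref{Kumainthm2} is quoted here from \cite{ha2020asymptotic} rather than proved, but the surrounding discussion (and the remark that Section~\ref{sec:large_deviations} is inspired by \cite[Section~6]{ha2020asymptotic}) confirms that the intended derivation is exactly to run a divergence/volumetric argument to force $R(t)\ge 1/\sqrt{N}$ at some finite time and then invoke Theorem~\ref{Kumainthm1} with $R_0=1/\sqrt{N}$.

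However, your divergence computation is wrong, and this makes the ``main obstacle'' you describe disappear entirely. In the Kuramoto field $F_i=\omega_i+\tfrac{\kappa}{N}\sum_j\sin(\theta_j-\theta_i)$ the diagonal term $j=i$ is identically zero, so its $\theta_i$-derivative is~$0$, not $-\cos 0$. Correcting this,
\[
\nabla\cdot F \;=\; -\frac{\kappa}{N}\sum_{i}\sum_{j\ne i}\cos(\theta_j-\theta_i)
\;=\; -\frac{\kappa}{N}\Big(\sum_{i,j}\cos(\theta_j-\theta_i)-N\Big)
\;=\; \kappa\big(1-NR^2\big).
\]
Thus the divergence is \emph{strictly positive} precisely on $\{R<1/\sqrt{N}\}$, and the volumetric argument of Section~\ref{sec:large_deviations} transfers verbatim (indeed, that section was modeled on this very computation). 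There is no need to pass to the $\mathrm{U}(1)$ quotient or to introduce a weighted measure; the set $\{\Theta^0: R(\Phi_t\Theta^0)<1/\sqrt{N}\ \forall t\ge 0\}$ is null for exactly the same reason as in Proposition~\ref{nodispersed}. This also explains transparently why the threshold $1/\sqrt{N}$ (and hence the factor $N$ in $\kappa>1.6\,N D(\Omega)$) appears: it is the zero of $1-NR^2$.
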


The connection between the Winfree model and the Kuramoto model is that their ODEs are both equations of the form
\begin{equation}\label{GenKu0}
\dot\theta_i=\omega_i+A\cos\theta_i+B\sin\theta_i,\quad i=1,\cdots,N,
\end{equation}
or alternatively of the form
\begin{equation}\label{GenKu}
\begin{cases}
\dot{\theta}_i(t)=\omega_i(\Theta(t))+ \kappa R(t) \sin (\phi(t)-\theta_i(t)),\quad t>0,\\
\theta_i(0)=\theta_i^0,
\end{cases}
\quad i=1,\cdots, N.
\end{equation}
with $\omega_i(t)=\omega(\Theta(t))$, $A(t)=A(\Theta(t))$, $B(t)=B(\Theta(t))$, $R(t)=R(\Theta(t))$, $\phi=\phi(\Theta(t))$ being functions of $\Theta$.

The equations \eqref{GenKu0} and \eqref{GenKu} were shown to be highly integrable by Watanabe and Strogatz \cite{watanabe1994constants} in the case $\omega_i$ are identical. See also Chen, Engelbrecht, and Mirollo \cite{chen2017hyperbolic} for a formulation of \eqref{GenKu} in terms of a flow on the hyperbolic disc, and the work of Ha, Kim, Park and the author \cite{ha2021constants} for higher-dimensional generalizations of the Watanabe-Strogatz constants of motion.

The sinusoidal Winfree model \eqref{Winfree} can be viewed as a special case of the general Kuramoto model \eqref{GenKu}, where we consider $2N$ oscillators $\theta_1,\cdots,\theta_{2N}$ with
\[
\omega_{N+1}=\cdots=\omega_{2N}=0, \quad \theta_{N+1}^0=\cdots=\theta_{2N}^0=0
\]
and
\[
\phi=0,\quad R=\frac 1N\sum_{j=1}^{2N} \cos(\phi-\theta_j).
\]
Note that $R$ is of the form \eqref{R-canonical}, up to the factor $2$. Exactly half of the oscillators are fixed at 0 and are ``beckoning'' the other half to come close to 0. 

Compared to the Kuramoto model, the Winfree model \eqref{Winfree} has the $\mathrm{U}(1)$ symmetry broken; it appears that this makes the analysis of complete oscillator death relatively easier than that of asymptotic complete phase-locking in the Kuramoto model, but makes the analysis of complete locking harder. In fact, when comparing Corollary \ref{bestsincosmainthm} versus Theorem \ref{Kumainthm1}, we see that Corollary \ref{bestsincosmainthm} requires the weaker lower bound ${R_0}^{-1.5}$ on $\kappa$ than ${R_0}^{-2}$ of Theorem \ref{Kumainthm1}, while controlling a comparable number of oscillators (Corollary \ref{bestsincosmainthm} guarantees the behavior of $O(R_0)$ of the oscillators and $N$ ``virtual'' oscillators described above, and so in total guarantees the behavior of $\frac 12+O(R_0)$ oscillators, just as in Theorem \ref{Kumainthm1}). Also, comparing Theorem \ref{themainthm} versus Theorem \ref{Kumainthm2}, we can see that we only require a weaker condition $\kappa>2\|\Omega\|_\infty$ in Theorem \ref{themainthm} while we require the somewhat unreasonably large condition $\kappa>1.6ND(\Omega)$ in Theorem \ref{Kumainthm2}; this is because the volumetric argument using the divergence which gives Theorem \ref{themainthm} from Corollary \ref{bestsincosmainthm} and Theorem \ref{Kumainthm2} from Theorem \ref{Kumainthm1} tells us that the order parameter $R$ reaches the universal constant $1$ for the Winfree model while it reaches the small amount $\frac{1}{\sqrt{N}}$ for the Kuramoto model.

Due to the similarity of Corollary \ref{bestsincosmainthm} and Theorem \ref{Kumainthm1}, it seems natural to pose the following question.
\begin{question}\label{Ku-Wi}
For which functions $\phi$ and $R$ with \eqref{R-canonical} do there exist a constant $c_1$ and exponent $p>0$ such that if $\Theta=\Theta(t)$ is the solution to \eqref{GenKu} with $\omega_i(\Theta)=\omega_i\in \mathbb{R}$, $R_0=R(\Theta^0)>0$, and $\kappa>c_1\frac{\max_i |\omega_i|}{R_0^p}$, then
\[
\sup_{t\ge 0} |\theta_i(t)-\theta_j(t)|<\infty,\quad i,j=1,\cdots, N
\]
holds? If so, is there a constant $c_2$ and an exponent $q>0$ such that there exists a subset $\mathcal{A}\subset \{1,\cdots,N\}$ with $|\mathcal{A}|/N\ge \frac 12+c_2R_0^q$ such that
\[
\limsup_{t\to\infty}|\theta_i(t)-\theta_j(t)|\lesssim_{R_0} \frac{\max_i |\omega_i|}{\kappa}
\]
holds after modulo $2\pi$ translations? What about $\phi$ and $R$ which do not satisfy \eqref{R-canonical}?
\end{question}



\section{An order parameter bootstrapping argument}\label{sec:bootstrap}

The purpose of this section is to prove Theorems \ref{generalbestsincosmainthm} and \ref{mainthm}, Corollary \ref{bestsincosmainthm}, and Propositions \ref{bootstrap} and \ref{general-pod}. Roughly, the proof idea is as follows. Choosing a suitable $\rho\in (0,R_0)$, we will find a subpopulation $\Theta_\mathcal{B}$ that we have some control over, in the sense that for all sufficiently small times $t\ge 0$ we have $R(t)\ge \frac 1N \sum_{i\in \mathcal{B}}I(\theta_i)>\rho$. By ``continuous induction'', we will be able to guarantee this property for all times $t\ge 0$.

We first prove toy versions of Theorem \ref{mainthm}, which tell us why it is important to have a uniform-in-time lower bound on the order parameter when proving oscillator death. We then prove Theorem \ref{mainthm} itself, which is a version of Corollary \ref{bestsincosmainthm} (which in turn is a consequence of Theorem \ref{generalbestsincosmainthm}) for more general interaction functions, before proving Theorem \ref{generalbestsincosmainthm}, because the argument is simpler in this more general setting and so it is easier to see how the bootstrapping proof works and why we get the exponent $1+\frac pq$ in Theorem \ref{mainthm}. We will then prove Theorem \ref{generalbestsincosmainthm} by specializing this proof method to the system \eqref{Winfree} while optimizing the argument to gain some control on the constants involved.

We will also obtain criteria for partial oscillator death in this section, namely we will prove Propositions \ref{bootstrap} and \ref{general-pod}. Complete oscillator death will be proven by an application of these criteria.

We will prove the toy versions of Theorem \ref{mainthm} in subsection \ref{subsec:toy}, Theorem \ref{mainthm} and Proposition \ref{general-pod} in subsection \ref{subsec:mainthm}, and Theorem \ref{generalbestsincosmainthm}, Corollary \ref{bestsincosmainthm}, and Proposition \ref{bootstrap} in subsection \ref{subsec:sincosmainthm}.

\subsection{Two toy models}\label{subsec:toy}
\subsubsection{Each oscillator constrains itself}
Suppose
\begin{equation}\label{eq:toy-0}
I\ge 0,\quad \min_{\theta\in [-\pi,\pi]}I(\theta)S(\theta)<0<\max_{\theta\in [-\pi,\pi]}I(\theta)S(\theta).
\end{equation}
In the single oscillator case $N=1$ the ODE becomes
\[
\dot\theta_1=\omega_1+\kappa I(\theta)S(\theta)
\]
and it is easy to see that $\kappa>\frac{|\omega_1|}{\min\{-\min IS, \max IS\}}$ implies boundedness of $\theta_1$. It is not hard to extend this argument to higher $N$ if we are willing to pay the price of a $\kappa$ proportional to $N$.
\begin{proposition}[Criterion for partial oscillator death under \eqref{eq:toy-0}]\label{verytrivial}
Let $\Theta=\Theta(t)$ be the solution to \eqref{GenWinfree} with parameters $\{\omega_i\}_{i=1}^N$, $\kappa$, and initial data $\{\theta_i^0\}_{i=1}^N$, and with the $2\pi$-periodic Lipschitz interaction functions $I$ and $S$ satisfying \eqref{eq:toy-0}.

Fix $\mathcal{B}\subset \{1,\cdots,N\}$. If
\begin{equation}\label{kappa-N-trivial}
    \kappa>\frac{N\|\Omega_\mathcal{B}\|_\infty}{\min\{-\min IS,\max IS\}},
\end{equation}
then
\[
\sup_{t\ge 0} \theta_i(t)-\inf_{t\ge 0}\theta_i(t)<2\pi \quad \forall i\in\mathcal{B}.
\]

\end{proposition}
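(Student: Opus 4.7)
The plan is to exhibit periodic barriers on each $\theta_i$, $i\in\mathcal{B}$, via the self-interaction term. Rewriting \eqref{GenWinfree} in the mean-field form $\dot\theta_i=\omega_i+\kappa R S(\theta_i)$ with $R\coloneqq\frac{1}{N}\sum_{j=1}^N I(\theta_j)$, the nonnegativity $I\ge 0$ from \eqref{eq:toy-0} gives the pointwise lower bound $R\ge\frac{1}{N}I(\theta_i)$. Picking $\theta_+,\theta_-\in[-\pi,\pi]$ with $(IS)(\theta_+)=\min IS<0$ and $(IS)(\theta_-)=\max IS>0$ — which exist by \eqref{eq:toy-0} and force $I(\theta_\pm)>0$ — the sign reversal from $S(\theta_+)<0$ yields, at $\theta_i=\theta_+$,
\[
\dot\theta_i\le\omega_i+\frac{\kappa}{N}(IS)(\theta_+)\le\|\Omega_\mathcal{B}\|_\infty+\frac{\kappa}{N}\min IS<0
\]
under the hypothesis \eqref{kappa-N-trivial}, and symmetrically $\dot\theta_i>0$ whenever $\theta_i=\theta_-$. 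By $2\pi$-periodicity of $I$ and $S$, these strict inequalities extend to every translate in $\theta_+ + 2\pi\mathbb{Z}$ and $\theta_- + 2\pi\mathbb{Z}$ respectively.

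A standard first-crossing-time argument then produces \emph{upper barriers} at $\theta_+ + 2\pi\mathbb{Z}$ (not crossable in the upward direction) and \emph{lower barriers} at $\theta_- + 2\pi\mathbb{Z}$ (not crossable in the downward direction) for each $\theta_i$, $i\in\mathcal{B}$. Since $\theta_+\not\equiv\theta_-\pmod{2\pi}$, the combined lattice of barriers partitions $\mathbb{R}$ into alternating intervals of two types: attracting intervals (a lower barrier on the left and an upper barrier on the right) of length $\ell_A\coloneqq(\theta_+-\theta_-)\bmod 2\pi\in(0,2\pi)$, and repelling intervals of complementary length $\ell_R\coloneqq 2\pi-\ell_A$.

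To conclude, I would perform a short case analysis on the position of $\theta_i^0$. If $\theta_i^0$ lies in an attracting interval, the trajectory is trapped there for all $t\ge 0$, with range at most $\ell_A<2\pi$. If $\theta_i^0$ lies in a repelling interval $(b_L,b_R)$, then either $\theta_i$ remains in $(b_L,b_R)$ forever (range $\le\ell_R<2\pi$), or it exits through exactly one of the two endpoints into an adjacent attracting interval, in which case the total range is strictly less than $b_R-b_L+\ell_A=2\pi$. The main thing to check carefully is this last strict inequality: it relies on the fact that the trajectory can reach only one of the two extreme barriers bracketing $\theta_i^0$, since reaching the other would force an impossible barrier crossing. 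Once this is established the conclusion $\sup_{t\ge 0}\theta_i(t)-\inf_{t\ge 0}\theta_i(t)<2\pi$ is immediate.
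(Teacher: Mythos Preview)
Your proof is correct and follows essentially the same approach as the paper: the same barrier points (your $\theta_+,\theta_-$ are the paper's $\alpha_0,\beta_0$), the same derivative estimate isolating the self-interaction term $\frac{\kappa}{N}(IS)(\theta_\pm)$ and dropping the nonnegative remainder, and the same trapping argument. Your concluding case analysis (attracting versus repelling intervals, with the one-sided exit argument) is in fact more explicit than the paper's compressed final sentence, and correctly handles the case where $\theta_i^0$ lies between an upper barrier on the left and a lower barrier on the right.
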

\begin{proof}
Let $\alpha_0,\beta_0\in [-\pi,\pi]$ such that
\[
\alpha_0\in\arg\min IS, \quad \beta_0\in \arg\max IS,
\]
so that $(IS)(\alpha_0)<0$ and $(IS)(\beta_0)>0$. In particular, $I(\alpha_0)>0$, $S(\alpha_0)<0$, $I(\beta_0)>0$, and $S(\beta_0)>0$. For any $i\in \mathcal{B}$ we see by \eqref{kappa-N-trivial} that $\theta_i(t_0)\equiv\alpha_0 \mod 2\pi$ implies
\begin{equation}\label{waste}
\dot\theta_i(t_0)=\omega_i+\frac{\kappa}{N}(IS)(\alpha_0)+\frac{\kappa}{N}\sum_{j\neq i}I(\theta_j(t_0))S(\alpha_0)\le \omega_i+\frac{\kappa}{N}(IS)(\alpha_0)\stackrel{\mathclap{\eqref{kappa-N-trivial}}}{<}0,
\end{equation}
and similarly that $\theta_i(t_0)\equiv\beta_0 \mod 2\pi$ implies $\dot\theta_i(t_0)>0$. Thus, if we choose integers $k_1,k_2\in \mathbb{Z}$ such that $\theta_i^0\in [\beta_0+2k_1\pi,\alpha_0+2k_2\pi]$ and $\mathrm{length}[\beta_0+2k_1\pi,\alpha_0+2k_2\pi]<2\pi$, then $\theta_i(t)\in [\beta_0+2k_1\pi,\alpha_0+2k_2\pi]$ for all $t\ge 0$.
\end{proof}
In particular, Proposition \ref{verytrivial} tells us that the pathwise critical coupling strength is finite everywhere:
\[
\kappa_{\mathrm{pc}}(\Theta^0,\Omega,I,S)\le \frac{N\|\Omega\|_\infty}{\min\{-\min IS,\max IS\}}.
\]
However, this is not an effective bound for large $N$.

\subsubsection{Importance of a lower bound on the average influence}
Note that in \eqref{waste} we have wasted many nonpositive terms. The main idea of the bootstrapping argument for Theorems \ref{generalbestsincosmainthm} and \ref{mainthm} is to guarantee a uniform-in-time lower bound on the average influence $R$. To see why we need such a condition, we examine the case when the interaction functions $I$ and $S$ satisfy the following:
\begin{equation}\label{eq:toy-I}
I(\theta)\ge \rho\quad  \forall \theta\in [-\pi,\pi]
\end{equation}
for some $\rho>0$ and
\begin{equation}\label{eq:toy-S}
    \min_{\theta\in [-\pi, \pi]}S(\theta)<0<\max_{\theta\in [-\pi, \pi]}S(\theta).
\end{equation}

\begin{proposition}[Criterion for partial oscillator death under \eqref{eq:toy-I}-\eqref{eq:toy-S}]\label{trivial}
Let $\Theta=\Theta(t)$ be the solution to \eqref{GenWinfree} with parameters $\{\omega_i\}_{i=1}^N$, $\kappa$, and initial data $\{\theta_i^0\}_{i=1}^N$, and with the $2\pi$-periodic Lipschitz interaction functions $I$ and $S$ satisfying \eqref{eq:toy-I} and \eqref{eq:toy-S}.

Fix $\mathcal{B}\subset \{1,\cdots,N\}$. If
\begin{equation}\label{kappa-trivial}
    \kappa>\frac{\|\Omega_\mathcal{B}\|_\infty}{\rho \min\{-\min S,\max S\}},
\end{equation}
then
\[
\sup_{t\ge 0} \theta_i(t)-\inf_{t\ge 0} \theta_i(t)<2\pi \quad \forall i\in\mathcal{B}.
\]

\end{proposition}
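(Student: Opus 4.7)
The plan is to exploit the uniform lower bound $I \ge \rho$ to get a uniform-in-time lower bound $R(t) \ge \rho$ on the average influence, which is exactly the kind of macroscopic control that the toy Proposition \ref{verytrivial} lacked (hence its wasteful factor of $N$ in \eqref{kappa-N-trivial}). Rewriting the ODE of \eqref{GenWinfree} in the mean-field form
\[
\dot{\theta}_i(t) = \omega_i + \kappa R(t) S(\theta_i(t)),
\]
the problem then reduces to a single-oscillator trapping argument where $R(t) \in [\rho, \|I\|_{\sup}]$ plays the role of a time-dependent but uniformly positive coefficient.

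Next, I would fix $\alpha_0 \in \arg\min_{[-\pi,\pi]} S$ and $\beta_0 \in \arg\max_{[-\pi,\pi]} S$, so that by \eqref{eq:toy-S} we have $S(\alpha_0) = -\min\{-\min S, \max S\} - \varepsilon_1 < 0$ and $S(\beta_0) \ge \min\{-\min S, \max S\} > 0$ (more precisely, $S(\alpha_0) \le -\min\{-\min S, \max S\}$ and $S(\beta_0) \ge \min\{-\min S, \max S\}$). For any $i \in \mathcal{B}$ and any time $t_0 \ge 0$ at which $\theta_i(t_0) \equiv \alpha_0 \pmod{2\pi}$, the mean-field ODE gives
\[
\dot{\theta}_i(t_0) \;=\; \omega_i + \kappa R(t_0) S(\alpha_0) \;\le\; \|\Omega_\mathcal{B}\|_\infty - \kappa \rho \min\{-\min S, \max S\} \;<\; 0,
\]
using $R(t_0) \ge \rho$, $S(\alpha_0) < 0$, and the hypothesis \eqref{kappa-trivial}. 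Symmetrically, $\theta_i(t_0) \equiv \beta_0 \pmod{2\pi}$ forces $\dot{\theta}_i(t_0) > 0$.

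From this sign analysis, I would conclude via a standard continuity/continuous-induction argument: choose integers $k_1, k_2 \in \mathbb{Z}$ so that $\theta_i^0 \in [\beta_0 + 2k_1\pi, \alpha_0 + 2k_2\pi]$ and the length of this interval is strictly less than $2\pi$ (always possible since $\alpha_0, \beta_0 \in [-\pi,\pi]$ are distinct modulo $2\pi$). The trapping conditions at the two endpoints (strictly inward derivative) prevent $\theta_i(t)$ from ever exiting the interval, giving $\sup_{t\ge 0} \theta_i(t) - \inf_{t \ge 0}\theta_i(t) < 2\pi$ for every $i \in \mathcal{B}$. The proof is essentially routine once the uniform lower bound on $R$ is in hand; the only mild subtlety is that, unlike Proposition \ref{verytrivial} where the bound was per-particle, here the control passes through the mean-field coefficient $R(t)$, and this is precisely the conceptual point the proposition is meant to illustrate and which foreshadows the bootstrapping strategy needed when $I$ is allowed to vanish.
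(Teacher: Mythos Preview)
The proposal is correct and follows essentially the same approach as the paper's proof: both pick $\alpha_0\in\arg\min S$ and $\beta_0\in\arg\max S$, use the uniform bound $R(t)\ge\rho$ in the mean-field form $\dot\theta_i=\omega_i+\kappa R S(\theta_i)$ together with \eqref{kappa-trivial} to force $\dot\theta_i<0$ at $\alpha_0\bmod 2\pi$ and $\dot\theta_i>0$ at $\beta_0\bmod 2\pi$, and then trap $\theta_i$ in an interval $[\beta_0+2k_1\pi,\alpha_0+2k_2\pi]$ of length $<2\pi$. Your added commentary on how this foreshadows the bootstrapping argument is apt and matches the paper's intent for this toy proposition.
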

\begin{proof}
Let $\alpha_0,\beta_0\in [-\pi,\pi]$ be such that
\[
\alpha_0\in\arg\min S, \quad \beta_0\in \arg\max S,
\]
so that $S(\alpha_0)<0$ and $S(\beta_0)>0$. For any $i\in \mathcal{B}$ we see by \eqref{GenWinfree_orderparam} and \eqref{kappa-trivial} (and the observation that $R\ge \rho $) that $\theta_i(t_0)\equiv\alpha_0 \mod 2\pi$ implies $\theta_i(t_0)<0$ and that $\theta_i(t_0)\equiv\beta_0 \mod 2\pi$ implies $\dot\theta_i(t_0)>0$. Thus, if we choose integers $k_1,k_2\in \mathbb{Z}$ such that $\theta_i^0\in [\beta_0+2k_1\pi,\alpha_0+2k_2\pi]$ and $\mathrm{length}[\beta_0+2k_1\pi,\alpha_0+2k_2\pi]<2\pi$, then $\theta_i(t)\in [\beta_0+2k_1\pi,\alpha_0+2k_2\pi]$ for all $t\ge 0$.
\end{proof}

Therefore, for $I$ and $S$ satisfying \eqref{eq:toy-I} and \eqref{eq:toy-S}, complete oscillator death happens for any initial data whenever $\kappa$ exceeds the fixed value given in \eqref{kappa-trivial} with $\mathcal{B}=\{1,\cdots,N\}$:
\[
\kappa>\frac{\|\Omega\|_\infty}{\rho \min\{-\min S,\max S\}}.
\]
In other words, the pathwise critical coupling strength (recall Definition \ref{def:crit-IS}) is uniformly bounded as follows:
\[
\kappa_{\mathrm{pc}}(\Theta^0,\Omega,I,S)\le \frac{\|\Omega\|_\infty}{\rho \min\{-\min S,\max S\}}.
\]
Moreover, by choosing different $\mathcal{B}$, we obtain various sufficient coupling strengths for partial oscillator death. We have a full proof of the existence of the oscillator death regime (and partial oscillator death regime) in this case.

The rest of this section is an imitation of this ideal case. We will find conditions that serve as a proxy for \eqref{eq:toy-I} and thus guarantee oscillator death. The price for assuming these conditions will ultimately be paid either in the form of a large coupling strength depending on the initial data as in Corollary \ref{bestsincosmainthm} and Theorem \ref{mainthm}, or by sacrificing a certain singular set of initial data (which, to our advantage, turns out to have asymptotically small measure) as in Theorems \ref{themainthm}, \ref{sincosmaincor}, \ref{sincosmaincor-time}, \ref{maincor}, \ref{maincor-kappalarge}, \ref{themainthm-IS}, and \ref{maincor-quant}.

\subsection{The general interaction function case}\label{subsec:mainthm}

The purpose of this subsection is to prove Theorem \ref{mainthm} and Proposition \ref{general-pod}.

Recall conditions \eqref{c_1}, \eqref{c_2}, and \eqref{c_3}: there is a constant $\alpha_0\in (0,\pi)$ such that
\begin{equation*}\tag{\ref{c_1}}
    S(\theta)\le -c_1 (\pi-\theta)^p \mbox{ for }  \theta\in [\alpha_0,\pi],\quad S(\theta)\ge c_1 (\theta+\pi)^p \mbox{ for }  \theta\in [-\pi,-\alpha_0],
\end{equation*}
\[\tag{\ref{c_2}}
0\le I(\theta)\le c_2 (\pi-|\theta|)^q \mbox{ for }  \theta\in [-\pi,\pi],
\]
\[\tag{\ref{c_3}}
\min_{|\phi|\le \max\{|\theta|,\alpha_0\}}I(\phi)\ge c_3 I(\theta),\mbox{ for }\theta\in [-\pi, \pi].
\]

The starting point of the analysis is the observation that, if we make the a priori assumption that the order parameter $R(t)$ is uniformly bounded below in time, then we have some control on the behavior of some of the variables.
\begin{lemma}[A priori estimate]\label{gen-a-priori}
Let $\Theta=\Theta(t)$ be the solution to \eqref{GenWinfree} with parameters $\{\omega_i\}_{i=1}^N$, $\kappa$, and initial data $\{\theta_i^0\}_{i=1}^N$, and let $I$ and $S$ satisfy \eqref{c_1}, \eqref{c_2}, and \eqref{c_3}. Suppose $T>0$, $0<\rho \le \|I\|_{\sup}$, and $\mathcal{B}\subseteq\{1,\cdots,N\}$ satisfy
\[
R(t)\ge \rho, ~\forall t\in [0,T), \quad and\quad \kappa>\frac{{\|\Omega_\mathcal{B}\|_\infty}}{\rho c_1 (\pi-\alpha_0)^p}.
\]
Let $i\in \mathcal{B}$ be such that $\theta_i^0\in \left[-\pi+\left(\frac{{\|\Omega_\mathcal{B}\|_\infty}}{\kappa \rho c_1}\right)^{1/p},\pi-\left(\frac{{\|\Omega_\mathcal{B}\|_\infty}}{\kappa \rho c_1}\right)^{1/p}\right]$. Then $|\theta_i(t)|\le\max\{ |\theta_i^0|,\alpha_0\}$ and $I(\theta_i(t))\ge c_3 I(\theta_i^0)$ for $t\in [0,T)$.
\end{lemma}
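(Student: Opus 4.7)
The plan is to deduce a pointwise trapping statement for $\theta_i(t)$ from the a priori order-parameter lower bound $R(t) \ge \rho$, by using hypothesis \eqref{c_1} to produce an inward-pointing vector field at the boundary of an appropriate symmetric interval. First I would introduce the abbreviation $\delta := \left(\frac{\|\Omega_\mathcal{B}\|_\infty}{\kappa \rho c_1}\right)^{1/p}$, so that the initial-data hypothesis becomes $\theta_i^0 \in [-\pi+\delta,\, \pi-\delta]$, and the coupling-strength hypothesis $\kappa > \frac{\|\Omega_\mathcal{B}\|_\infty}{\rho c_1 (\pi-\alpha_0)^p}$ rearranges to $\delta < \pi - \alpha_0$. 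Consequently both $|\theta_i^0|$ and $\alpha_0$ lie in $[0,\, \pi-\delta]$, so $M := \max\{|\theta_i^0|,\alpha_0\}$ satisfies $\alpha_0 \le M \le \pi - \delta$, and in particular $\pi - M \ge \delta$.

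Next I would rewrite the mean-field ODE \eqref{GenWinfree_orderparam} for $\theta_i$ as a Lipschitz scalar non-autonomous equation $\dot\theta_i = v(t, \theta_i)$ with $v(t, \theta) := \omega_i + \kappa R(t) S(\theta)$, and verify that $v$ points inward on the boundary of $[-M, M]$. At $\theta = M \in [\alpha_0, \pi]$, condition \eqref{c_1} combined with $\pi-M \ge \delta$ yields $S(M) \le -c_1(\pi - M)^p \le -c_1 \delta^p < 0$; using the a priori bound $R(t) \ge \rho$, this gives
\[
v(t, M) \;\le\; \omega_i + \kappa \rho \cdot (-c_1 \delta^p) \;\le\; \|\Omega_\mathcal{B}\|_\infty - \kappa \rho c_1 \delta^p \;=\; 0,
\]
and symmetrically $v(t, -M) \ge 0$. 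Thus the constant functions $M$ and $-M$ are respectively a super- and sub-solution of $\dot\theta = v(t,\theta)$ on $[0, T)$, and since the initial datum satisfies $\theta_i^0 \in [-M, M]$, the standard ODE comparison principle delivers the trapping $|\theta_i(t)| \le M$ for every $t \in [0, T)$.

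The second conclusion then follows immediately by plugging $\theta = \theta_i^0$ into the weak-monotonicity hypothesis \eqref{c_3}: since $|\theta_i(t)| \le \max\{|\theta_i^0|, \alpha_0\}$, we have
\[
I(\theta_i(t)) \;\ge\; \min_{|\phi|\le \max\{|\theta_i^0|,\alpha_0\}} I(\phi) \;\ge\; c_3\, I(\theta_i^0).
\]
I do not foresee any substantial obstacle; the argument is essentially a careful unpacking of \eqref{c_1} and \eqref{c_3}, with ODE comparison as the only nontrivial ingredient. The conceptual point to emphasize is that \eqref{c_1} cleanly converts the \emph{macroscopic} a priori bound $R \ge \rho$ into a \emph{microscopic} inward flow at $\theta = \pm M$; this is precisely the mechanism that will power the continuous-induction bootstrap used subsequently in the proofs of Propositions \ref{bootstrap} and \ref{general-pod} and of Theorems \ref{generalbestsincosmainthm} and \ref{mainthm}.
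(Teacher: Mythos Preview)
Your proposal is correct and follows essentially the same approach as the paper: both arguments use \eqref{c_1} together with the a priori bound $R\ge\rho$ to show the vector field points inward on the boundary of the interval $[-M,M]$ (the paper verifies this on the whole strip $[\alpha_0,\pi-\delta]$ and its reflection, which amounts to the same thing), and then invoke a comparison/exit-time argument followed by \eqref{c_3}. Your write-up is slightly more explicit in naming $\delta$ and $M$, but there is no substantive difference.
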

\begin{proof}
We compute that for any $t_0\in [0,T)$, if $\theta_i(t_0)\in \left[\alpha_0,\pi-\left(\frac{{\|\Omega_\mathcal{B}\|_\infty}}{\kappa \rho c_1}\right)^{1/p}\right]$ (note that our choice of $\kappa$ makes this a nonempty interval) then
\[
\dot{\theta}_i(t_0)=\Omega_i+\kappa R(t_0)S(\theta_i(t_0))\stackrel{\eqref{c_1}}{\le} {\|\Omega_\mathcal{B}\|_\infty}-\kappa \rho c_1 (\pi-\theta_i(t_0))^p \le 0,
\]
and similarly if $\theta_i(t_0)\in \left[-\pi+\left(\frac{{\|\Omega_\mathcal{B}\|_\infty}}{\kappa \rho c_1}\right)^{1/p},-\alpha_0\right]$ then $\dot{\theta}_i(t_0)\ge 0$. By a standard exit-time argument, we have $|\theta_i(t)|\le\max\{ |\theta_i^0|,\alpha_0\}$ for $t\in [0,T)$. This then implies $I(\theta_i(t))\ge c_3 I(\theta_i^0)$ by \eqref{c_3}.
\end{proof}

All in all, Lemma \ref{gen-a-priori} says that if we have an a priori uniform-in-time lower bound for the average influence $R=\frac 1N\sum_{j=1}^N I(\theta_j)$, then for those $\theta_i$ that are far from $\pi$ (modulo $2\pi$) we have a lower bound on $I(\theta_i)$. If we can find a large enough number of such $\theta_i$'s, we would be able to deduce a lower bound on their average $R$. The main idea of our proof of Theorem \ref{mainthm} is to formalize this into a bootstrapping argument so as to guarantee a lower bound on $R(t)$ for all $t\ge 0$.

\begin{proof}[Proof of Proposition \ref{general-pod}]
Define
\[
\mathcal{T}\coloneqq \{T>0: R(t)> \rho \mbox{ for }t\in [0,T)\}.
\]
Noting from $c_3\le 1$ and \eqref{pod-cond}-(i) that 
\[
{R(0)}\ge \frac 1N \sum_{i\in \mathcal{A}}I(\theta_i^0)> \frac{\rho}{c_3}\ge\rho,
\]
by continuity of $R(t)$ we may find a small $\delta>0$ such that $R(t)>\rho $ for $t\in [0,\delta)$. It then follows that $\delta\in \mathcal{T}$, so $T^*\coloneqq \sup \mathcal{T}>0$. It is immediate that $\mathcal{T}\supset(0,T^*)$. 

Now by \eqref{pod-cond} and \eqref{pod-cond-2}, Lemma \ref{gen-a-priori} applies up to time $T^*$, so  we have $I(\theta_i(t))\ge c_3 I(\theta_i^0)$ for all $i\in \mathcal{A}$ and all $t\in [0,T^*)$, which gives
\[
R(t)\ge \frac 1N\sum_{i\in \mathcal{A}}I(\theta_i(t))\ge c_3 \cdot \frac 1N\sum_{i\in \mathcal{A}}I(\theta_i^0),\quad t\in [0,T^*).
\]
If we had $T^*<\infty$, then by continuity of $R$ we would have
\[
R(T^*)\ge  c_3 \cdot \frac 1N\sum_{i\in \mathcal{A}}I(\theta_i^0)\stackrel{\eqref{pod-cond}-(\mathrm{i})}{>}\rho,
\]
and again by continuity of $R$ there would exist a $\delta>0$ such that $R(t)>\rho$ for $t\in [T^*,T^*+\delta)$, contradicting the definition of $T^*$. Thus, we must have $T^*=\infty$, i.e., $R(t)> \rho$ for all $t\ge 0$. This is assertion (a) of Proposition \ref{general-pod}.

Since Lemma \ref{gen-a-priori} applies up to time $T^*=\infty$, we have assertion (b) of Proposition \ref{general-pod}.

It remains to prove assertion (c) of Proposition \ref{general-pod}, i.e., $\sup_{t\ge 0} \theta_i(t)-\inf_{t\ge 0}\theta_i(t)<2\pi$ for all $i\in\mathcal{B}$. We need to separate into two cases. Fix an $i\in \mathcal{B}$.
\begin{itemize}
\item[Case 1.] $\theta_i^0\in \left[-\pi+\left(\frac{\|\Omega_\mathcal{B}\|_\infty}{\kappa \rho c_1}\right)^{1/p},\pi-\left(\frac{\|\Omega_\mathcal{B}\|_\infty}{\kappa \rho c_1}\right)^{1/p}\right]\mod 2\pi$.

Then, by Lemma \ref{gen-a-priori}, we have $\theta_i(t)\in \left[- \max\{|\theta_i^0|,\alpha_0\},\max\{|\theta_i^0|,\alpha_0\}\right] \mod 2\pi$ for all $t\ge 0$, from which the assertion $\sup_{t\ge 0} \theta_i(t)-\inf_{t\ge 0}\theta_i(t)<2\pi$ easily follows.

\item[Case 2.] $\theta_i^0\notin \left[-\pi+\left(\frac{\|\Omega_\mathcal{B}\|_\infty}{\kappa \rho c_1}\right)^{1/p},\pi-\left(\frac{\|\Omega_\mathcal{B}\|_\infty}{\kappa \rho c_1}\right)^{1/p}\right] \mod 2\pi$.

Then either $\theta_i(t)\notin \left[-\pi+\left(\frac{\|\Omega_\mathcal{B}\|_\infty}{\kappa \rho c_1}\right)^{1/p},\pi-\left(\frac{\|\Omega_\mathcal{B}\|_\infty}{\kappa \rho c_1}\right)^{1/p}\right] \mod 2\pi$ for all $t\ge 0$, in which case $\theta_i(t)$ is trapped in an interval of length $<2\pi$ for all time and the assertion follows, or there will be the smallest time $t^*>0$ at which $\theta_i(t^*)=-\pi+\left(\frac{\|\Omega_\mathcal{B}\|_\infty}{\kappa \rho c_1}\right)^{1/p}\mbox{ or }\pi-\left(\frac{\|\Omega_\mathcal{B}\|_\infty}{\kappa \rho c_1}\right)^{1/p}\mod 2\pi$. In the latter case, we can apply Lemma \ref{gen-a-priori} starting at time $t^*$ (this is possible since \eqref{Winfree} is a time-autonomous system) and deduce that the set $\{\theta_i(t):t\ge 0\}$ will be contained in an interval either of the form $\left(-\pi-\left(\frac{\|\Omega_\mathcal{B}\|_\infty}{\kappa \rho c_1}\right)^{1/p},\pi-\left(\frac{\|\Omega_\mathcal{B}\|_\infty}{\kappa \rho c_1}\right)^{1/p}\right]\mod 2\pi$ or $\left[-\pi+\left(\frac{\|\Omega_\mathcal{B}\|_\infty}{\kappa \rho c_1}\right)^{1/p},\pi+\left(\frac{\|\Omega_\mathcal{B}\|_\infty}{\kappa \rho c_1}\right)^{1/p}\right)\mod 2\pi$, and the assertion easily follows in both of these cases.
\end{itemize}
\end{proof}

We are now ready to prove Theorem \ref{mainthm}. We will apply our criterion for partial oscillator death given in Proposition \ref{general-pod}. By choosing the coupling strength $\kappa$ to be sufficiently large depending on the initial data, we will be able to find a subensemble $\mathcal{A}$ which satisfies the conditions \eqref{pod-cond}-(i) and \eqref{pod-cond-2}.

\begin{proof}[Proof of Theorem \ref{mainthm}]
After modulo $2\pi$ shifts, we may assume that $\theta_i^0\in [-\pi,\pi)$ for all $i=1,\cdots,N$. Define
\[
\mathcal{A}=\{i=1,\cdots,N:I(\theta_i^0)\ge \frac{{R_0}}{2} \}.
\]
Then
\begin{equation}\label{gen_lb}
\frac 1N\sum_{i\in \mathcal{A}}I(\theta_i^0)\stackrel{\eqref{gen_order_parameter}}{=}{R_0}-\frac 1N\sum_{i\in \{1,\cdots,N\}\setminus \mathcal{A}}I(\theta_i^0)>{R_0}-\frac{{R_0}}{2}=\frac{{R_0}}{2}.
\end{equation}
We can also see by \eqref{c_2} that 
\begin{equation}\label{gen_range}
|\theta_i^0|\le \pi-\left(\frac{{R_0}}{2c_2}\right)^{1/q}\stackrel{\eqref{gen_large_kappa}-(\mathrm{i})}{\le} \pi -\left(\frac{{\|\Omega\|_\infty}}{\kappa \cdot \frac{c_3 R_0}{2}\cdot c_1}\right)^{1/p},\quad \forall i\in \mathcal{A}.
\end{equation}

We may now see that \eqref{gen_lb} and \eqref{gen_range} satisfies the hypotheses of Proposition \ref{general-pod} with $\rho=\frac{c_3 R_0}{2}$ and $\mathcal{B}=\{1,\cdots,N\}$, with \eqref{gen_large_kappa}-(ii) becoming \eqref{pod-cond}-(ii). Now Proposition \ref{general-pod} (a,c) give statements (a) and (b).
\end{proof}

\subsection{The special interaction function case}\label{subsec:sincosmainthm}

The purpose of this subsection is to prove Theorem \ref{generalbestsincosmainthm} and derive Corollary \ref{bestsincosmainthm} from it. We will prove Proposition \ref{bootstrap} along the way. Note that, with Theorem \ref{mainthm} at hand, we already have a version of Corollary \ref{bestsincosmainthm} with worse constants. We are presenting a refined version of the proof of Theorem \ref{mainthm} to show how one may optimize the analysis to obtain a reasonable coupling strength that is closer to the range considered in \cite{ariaratnam2001phase}. Thus, in this subsection, we will try to be precise in our analysis to get good constants.

As in the previous subsection, the starting point of the analysis is the observation that, if we make the a priori assumption that the order parameter $R(t)$ is uniformly bounded below in time, then we have some control on the dynamics of some of the variables. The following is the analogue of Lemma \ref{gen-a-priori} for system \eqref{Winfree}.
\begin{lemma}[A priori estimate]\label{Simplest-a-priori}
Let $\Theta=\Theta(t)$ be the solution to \eqref{Winfree} with parameters $\{\omega_i\}_{i=1}^N$, $\kappa$, and $\{\theta_i^0\}_{i=1}^N$. Suppose $T>0$, $0<\rho \le 2$, and $\mathcal{B}\subseteq\{1,\cdots,N\}$ satisfy
\[
R(t)\ge \rho, ~\forall t\in [0,T), \quad \mathrm{and}\quad \kappa>\frac{{\|\Omega_\mathcal{B}\|_\infty}}{\rho}.
\]
Let $i\in \mathcal{B}$ be such that $\cos \theta^0_i>-\sqrt{1-\frac{{\|\Omega_\mathcal{B}\|_\infty}^2}{\kappa^2 \rho^2}}$. Then
\begin{enumerate}[(a)]
\item  $\cos \theta_i(t)\ge \cos\theta_i^0$ for $t\in [0,T)$ if $\cos \theta^0_i\le \sqrt{1-\frac{{\|\Omega_\mathcal{B}\|_\infty}^2}{\kappa^2 \rho^2}}$,
\item $\cos \theta_i(t)\ge \sqrt{1-\frac{{\|\Omega_\mathcal{B}\|_\infty}^2}{\kappa^2 \rho^2}}$ for $t\in [0,T)$ if $\cos \theta^0_i> \sqrt{1-\frac{{\|\Omega_\mathcal{B}\|_\infty}^2}{\kappa^2 \rho^2}}$,
\item  $1+\cos\theta_i(t)\ge \frac 12 \left(1+\sqrt{1-\frac{{\|\Omega_\mathcal{B}\|_\infty}^2}{\kappa^2 \rho^2}}\right)(1+\cos\theta_i^0)$ for all $t\in [0,T)$,
\item for a given  $0<\alpha< \sqrt{1-\frac{{\|\Omega_\mathcal{B}\|_\infty}^2}{\kappa^2 \rho^2}}$, if $\cos\theta_i^0\ge -\alpha$, then $\cos \theta_i(t)\ge -\alpha$ for $t\in [0,T)$,
\item for a given  $0<\alpha< \sqrt{1-\frac{{\|\Omega_\mathcal{B}\|_\infty}^2}{\kappa^2 \rho^2}}$,  if $T>\frac{\pi}{\kappa \rho \sqrt{1-\alpha^2}-{\|\Omega_\mathcal{B}\|_\infty}}$ and if $\cos\theta_i^0\ge -\alpha$, then  $\cos \theta_i(t)\ge \alpha$ for $t\in \left[\frac{\pi}{\kappa \rho \sqrt{1-\alpha^2}-{\|\Omega_\mathcal{B}\|_\infty}},T\right)$.
\setcounter{name}{\value{enumi}}
\end{enumerate}
Furthermore, let $0<\alpha< \sqrt{1-\frac{{\|\Omega_\mathcal{B}\|_\infty}^2}{\kappa^2 \rho^2}}$ be such that $\cos\theta_i^0>-\alpha$, and suppose we chose another $j\in \mathcal{B}$ such that $\cos \theta^0_j>-\alpha$. Suppose $T>\frac{\pi}{\kappa \rho \sqrt{1-\alpha^2}-{\|\Omega_\mathcal{B}\|_\infty}}$. Then
\begin{enumerate}[(a)]
\setcounter{enumi}{\value{name}}
\item if $\omega_i>\omega_j$, then after modulo $2\pi$ translations,
\[
\theta_i(t)-\theta_j(t)\le \frac{\omega_i-\omega_j}{\kappa\rho\alpha}+\pi\exp\left(-\kappa\rho\alpha\left(t-\frac{\pi}{\kappa \rho \sqrt{1-\alpha^2}-{\|\Omega_\mathcal{B}\|_\infty}}\right)\right)\mbox{ for }t\in \left[\frac{\pi}{\kappa \rho \sqrt{1-\alpha^2}-{\|\Omega_\mathcal{B}\|_\infty}},T\right),
\]
and if $T>\frac{\pi}{\kappa \rho \sqrt{1-\alpha^2}-{\|\Omega_\mathcal{B}\|_\infty}}+\frac{\pi}{\omega_i-\omega_j}$,
\begin{align*}
\theta_i(t)-\theta_j(t)\ge \frac{\omega_i-\omega_j}{2\kappa}-\frac{\omega_i-\omega_j}{2\kappa}\exp\left(-2\kappa\left(t-\frac{\pi}{\kappa \rho \sqrt{1-\alpha^2}-{\|\Omega_\mathcal{B}\|_\infty}}-\frac{\pi}{\omega_i-\omega_j}\right)\right)\\
\mbox{for }t\in \left[\frac{\pi}{\kappa \rho \sqrt{1-\alpha^2}-{\|\Omega_\mathcal{B}\|_\infty}}+\frac{\pi}{\omega_i-\omega_j},T\right),
\end{align*}
\item if $\omega_i=\omega_j$, then after modulo $2\pi$ translations,
\[
|\theta_i(t)-\theta_j(t)|\le \pi \exp\left(-\kappa\rho\alpha\left(t-\frac{\pi}{\kappa \rho \sqrt{1-\alpha^2}-{\|\Omega_\mathcal{B}\|_\infty}}\right)\right)\mbox{ for }t\in \left[\frac{\pi}{\kappa \rho \sqrt{1-\alpha^2}-{\|\Omega_\mathcal{B}\|_\infty}},T\right).
\]
\end{enumerate}
\end{lemma}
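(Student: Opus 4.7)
Since the hypothesis $\kappa\rho > \|\Omega_{\mathcal{B}}\|_\infty$ makes the critical angle $\theta_\alpha := \arcsin(\|\Omega_{\mathcal{B}}\|_\infty/(\kappa\rho)) \in [0,\pi/2)$ well-defined, I begin by observing that the mean-field ODE \eqref{Winfree_orderparam} combined with $R(t) \geq \rho$ yields the sign bounds $\dot\theta_i \leq \|\Omega_{\mathcal{B}}\|_\infty - \kappa\rho\sin\theta_i$ when $\sin\theta_i \geq 0$ and $\dot\theta_i \geq -\|\Omega_{\mathcal{B}}\|_\infty - \kappa\rho\sin\theta_i$ when $\sin\theta_i \leq 0$. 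Hence at each of the four critical angles $\pm\theta_\alpha, \pm(\pi-\theta_\alpha) \pmod{2\pi}$, the vector field $\dot\theta_i$ points (weakly) toward $0$, making the level sets $\cos\theta_i = \pm\cos\theta_\alpha$ into one-sided barriers for the dynamics of $\theta_i$.

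Parts (a), (b), (d) are proved by continuous induction / forward-invariance. For (b), let $t^* := \sup\{t \in [0,T) : \cos\theta_i(s) \geq \cos\theta_\alpha \text{ for all } s \in [0,t]\}$; if $t^* < T$ then by continuity $\cos\theta_i(t^*) = \cos\theta_\alpha$, so $\sin\theta_i(t^*) = \pm\sin\theta_\alpha$, and the sign bounds above yield $\dot\theta_i(t^*)$ pointing back into $\{\cos\theta_i \geq \cos\theta_\alpha\}$, contradicting the definition of $t^*$; parts (a) and (d) are handled identically with the barriers $\cos\theta_i^0$ and $-\alpha$ respectively. Part (c) is algebraic: depending on whether $\cos\theta_i^0 \leq \cos\theta_\alpha$ or not, one of (a) or (b) gives $\cos\theta_i(t) \geq \min\{\cos\theta_i^0, \cos\theta_\alpha\}$, and the elementary inequality $1 + \min\{x,y\} \geq \tfrac{1}{2}(1+x)(1+y)$ on $[-1,1]$ delivers (c). For (e), on the sub-region $\{-\alpha \leq \cos\theta_i \leq \alpha\} \cap \{\sin\theta_i > 0\}$ we have $\sin\theta_i \geq \sqrt{1-\alpha^2} > \sin\theta_\alpha$, so $\theta_i$ decreases at speed $\geq \kappa\rho\sqrt{1-\alpha^2} - \|\Omega_{\mathcal{B}}\|_\infty$ (symmetrically in the lower half). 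Since the angular width of the two transit regions is $<\pi$, $\theta_i$ enters $\{\cos\theta_i \geq \alpha\} \pmod{2\pi}$ within time $\pi/(\kappa\rho\sqrt{1-\alpha^2}-\|\Omega_{\mathcal{B}}\|_\infty)$, and (d) applied with threshold $\alpha$ traps it there.

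Applying (e) to both indices, after the entry time both $\theta_i(t), \theta_j(t)$ lie in the trapping region $[-\arccos\alpha, \arccos\alpha] \subseteq [-\pi/2,\pi/2] \pmod{2\pi}$, on which $\sin$ is monotone increasing with derivative $\geq \alpha$. For the upper bound in (f), when $\theta_i \geq \theta_j$ the mean value theorem gives $\sin\theta_i - \sin\theta_j \geq \alpha(\theta_i-\theta_j)$, whence $\frac{d}{dt}(\theta_i-\theta_j) \leq (\omega_i-\omega_j) - \kappa\rho\alpha(\theta_i-\theta_j)$, and Gronwall starting from $\theta_i-\theta_j \leq \pi$ yields the claimed exponential bound. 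For the lower bound, whenever $\theta_i < \theta_j$ the interaction term is nonnegative so $\frac{d}{dt}(\theta_i-\theta_j) \geq \omega_i-\omega_j$, forcing $\theta_i-\theta_j$ (bounded below by $-\pi$) to reach $0$ within an additional time $\pi/(\omega_i-\omega_j)$; thereafter $|\sin a - \sin b| \leq |a-b|$ and $R \leq 2$ give $\frac{d}{dt}(\theta_i-\theta_j) \geq (\omega_i-\omega_j) - 2\kappa(\theta_i-\theta_j)$, and Gronwall starting from $0$ produces the lower bound. For (g) with $\omega_i = \omega_j$, $\frac{1}{2}\frac{d}{dt}(\theta_i-\theta_j)^2 = -\kappa R(\theta_i-\theta_j)(\sin\theta_i-\sin\theta_j) \leq -\kappa\rho\alpha(\theta_i-\theta_j)^2$ on the trapping region, yielding exponential decay in $L^2$ and hence in absolute value.

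The main technical obstacle is the marginal case of the barrier arguments in (a), (b), (d): at the critical angles the vector field is only weakly inward-pointing, with equality $\dot\theta_i = 0$ achievable when $R(t) = \rho$ and $|\omega_i| = \|\Omega_{\mathcal{B}}\|_\infty$ hold simultaneously. I would handle this by perturbation (replace $\rho$ by $\rho - \epsilon$, run the strict barrier argument, and pass to the limit $\epsilon \to 0$), or by a second-order Taylor analysis showing that $h(t) := \cos\theta_i(t) + \cos\theta_\alpha$ satisfies $\dot h \geq (\kappa\rho\cos\theta_\alpha)\, h + O(h^2)$ near $h = 0$, so $h$ cannot vanish starting from $h(0) > 0$. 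The two-stage Gronwall in the lower bound of (f) is a secondary non-routine step, but is straightforward once the splitting into the $\theta_i < \theta_j$ and $\theta_i \geq \theta_j$ regimes is made explicit.
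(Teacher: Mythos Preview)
Your proof is correct and follows essentially the same approach as the paper. The only cosmetic difference is that the paper computes $\frac{d}{dt}\cos\theta_i = -\omega_i\sin\theta_i + \kappa R\sin^2\theta_i \ge (-\|\Omega_\mathcal{B}\|_\infty + \kappa\rho|\sin\theta_i|)|\sin\theta_i|$ directly, showing this is nonnegative whenever $\cos\theta_i$ lies in the critical band $[-\sqrt{1-\|\Omega_\mathcal{B}\|_\infty^2/(\kappa^2\rho^2)},\sqrt{1-\|\Omega_\mathcal{B}\|_\infty^2/(\kappa^2\rho^2)}]$; this monotonicity statement for $\cos\theta_i$ yields (a)--(d) in one stroke without tracking which branch of $\arccos$ the angle is on, whereas you phrase the same content as exit-time arguments on $\theta_i$ at the four barrier angles. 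Your explicit algebraic step for (c), the transit-speed argument for (e), and the two-regime Gronwall splitting for (f) and (g) all match the paper exactly, and your worry about the marginal barrier case is something the paper simply sweeps under ``a simple comparison argument,'' so your $\varepsilon$-perturbation remark is if anything more careful than the original.
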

\begin{proof}
We first observe that for any $t_0\in [0,T)$ such that $\cos\theta_i(t_0)\in \left[-\sqrt{1-\frac{{\|\Omega_\mathcal{B}\|_\infty}^2}{\kappa^2 \rho^2}},\sqrt{1-\frac{{\|\Omega_\mathcal{B}\|_\infty}^2}{\kappa^2 \rho^2}}\right]$, we have
\begin{align*}
\left.\frac{d}{dt}\right|_{t=t_0}\cos\theta_i=&-\dot{\theta}_i(t_0) \sin \theta_i(t_0)=-\omega_i\sin\theta_i(t_0)+\kappa R(t_0)\sin^2\theta_i(t_0)\\
\ge& (-{\|\Omega_\mathcal{B}\|_\infty}+\kappa \rho |\sin\theta_i(t_0)|)|\sin\theta_i(t_0)|\ge 0.
\end{align*}
In other words, $\cos\theta_i(t)$ behaves as a non-decreasing function of $t$ whenever its value lies in  \linebreak$\left[-\sqrt{1-\frac{{\|\Omega_\mathcal{B}\|_\infty}^2}{\kappa^2 \rho^2}},\sqrt{1-\frac{{\|\Omega_\mathcal{B}\|_\infty}^2}{\kappa^2 \rho^2}}\right]$. A simple comparison argument gives assertions (a) and (b).

Assertions (c) and (d) immediately follows from assertions (a) and (b).

To show assertion (e), we calculate that, for any $t_0\in [0,T)$ such that $\theta_i(t_0)\in [\cos^{-1}\alpha,\pi-\cos^{-1}\alpha]\mod 2\pi$, we have
\[
\dot{\theta}_i(t_0)=\Omega_i-\kappa R(t_0)\sin \theta_i(t_0)\le {\|\Omega_\mathcal{B}\|_\infty}-\kappa \rho \sqrt{1-\alpha^2}<0.
\]
Similarly, if $\theta_i(t_0)\in [-\pi+\cos^{-1}\alpha,-\cos^{-1}\alpha]\mod 2\pi$, we have
\[
\dot{\theta}_i(t_0)\ge -{\|\Omega_\mathcal{B}\|_\infty}+\kappa \rho \sqrt{1-\alpha^2}>0.
\]
A simple comparison argument now gives assertion (e).

For assertions (f) and (g), note that for $t\in \left[\frac{\pi}{\kappa \rho \sqrt{1-\alpha^2}-{\|\Omega_\mathcal{B}\|_\infty}},T\right)$, we have by (e) that $\theta_i(t),\theta_j(t)\in [-\cos^{-1}\alpha,\cos^{-1}\alpha]$. Using $\dot\theta_i-\dot\theta_j=\omega_i-\omega_j-\kappa R(\sin\theta_i-\sin\theta_j)$, we have
\begin{equation}\label{eq:diffineq-pos}
\theta_i(t)\ge \theta_j(t)\Rightarrow \omega_i-\omega_j-2\kappa(\theta_i(t)-\theta_j(t)) \le\dot\theta_i(t)-\dot\theta_j(t)\le \omega_i-\omega_j-\kappa\rho\alpha(\theta_i(t)-\theta_j(t))
\end{equation}
\begin{equation}\label{eq:diffineq-neg}
\theta_i(t)\le \theta_j(t)\Rightarrow \dot\theta_i(t)-\dot\theta_j(t)\ge \omega_i-\omega_j-\kappa\rho\alpha(\theta_i-\theta_j)\ge \omega_i-\omega_j.
\end{equation}
The first part of statement (f) and statement (g) follow easily from the right-hand side of \eqref{eq:diffineq-pos}. On the other hand, for the second statement of (f), \eqref{eq:diffineq-neg} tells us that $\theta_i(t)>\theta_j(t)$ for $t\in \left[\frac{\pi}{\kappa \rho \sqrt{1-\alpha^2}-{\|\Omega_\mathcal{B}\|_\infty}}+\frac{\pi}{\omega_i-\omega_j},T\right)$, on which we apply the left-hand side of \eqref{eq:diffineq-pos} to obtain the desired statement.
\end{proof}
As a direct corollary, the Winfree model \eqref{Winfree} is well-controlled if we have a positive lower bound on the order parameter for all time. Compared to the previous subsection, we state the following extra Lemma separately, since we know additional information about the asymptotic dynamics thanks to the {\L}ojasiewicz gradient theorem (Proposition \ref{Loja}).
\begin{lemma}\label{simple-cor-1}
Let $\Theta=\Theta(t)$ be the solution to \eqref{Winfree} with parameters $\{\omega_i\}_{i=1}^N$, $\kappa$, and $\{\theta_i^0\}_{i=1}^N$. Suppose $0<\rho \le 2$ and $\mathcal{B}\subseteq \{1,\cdots,N\}$ satisfy
\[
R(t)\ge \rho, ~\forall t\in [0,\infty), \quad and\quad \kappa>\frac{{\|\Omega_\mathcal{B}\|_\infty}}{\rho}.
\]
\begin{enumerate}[(a)]
\item Let $i\in \mathcal{B}$ be such that $\cos \theta^0_i\ge-\sqrt{1-\frac{{\|\Omega_\mathcal{B}\|_\infty}^2}{\kappa^2 \rho^2}}$. Then $\cos \theta_i(t)\ge-\sqrt{1-\frac{{\|\Omega_\mathcal{B}\|_\infty}^2}{\kappa^2 \rho^2}}$ and $1+\cos\theta_i(t)\ge \frac 12 \left(1+\sqrt{1-\frac{{\|\Omega_\mathcal{B}\|_\infty}^2}{\kappa^2 \rho^2}}\right)(1+\cos\theta_i^0)$ for all $t\in [0,\infty)$.
\item Let $i\in \mathcal{B}$ and $0<\alpha< \sqrt{1-\frac{{\|\Omega_\mathcal{B}\|_\infty}^2}{\kappa^2 \rho^2}}$ be such that $\cos \theta^0_i\ge-\alpha$. Then $\cos\theta_i(t)\ge -\alpha$ for $t\ge 0$ and $\cos \theta_i(t)\ge \alpha$ for $t\ge \frac{\pi}{\kappa \rho \sqrt{1-\alpha^2}-{\|\Omega_\mathcal{B}\|_\infty}}$.
\item Let $i,j\in \mathcal{B}$ and $0<\alpha< \sqrt{1-\frac{{\|\Omega_\mathcal{B}\|_\infty}^2}{\kappa^2 \rho^2}}$ be such that $\cos \theta^0_i\ge-\alpha$ and $\cos \theta^0_j\ge-\alpha$. If $\omega_i>\omega_j$, then after modulo $2\pi$ translations,
\[
\theta_i(t)-\theta_j(t)\le \frac{\omega_i-\omega_j}{\kappa\rho\alpha}+\pi\exp\left(-\kappa\rho\alpha\left(t-\frac{\pi}{\kappa \rho \sqrt{1-\alpha^2}-{\|\Omega_\mathcal{B}\|_\infty}}\right)\right)\mbox{ for }t\ge \frac{\pi}{\kappa \rho \sqrt{1-\alpha^2}-{\|\Omega_\mathcal{B}\|_\infty}},
\]
and
\begin{align*}
\theta_i(t)-\theta_j(t)\ge \frac{\omega_i-\omega_j}{2\kappa}-\frac{\omega_i-\omega_j}{2\kappa}\exp\left(-2\kappa\left(t-\frac{\pi}{\kappa \rho \sqrt{1-\alpha^2}-{\|\Omega_\mathcal{B}\|_\infty}}-\frac{\pi}{\omega_i-\omega_j}\right)\right)\\
\mbox{for }t\ge \frac{\pi}{\kappa \rho \sqrt{1-\alpha^2}-{\|\Omega_\mathcal{B}\|_\infty}}+\frac{\pi}{\omega_i-\omega_j}.
\end{align*}
If $\omega_i=\omega_j$, then after modulo $2\pi$ translations, $\lim_{t\to\infty}\left(\theta_i(t)-\theta_j(t)\right)=0$ exponentially:
\[
|\theta_i(t)-\theta_j(t)|\le \pi \exp\left(-\kappa\rho\alpha\left(t-\frac{\pi}{\kappa \rho \sqrt{1-\alpha^2}-{\|\Omega_\mathcal{B}\|_\infty}}\right)\right)\mbox{ for }t\ge \frac{\pi}{\kappa \rho \sqrt{1-\alpha^2}-{\|\Omega_\mathcal{B}\|_\infty}}.
\]
\item For all $i\in \mathcal{B}$,
\[
\sup_{t\ge 0} \theta_i(t)-\inf_{t\ge 0}\theta_i(t)<2\pi.
\]
\item If $\mathcal{B}=\{1,\cdots,N\}$, then for all $i=1,\cdots,N$, the limit $\lim_{t\to\infty}\theta_i(t)$ exists, and
\[
\lim_{t\to\infty}\dot{\theta}_i(t)=0.
\]
\end{enumerate}
\end{lemma}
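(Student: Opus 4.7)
The plan is to deduce this corollary from the a priori estimate Lemma \ref{Simplest-a-priori} by sending the lifetime $T\to\infty$, and then to invoke the {\L}ojasiewicz gradient theorem (Proposition \ref{Loja}) for the convergence statement in (e).

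For parts (a), (b), and (c), since we now assume $R(t)\ge \rho$ on all of $[0,\infty)$, the corresponding assertions of Lemma \ref{Simplest-a-priori} apply with any finite $T$ and extend to $[0,\infty)$. The one minor issue is that parts (a)--(c) of Lemma \ref{Simplest-a-priori} are stated with the strict inequality $\cos\theta_i^0>-\sqrt{1-\|\Omega_\mathcal{B}\|_\infty^2/(\kappa^2\rho^2)}$, whereas here we allow equality; but inspection of the proof of Lemma \ref{Simplest-a-priori} shows that $\frac{d}{dt}\cos\theta_i\ge 0$ whenever $\cos\theta_i\in [-\sqrt{1-\|\Omega_\mathcal{B}\|_\infty^2/(\kappa^2\rho^2)},\sqrt{1-\|\Omega_\mathcal{B}\|_\infty^2/(\kappa^2\rho^2)}]$, so the equality case is handled identically by the same differential inequality.

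For part (d), I would fix $i\in\mathcal{B}$, write $\alpha^*\coloneqq\sqrt{1-\|\Omega_\mathcal{B}\|_\infty^2/(\kappa^2\rho^2)}$ and $\beta\coloneqq\cos^{-1}\alpha^*\in(0,\pi)$, and translate so that $\theta_i^0\in(-\pi,\pi]$. If $\cos\theta_i^0\ge -\alpha^*$, then by part (a), $\theta_i(t)$ remains trapped in the good arc $[-(\pi-\beta),\pi-\beta]$ of length $2(\pi-\beta)<2\pi$ for all $t\ge 0$. Otherwise $\theta_i^0$ lies in the bad arc $\{\cos\theta<-\alpha^*\}$ near $\pm\pi$ of length $2\beta$; then either $\theta_i(t)$ remains in that bad arc for all time (and is trapped in an interval of length $2\beta<2\pi$), or there is a first exit time $t^*>0$ with $\cos\theta_i(t^*)=-\alpha^*$, after which I apply Lemma \ref{Simplest-a-priori} at the new initial time $t^*$ (by time-autonomy of \eqref{Winfree}) to trap $\theta_i(t)$ in the adjacent good arc for $t\ge t^*$. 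Concatenating the segments on $[0,t^*]$ and $[t^*,\infty)$ confines $\theta_i$ to the union of one bad arc and one adjacent good arc, an interval of total length at most $2\pi$.

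Part (e) follows from the {\L}ojasiewicz gradient theorem. When $\mathcal{B}=\{1,\cdots,N\}$, part (d) provides $\sup_{t\ge 0}\|\Theta(t)\|_\infty<\infty$, and since $I(\theta)=1+\cos\theta$ and $S(\theta)=-\sin\theta$ are real-analytic with $S=I'$, the ODE of \eqref{Winfree} is a real-analytic gradient flow and Proposition \ref{Loja} applies. I expect the main subtle point to be the strict inequality $<2\pi$ in (d): the concatenated bad-plus-good arc has total length exactly $2\pi$, and strictness requires observing that for $t\in[0,t^*)$ the trajectory is strictly interior to the bad arc (since reaching the opposite endpoint would give $\cos\theta_i=-\alpha^*$ at an earlier time, contradicting the minimality of $t^*$ via the monotonicity argument underlying Lemma \ref{Simplest-a-priori}(a)), so that the supremum of $\theta_i$ is attained strictly below the far endpoint of the combined arc.
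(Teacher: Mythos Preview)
Your proposal is correct and follows essentially the same approach as the paper: parts (a)--(c) are read off from Lemma \ref{Simplest-a-priori} with $T=\infty$, part (d) is the same two-case argument (good arc versus bad arc with a first-exit time), and part (e) is Proposition \ref{Loja}. You are in fact slightly more careful than the paper in two places: you explicitly address the passage from the strict inequality $\cos\theta_i^0>-\sqrt{1-\|\Omega_\mathcal{B}\|_\infty^2/(\kappa^2\rho^2)}$ in Lemma \ref{Simplest-a-priori} to the non-strict inequality in part (a) here, and you isolate the reason why the concatenated bad-plus-good arc gives a \emph{strict} bound $<2\pi$ in part (d), a point the paper leaves implicit.
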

\begin{proof}
Assertions (a), (b), and (c) follow immediately from Lemma \ref{Simplest-a-priori}. To prove assertion (d), fix an $i\in \mathcal{B}$.
\begin{itemize}
\item[Case 1.] $\theta_i^0\in \left[-\cos^{-1}\Big(-\sqrt{1-\frac{{\|\Omega_\mathcal{B}\|_\infty}^2}{\kappa^2 \rho^2}}\Big),\cos^{-1}\Big(-\sqrt{1-\frac{{\|\Omega_\mathcal{B}\|_\infty}^2}{\kappa^2 \rho^2}}\Big)\right]\mod 2\pi$.

Then $\theta_i(t)\in \left[-\cos^{-1}\Big(-\sqrt{1-\frac{{\|\Omega_\mathcal{B}\|_\infty}^2}{\kappa^2 \rho^2}}\Big),\cos^{-1}\Big(-\sqrt{1-\frac{{\|\Omega_\mathcal{B}\|_\infty}^2}{\kappa^2 \rho^2}}\Big)\right]\mod 2\pi$ for all $t\ge 0$, and $\theta_i(t)$ will belong to the same connected component, which has length $<2\pi$. Assertion (d) follows in this case.
\item[Case 2.] $\theta_i^0\in \left(\cos^{-1}\Big(-\sqrt{1-\frac{{\|\Omega_\mathcal{B}\|_\infty}^2}{\kappa^2 \rho^2}}\Big),2\pi-\cos^{-1}\Big(-\sqrt{1-\frac{{\|\Omega_\mathcal{B}\|_\infty}^2}{\kappa^2 \rho^2}}\Big)\right)\mod 2\pi$.

Then either $\theta_i(t)\in \left(\cos^{-1}\Big(-\sqrt{1-\frac{{\|\Omega_\mathcal{B}\|_\infty}^2}{\kappa^2 \rho^2}}\Big),2\pi-\cos^{-1}\Big(-\sqrt{1-\frac{{\|\Omega_\mathcal{B}\|_\infty}^2}{\kappa^2 \rho^2}}\Big)\right)\mod 2\pi$ for all $t\ge 0$ while staying in the same connected component, in which case assertion (d) follows, or there will be the smallest time $t^*>0$ at which $\theta_i(t^*)=\cos^{-1}\Big(-\sqrt{1-\frac{{\|\Omega_\mathcal{B}\|_\infty}^2}{\kappa^2 \rho^2}}\Big)\mbox{ or }2\pi-\cos^{-1}\Big(-\sqrt{1-\frac{{\|\Omega_\mathcal{B}\|_\infty}^2}{\kappa^2 \rho^2}}\Big)\mod 2\pi$. In the latter case, we can apply Lemma \ref{Simplest-a-priori} starting at time $t^*$ (this is possible since \eqref{Winfree} is a time-autonomous system) and deduce that the set $\{\theta_i(t):t\ge 0\}$ will be contained in an interval either of the form $\left(\cos^{-1}\Big(-\sqrt{1-\frac{{\|\Omega_\mathcal{B}\|_\infty}^2}{\kappa^2 \rho^2}}\Big),2\pi+\cos^{-1}\Big(-\sqrt{1-\frac{{\|\Omega_\mathcal{B}\|_\infty}^2}{\kappa^2 \rho^2}}\Big)\right]\mod 2\pi$ or $\left[-\cos^{-1}\Big(-\sqrt{1-\frac{{\|\Omega_\mathcal{B}\|_\infty}^2}{\kappa^2 \rho^2}}\Big),2\pi-\cos^{-1}\Big(-\sqrt{1-\frac{{\|\Omega_\mathcal{B}\|_\infty}^2}{\kappa^2 \rho^2}}\Big)\right)\mod 2\pi$, and assertion (d) follows in both of these cases.
\end{itemize}
Assertion (e) follows from assertion (d) by Proposition \ref{Loja}.
\end{proof}

To guarantee the a priori lower bound, we can use a bootstrapping argument as in the previous subsection, which allows us to deduce the criterion for partial oscillator death, namely Proposition \ref{bootstrap}.

\begin{proof}[Proof of Proposition \ref{bootstrap}]
It is clear, by Lemma \ref{simple-cor-1}, that (b)-(f) follow once we prove (a). To prove (a), define
\[
\mathcal{T}\coloneqq \{T>0: R(t)\ge \rho \mbox{ for }t\in [0,T)\}.
\]
Noting that
\[
{R_0}\ge \frac 1N\sum_{i\in \mathcal{A}} (1+\cos\theta_i^0)> \frac{2\rho}{1+\sqrt{1-\frac{{\|\Omega_\mathcal{B}\|_\infty}^2}{\kappa^2\rho^2}}}\ge \rho,
\]
by continuity of $R(t)$ we may find a small $\delta>0$ such that $R(t)>\rho$ for $t\in [0,\delta)$. By Lemma \ref{Simplest-a-priori} (a)-(b) and \eqref{part_is_in_range}, we see that $\cos\theta_i(t)\ge \cos\theta_i^0\ge -\sqrt{1-\frac{{\|\Omega_\mathcal{B}\|_\infty}^2}{\kappa^2\rho^2}}$ for all $i\in \mathcal{A}$ and $t\in [0,\delta)$. This proves that $\delta\in \mathcal{T}$, i.e., $\mathcal{T}$ is not empty. It is then immediate that $\mathcal{T}\supset(0,T^*)$, where $T^*\coloneqq \sup \mathcal{T}>0$. Suppose, for contradiction, that $T^*<\infty$. Then, for any $t\in [0,T^*)$, we have
\[
R(t)\ge \frac 1N \sum_{i\in \mathcal{A}}(1+\cos\theta_i(t))\stackrel{\mathrm{Lemma }~\ref{Simplest-a-priori} (c)}{\ge} \frac 1N \sum_{i\in \mathcal{A}}\frac 12 \left(1+\sqrt{1-\frac{{\|\Omega_\mathcal{B}\|_\infty}^2}{\kappa^2\rho^2}}\right)(1+\cos\theta_i^0)
\]
so that by continuity of $R(t)$, we have
\[
R(T^*)\ge \frac 1N \sum_{i\in \mathcal{A}}\frac 12 \left(1+\sqrt{1-\frac{{\|\Omega_\mathcal{B}\|_\infty}^2}{\kappa^2\rho^2}}\right)(1+\cos\theta_i^0)\stackrel{\eqref{part_is_big}}{>}  \frac 12 \left(1+\sqrt{1-\frac{{\|\Omega_\mathcal{B}\|_\infty}^2}{\kappa^2\rho^2}}\right)\cdot \frac{2\rho}{1+\sqrt{1-\frac{{\|\Omega_\mathcal{B}\|_\infty}^2}{\kappa^2\rho^2}}} =\rho.
\]
Again by continuity of $R(t)$, we may find a small $\delta>0$ such that $R(t)>\rho$ for $t\in [0,T^*+\delta)$. 
This shows $T^*+\delta\in \mathcal{T}$, contradicting the definition of $T^*$. Hence, we must have $T^*=\infty$, and this completes the proof of assertion (a).
\end{proof}

We are now ready to prove Theorem \ref{generalbestsincosmainthm}.
\begin{proof}[Proof of Theorem \ref{generalbestsincosmainthm}]
Define
\begin{equation}\label{final-choice-A2}
\mathcal{A}\coloneqq\{i=1,\cdots,N:\cos\theta_i^0\ge -1+\mu \}.
\end{equation}
We claim that 
\begin{equation}\label{technical}
\frac 1N\sum_{i\in \mathcal{A}}(1+\cos\theta_i^0)\ge\frac{2({R_0}-\mu)}{2-\mu }.
\end{equation}
To prove the claim, we first observe by definition \eqref{order_parameter} that
\begin{equation}\label{sum}
\frac 1N\sum_{i\in \mathcal{A}}(1+\cos\theta_i^0)+\frac 1N\sum_{i\in \{1,\cdots,N\}\setminus\mathcal{A}}(1+\cos\theta_i^0)={R_0}.
\end{equation}
Then, using
\begin{equation}\label{sizebound}
\begin{cases}
1+\cos\theta_i\le 2, & \mbox{for } i\in \mathcal{A},\\
1+\cos\theta_i\le \mu ,&\mbox{for } i\in \{1,\cdots,N\}\setminus\mathcal{A},
\end{cases}
\end{equation}
we derive
\begin{align*}
\mu \cdot \frac 1N\sum_{i\in \mathcal{A}}(1+\cos\theta_i^0)\stackrel{\eqref{sizebound}}{\le}& \mu \cdot \frac{2|\mathcal{A}|}{N}=2\mu -2\mu \cdot \frac{|\{1,\cdots,N\}\setminus\mathcal{A}|}{N}\\
\stackrel{\eqref{sizebound}}{\le} &2\mu -\frac 2N\sum_{i\in \{1,\cdots,N\}\setminus \mathcal{A}}(1+\cos\theta_i^0)\\
\stackrel{\eqref{sum}}{=}&2\mu -2{R_0}+\frac 2N\sum_{i\in  \mathcal{A}}(1+\cos\theta_i^0),
\end{align*}
from which the above claim \eqref{technical} follows.

Now let
\[
\rho={R_0}-\mu>0.
\]
We easily observe that condition \eqref{large_kappa_general} implies
\begin{equation}\label{condition}
-1+\mu > -\sqrt{1-\frac{\|\Omega\|_\infty^2}{\kappa^2\rho^2}}.
\end{equation}
We now confirm that our choice of $\mathcal{A}$ and $\rho$ along with $\mathcal{B}=\{1,\cdots,N\}$ satisfy the hypotheses of Proposition \ref{bootstrap}. Indeed, \eqref{part_is_big} follows from
\begin{equation*}
\frac 1N\sum_{i\in \mathcal{A}}(1+\cos\theta_i^0)\stackrel{\eqref{technical}}{\ge}\frac{2({R_0}-\mu)}{2-\mu }\stackrel{\eqref{condition}}{>} \frac{2\rho}{1+\sqrt{1-\frac{\|\Omega\|_\infty^2}{\kappa^2\rho^2}}},
\end{equation*}
\eqref{partial_large_kappa} follows from
\begin{equation*}
\kappa^2\stackrel{\eqref{technical2}}{>}\frac{\|\Omega\|_\infty^2}{\rho^2\mu(2-\mu)}\ge \frac{\|\Omega\|_\infty^2}{\rho^2},
\end{equation*}
and \eqref{part_is_in_range} follows from 
\begin{equation*}
\cos\theta_i^0\stackrel{\eqref{final-choice-A2}}{\ge}-1+\mu\stackrel{\eqref{condition}}{>} -\sqrt{1-\frac{\|\Omega\|_\infty^2}{\kappa^2\rho^2}} \quad \forall i\in\mathcal{A}.
\end{equation*}

Assertion (a) of Theorem \ref{generalbestsincosmainthm} follows from assertion (a) of Proposition \ref{bootstrap}, assertion (b) of Theorem \ref{generalbestsincosmainthm} follows from assertions (d) and (f) of Proposition \ref{bootstrap}. Assertion (c) of Theorem \ref{generalbestsincosmainthm} follows from assertion (c) of Proposition \ref{bootstrap} by setting $\alpha=1-\mu$, because of \eqref{condition} and $\cos\theta_i(t_0)\ge -1+\mu=-\alpha$ by definition, so $\cos\theta_i(t)\ge 1-\mu$ for 
\[
t\ge t_0+\frac{\pi}{\kappa \rho \sqrt{1-\alpha^2}-\|\Omega\|_\infty}=t_0+\frac{\pi}{\kappa \rho \sqrt{\mu(2-\mu)}-\|\Omega\|_\infty}.
\]
Likewise assertion (e) of Theorem \ref{generalbestsincosmainthm} follows from assertion (e) of Proposition \ref{bootstrap}.

For assertion (d) of Theorem \ref{generalbestsincosmainthm}, note that if we denote $R_\infty=\lim_{t\to\infty}R(t)$, then by assertion (b) of Theorem \ref{generalbestsincosmainthm} and \eqref{Winfree_orderparam} we have $\lim_{t\to\infty} \sin \theta_i(t)=\frac{\omega_i}{\kappa R_\infty}$, so that
\[
\lim_{t\to\infty} \theta_i(t)=\sin^{-1}\frac{\omega_i}{\kappa R_\infty}\mathrm{~or~}\pi-\sin^{-1}\frac{\omega_i}{\kappa R_\infty}\mod 2\pi.
\]
Noting that $\cos\left(\pi-\sin^{-1}\frac{\omega_i}{\kappa R_\infty}\right)=-\sqrt{1-\frac{\omega_i^2}{\kappa^2\rho^2}}<-\sqrt{1-\frac{\|\Omega\|_\infty^2}{\kappa^2\rho^2}}\stackrel{\eqref{condition}}{<}-1+\mu$, we obtain assertion (d).
\end{proof}

We are now ready to prove Corollary \ref{bestsincosmainthm}, which turns out to be special cases of Theorem \ref{generalbestsincosmainthm}.
\begin{proof}[Proof of Corollary \ref{bestsincosmainthm}]
Assertions (a), (b), (c), (d) and (e) of Corollary \ref{bestsincosmainthm} just follow from the case
\[
\mu=\frac{3+{R_0}-\sqrt{{R_0}^2-2{R_0}+9}}{4}\in (0,\min\{R_0,1\})
\]
of Theorem \ref{generalbestsincosmainthm} and from \begin{equation}\label{eq:sqrt-ineq}
\sqrt{R_0^2-2{R_0}+9}\ge 3-\frac 13 {R_0}.
\end{equation}
Indeed, we need to check that \eqref{large_kappa_general} holds. Because of \eqref{large_kappa_1} and \eqref{large_kappa_2}, it is enough to check
\begin{equation}\label{technical2}
K_c^2\ge\frac{1}{\rho^2\mu (2-\mu )}.
\end{equation}
We will verify this in Appendix A using elementary one-variable calculus. Also, we verify that
\[
R_0-\mu=\frac{3{R_0}-3+\sqrt{{R_0}^2-2{R_0}+9}}{4}\stackrel{\eqref{eq:sqrt-ineq}}{>}\frac{2}{3}R_0,
\]
\[
1-\mu=\frac{1-{R_0}+\sqrt{{R_0}^2-2{R_0}+9}}{4}\stackrel{\eqref{eq:sqrt-ineq}}{>}1-\frac{1}{3}R_0,
\]
and we verify the time that appears in statements (c) and (d) follows from the following upper bound:
\[
\frac{\pi}{\kappa \rho \sqrt{\mu(2-\mu)}-\|\Omega\|_\infty}\stackrel{\eqref{technical2}}{\le} \frac{\pi}{\rho\sqrt{\mu(2-\mu)}}(\kappa-K_c\|\Omega\|_\infty)^{-1}\stackrel{\eqref{technical2}}{\le}\frac{\pi K_c}{\kappa-K_c\|\Omega\|_\infty}.
\]

Assertions (f), (g), (h) and (i) of Corollary \ref{bestsincosmainthm} follow from the case $\mu=\frac{4\|\Omega\|_\infty^2}{\kappa^2R_0^2}$ of Theorem \ref{generalbestsincosmainthm}. Note that because $\kappa>\frac{2\sqrt{2}\|\Omega\|_\infty}{R_0^{3/2}}$, we have $\mu<\frac{R_0}{2}\le \min\{R_0,1\}$. Condition \eqref{large_kappa_general} follows from $2-\mu\ge 1$ and $R_0-\mu> R_0/2$. The time that appears in statement (i) follows from the following upper bound:
\[
\frac{\pi}{\kappa(R_0-\mu)\sqrt{\mu(2-\mu)}-\|\Omega\|_\infty}\le \frac{\pi}{\kappa (R_0-4\|\Omega\|^2_\infty/\kappa^2R_0^2)\cdot 2\|\Omega\|_\infty/\kappa R_0-\|\Omega\|_\infty}=\frac{\pi}{\|\Omega\|_\infty\left(1-\frac{8\|\Omega\|_\infty^2}{\kappa^2R_0^3}\right)}.
\]

\end{proof}


\section{Volumetric arguments}\label{sec:large_deviations}
The purpose of this section is to prove Theorems \ref{themainthm}, \ref{sincosmaincor}, \ref{sincosmaincor-time}, \ref{maincor}, \ref{maincor-kappalarge}, \ref{themainthm-IS}, and \ref{maincor-quant}. First, in subsection \ref{subsec:large_deviations}, we will recall some basic large deviations theory to provide estimates on the volume of certain singular sets and then derive Theorems \ref{maincor} and \ref{maincor-kappalarge} from Theorem \ref{mainthm}, and Theorem \ref{sincosmaincor} from Corollary \ref{bestsincosmainthm}. Then, in subsection \ref{subsec:instability}, we will use the divergence to examine the evolution of the volume of the singular sets, to prove Theorem \ref{themainthm} from Theorem \ref{generalbestsincosmainthm}, Theorem \ref{sincosmaincor-time} from Corollary \ref{bestsincosmainthm}, and Theorems \ref{themainthm-IS} and \ref{maincor-quant} from Theorem \ref{mainthm}.

\subsection{Some large deviations theory}\label{subsec:large_deviations}
We first begin by discussing some time-independent large deviations theory to prove Theorems \ref{sincosmaincor}, \ref{maincor}, and \ref{maincor-kappalarge}.

In this paragraph, we recall some standard results and notations in large deviations theory (the statements of this paragraph can all be found in \cite[Section 3.1]{van2014probability}). For a given $\sigma>0$, a real-valued random variable $X$ is said to be $\sigma^2$-\textit{subgaussian} if $X$ is integrable and
\[
\mathbb{E}[\exp(\lambda(X-\mathbb{E}X))]\le \exp\left(\frac{\lambda^2 \sigma^2}{2}\right),\quad \forall \lambda\in \mathbb{R}.
\]
It is clear that if the independent random variables $X$ and $Y$ are $\sigma^2$-subgaussian and $\tau^2$-subgaussian, respectively, then $X+Y$ is $(\sigma^2+\tau^2)$-subgaussian. The routine Chernoff bound states that if $X$ is a $\sigma^2$-subgaussian random variable, then
\begin{equation}\label{Chernoff}
\mathbb{P}[X\ge\mathbb{E}X+ t],\mathbb{P}[X\le \mathbb{E}X- t]\le \exp\left(-\frac{t^2}{2\sigma^2}\right),\quad \forall  t>0.
\end{equation}
Thus, if $X_1,\cdots,X_n$ are independent random variables that are each $\sigma^2$-subgaussian, then $\frac{X_1+\cdots+X_n}{n}$ is $\sigma^2/n$-subgaussian so that
\[
\mathbb{P}\left[\frac{X_1+\cdots+X_n}{n}\ge\frac{\mathbb{E}X_1+\cdots+\mathbb{E}X_n}{n}+ t\right],\mathbb{P}\left[\frac{X_1+\cdots+X_n}{n}\le\frac{\mathbb{E}X_1+\cdots+\mathbb{E}X_n}{n}- t\right]\le \exp\left(-\frac{nt^2}{2\sigma^2}\right),\quad \forall  t>0.
\]
In addition, the well-known Hoeffding lemma states that if $X$ is a real-valued random variable, with $a\le X\le b$ almost surely for some $a,b\in \mathbb{R}$, then $X$ is $(b-a)^2/4$-subgaussian.

With these basic tools, we are ready to prove Theorem \ref{maincor}.

\begin{proof}[Proof of Theorem \ref{maincor}]
The distribution $\mu^{\otimes N}$ means that we are independently and identically sampling $\theta_i^0\sim \mu$, so ${R_0}=\frac 1N\sum_{j=1}^N I(\theta_j^0)$ is the sum of the independent random variables $\frac 1N I(\theta_j^0)$, $j=1,\cdots, N$. By the Hoeffding lemma, $\frac 1N I(\theta_j^0)$ is $\frac{\|I\|_{\sup}^2}{4N^2}$-subgaussian, so by independence, ${R_0}=\sum_{j=1}^N \frac 1N I(\theta_j^0)$ is $\frac{\|I\|_{\sup}^2}{4N}$-subgaussian. As $\mathbb{E}_{\mu^{\otimes N}}{R_0}=R^*$, we conclude

\begin{align*}
&\mu^{\otimes N}\Big\{\{\theta_i^0\}_{i=1}^N\in [-\pi,\pi]^N:\Theta(t)\mbox{ satisfies }R(t)> \frac{c_3 R^*}{4}~\forall t\ge 0 \mbox{ and}\\
&\qquad\qquad\qquad\qquad\qquad\qquad\sup_{t\ge 0} \theta_i(t)-\inf_{t\ge 0}\theta_i(t)<2\pi ~\forall i=1,\cdots,N\Big\}\\
&\stackrel{\mathclap{\mathrm{Theorem~} \ref{mainthm},\eqref{cor-kappa-large}}}{\ge}\qquad \mu^{\otimes N}\Big\{\{\theta_i^0\}_{i=1}^N\in [-\pi,\pi]^N:{R_0}>\frac{R^*}2\Big\}\\
&=1-\mu^{\otimes N}\left\{\{\theta_i^0\}_{i=1}^N\in [-\pi,\pi]^N:{R_0}\le R^*-\frac{R^*}2\right\}\\
&\stackrel{\eqref{Chernoff}}{\ge} 1-\exp\left(-\frac{(R^*/2)^2}{2\cdot \|I\|_{\sup}^2 /4N}\right)=1-\exp\left(-\frac{(R^*)^2N}{2 \|I\|_{\sup}^2 }\right).
\end{align*}
\end{proof}

If the distribution $\mu$ is such that $I(\theta^0)$, $\theta^0\sim\mu$ has better subgaussian concentration, we may improve the probability estimate of Theorem \ref{maincor}. We will exploit better concentration behavior in Theorem \ref{sincosmaincor}.

\begin{proposition}\label{prop:circleConc}
    Suppose $X\in [-\pi,\pi]$ is a random variable that is uniform with respect to Lebesgue measure. Then $\cos X$ is $1/2$-subgaussian.
\end{proposition}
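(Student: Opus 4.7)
The plan is to compute the moment generating function of $\cos X$ explicitly and compare it term-by-term with the target bound. Since $X$ is uniform on $[-\pi,\pi]$, by symmetry of $\cos$ on this interval we have $\mathbb{E}[\cos X]=\frac{1}{2\pi}\int_{-\pi}^\pi\cos\theta\,d\theta = 0$, so $1/2$-subgaussianity reduces to showing
\[
\mathbb{E}[e^{\lambda\cos X}] \le e^{\lambda^2/4}, \quad \forall \lambda\in\mathbb{R}.
\]

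First I would expand the left-hand side as a power series in $\lambda$. The odd moments $\mathbb{E}[\cos^{2k+1}X]$ all vanish by the symmetry $\cos(\pi-\theta)=-\cos\theta$ (so after substituting $\theta\mapsto\pi-\theta$ the integral $\int_0^\pi\cos^{2k+1}\theta\,d\theta$ equals its own negative). The even moments are the classical Wallis-type integrals,
\[
\mathbb{E}[\cos^{2k}X]=\frac{1}{2\pi}\int_{-\pi}^\pi\cos^{2k}\theta\,d\theta=\frac{1}{4^k}\binom{2k}{k}=\frac{(2k)!}{(k!)^2\,4^k}.
\]
Substituting into the Taylor series of the exponential gives
\[
\mathbb{E}[e^{\lambda\cos X}]=\sum_{k=0}^\infty\frac{\lambda^{2k}}{(2k)!}\cdot\frac{(2k)!}{(k!)^2\,4^k}=\sum_{k=0}^\infty\frac{(\lambda^2/4)^k}{(k!)^2}.
\]

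On the other hand, $e^{\lambda^2/4}=\sum_{k=0}^\infty\frac{(\lambda^2/4)^k}{k!}$, so a termwise comparison suffices: since $k!\ge 1$ for all $k\ge 0$, we have $\frac{1}{(k!)^2}\le\frac{1}{k!}$, and each term $(\lambda^2/4)^k$ is nonnegative, so the desired inequality follows by summing. There is no real obstacle here; the one thing to double-check is that the series manipulations are justified, but absolute convergence of both series for every $\lambda\in\mathbb{R}$ makes the term-by-term comparison rigorous. (One could alternatively recognize $\mathbb{E}[e^{\lambda\cos X}]=I_0(\lambda)$, the modified Bessel function of order $0$, and quote the known bound $I_0(\lambda)\le e^{\lambda^2/4}$, but the direct series comparison above is self-contained and just as short.)
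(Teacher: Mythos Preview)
Your proof is correct and follows essentially the same approach as the paper: both compute the moment generating function as the series $\sum_{k=0}^\infty (\lambda^2/4)^k/(k!)^2$ and then compare term by term with $e^{\lambda^2/4}=\sum_{k=0}^\infty (\lambda^2/4)^k/k!$. The only cosmetic difference is in how the series is obtained: the paper identifies $\mathbb{E}[e^{\lambda\cos X}]$ via the Bessel function $J_0$ (writing $\fint_{\mathbb{S}^1}e^{\mathrm{i}\langle a,u\rangle}d\sigma(u)=J_0(\|a\|)$ and analytically continuing to imaginary $a$), whereas you compute the even moments directly using the Wallis integrals. Your route is slightly more self-contained, but the two arguments are the same in substance.
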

\begin{proof}
    It is well-known that for $d\ge 2$ and a vector $a\in \mathbb{R}^d$,
\[
\int_{\mathbb{S}^{d-1}}e^{\mathrm{i}\langle a, u\rangle}d\sigma(u)=(2\pi)^{d/2}\|a\|^{-\frac d2 +1}J_{\frac d2 -1}(\|a\|),
\]
where $\sigma$ is the Lebesgue measure on $\mathbb{S}^{d-1}$, and, for $\nu\ge 0$, $J_\nu$ is the Bessel function of the first kind, which admits the convergent expansion
\[
J_\nu(x)=\left(\frac x2\right)^\nu \sum_{m=0}^\infty \frac{(-1)^m (x^2/4)^m}{m! \Gamma(\nu+m+1)}.
\]
Thus
\begin{align*}
\fint_{\mathbb{S}^{1}}e^{\mathrm{i}\langle a, u\rangle}d\sigma(u)&=\frac{1}{2\pi}\int_{\mathbb{S}^{1}}e^{\mathrm{i}\langle a, u\rangle}d\sigma(u)\\
&=J_{0}(\|a\|)\\
&=\sum_{m=0}^\infty \frac{(-1)^m (\|a\|^2/4)^m}{(m!)^2}.
\end{align*}
Formally writing $a=-\mathrm{i}\lambda e_1$, $\lambda\in\mathbb{R}$, we have
\begin{align*}
\fint_{-\pi}^\pi e^{\lambda \cos\theta}d\theta=\fint_{\mathbb{S}^{d-1}}e^{\lambda \langle e_1, u\rangle}d\sigma(u)&=\sum_{m=0}^\infty \frac{ (\lambda^2/4)^m}{(m!)^2}\le \sum_{m=0}^\infty \frac{ (\lambda^2/4)^m}{m!}=\exp\left(\frac{\lambda^2}{2d}\right).
\end{align*}
\end{proof}
\color{black}

On the other hand, given a random variable $X$ and a real number $M$, if we know the concrete upper bound $X\le M$, what can we say about how close $X$ can be to $M$?
Trivially $\mathbb{E}e^{\lambda X}\le e^{\lambda M}$ for $\lambda>0$. If the distribution of $X$ is absolutely continuous with respect to Lebesgue measure near $M$, say the density function is of the power type $O((M-x)^{\alpha-1})$ for $x<M$ near $M$ for some $\alpha>0$, then we have the bound $\mathbb{E}e^{\lambda X}\le O(e^{\lambda M}/{\lambda^\alpha})+e^{\lambda (M-\Omega(1))}$ which is stronger for large $\lambda$. Conversely, if it is the case that we have the bound $\mathbb{E}e^{\lambda X}\le \frac{C}{\lambda^\alpha}e^{\lambda M}$ for some constants $C,\alpha>0$, then we know the distribution of $X$ has power-type behavior near $M$, as follows; this has implications for sums of i.i.d.~bounded random variables of this power-type behavior.
\begin{lemma}\label{lem:extreme-close}
    Let $X$ be a random variable such that for some constants $C,\alpha>0$, we have $\mathbb{E}e^{\lambda X}\le \frac{C}{\lambda^\alpha}e^{\lambda M}$ for all $\lambda>0$. Then
    \[
    \mathbb{P}[X\ge M-\varepsilon]\le C\left(\frac{e\varepsilon}{\alpha}\right)^\alpha,\quad \varepsilon>0.
    \]
    Furthermore, if $X_1,\cdots,X_N$ are independent and identically distributed copies of $X$, where $N\in \mathbb{N}$, then
    \[
    \mathbb{P}\left[\frac{X_1+\cdots+X_N}{N}\ge M-\varepsilon\right]\le \left(C\left(\frac{e\varepsilon}{\alpha}\right)^\alpha\right)^N.
    \]
\end{lemma}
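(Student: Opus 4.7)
The plan is to prove both statements by a standard Chernoff-type optimization of the exponential Markov inequality. For the first statement, I would fix $\lambda>0$ and apply Markov to $e^{\lambda X}$:
\[
\mathbb{P}[X\ge M-\varepsilon]=\mathbb{P}\left[e^{\lambda X}\ge e^{\lambda(M-\varepsilon)}\right]\le e^{-\lambda(M-\varepsilon)}\,\mathbb{E}e^{\lambda X}\le \frac{C\,e^{\lambda\varepsilon}}{\lambda^{\alpha}},
\]
using the hypothesis in the final step. The only real computation is to minimize $\lambda\mapsto e^{\lambda\varepsilon}/\lambda^{\alpha}$ over $\lambda>0$; differentiating shows the minimizer is $\lambda^*=\alpha/\varepsilon$, which gives the value $(e\varepsilon/\alpha)^{\alpha}$ and hence the stated bound $C(e\varepsilon/\alpha)^{\alpha}$.

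For the second statement, I would exploit independence to reduce to the first statement applied to the average $S_N\coloneqq (X_1+\cdots+X_N)/N$. Namely, for any $\mu>0$,
\[
\mathbb{E}e^{\mu S_N}=\prod_{i=1}^N \mathbb{E}e^{(\mu/N)X_i}\le \left(\frac{C\,N^{\alpha}}{\mu^{\alpha}}\,e^{\mu M/N}\right)^{\!N}=\frac{C^N N^{\alpha N}}{\mu^{\alpha N}}\,e^{\mu M},
\]
so $S_N$ satisfies the same hypothesis as $X$, but with constants $\widetilde C=C^N N^{\alpha N}$ and $\widetilde\alpha=\alpha N$ in place of $C$ and $\alpha$. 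Plugging these into the first part yields
\[
\mathbb{P}[S_N\ge M-\varepsilon]\le C^N N^{\alpha N}\left(\frac{e\varepsilon}{\alpha N}\right)^{\!\alpha N},
\]
and the factors $N^{\alpha N}$ cancel to leave $\bigl(C(e\varepsilon/\alpha)^{\alpha}\bigr)^N$, as desired.

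I do not anticipate any substantive obstacle; the argument is entirely mechanical once the hypothesis is read as an upper bound on the moment generating function. The only points that deserve a line of care are that the optimizing $\lambda^*=\alpha/\varepsilon$ is indeed positive (which is automatic from $\alpha,\varepsilon>0$), and that in the second step the independence of the $X_i$ is used solely to factorize the joint moment generating function, so no further regularity of the distribution of $X$ is needed.
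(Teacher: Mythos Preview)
Your proposal is correct and follows essentially the same approach as the paper: apply the exponential Markov inequality, optimize at $\lambda=\alpha/\varepsilon$, and for the i.i.d.\ case factorize the moment generating function to obtain the hypothesis for $S_N$ with constants $C^N N^{\alpha N}$ and $\alpha N$, then invoke the first part.
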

\begin{proof}
    The first assertion follows from observing that
    \[
    \mathbb{P}[X\ge M-\varepsilon]\le e^{-\lambda (M-\varepsilon)}\mathbb{E}e^{\lambda X}\le \frac{Ce^{\varepsilon \lambda}}{\lambda^\alpha},\quad \lambda>0
    \]
    and optimizing over $\lambda$ by choosing $\lambda=\frac{\alpha}{\varepsilon}>0$. The second assertion follows from the first assertion by observing that
    \[
    \mathbb{E}\exp\left(\lambda\cdot \frac{X_1+\cdots+X_N}{N}\right)=(\mathbb{E}e^{\lambda X_1/N})^N\le \frac{C^NN^{N\alpha}}{\lambda^{N\alpha}}e^{\lambda M}.
    \]
\end{proof}

\begin{proposition}\label{prop:circle-extreme}
Suppose $X\in [-\pi,\pi]$ is a random variable that is uniform with respect to Lebesgue measure.
Then\footnote{The correct asymptotics as $\lambda\to\infty$ is $\frac{e^\lambda}{\sqrt{2\pi\lambda}} (1+o(1))$.}
    \[
    \mathbb{E}e^{\lambda \cos X}\le 
    \frac 12 \sqrt{\frac{\pi}{2\lambda}}e^\lambda,
    \quad \lambda>0.
    \]
\end{proposition}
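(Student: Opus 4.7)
The plan is to identify $\mathbb{E}e^{\lambda\cos X}$ with the modified Bessel function $I_0(\lambda)$ and then bound it via a Gaussian comparison. Concretely, by symmetry of $\cos$ we have
\[
\mathbb{E}e^{\lambda\cos X} = \frac{1}{2\pi}\int_{-\pi}^\pi e^{\lambda\cos\theta}d\theta = \frac{1}{\pi}\int_0^\pi e^{\lambda\cos\theta}d\theta,
\]
so the first step is to extract the dominant factor $e^\lambda$. I would do this via the half-angle substitution $\theta = 2\phi$, using $\cos(2\phi) = 1 - 2\sin^2\phi$, to rewrite
\[
\mathbb{E}e^{\lambda\cos X} = \frac{2e^\lambda}{\pi}\int_0^{\pi/2} e^{-2\lambda\sin^2\phi}d\phi.
\]

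The remaining task is to bound the integral on $[0,\pi/2]$ by something of order $\lambda^{-1/2}$. The key tool is Jordan's inequality $\sin\phi \ge 2\phi/\pi$ for $\phi \in [0,\pi/2]$ (which follows from the monotonicity of $\phi \mapsto \sin\phi/\phi$ on that interval, a standard elementary calculus fact). Squaring gives $\sin^2\phi \ge 4\phi^2/\pi^2$, so
\[
\int_0^{\pi/2} e^{-2\lambda\sin^2\phi}d\phi \le \int_0^{\pi/2} e^{-8\lambda\phi^2/\pi^2}d\phi \le \int_0^\infty e^{-8\lambda\phi^2/\pi^2}d\phi = \frac{\pi}{4}\sqrt{\frac{\pi}{2\lambda}}.
\]
Multiplying by $2e^\lambda/\pi$ produces exactly the claimed upper bound $\tfrac{1}{2}\sqrt{\pi/(2\lambda)}\,e^\lambda$.

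There is no real obstacle here; the inequality is not tight (the sharp leading-order asymptotic $e^\lambda/\sqrt{2\pi\lambda}$ noted in the footnote would require, e.g., Laplace's method, but is off from the stated bound only by a factor of $\pi/2$), and Jordan's inequality is more than sharp enough for the downstream applications in the paper. If a fully self-contained presentation is desired, Jordan's inequality can be relegated to the calculus appendix referenced in the introduction.
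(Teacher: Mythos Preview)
Your proof is correct and essentially the same as the paper's: both bound the integrand by a Gaussian via the inequality $\cos\theta \le 1 - \tfrac{2}{\pi^2}\theta^2$ on $[-\pi,\pi]$ and then extend the integration to the whole line. Your half-angle substitution $\theta=2\phi$ together with Jordan's inequality $\sin\phi\ge 2\phi/\pi$ is literally the same inequality in disguise (substituting $\theta=2\phi$ into $\cos\theta\le 1-\tfrac{2}{\pi^2}\theta^2$ gives $\sin^2\phi\ge 4\phi^2/\pi^2$), so the two arguments differ only in packaging.
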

\begin{proof}
    We are to estimate the integral
\[
\frac{1}{2\pi}\int_{-\pi}^\pi e^{\lambda \cos\theta}d\theta.
\]
Laplace's method tells us that this is $\frac{e^\lambda}{\sqrt{2\pi\lambda}} (1+o(1))$ as $\lambda\to\infty$. We content ourselves with proving a weaker bound, using the fact that $\cos\theta\le 1-\frac{2}{\pi^2}\theta^2$ for $\theta\in [-\pi,\pi]$:
\[
    \frac{1}{2\pi}\int_{-\pi}^\pi e^{\lambda \cos\theta}d\theta\le \frac{e^\lambda}{2\pi}\int_{-\pi}^\pi e^{-2\lambda \theta^2/\pi^2}d\theta\le \frac{e^\lambda}{2\pi}\int_{-\infty}^\infty e^{-2\lambda \theta^2/\pi^2}d\theta =\frac 12 \sqrt{\frac{\pi}{2\lambda}}e^\lambda.
\]
\end{proof}
\color{black}
We thus obtain the following information on the distribution of the order parameter $R$.
\begin{lemma}\label{lem:order-param-conc}
For $t\in (0,1)$,
    \begin{equation*}
m\Big\{\{\theta_i^0\}_{i=1}^N\in [-\pi,\pi]^N:~{R_0}\le t\Big\}\le
\min\left\{
\exp\left(-(1-t)^2 N\right),
\left(\frac{\sqrt{\pi e t}}{2}\right)^N
\right\}.
\end{equation*}
\end{lemma}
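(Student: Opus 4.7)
The plan is to view $R_0 = \frac{1}{N}\sum_{j=1}^N (1+\cos\theta_j^0)$ as the sample mean of $N$ i.i.d.\ copies of $1+\cos X$, where $X$ is uniform on $[-\pi,\pi]$ under the normalized Lebesgue measure $m$, and then apply the two concentration ingredients already established (Propositions \ref{prop:circleConc} and \ref{prop:circle-extreme}, and Lemma \ref{lem:extreme-close}) to obtain the two bounds separately, after which we take their minimum.

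For the first bound, I would invoke Proposition \ref{prop:circleConc}: since $\cos X$ is $\tfrac12$-subgaussian (with mean $0$), the shifted variable $1+\cos X$ is $\tfrac12$-subgaussian with mean $1$. By independence of the $\theta_j^0$, the sample mean $R_0$ is then $\tfrac{1}{2N}$-subgaussian with $\mathbb{E}R_0 = 1$. Applying the Chernoff bound \eqref{Chernoff} to the lower tail with deviation $1-t > 0$ gives
\[
m\{R_0 \le t\} = m\{R_0 \le \mathbb{E}R_0 - (1-t)\} \le \exp\!\left(-\frac{(1-t)^2}{2\cdot \tfrac{1}{2N}}\right) = \exp(-(1-t)^2 N),
\]
which is the first term in the minimum.

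For the second bound, I would apply Proposition \ref{prop:circle-extreme} to the variable $Y\coloneqq -(1+\cos X)\le 0$. Using the evenness of $\cos$,
\[
\mathbb{E} e^{\lambda Y} = e^{-\lambda}\,\mathbb{E} e^{-\lambda\cos X} = e^{-\lambda}\,\mathbb{E} e^{\lambda\cos X} \le \tfrac12\sqrt{\pi/(2\lambda)},\qquad \lambda>0,
\]
so $Y$ satisfies the hypothesis of Lemma \ref{lem:extreme-close} with $M=0$, $C=\tfrac12\sqrt{\pi/2}$, and $\alpha=\tfrac12$. Since $\{R_0\le t\} = \{\tfrac{1}{N}\sum_j Y_j \ge -t\}$, the i.i.d.\ version of that lemma (with $\varepsilon=t$) yields
\[
m\{R_0 \le t\} \le \left(C\Bigl(\tfrac{e t}{\alpha}\Bigr)^{\!\alpha}\right)^{\!N} = \left(\tfrac12\sqrt{\tfrac{\pi}{2}}\cdot\sqrt{2 e t}\right)^{\!N} = \left(\frac{\sqrt{\pi e t}}{2}\right)^{\!N},
\]
which is the second term in the minimum.

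Taking the smaller of the two bounds finishes the proof. There is no genuine obstacle here; the content of the lemma is that the Gaussian-type bound dominates when $t$ is close to $1$ (deviations from the mean on a bounded scale), while the power-type bound dominates when $t$ is close to $0$ (extreme concentration near the boundary of the support), and our only task is to track the arithmetic constants from Propositions \ref{prop:circleConc} and \ref{prop:circle-extreme} correctly through Chernoff and Lemma \ref{lem:extreme-close}.
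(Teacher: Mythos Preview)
Your proof is correct and follows essentially the same approach as the paper: the subgaussian Chernoff bound via Proposition~\ref{prop:circleConc} for the first term, and Proposition~\ref{prop:circle-extreme} fed into Lemma~\ref{lem:extreme-close} for the second. The only cosmetic difference is that the paper applies Lemma~\ref{lem:extreme-close} to $-\cos X$ with $M=1$ rather than to $-(1+\cos X)$ with $M=0$, which is a trivial shift.
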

\begin{proof}
Under the measure $m$, ${R_0}=\frac 1N\sum_{j=1}^N(1+\cos\theta_j^0)$ is the sum of the independent random variables $\frac 1N(1+\cos\theta_j^0)$, and as each $\frac 1N(1+\cos\theta_j^0)$ is $\frac 1{2N^2}$-subgaussian by Proposition \ref{prop:circleConc}, ${R_0}=\sum_{j=1}^N \frac 1N(1+\cos\theta_j^0)$ is $\frac 1{2N}$-subgaussian with mean $1$. Thus
\begin{equation*}
m\Big\{\{\theta_i^0\}_{i=1}^N\in [-\pi,\pi]^N:~{R_0}\le t\Big\}
\le \exp\left(-\frac{(1-t)^2}{1/N}\right)=\exp\left(-(1-t)^2N\right).
\end{equation*}

On the other hand, from Proposition \ref{prop:circle-extreme} we have
\[
\mathbb{E}e^{-\lambda \cos\theta_j^0}=\mathbb{E}e^{\lambda \cos\theta_j^0}\le \frac 12\sqrt{\frac{\pi}{2\lambda}}e^\lambda,\quad \lambda>0,
\]
so that by Lemma \ref{lem:extreme-close} with $M=1$,
\[
\mathbb{P}\left[R_0\le t\right]=\mathbb{P}\left[\frac{-\cos\theta_1^0-\cdots-\cos\theta_N^0}{N}\ge 1-t \right]\le \left(\frac{\sqrt{\pi e t}}{2}\right)^N.
\]
\end{proof}



\begin{proof}[Proof of Theorem \ref{sincosmaincor}]
By Corollary \ref{bestsincosmainthm}, if $R_0\ge 1-\frac\varepsilon 5$, since
\[
K_c\le \frac 2{(1-\varepsilon/5)^{3/2}}\le 2+\varepsilon\quad (\because\varepsilon\in (0,1)),
\]
the condition $\kappa>(2+\varepsilon)\|\Omega\|_\infty$ implies
\begin{enumerate}[(a)]
    \item $R(t)\ge \frac{3{R_0}-3+\sqrt{{R_0}^2-2{R_0}+9}}{4}\ge \frac{\sqrt{2}}{2}-\frac{3\varepsilon}{20}$ for all $t\ge 0$,
    \item for all $i=1,\cdots,N$, the limit $\lim_{t\to\infty}\theta_i(t)$ exists, and
\[
\sup_{t\ge 0} \theta_i(t)-\inf_{t\ge 0}\theta_i(t)<2\pi,\quad \lim_{t\to\infty}\dot{\theta}_i(t)=0.
\]
    \item for any $i\in\{1,\cdots,N\}$ and $t_0\ge 0$ with
$\cos\theta_i(t_0)\ge -\frac{\sqrt{2}}{2}\ge \frac{-1+{R_0}-\sqrt{{R_0}^2-2{R_0}+9}}{4}$,
we have
\[
\cos \theta_i(t)\ge\frac{-1+{R_0}-\sqrt{{R_0}^2-2{R_0}+9}}{4}\ge \frac{\sqrt{2}}{2}\quad \mathrm{for~}t\ge t_0+\frac{(2+\varepsilon)\pi}{\kappa-(2+\varepsilon)\|\Omega\|_\infty}\ge t_0+\frac{\pi K_c}{\kappa-K_c\|\Omega\|_\infty}.
\]
\end{enumerate}
Thus
\begin{align*}
&m\Big\{\{\theta_i^0\}_{i=1}^N\in [-\pi,\pi]^N:~\forall t\ge 0~ R(t)\ge \frac{1}{\sqrt{2}} -\frac{3\varepsilon}{20},\\
&\qquad \forall i=1,\cdots,N~\sup_{t\ge 0} \theta_i(t)-\inf_{t\ge 0}\theta_i(t)<2\pi,\\
&\qquad \mathrm{and~}\forall i\in \{1,\cdots,N\}\forall t_0\ge 0\mathrm{~if~}\cos\theta_i(t_0)\ge -\frac{1}{\sqrt{2}}\mathrm{~then}\\
&\qquad \cos\theta_i(t_0)\ge -\frac{1}{\sqrt{2}}\mathrm{~for~}t\ge t_0\mathrm{~and~}\cos\theta_i(t)\ge \frac{1}{\sqrt{2}}\mathrm{~for~}t\ge t_0+\frac{(2+\varepsilon)\pi}{\kappa-(2+\varepsilon)\|\Omega\|_\infty}\Big\}\\
&\ge m\Big\{\{\theta_i^0\}_{i=1}^N\in [-\pi,\pi]^N:~{R_0}\ge 1-\frac \varepsilon {5}\Big\}.
\end{align*}

On the other hand, by Lemma \ref{lem:order-param-conc},
\begin{equation*}
m\left\{\{\theta_i^0\}_{i=1}^N\in [-\pi,\pi]^N:~{R_0}\ge 1-\frac \varepsilon {5}\right\}=1-m\left\{\{\theta_i^0\}_{i=1}^N\in [-\pi,\pi]^N:~{R_0}< 1-\frac \varepsilon {5}\right\}\ge 1-\exp\left(-\frac{\varepsilon^2 N}{25}\right).
\end{equation*}

Similarly, if $\kappa>\left(\frac{\pi e}{2}\right)^{3/2}\|\Omega\|_\infty>2\sqrt{2}\|\Omega\|_\infty$ and $R_0> \left(\frac{2\sqrt{2}\|\Omega\|_\infty}{\kappa}\right)^{2/3}$, then by Corollary \ref{bestsincosmainthm},
\begin{enumerate}
    \item For all $i=1,\cdots,N$, the limit $\lim_{t\to\infty}\theta_i(t)$ exists, and
\[
\sup_{t\ge 0} \theta_i(t)-\inf_{t\ge 0}\theta_i(t)<2\pi,\quad \lim_{t\to\infty}\dot{\theta}_i(t)=0.
\]
    \item for any $i\in\{1,\cdots,N\}$ and $t_0\ge 0$ with
$\cos\theta_i(t_0)\ge -1+\frac{\|\Omega\|_\infty^{2/3}}{\kappa^{2/3}} \ge-1+\frac{4\|\Omega\|_\infty^2}{\kappa^2R_0^2}$,
we have
\[
\cos \theta_i(t)\ge 1-\frac{4\|\Omega\|_\infty^2}{\kappa^2R_0^2}\ge 1-\frac{\|\Omega\|_\infty^{2/3}}{\kappa^{2/3}} \quad \mathrm{for~}t\ge t_0+\frac{\pi}{\|\Omega\|_\infty\left(1-\frac{8\|\Omega\|_\infty^2}{\kappa^2 R_0^3}\right)}\ge t_0+\frac{\pi K_c}{\kappa-K_c\|\Omega\|_\infty}.
\]
\end{enumerate}

So by Lemma \ref{lem:order-param-conc},
\begin{align*}
    &m\Big\{\{\theta_i^0\}_{i=1}^N\in [-\pi,\pi]^N: \exists \lim_{t\to\infty}\theta_i(t), ~\sup_{t\ge 0} \theta_i(t)-\inf_{t\ge 0}\theta_i(t)<2\pi,~ \lim_{t\to\infty}\dot{\theta}_i(t)=0 ~\mbox{for all } i=1,\cdots,N\Big\}\\
&\ge m\left\{\{\theta_i^0\}_{i=1}^N\in [-\pi,\pi]^N:~R_0>\left(\frac{2\sqrt{2}\|\Omega\|_\infty}{\kappa}\right)^{2/3}\right\}\ge 1-\left(\sqrt{\frac{\pi e}{2}}\frac{\|\Omega\|_\infty^{1/3}}{\kappa^{1/3}}\right)^N.
\end{align*}
\end{proof}

\begin{proof}[Proof of Theorem \ref{maincor-kappalarge}]
The distribution $\mu^{\otimes N}$ means that we are independently and identically sampling $\theta_i^0\sim \mu$, so ${R_0}=\frac 1N\sum_{j=1}^N I(\theta_j^0)$ is the sum of the independent random variables $\frac 1N I(\theta_j^0)$, $j=1,\cdots, N$. By \eqref{eq:I-moment} and Lemma \ref{lem:extreme-close} with $M=0$, we have that
\begin{equation}\label{eq:F-1}
\mu^{\otimes N}\left\{\{\theta_i^0\}_{i=1}^N\in [-\pi,\pi]^N:{R_0}=\frac 1N\sum_{j=1}^NI(\theta_j^0) \le \varepsilon\right\}\le \left(C_\mu\left(\frac{e\varepsilon}{\beta}\right)^\beta\right)^N.
\end{equation}
By condition \eqref{eq:kappa-large-cor}, we may find $\varepsilon>0$ such that $C_\mu\left(\frac{e\varepsilon}{\beta}\right)^\beta<1$,
\begin{equation}\label{eq:F-2}
\varepsilon<\max\left\{\left(\frac{
2}{c_1c_3}\frac{\|\Omega\|_\infty}{\kappa}\right)^{\frac{1}{1+p/q}}(2c_2)^{\frac{p/q}{1+p/q}},\frac{2}{c_1c_3(\pi-\alpha_0)^p}\frac{\|\Omega\|_\infty}{\kappa}\right\}
\end{equation}
and
\[
\kappa>\max\left\{\frac{2(2c_2)^{p/q}}{c_1c_3}\cdot\frac{\|\Omega\|_\infty}{{\varepsilon}^{1+\frac pq}},\frac{2}{c_1c_3(\pi-\alpha_0)^p}\cdot \frac{\|\Omega\|_\infty}{{\varepsilon}}\right\}.
\]
By Theorem \ref{mainthm}, we have
\begin{align*}
&\mu^{\otimes N}\Big\{\{\theta_i^0\}_{i=1}^N\in [-\pi,\pi]^N:\sup_{t\ge 0} \theta_i(t)-\inf_{t\ge 0}\theta_i(t)<2\pi ~\forall i=1,\cdots,N\Big\}\\
&\ge \mu^{\otimes N}\left\{\{\theta_i^0\}_{i=1}^N\in [-\pi,\pi]^N:{R_0}=\frac 1N\sum_{j=1}^NI(\theta_j^0)> \varepsilon\right\}\\
&\ge 1-\mu^{\otimes N}\left\{\{\theta_i^0\}_{i=1}^N\in [-\pi,\pi]^N:{R_0}=\frac 1N\sum_{j=1}^NI(\theta_j^0)\le \varepsilon\right\}
\end{align*}
and combining \eqref{eq:F-1} and \eqref{eq:F-2} gives the stated result.
\end{proof}

The proof of Theorems \ref{themainthm}, \ref{sincosmaincor-time}, \ref{themainthm-IS}, and \ref{maincor-quant} will be more involved since their proof require asking for ``how much'' initial data $\{\theta_i^0\}_{i=1}^N$ does the order parameter stay small up to time $T$. This is the subject of the next subsection.

\subsection{Instability of states with small order parameter}\label{subsec:instability}
In this subsection, we prove Theorems \ref{themainthm}, \ref{sincosmaincor-time}, \ref{themainthm-IS}, and \ref{maincor-quant}. We will control the volume of a singular set using the divergence of the flows \eqref{GenWinfree} and \eqref{Winfree}; the argument of this section is inspired by Section 6 of \cite{ha2020asymptotic}.
\subsubsection{The prototypical Winfree model \eqref{Winfree}}

In this subsubsection, we prove Theorems \ref{themainthm} and \ref{sincosmaincor-time}. The idea is that the set of states with sufficiently small order parameters cannot be positively invariant under the Winfree flow \eqref{Winfree}. By the identification $\theta_i\leftrightarrow e^{\mathrm{i}\theta_i}$, we may consider \eqref{Winfree} to be a flow on the $N$-torus $\bbt^N$. 

For any $\delta>0$ and $N$, we define the set
\[ U^0_{1-\delta} \coloneqq  \left\{\Theta \in \bbt^N:~
R(\Theta)<1-\delta \right\}. \]
We will show that  $ U^0_{1-\delta}$ is almost surely not positively invariant under the Winfree flow \eqref{Winfree} with $\kappa > 0$, i.e., if the flow starts at a generic point in $U^0_{1-\delta}$, it cannot stay inside the region for  all time, regardless of the magnitude of the natural frequencies. Then, as an application of this result, we will prove our main Theorem \ref{themainthm}.

First, we begin with a heuristic argument for the fact that the set $U^0_{1-\delta}$ is not positively invariant. For a given natural frequency vector $\Omega$,  consider the following two cases: \newline
\begin{itemize}
\item Case A ($\kappa \gg \|\Omega\|_\infty$): According to numerical simulations, the flow converges to the equilibrium of Theorem \ref{preciseCOD}, and so the order parameter must eventually exceed $1+\cos \frac{\pi}{3}=\frac 32>1$.

\item Case B ($\kappa \ll \|\Omega\|_\infty$): In this case, the nonlinear coupling will contribute little to the dynamics of \eqref{Winfree}, so we may approximate \eqref{Winfree} as
\[
\dot{\theta}_i\approx\omega_i.
\]
Thus, the phases will tend to be uniformly randomly distributed over the unit circle and thus the order parameter will attain the temporal average
\[
\langle R\rangle =\frac{1}{N} \sum_{i} \langle 1+\cos\theta_i\rangle=1,
\]
where the temporal average $\langle \cdot \rangle$ is defined by
\[
    \langle R\rangle\coloneqq\lim_{T\rightarrow\infty}\frac{1}{T}\int_0^T R(t)dt.
\]
Thus, for any $\delta>0$, such a solution cannot stay in the region $U^0_{1-\delta}$ for all times $t\ge 0$.
\end{itemize}

Hence, it is reasonable to expect that given $\kappa >0$, $\Omega=(\omega_1,\cdots,\omega_N)$ and any $\delta>0$, a generic solution to \eqref{Winfree} cannot stay in the region $U^0_{1-\delta}$ for all $t\ge 0$. Now, we will prove this heuristic argument rigorously using the divergence of the flow \eqref{Winfree}. First, note that the Winfree model gives the integral curves to the following vector field:
\begin{equation}\label{vf-1}
    F: \mathbb{T}^N\rightarrow T\mathbb{T}^N,\quad F=(F_1,\cdots,F_N),
\end{equation}
where
\begin{equation}\label{vf-2}
    F_i(\Theta)=\omega_i-\frac{\kappa}{N}\sum_{j=1}^{N} (1+\cos \theta_j)\sin\theta_i.
\end{equation}
Let us denote the flow of this vector field by $\Phi_t$, i.e., $\Phi_t(\Theta^0)=\Theta(t)$ is the solution to \eqref{Ku} with initial data $\Theta^0$. Also, for any subset $U\subset \bbt^N$ we denote
\[
\Phi_t(U)=\{\Phi_t(\Theta):\Theta\in U\}.
\]
Finally, we endow $\bbt^N$ with the standard Lebesgue measure $m$, normalized so that $m(\bbt^N)=1$. Now the divergence of the vector field $F$ is
\begin{equation}\label{div}
    \nabla \cdot F=\sum_{i=1}^{N} \frac{\partial F_i}{\partial \theta_i}=-\frac{\kappa}{N}\sum_{i, j=1,\cdots, N} (1+\cos\theta_j)\cos\theta_i+\frac{\kappa}{N}\sum_{i=1}^N\sin^2\theta_i=\kappa\left(NR(1-R)+\frac 1N \sum_{i=1}^N\sin^2\theta_i\right)
\end{equation}
Note that the divergence is independent of the choice of natural frequencies $\nu_i$, and depends only on $\kappa$, $N$ and $\Theta$. 

With \eqref{div}, we can immediately show the instability of equilibria in $U^0_{1-\delta}$. 
\begin{proposition} \label{P6.1}
The following two assertions hold. 
\begin{enumerate}[(a)]
\item
If $\kappa>0$, then any equilibrium $\Theta$ of the Winfree model \eqref{Winfree} with order parameter $R$ satisfying
\[
NR(1-R)+\frac{\sum_i\omega_i^2}{N\kappa^2 R^2}>0
\]
is linearly unstable. In particular, any equilibrium with $0<R<1$ is linearly unstable.
\item
If $\kappa<0$, then any equilibrium $\Theta$ of the Winfree model \eqref{Winfree} with order parameter $R$ satisfying
\[
NR(1-R)+\frac{\sum_i\omega_i^2}{N\kappa^2 R^2}<0
\]
is linearly unstable.
\end{enumerate}
\end{proposition}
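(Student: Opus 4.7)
The plan is to observe that at any equilibrium, the divergence formula \eqref{div} simplifies dramatically, and then to exploit the fact that the trace of the Jacobian (which equals the divergence) controls linear instability via the sum of real parts of eigenvalues.

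First, I would substitute the equilibrium condition into the divergence. At an equilibrium $\Theta$ of \eqref{Winfree}, the mean-field form \eqref{Winfree_orderparam} gives $\omega_i = \kappa R \sin \theta_i$ for each $i$, so $\sin^2 \theta_i = \omega_i^2 / (\kappa^2 R^2)$ (this requires $R \neq 0$, which is implicit in the stated hypothesis since the expression $\sum_i \omega_i^2/(N\kappa^2 R^2)$ otherwise diverges). Plugging into \eqref{div} and evaluating at the equilibrium,
\[
(\nabla \cdot F)(\Theta) \;=\; \kappa\left( N R(1-R) + \frac{\sum_i \omega_i^2}{N \kappa^2 R^2}\right).
\]
Under the hypothesis of part (a), with $\kappa>0$ and the bracketed quantity positive, this trace is strictly positive; under the hypothesis of part (b), with $\kappa<0$ and the bracketed quantity negative, the product is again strictly positive.

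Next, I would invoke the standard fact that the divergence of a smooth vector field equals the trace of its Jacobian: $(\nabla \cdot F)(\Theta) = \operatorname{tr}\bigl(DF(\Theta)\bigr)$. Because $DF(\Theta)$ is a real matrix, its complex eigenvalues come in conjugate pairs, so $\operatorname{tr}(DF(\Theta))$ equals the sum of the real parts of the eigenvalues (counted with algebraic multiplicity). A strictly positive trace therefore forces at least one eigenvalue of $DF(\Theta)$ to have strictly positive real part, which is precisely the definition of linear instability of the equilibrium $\Theta$ for the ODE \eqref{Winfree}.

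For the ``in particular'' statement of part (a), note that when $0 < R < 1$ we have $NR(1-R) > 0$ and $\sum_i \omega_i^2/(N\kappa^2 R^2) \ge 0$, so the bracketed quantity is automatically strictly positive and the general criterion applies. The only step that requires any care is the justification of $R \neq 0$ at the equilibria under consideration, but this is forced by the stated condition (otherwise the term $\sum_i \omega_i^2/(N\kappa^2 R^2)$ is not even finite, so the hypothesis is vacuous in that case). No other obstacle arises; the argument is essentially a one-line consequence of the divergence computation \eqref{div} combined with linear algebra, which is why I expect it to be the shortest proof in this section.
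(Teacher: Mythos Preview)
Your proposal is correct and follows essentially the same approach as the paper: substitute the equilibrium relation $\omega_i=\kappa R\sin\theta_i$ into the divergence formula \eqref{div}, then use that a positive trace of the Jacobian forces an eigenvalue with positive real part. The paper's proof is nearly identical, just slightly more terse.
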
 
\begin{proof}
If $\Theta$ is an equilibrium with $\nabla\cdot F(\Theta)>0$, then the Jacobian matrix of $F$ at $\Theta$ has a positive trace and thus has a complex eigenvalue with positive real part, so that $\Theta$ is an unstable equilibrium. Note that for an equilibrium $\Theta$, we have $\omega_i=\kappa R \sin\theta_i$, so that
\[
\nabla\cdot F(\Theta)=\kappa\left(NR(1-R)+\frac 1{N\kappa^2 R^2} \sum_{i=1}^N\omega_i^2\right).
\]
\end{proof}
More generally, we can rigorously prove the heuristic argument in the beginning of this subsubsection.
\begin{proposition}\label{nodispersed}
Let $\kappa>0$, $N\ge 2$,\footnote{If $N=1$, the dynamics of \eqref{Winfree} is extremely well-understood; see Proposition \ref{verytrivial}.} and $\Omega=(\omega_1,\cdots,\omega_N)$ be fixed, and consider the flow $\Phi_t$ generated by \eqref{vf-1} and \eqref{vf-2}. Then, for any $0<\delta<1$,  the Borel set
\[
U_{1-\delta}^\infty=\{ \Theta^0\in \bbt^N: R(\Phi_t(\Theta^0))<1-\delta\mbox{ for all } t\ge 0\}
\]
has measure zero. Also, for each $T\ge 0$, the open set 
\[
U_{1-\delta}^T\coloneqq \{\Theta^0\in\bbt^N: R(\Phi_t(\Theta^0))<1-\delta \mbox{ for all }0\le t \le T\}
\]
has measure
\begin{equation}\label{eq:meas-ineq}
m(U_{1-\delta}^T)\le 
\begin{cases}
\frac 1{N/2+1} \left(\frac{\sqrt{\pi e \delta}}{2}\right)^N\left(1-\exp\left(-\kappa N \delta(1-\delta)T\right)\right)+\exp\left(-N\delta^2-\kappa N \delta(1-\delta)T\right),& \mathrm{if~} 0<\delta<\frac 14\mathrm{~and~}T\le T_0,\\
\left(\frac{4}{\pi e\delta}+\frac{4\kappa(1-\delta)}{\pi e}(T-T_0)\right)^{-\frac{N}{2}},&\mathrm{if~}0<\delta<\frac 14\mathrm{~and~}T>T_0,\\
\frac 1{N/2+1} \left(\frac{\sqrt{\pi e}}{4}\right)^N\left(1-\exp\left(-\frac{3\kappa N T}{16}\right)\right)+\exp\left(-N\delta^2-\frac{3\kappa N T}{16}\right),& \mathrm{if~} \frac 14\le \delta<\frac 12\mathrm{~and~}T\le T_0,\\
\left(\frac{16}{\pi e}+\frac{3\kappa}{\pi e}(T-T_0)\right)^{-\frac{N}{2}},&\mathrm{if~}\frac 14\le \delta<\frac 12\mathrm{~and~}T>T_0,\\
\left(e^{2\delta^2}+\frac{4\kappa\delta T}{\pi e}\right)^{-\frac{N}{2}},&\mathrm{if~}\frac 12 \le \delta<\frac 34,
\\
\left(\frac{4}{\pi e(1-\delta)}+\frac{4\kappa\delta T}{\pi e}\right)^{-\frac{N}{2}},&\mathrm{if~}\frac 34 \le \delta<1,
\end{cases}
\end{equation}
where
\[
T_0 = 
\begin{cases}
\frac{1}{\kappa N\delta(1-\delta)}\log\left(\left(1+\frac{2}{N}\right)\left(\frac{2}{\sqrt{\pi \delta}e^{\frac 12 +\delta^2}}\right)^N-\frac 2N\right)&\mathrm{if~}0<\delta<\frac 14,\\
\frac{16}{3\kappa N}\log\left(\left(1+\frac{2}{N}\right)\left(\frac{4}{\sqrt{\pi}e^{\frac 12 +\delta^2}}\right)^N-\frac 2N\right)&\mathrm{if~}\frac 14\le \delta<\frac 12.
\end{cases}
\]
\end{proposition}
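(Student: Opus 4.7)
Set $V_t=\Phi_t(U_{1-\delta}^T)$ and $\phi(t)=m(V_t)$; by construction $V_t\subseteq\{R<1-\delta\}$ for $t\in[0,T]$. Liouville's theorem combined with the divergence formula \eqref{div} gives
\[
\dot\phi\ge \kappa N\int_{V_t}R(1-R)\,dm\ge \kappa N\delta\int_{V_t}R\,dm,
\]
the last step using $1-R\ge\delta$ on $V_t$; in particular $\phi$ is non-decreasing (since $\nabla\cdot F\ge0$ on $\{R<1\}$), so $m(U_{1-\delta}^\infty)=0$ will drop out by letting $T\to\infty$ in the quantitative bounds. The analytic workhorse is the rearrangement inequality
\[
\int_V R\,dm\ge \frac{4N}{(N+2)\pi e}\,m(V)^{1+2/N},\qquad V\subseteq\mathbb{T}^N,
\]
proved by replacing $V$ with the sublevel set $V^*=\{R<r^*\}$ of equal measure (noting $\int_V R\,dm\ge\int_{V^*}R\,dm$ by Hardy--Littlewood), writing $\int_{V^*}R\,dm=\int_0^{r^*}(m(V)-\mu(s))\,ds$ with $\mu(s)=m(\{R<s\})$, and invoking the upper bound $\mu(s)\le(\sqrt{\pi e s}/2)^N$ from Lemma~\ref{lem:order-param-conc}.

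Combining the two inputs yields $\dot\phi\ge c\phi^{1+2/N}$ with $c=\frac{4\kappa N^2\delta}{(N+2)\pi e}$, which integrates (using $\frac{2N}{N+2}\ge 1$ for $N\ge 2$) to $\phi(0)\le\bigl(\phi(T)^{-2/N}+\frac{4\kappa\delta T}{\pi e}\bigr)^{-N/2}$. Plugging in the two alternative bounds on $\phi(T)\le m(\{R<1-\delta\})$ from Lemma~\ref{lem:order-param-conc} — the subgaussian $e^{-N\delta^2}$ (dominant for $\tfrac12\le\delta<\tfrac34$) and the extremal power bound $(\sqrt{\pi e(1-\delta)}/2)^N$ (dominant for $\tfrac34\le\delta<1$) — this produces the two single-formula bounds in the stated large-$\delta$ ranges.

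For $\delta<\tfrac12$ the pointwise estimate $R(1-R)\ge\delta R$ is wasteful, since the bulk of $\{R<1-\delta\}$ satisfies $R(1-R)\ge\delta(1-\delta)$. I would split $V_t=V_t^{\mathrm{hi}}\cup V_t^{\mathrm{lo}}$ at $R=\delta$, estimate
\[
\int_{V_t}R(1-R)\,dm\ge \delta(1-\delta)\bigl(\phi-m(V_t^{\mathrm{lo}})\bigr)+(1-\delta)\int_{V_t^{\mathrm{lo}}}R\,dm,
\]
and apply the rearrangement inequality to the second integral. Optimizing the resulting concave expression in $y=m(V_t^{\mathrm{lo}})\in[0,A]$, with $A=(\sqrt{\pi e\delta}/2)^N$, pins the constant at $A/(N/2+1)$ and yields the differential inequality $\dot\phi\ge k(\phi-A/(N/2+1))$ with $k=\kappa N\delta(1-\delta)$. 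Integrating backwards from $\phi(T)\le B:=e^{-N\delta^2}$ gives the exponential bound $\frac{A}{N/2+1}(1-e^{-kT})+Be^{-kT}$ for $T\le T_0$, where $T_0$ is the unique time at which this bound equals $A$ (whence the explicit formula for $T_0$). For $T>T_0$ I would continue with the polynomial argument, but now on the set $V_t$ which is essentially confined to $\{R<\delta\}$ (since the preceding linear regime forces the high-$R$ mass of $V_t$ to be at most $A/(N/2+1)$); there the sharper pointwise bound $\nabla\cdot F\ge \kappa N(1-\delta)R$ (valid since $1-R\ge 1-\delta$ when $R\le\delta$) replaces $\kappa N\delta R$ and produces the improved coefficient $\frac{4\kappa(1-\delta)}{\pi e}$ and base $\frac{4}{\pi e\delta}=A^{-2/N}$ of the second small-$\delta$ bound.

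The main obstacle will be the small-$\delta$ regime: carrying out the concave optimization to obtain the sharp constant $A/(N/2+1)$ rather than the trivial $A$, justifying the effective confinement of $V_t$ to $\{R<\delta\}$ past $T_0$ so that the $(1-\delta)$-rate takes over, and matching the two bounds continuously at $T=T_0$. The Liouville/divergence step, the rearrangement inequality, and the large-$\delta$ analysis are all routine consequences of \eqref{div} and Lemma~\ref{lem:order-param-conc}.
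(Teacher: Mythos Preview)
Your approach is essentially the paper's: the layer-cake (rearrangement) lower bound on $\int_V R\,dm$, the split at a sublevel with optimization over the low-$R$ mass $y$, and integrating the resulting piecewise differential inequality backward from the terminal bound all match the paper's argument. Two small refinements worth noting: (i) for $\tfrac14\le\delta<\tfrac12$ the paper splits at $R=\tfrac14$ rather than $R=\delta$ to obtain exactly the stated constants, and (ii) the power-law regime for $T>T_0$ falls out of the \emph{same} optimization---when $\phi\le A$ the minimizer is $y=\phi$ (the expression is convex, not concave), giving $\dot\phi\ge\tfrac{4\kappa N(1-\delta)}{(N+2)\pi e}\,\phi^{1+2/N}$ directly---so no separate ``confinement'' argument is needed and the transition at $T_0$ is simply where the bound on $\phi$ crosses $A$.
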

\begin{proof}
Note that 
\[
U_{\varepsilon}^0 =\{\Theta^0\in\bbt^N: R(\Theta^0)<\varepsilon\}, \quad \varepsilon\in (0,1),
\]
which by Lemma \ref{lem:order-param-conc} has measure
\[
m(U_\varepsilon^0)\le \left(\frac{\sqrt{\pi e \varepsilon}}{2}\right)^N, \quad \varepsilon\in (0,1).
\]
By definition, we have
\[
\Phi_t(U_{1-\delta}^T)\subset U_{1-\delta}^0,\quad \forall~0\le t\le T.
\]
By the fact that the divergence of a flow determines the rate of change of volume, we have for $0\le t\le T$
\begin{equation}\label{eq:div-ineq}
\frac{d}{dt}m\left(\Phi_t\left(U_{1-\delta}^T\right)\right)
=\int_{\Phi_t\left(U_{1-\delta}^T\right)}(\nabla\cdot F)(\Theta)dm(\Theta)
\stackrel{\eqref{div}}{\ge}\kappa N \int_{\Phi_t\left(U_{1-\delta}^T\right)}R(1-R) dm(\Theta).
\end{equation}

We now prove \eqref{eq:meas-ineq} case by case.

\begin{enumerate}
\item 
Suppose $0<\delta<\frac 14$. We have
\begin{align*}
    \frac{d}{dt}m\left(\Phi_t\left(U_{1-\delta}^T\right)\right)
&\stackrel{\mathclap{\eqref{eq:div-ineq}}}{\ge}\kappa N \int_{\Phi_t\left(U_{1-\delta}^T\right)\setminus U^0_\delta}R(1-R) dm(\Theta)+\kappa N \int_{\Phi_t\left(U_{1-\delta}^T\right)\cap U^0_\delta}R(1-R) dm(\Theta)\\
&\ge \kappa N \delta(1-\delta) m\left(\Phi_t\left(U_{1-\delta}^T\right)\setminus U^0_\delta\right)+\kappa N(1-\delta)\int_{\Phi_t\left(U_{1-\delta}^T\right)\cap U^0_\delta}R dm(\Theta),
\end{align*}
which we estimate term by term:
\begin{align*}
\kappa N \delta(1-\delta) m\left(\Phi_t\left(U_{1-\delta}^T\right)\setminus U^0_\delta\right)&\ge \kappa N \delta(1-\delta)m\left(\Phi_t\left(U_{1-\delta}^T\right)\right)-\kappa N \delta(1-\delta)m\left( \Phi_t\left(U_{1-\delta}^T\right)\cap U^0_\delta\right)
\end{align*}
and
\begin{align*}
    \kappa N(1-\delta)\int_{\Phi_t\left(U_{1-\delta}^T\right)\cap U^0_\delta}R dm(\Theta)&=\kappa N(1-\delta)\int_{\Phi_t\left(U_{1-\delta}^T\right)\cap U^0_\delta}\int_0^\delta \mathbbm{1}_{R\ge t} dtdm(\Theta)\\
    &=\kappa N(1-\delta)\int_0^\delta m\left(\Phi_t\left(U_{1-\delta}^T\right)\cap U^0_\delta\setminus  U^0_t\right) dt\\
    &\ge \kappa N(1-\delta)\int_0^\delta \max\left\{0,m\left(\Phi_t\left(U_{1-\delta}^T\right)\cap U^0_\delta\right)-m\left( U^0_t\right)\right\} dt\\
    &\ge \kappa N(1-\delta)\int_0^\delta \max\left\{0,m\left(\Phi_t\left(U_{1-\delta}^T\right)\cap U^0_\delta\right)-\left(\frac{\sqrt{\pi e t}}{2}\right)^N\right\} dt\\
    &=\kappa N(1-\delta)\frac{4}{\pi e}\frac{1}{1+2/N}m\left(\Phi_t\left(U_{1-\delta}^T\right)\cap U^0_\delta\right)^{1+\frac 2N},
\end{align*}
where in the last equality we used $m\left(\Phi_t\left(U_{1-\delta}^T\right)\cap U^0_\delta\right)\le m\left( U^0_\delta\right)\le \left(\frac{\sqrt{\pi e \delta}}{2}\right)^N$.
Harvesting estimates, we obtain
\begin{align}\label{eq:diffeq-1}
\begin{aligned}
    \frac{d}{dt}m\left(\Phi_t\left(U_{1-\delta}^T\right)\right)&\ge \kappa N \delta(1-\delta)m\left(\Phi_t\left(U_{1-\delta}^T\right)\right)-\kappa N \delta(1-\delta)m\left( \Phi_t\left(U_{1-\delta}^T\right)\cap U^0_\delta\right)\\
    &\quad +\kappa N(1-\delta)\frac{4}{\pi e}\frac{1}{1+2/N}m\left(\Phi_t\left(U_{1-\delta}^T\right)\cap U^0_\delta\right)^{1+\frac 2N}\\
    &\ge \kappa N \delta(1-\delta)m\left(\Phi_t\left(U_{1-\delta}^T\right)\right)+\kappa N (1-\delta)\inf_{0\le y\le m\left(\Phi_t\left(U_{1-\delta}^T\right)\right)}\left(-\delta y+\frac {4}{\pi e}\frac 1{1+2/N}y^{1+\frac 2N}\right)\\
    &=
    \begin{cases}
        \kappa N(1-\delta)\frac{4}{\pi e}\frac 1{1+2/N}m\left(\Phi_t\left(U_{1-\delta}^T\right)\right)^{1+2/N}& \mathrm{if~}m\left(\Phi_t\left(U_{1-\delta}^T\right)\right)\le \left(\frac{\sqrt{\pi e \delta}}{2}\right)^{N}\\
        \kappa N \delta(1-\delta)\left(m\left(\Phi_t\left(U_{1-\delta}^T\right)\right)-\frac{1}{N/2+1}\left(\frac{\sqrt{\pi e \delta}}{2}\right)^{N}\right)& \mathrm{if~}m\left(\Phi_t\left(U_{1-\delta}^T\right)\right)> \left(\frac{\sqrt{\pi e \delta}}{2}\right)^{N}
    \end{cases}
    \\
    &\ge 
    \begin{cases}
        \frac{2\kappa N(1-\delta)}{\pi e}m\left(\Phi_t\left(U_{1-\delta}^T\right)\right)^{1+2/N}& \mathrm{if~}m\left(\Phi_t\left(U_{1-\delta}^T\right)\right)\le \left(\frac{\sqrt{\pi e \delta}}{2}\right)^{N}\\
        \kappa N \delta(1-\delta)\left(m\left(\Phi_t\left(U_{1-\delta}^T\right)\right)-\frac{1}{N/2+1}\left(\frac{\sqrt{\pi e \delta}}{2}\right)^{N}\right)& \mathrm{if~}m\left(\Phi_t\left(U_{1-\delta}^T\right)\right)> \left(\frac{\sqrt{\pi e \delta}}{2}\right)^{N}
    \end{cases}
\end{aligned}
\end{align}
where in the second inequality we put $y=m\left( \Phi_t\left(U_{1-\delta}^T\right)\cap U^0_\delta\right)$, and in the first equality we optimized to find $y=m\left(\Phi_t\left(U_{1-\delta}^T\right)\right)$ if $m\left(\Phi_t\left(U_{1-\delta}^T\right)\right)\le \left(\frac{\sqrt{\pi e \delta}}{2}\right)^{N}$ and $y=\left(\frac{\sqrt{\pi e \delta}}{2}\right)^N$ if $m\left(\Phi_t\left(U_{1-\delta}^T\right)\right)> \left(\frac{\sqrt{\pi e \delta}}{2}\right)^{N}$, and in the third inequality we used $N\ge 2$.

Solving this differential inequality \eqref{eq:diffeq-1} along with the terminal condition
\[
m\left(\Phi_T\left(U_{1-\delta}^T\right)\right)\le m(U^0_{1-\delta})\stackrel{\mathrm{Lemma~}\ref{lem:order-param-conc}}{\le} \exp(-N\delta^2)
\]
gives the stated inequality \eqref{eq:meas-ineq} in the case $0<\delta<\frac 14$.

\item 
On the other hand, suppose $\frac 14\le \delta<\frac 12$. We have
\begin{align*}
    \frac{d}{dt}m\left(\Phi_t\left(U_{1-\delta}^T\right)\right)
&\stackrel{\mathclap{\eqref{eq:div-ineq}}}{\ge}\kappa N \int_{\Phi_t\left(U_{1-\delta}^T\right)\setminus U^0_{1/4}}R(1-R) dm(\Theta)+\kappa N \int_{\Phi_t\left(U_{1-\delta}^T\right)\cap U^0_{1/4}}R(1-R) dm(\Theta)\\
&\ge \frac{3}{16}\kappa N  m\left(\Phi_t\left(U_{1-\delta}^T\right)\setminus U^0_{1/4}\right)+\frac 34\kappa N\int_{\Phi_t\left(U_{1-\delta}^T\right)\cap U^0_{1/4}}R dm(\Theta),
\end{align*}
which we again estimate term by term:
\begin{align*}
\frac{3}{16}\kappa N  m\left(\Phi_t\left(U_{1-\delta}^T\right)\setminus U^0_{1/4}\right)&\ge \frac{3}{16}\kappa N m\left(\Phi_t\left(U_{1-\delta}^T\right)\right)-\frac{3}{16}\kappa N m\left( \Phi_t\left(U_{1-\delta}^T\right)\cap U^0_{1/4}\right)
\end{align*}
and
\begin{align*}
    \frac 34\kappa N\int_{\Phi_t\left(U_{1-\delta}^T\right)\cap U^0_{1/4}}R dm(\Theta)&=\frac 34\kappa N\int_{\Phi_t\left(U_{1-\delta}^T\right)\cap U^0_{1/4}}\int_0^{1/4} \mathbbm{1}_{R\ge t} dtdm(\Theta)\\
    &=\frac 34\kappa N\int_0^{1/4} m\left(\Phi_t\left(U_{1-\delta}^T\right)\cap U^0_{1/4}\setminus  U^0_t\right) dt\\
    &\ge \frac 34\kappa N\int_0^{1/4} \max\left\{0,m\left(\Phi_t\left(U_{1-\delta}^T\right)\cap U^0_{1/4}\right)-m\left( U^0_t\right)\right\} dt\\
    &\ge \frac 34\kappa N\int_0^{1/4} \max\left\{0,m\left(\Phi_t\left(U_{1-\delta}^T\right)\cap U^0_{1/4}\right)-\left(\frac{\sqrt{\pi e t}}{2}\right)^N\right\} dt\\
    &=\frac{3\kappa N}{\pi e}\frac{1}{1+2/N}m\left(\Phi_t\left(U_{1-\delta}^T\right)\cap U^0_{1/4}\right)^{1+\frac 2N},
\end{align*}
where in the last equality we used $m\left(\Phi_t\left(U_{1-\delta}^T\right)\cap U^0_{1/4}\right)\le m\left( U^0_{1/4}\right)\le \left(\frac{\sqrt{\pi e}}{4}\right)^N$.

Harvesting estimates, we obtain
\begin{align}\label{eq:diffeq-2}
\begin{aligned}
    \frac{d}{dt}m\left(\Phi_t\left(U_{1-\delta}^T\right)\right)&\ge \frac{3}{16}\kappa N m\left(\Phi_t\left(U_{1-\delta}^T\right)\right)-\frac{3}{16}\kappa N m\left( \Phi_t\left(U_{1-\delta}^T\right)\cap U^0_{1/4}\right)\\
    &\quad +\frac {3\kappa N}{\pi e}\frac{1}{1+2/N}m\left(\Phi_t\left(U_{1-\delta}^T\right)\cap U^0_{1/4}\right)^{1+\frac 2N}\\
    &\ge \frac{3}{16}\kappa N m\left(\Phi_t\left(U_{1-\delta}^T\right)\right)+\frac{3\kappa N}{16} \inf_{0\le y\le m\left(\Phi_t\left(U_{1-\delta}^T\right)\right)}\left(- y+\frac {16}{\pi e}\frac 1{1+2/N}y^{1+\frac 2N}\right)\\
    &=
    \begin{cases}
        \frac{3\kappa N}{\pi e}\frac 1{1+2/N}m\left(\Phi_t\left(U_{1-\delta}^T\right)\right)^{1+2/N}& \mathrm{if~}m\left(\Phi_t\left(U_{1-\delta}^T\right)\right)\le \left(\frac{\sqrt{\pi e }}{4}\right)^{N}\\
        \frac{3\kappa N}{16} \left(m\left(\Phi_t\left(U_{1-\delta}^T\right)\right)-\frac{1}{N/2+1}\left(\frac{\sqrt{\pi e }}{4}\right)^{N}\right)& \mathrm{if~}m\left(\Phi_t\left(U_{1-\delta}^T\right)\right)> \left(\frac{\sqrt{\pi e }}{4}\right)^{N}
    \end{cases}
    \\
    &\ge 
    \begin{cases}
        \frac{3\kappa N}{2\pi e}m\left(\Phi_t\left(U_{1-\delta}^T\right)\right)^{1+2/N}& \mathrm{if~}m\left(\Phi_t\left(U_{1-\delta}^T\right)\right)\le \left(\frac{\sqrt{\pi e }}{4}\right)^{N}\\
        \frac{3\kappa N}{16} \left(m\left(\Phi_t\left(U_{1-\delta}^T\right)\right)-\frac{1}{N/2+1}\left(\frac{\sqrt{\pi e}}{4}\right)^{N}\right)& \mathrm{if~}m\left(\Phi_t\left(U_{1-\delta}^T\right)\right)> \left(\frac{\sqrt{\pi e }}{4}\right)^{N}
    \end{cases}
\end{aligned}
\end{align}
where in the second inequality we put $y=m\left( \Phi_t\left(U_{1-\delta}^T\right)\cap U^0_{1/4}\right)$, and in the first equality we optimized to find $y=m\left(\Phi_t\left(U_{1-\delta}^T\right)\right)$ if $m\left(\Phi_t\left(U_{1-\delta}^T\right)\right)\le \left(\frac{\sqrt{\pi e }}{4}\right)^{N}$ and $y=\left(\frac{\sqrt{\pi e }}{4}\right)^N$ if $m\left(\Phi_t\left(U_{1-\delta}^T\right)\right)> \left(\frac{\sqrt{\pi e }}{4}\right)^{N}$, and in the third inequality we used $N\ge 2$.

Solving this differential inequality \eqref{eq:diffeq-2} along with the terminal condition
\[
m\left(\Phi_T\left(U_{1-\delta}^T\right)\right)\le m(U^0_{1-\delta})\stackrel{\mathrm{Lemma~}\ref{lem:order-param-conc}}{\le} \exp(-N\delta^2)
\]
gives the stated inequality \eqref{eq:meas-ineq} in the case $\frac 14\le \delta<\frac 12$.

\item 
Now, suppose $\frac 12\le \delta<1$. Evidently $\Phi_t\left(U_{1-\delta}^T\right)\subset U_{1-\delta}^0$, so 
\begin{align}
\begin{aligned}\label{eq:diffeq-3}
    \frac{d}{dt}m\left(\Phi_t\left(U_{1-\delta}^T\right)\right)
&\stackrel{\mathclap{\eqref{eq:div-ineq}}}{\ge}\kappa N \int_{\Phi_t\left(U_{1-\delta}^T\right)}R(1-R) dm(\Theta)\ge \kappa N\delta  \int_{\Phi_t\left(U_{1-\delta}^T\right)}R dm(\Theta)\\
&\ge \kappa N \delta \int_{\Phi_t\left(U_{1-\delta}^T\right)} \int_0^{1-\delta}\mathbbm{1}_{R\ge t}dt dm(\Theta)\\
&=\kappa N \delta \int_0^{1-\delta}m\left(\Phi_t\left(U_{1-\delta}^T\right)\setminus U_t^0\right)dt\\
&\ge \kappa N \delta \int_0^{1-\delta}\max\left\{0,m\left(\Phi_t\left(U_{1-\delta}^T\right)\right)-m\left( U_t^0\right)\right\}dt\\
&\ge \kappa N \delta \int_0^{1-\delta}\max\left\{0,m\left(\Phi_t\left(U_{1-\delta}^T\right)\right)-\left(\frac{\sqrt{\pi e t}}{2}\right)^N\right\}dt\\
    &=\frac{4\kappa N\delta }{\pi e}\frac{1}{1+2/N}m\left(\Phi_t\left(U_{1-\delta}^T\right)\right)^{1+\frac 2N}\\
    &\ge \frac{2\kappa N\delta }{\pi e}m\left(\Phi_t\left(U_{1-\delta}^T\right)\right)^{1+\frac 2N},
\end{aligned}
\end{align}
where in the last equality we used $m\left(\Phi_t\left(U_{1-\delta}^T\right)\right)\le m\left(U_{1-\delta}^0\right)\le \left(\frac{\sqrt{\pi e (1-\delta)}}{2}\right)^N$ and in the last inequality we used $N\ge 2$.

In the case $\frac 12\le \delta<\frac 34$, solving the differential inequality \eqref{eq:diffeq-3} along with the terminal condition $m\left(\Phi_T\left(U_{1-\delta}^T\right)\right)\le m(U^0_{1-\delta})\stackrel{\mathrm{Lemma~}\ref{lem:order-param-conc}}{\le}   \exp(-N\delta^2)$ gives the stated inequality \eqref{eq:meas-ineq}. In the case $\frac 34\le \delta<1$, solving the differential inequality \eqref{eq:diffeq-3} along with the terminal condition $m\left(\Phi_T\left(U_{1-\delta}^T\right)\right)\le m(U^0_{1-\delta})\stackrel{\mathrm{Lemma~}\ref{lem:order-param-conc}}{\le}  \left(\frac{\sqrt{\pi e (1-\delta)}}{2}\right)^N$ gives the stated inequality \eqref{eq:meas-ineq} .

\end{enumerate}

It is now evident that
\[
U_{1-\delta}^\infty =\mathop{\bigcap}_{T\in\bbn}U_{1-\delta}^T=\mathop{\bigcap}_{T>0}U_{1-\delta}^T
\]
is a Borel set with measure $m\left(U_{1-\delta}^\infty\right)\le \liminf_{T\to\infty}m\left(U_{1-\delta}^T\right)=0$.
\end{proof}

We are now ready to prove Theorem \ref{themainthm}.
\begin{proof}[Proof of Theorem \ref{themainthm}] 
Choose a sequence $\{\mu_n\}_{n=1}^\infty$ of real numbers in $\left(1-\sqrt{\frac 12+\sqrt{\frac 14-\frac{\|\Omega\|_\infty^2}{\kappa^2}}},1-\frac{1}{\sqrt{2}}\right)$ such that $\lim_{n\to\infty}\mu_n=1-\sqrt{\frac 12+\sqrt{\frac 14-\frac{\|\Omega\|_\infty^2}{\kappa^2}}}$. As
\[
\kappa>\frac{\|\Omega\|_\infty}{(1-\mu_n)\sqrt{\mu_n(2-\mu_n)}},
\]
we may choose a sequence $\{\delta_n\}_{n=1}^\infty$ of real numbers in $(0,1)$ sufficiently close to $0$ such that
\[ \kappa >\frac{\|\Omega\|_\infty}{(1-\delta_n-\mu_n)\sqrt{\mu_n(2-\mu_n)}} \quad \forall n\ge 1,\quad\mbox{and}\quad \lim_{n\rightarrow\infty}\delta_n=0.
\]
Since $\kappa>0$, Proposition \ref{nodispersed} tells us that for every $n\ge 1$ the set
\[
U_{1-\delta_n}^\infty=\{ \Theta^0\in \bbt^N: R(\Phi_t(\Theta^0))<1-\delta_n\mbox{ for all } t\ge 0\}
\]
has measure zero and thus the union
\[
U^\infty=\mathop{\bigcup}_{n=1}^\infty U_{1-\delta_n}^\infty
\]
has measure zero as well. So, for almost all initial data $\Theta^0\in \bbt^N\backslash U^\infty$, for each $n\ge 1$ there exists a finite time $t_n\ge 0$ such that
\[
R(\Phi_{t_n}(\Theta^0))\ge 1-\delta_n,
\]
and we apply Theorem \ref{generalbestsincosmainthm} at this time $t=t_n$ to conclude the following:
\begin{enumerate}[(a)]
\item By Theorem \ref{generalbestsincosmainthm} (b), there exists $\lim_{t\to\infty}\theta_i(t)$ and $\lim_{t\to\infty}\dot{\theta}_i(t)=0$. This proves assertion (a) of Theorem \ref{themainthm}.

\item By Theorem \ref{generalbestsincosmainthm} (a), we have $\lim_{t\to\infty}R(t)\ge 1-\delta_n-\mu_n$. By taking the limit $n\rightarrow\infty$ we obtain assertion (b) of Theorem \ref{themainthm}. 
\end{enumerate}
\end{proof}

We are also ready to prove Theorem \ref{sincosmaincor-time}.

\begin{proof}[Proof of Theorem \ref{sincosmaincor-time}]
Similarly to the proof of Theorem \ref{sincosmaincor}, we have from Corollary \ref{bestsincosmainthm} that
\begin{align*}
&m\Big\{\{\theta_i^0\}_{i=1}^N\in [-\pi,\pi]^N:~\forall t\ge T~ R(t)\ge \frac{1}{\sqrt{2}} -\frac{3\varepsilon}{20},\\
&\qquad \forall i=1,\cdots,N~\sup_{t\ge T} \theta_i(t)-\inf_{t\ge T}\theta_i(t)<2\pi,\\
&\qquad \mathrm{and~}\forall i\in \{1,\cdots,N\}\forall t_0\ge T\mathrm{~if~}\cos\theta_i(t_0)\ge -\frac{1}{\sqrt{2}}\mathrm{~then}\\
&\qquad \cos\theta_i(t_0)\ge -\frac{1}{\sqrt{2}}\mathrm{~for~}t\ge t_0\mathrm{~and~}\cos\theta_i(t)\ge \frac{1}{\sqrt{2}}\mathrm{~for~}t\ge t_0+\frac{(2+\varepsilon)\pi}{\kappa-(2+\varepsilon)\|\Omega\|_\infty}\Big\}\\
&\ge m\Big\{\{\theta_i^0\}_{i=1}^N\in [-\pi,\pi]^N:~R(t)>1-\frac \varepsilon {5}\mathrm{~for~some~}t\in [0,T]\Big\}\\
&= 1-m\left(U_{1-\varepsilon/5}^T\right)
\end{align*}
at which point we may invoke Proposition \ref{nodispersed} with $\delta=\frac \varepsilon 5<\frac 14$ to see that
\begin{equation*}
m(U_{1-\varepsilon/5}^T)\le 
\begin{cases}
\frac 1{N/2+1} \left(\frac{\sqrt{\pi e \varepsilon}}{2\sqrt{5}}\right)^N\left(1-\exp\left(-\frac{\kappa N \varepsilon(5-\varepsilon)T}{25}\right)\right)+\exp\left(-\frac{N\varepsilon^2}{25}-\frac{\kappa N \varepsilon(5-\varepsilon)T}{25}\right),& \mathrm{if~}T\le T_0,\\
\left(\frac{20}{\pi e\varepsilon}+\frac{4\kappa(5-\varepsilon)}{5\pi e}(T-T_0)\right)^{-\frac{N}{2}},&\mathrm{if~}T>T_0,
\end{cases}
\end{equation*}
where $T_0 = \frac{25}{\kappa N\varepsilon(5-\varepsilon)}\log\left(\left(1+\frac 2N\right)\left(\frac{2\sqrt{5}}{\sqrt{\pi \varepsilon}e^{1/2+\varepsilon^2/25}}\right)^N-\frac 2N\right)$.

Similarly, if $\kappa>8\|\Omega\|_\infty$ and $R_0> \left(\frac{2\sqrt{2}\|\Omega\|_\infty}{\kappa}\right)^{2/3}$, we have from Corollary \ref{bestsincosmainthm} that
\begin{align*}
    &m\Big\{\{\theta_i^0\}_{i=1}^N\in [-\pi,\pi]^N: \exists \lim_{t\to\infty}\theta_i(t), ~\sup_{t\ge T} \theta_i(t)-\inf_{t\ge T}\theta_i(t)<2\pi,~ \lim_{t\to\infty}\dot{\theta}_i(t)=0 ~\mbox{for all } i=1,\cdots,N\Big\}\\
&\ge m\left\{\{\theta_i^0\}_{i=1}^N\in [-\pi,\pi]^N:~R(t)>\left(\frac{2\sqrt{2}\|\Omega\|_\infty}{\kappa}\right)^{2/3}\mathrm{~for~some~}t\in [0,T]\right\}=1-m\left(U^T_{\left(\frac{2\sqrt{2}\|\Omega\|_\infty}{\kappa}\right)^{2/3}}\right)
\end{align*}
and we invoke Proposition \ref{nodispersed} to see that
\[
m\left(U^T_{\left(\frac{2\sqrt{2}\|\Omega\|_\infty}{\kappa}\right)^{2/3}}\right)\le 
\begin{cases}
\left(e^{1/2}+\frac{3\kappa T}{\pi e}\right)^{-\frac{N}{2}}&\mathrm{if~}8\|\Omega\|_\infty<\kappa\le 16\sqrt{2}\|\Omega\|_\infty\\
\left(\frac{4}{\pi e}\left(\frac{\kappa}{2\sqrt{2}\|\Omega\|_\infty}\right)^{2/3}+\frac{3\kappa T}{\pi e}\right)^{-\frac{N}{2}}&\mathrm{if~}\kappa>16\sqrt{2}\|\Omega\|_\infty
\end{cases}
\]
since $\left(\frac{2\sqrt{2}\|\Omega\|_\infty}{\kappa}\right)^{2/3}<\frac 12$ if $\kappa>8\|\Omega\|_\infty$ and $\left(\frac{2\sqrt{2}\|\Omega\|_\infty}{\kappa}\right)^{2/3}<\frac 14$ if $\kappa>16\sqrt{2}\|\Omega\|_\infty$.
\end{proof}

\subsubsection{General interaction functions $I$ and $S$}
In this subsection we prove Theorems \ref{themainthm-IS} and \ref{maincor-quant}. This time, note that the Winfree model \eqref{GenWinfree} gives the integral curves to the following vector field:
\begin{equation}\label{vf-1-IS}
    F^{I,S}: \mathbb{T}^N\rightarrow T\mathbb{T}^N,\quad F^{I,S}=(F^{I,S}_1,\cdots,F^{I,S}_N),
\end{equation}
where
\begin{equation}\label{vf-2-IS}
    F^{I,S}_i(\Theta)=\omega_i+\frac{\kappa}{N}\sum_{j=1}^{N} I(\theta_j)S(\theta_i).
\end{equation}
Denote the flow of this vector field by $\Phi^{I,S}_t$. The divergence of the vector field $F^{I,S}$ satisfies, under the assumptions \eqref{c_4}, \eqref{c_5}, and $I\ge 0$,
\begin{equation}\label{div-IS}
    \nabla \cdot F^{I,S}=\sum_{i=1}^{N} \frac{\partial F^{I,S}_i}{\partial \theta_i}=\frac{\kappa}{N}\sum_{i, j=1,\cdots, N} I(\theta_j)S'(\theta_i)+\frac{\kappa}{N}\sum_{i=1}^NI'(\theta_i)S(\theta_i)\ge c_4\kappa N R(I_*-R).
\end{equation}
Again, the right-hand side is independent of the choice of natural frequencies $\nu_i$, and depends only on $\kappa$, $N$ and $R$. 

With \eqref{div-IS}, we can again show the instability of certain equilibria.
\begin{proposition} 
Consider the Winfree model \eqref{GenWinfree} with interaction functions $I$ and $S$ satisfying \eqref{c_4}, \eqref{c_5} and $I\ge 0$. If $\kappa>0$, then any equilibrium $\Theta$ of \eqref{GenWinfree} with order parameter $R\in (0,I_*)$ is linearly unstable.
\end{proposition}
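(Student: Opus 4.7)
The plan is to show that the divergence of the vector field $F^{I,S}$ defined in \eqref{vf-1-IS}--\eqref{vf-2-IS} is strictly positive at the equilibrium, which forces the Jacobian $DF^{I,S}(\Theta)$ to have an eigenvalue with positive real part. This would mirror the argument sketched for Proposition \ref{P6.1} in the prototypical case, with hypotheses \eqref{c_4} and \eqref{c_5} replacing the explicit $S=-\sin,\,I=1+\cos$ computation.

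First I would factor the cross term in the divergence as $\frac{\kappa}{N}\sum_{i,j}I(\theta_j)S'(\theta_i)=\kappa R\sum_i S'(\theta_i)$, then apply \eqref{c_4} to estimate $\sum_i S'(\theta_i)\ge c_4\sum_i(I_*-I(\theta_i))=c_4 N(I_*-R)$. The remaining diagonal term $\frac{\kappa}{N}\sum_i I'(\theta_i)S(\theta_i)$ is nonnegative by \eqref{c_5}. Combining these, one arrives at the already-stated inequality
\[
(\nabla\cdot F^{I,S})(\Theta)\ \ge\ c_4\kappa N R(I_*-R),
\]
so that for $\kappa>0$ and $R\in(0,I_*)$ the divergence is strictly positive at the equilibrium in question. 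Note this estimate is purely local and does not use the equilibrium condition $\omega_i=-\kappa RS(\theta_i)$; the only role played by the equilibrium is that the Jacobian determines the \emph{linear} stability at that point.

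Finally, since the trace of the real matrix $DF^{I,S}(\Theta)$ coincides with $(\nabla\cdot F^{I,S})(\Theta)$ and equals the sum of its eigenvalues (complex conjugate pairs contributing twice their real part), a strictly positive trace forces at least one eigenvalue $\lambda$ with $\operatorname{Re}\lambda>0$, which certifies linear instability. I do not expect any serious obstacle: hypotheses \eqref{c_4} and \eqref{c_5} have been tailor-made precisely so that the divergence estimate requires no structural information about the equilibrium beyond $R\in(0,I_*)$ and $\kappa>0$, so the proof is essentially a transcription of the first part of Proposition \ref{P6.1}.
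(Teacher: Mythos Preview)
Your proposal is correct and follows exactly the paper's approach: the paper simply says ``the proof is the same as that of Proposition \ref{P6.1},'' relying on the divergence estimate \eqref{div-IS} (which you correctly re-derive) and the observation that a positive trace of the Jacobian forces an eigenvalue with positive real part.
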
 
The proof is the same as that of Proposition \ref{P6.1}.

Again, we have the following generalization of Proposition \ref{nodispersed}, whose proof is the same.
\begin{proposition}\label{nodispersed-IS-qual}
Consider the Winfree model \eqref{GenWinfree} with interaction functions $I$ and $S$ satisfying \eqref{c_4}, \eqref{c_5}, and \eqref{c_6}. Let $\kappa>0$, $N$, and $\Omega=(\omega_1,\cdots,\omega_N)$ be fixed, and consider the flow $\Phi_t^{I,S}$ generated by \eqref{vf-1-IS}-\eqref{vf-2-IS}. Then, for any $0<\delta<I_*$,  the Borel set
\[
U_{I_*-\delta}^{\infty,I,S}=\{ \Theta^0\in \bbt^N: R(\Phi_t^{I,S}(\Theta^0))<I_*-\delta\mbox{ for all } t\ge 0\}
\]
has measure zero.
\end{proposition}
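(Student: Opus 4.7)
The plan is to adapt the volumetric argument of Proposition \ref{nodispersed} using the divergence lower bound \eqref{div-IS} in place of \eqref{div}. I would first introduce the finite-time analogue
\[
U_{I_*-\delta}^{T,I,S}\coloneqq \{\Theta^0\in \bbt^N:~R(\Phi_t^{I,S}(\Theta^0))<I_*-\delta~\text{for all }0\le t\le T\},
\]
and observe that $U_{I_*-\delta}^{\infty,I,S}=\bigcap_{T\ge 0}U_{I_*-\delta}^{T,I,S}$, so it suffices to show $m(U_{I_*-\delta}^{T,I,S})\to 0$ as $T\to\infty$. Writing $V(t)\coloneqq m(\Phi_t^{I,S}(U_{I_*-\delta}^{T,I,S}))$, Liouville's formula and \eqref{div-IS} give
\[
V'(t)=\int_{\Phi_t^{I,S}(U_{I_*-\delta}^{T,I,S})}(\nabla\cdot F^{I,S})\,dm\ge c_4\kappa N\int_{\Phi_t^{I,S}(U_{I_*-\delta}^{T,I,S})}R(I_*-R)\,dm \ge c_4\kappa N\delta\int_{\Phi_t^{I,S}(U_{I_*-\delta}^{T,I,S})} R\,dm,
\]
where the last inequality uses that $R<I_*-\delta$ on the image set by the definition of $U_{I_*-\delta}^{T,I,S}$.

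Next, I would handle the region where $R$ is small by a simple thresholding argument. For any $\varepsilon\in(0,I_*-\delta)$, letting $A_\varepsilon\coloneqq\{\Theta\in\bbt^N:R(\Theta)<\varepsilon\}$, we have $\int_{\Phi_t^{I,S}(U_{I_*-\delta}^{T,I,S})}R\,dm\ge \varepsilon(V(t)-m(A_\varepsilon))$, so that
\[
V'(t)\ge c_4\kappa N\delta\varepsilon\,(V(t)-m(A_\varepsilon)).
\]
Setting $W(t)\coloneqq V(t)-m(A_\varepsilon)$, Grönwall yields $W(T)\ge W(0)\exp(c_4\kappa N\delta\varepsilon T)$ whenever $W$ stays positive on $[0,T]$, and the trivial upper bound $V(T)\le 1$ then forces $V(0)\le m(A_\varepsilon)+\exp(-c_4\kappa N\delta\varepsilon T)$. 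Taking $T\to\infty$ gives $m(U_{I_*-\delta}^{\infty,I,S})\le m(A_\varepsilon)$ for every $\varepsilon>0$.

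To finish, I would send $\varepsilon\downarrow 0$ and use the nondegeneracy condition \eqref{c_6}: since $I$ is continuous and $I>0$ on $(-\pi,\pi)$, the set $\{R=0\}=\bigcap_{i}\{\theta_i=\pi\!\!\mod 2\pi\}$ has Lebesgue measure zero, so $m(A_\varepsilon)\to 0$ by dominated convergence, giving $m(U_{I_*-\delta}^{\infty,I,S})=0$.

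The main obstacle, compared to the prototypical case, is the absence of a clean concentration bound on $R$ for general $I$ (we lose the sharp Bessel-type estimates of Proposition \ref{prop:circle-extreme}). However, since we only need the qualitative claim that $m(U_{I_*-\delta}^{\infty,I,S})=0$ rather than quantitative bounds as in Proposition \ref{nodispersed}, the crude thresholding $R\ge\varepsilon\mathbbm{1}_{R\ge\varepsilon}$ combined with the nondegeneracy \eqref{c_6} is enough; one pays for this simplicity by obtaining no effective rate in $T$, but none is claimed in the statement. For a quantitative analogue (in the spirit of Theorems \ref{themainthm-IS} and \ref{maincor-quant}), one would instead invoke the stronger power-type lower bound \eqref{c_7} together with Lemma \ref{lem:extreme-close} applied to $-I(\theta)$ under Lebesgue measure, replacing Lemma \ref{lem:order-param-conc} in the case-by-case analysis of the proof of Proposition \ref{nodispersed}.
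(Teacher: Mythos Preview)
Your proof is correct and follows essentially the same volumetric argument as the paper. The only difference is cosmetic: the paper bounds $R(I_*-R)\ge \varepsilon(I_*-\varepsilon)\mathbbm{1}_{R\ge\varepsilon}$ (which requires first shrinking to $\delta<I_*/2$ so that the minimum of $R(I_*-R)$ on $[\varepsilon,I_*-\delta]$ occurs at $R=\varepsilon$), whereas you bound $R(I_*-R)\ge \delta R$ directly from $I_*-R>\delta$, which is slightly cleaner and avoids that preliminary reduction; both routes lead to the same Gr\"onwall estimate $m(U_{I_*-\delta}^{T,I,S})\le m(A_\varepsilon)+e^{-cT}$ and the same conclusion upon sending $T\to\infty$ and then $\varepsilon\downarrow 0$.
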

\begin{proof}
    Defining for each $T\in[0,\infty)$ the open set 
\[
U_{I_*-\delta}^{T,I,S}\coloneqq \{\Theta^0\in\bbt^N: R(\Phi_t^{I,S}(\Theta^0))<I_*-\delta \mbox{ for all }0\le t \le T\},
\]
we have that
\[
U_{I_*-\delta}^{\infty,I,S}=\bigcap_{T\in \mathbb{N}}U_{I_*-\delta}^{T,I,S}
\]
is a Borel set.

Note that 
\[
U_{\varepsilon}^{0,I,S} =\{\Theta^0\in\bbt^N: R(\Theta^0)<\varepsilon\}, \quad \varepsilon>0,
\]
which by condition \eqref{c_6} satisfies
\[
\lim_{\varepsilon\to 0}m(U_\varepsilon^{0,I,S})=0.
\]
By definition, we have
\[
\Phi_t^{I,S}(U_{I_*-\delta}^{T,I,S})\subset U_{I_*-\delta}^{0,I,S},\quad \forall~0\le t\le T\le \infty,~t<\infty.
\]
Also, since for $0<\delta_1<\delta_2<I_*$ and $0\le T\le \infty$ we have $U^{T,I,S}_{I_*-\delta_2}\subset U^{T,I,S}_{I_*-\delta_1}$, we may possibly shrink $\delta$ to assume that $\delta<I_*/2$.

By the fact that the divergence of a flow determines the rate of change of volume, we have for $0\le t\le T<\infty$
\begin{equation*}
\frac{d}{dt}m\left(\Phi_t^{I,S}\left(U_{I_*-\delta}^{T,I,S}\right)\right)
=\int_{\Phi_t^{I,S}\left(U_{I_*-\delta}^{T,I,S}\right)}(\nabla\cdot F^{I,S})(\Theta)dm(\Theta)
\stackrel{\eqref{div-IS}}{\ge}c_4\kappa N \int_{\Phi_t^{I,S}\left(U_{I_*-\delta}^{T,I,S}\right)}R(I_*-R) dm(\Theta).
\end{equation*}

Let $0<\varepsilon<\delta$. We have
\begin{align*}
    \frac{d}{dt}m\left(\Phi_t^{I,S}\left(U_{I_*-\delta}^{T,I,S}\right)\right)
&\ge c_4\kappa N \int_{\Phi_t^{I,S}\left(U_{I_*-\delta}^{T,I,S}\right)\setminus U^{0,I,S}_\varepsilon}R(I_*-R) dm(\Theta)\ge c_4\kappa N \varepsilon(I_*-\varepsilon) m\left(\Phi_t^{I,S}\left(U_{I_*-\delta}^{T,I,S}\right)\setminus U^{0,I,S}_\varepsilon\right)\\
&\ge c_4\kappa N \varepsilon(I_*-\varepsilon) \max\left\{0,m\left(\Phi_t^{I,S}\left(U_{I_*-\delta}^{T,I,S}\right)\right)-m\left(U^{0,I,S}_\varepsilon\right)\right\}
\end{align*}
which we can solve as
\[
\max\left\{0,m\left(\Phi_T^{I,S}\left(U_{I_*-\delta}^{T,I,S}\right)\right)-m\left(U^{0,I,S}_\varepsilon\right)\right\}\ge \exp\left(c_4\kappa N\varepsilon(I_*-\varepsilon)T\right)\max\left\{0,m\left(U_{I_*-\delta}^{T,I,S}\right)-m\left(U^{0,I,S}_\varepsilon\right)\right\}.
\]
However, because of the bound $m\left(\Phi_T^{I,S}\left(U_{I_*-\delta}^{T,I,S}\right)\right)\le 1$, we have
\[
m\left(U_{I_*-\delta}^{\infty,I,S}\right)\le m\left(U_{I_*-\delta}^{T,I,S}\right)\le m\left(U^{0,I,S}_\varepsilon\right)+\exp\left(-c_4\kappa N\varepsilon(I_*-\varepsilon)T\right),\quad \forall \varepsilon\in (0,\delta),~\forall T\in (0,\infty).
\]
First taking the limit $T\to\infty$ and then taking the limit $\varepsilon\to 0$ gives $m\left(U_{I_*-\delta}^{\infty,I,S}\right)=0$.
\end{proof}

The proof of Theorem \ref{themainthm-IS} from Theorem \ref{mainthm} and Proposition \ref{nodispersed-IS-qual} is essentially the same as the proof of Theorem \ref{themainthm} from Theorem \ref{generalbestsincosmainthm} and Proposition \ref{nodispersed}.

\begin{proof}[Proof of Theorem \ref{themainthm-IS}] 
Choose a sequence $\{\delta_n\}_{n=1}^\infty$ of positive real numbers such that

\[
\kappa>\max\left\{\frac{2(2c_2)^{p/q}}{c_1c_3}\cdot\frac{\|\Omega\|_\infty}{{(I_*-\delta_n)}^{1+\frac pq}},\frac{2}{c_1c_3(\pi-\alpha_0)^p}\cdot \frac{\|\Omega\|_\infty}{{I_*-\delta_n}}\right\} \quad \forall n\ge 1,\quad\mbox{and}\quad \lim_{n\rightarrow\infty}\delta_n=0.
\]
Since $\kappa>0$, Proposition \ref{nodispersed} tells us that for every $n\ge 1$ the set
\[
U_{I_*-\delta_n}^{\infty,I,S}=\{ \Theta^0\in \bbt^N: R(\Phi_t^{I,S}(\Theta^0))<I_*-\delta_n\mbox{ for all } t\ge 0\}
\]
has measure zero and thus the union
\[
U^{\infty,I,S}=\mathop{\bigcup}_{n=1}^\infty U_{I_*-\delta_n}^{\infty,I,S}
\]
has measure zero as well. So, for almost all initial data $\Theta^0\in \bbt^N\backslash U^{\infty,I,S}$, for each $n\ge 1$ there exists a finite time $t_n\ge 0$ such that
\[
R(\Phi^{I,S}_{t_n}(\Theta^0))\ge I_*-\delta_n,
\]
and we apply Theorem \ref{mainthm} at this time $t=t_n$ to conclude the following:
\begin{enumerate}[(a)]
\item By Theorem \ref{mainthm} (a), we have $\liminf_{t\to\infty}R(t)\ge \frac{c_3R(t_n)}{2}\ge \frac{c_3(I_*-\delta_n)}{2}$. By taking the limit $n\to \infty$ we obtain assertion (a) of Theorem \ref{themainthm-IS}.

\item By Theorem \ref{mainthm} (b), $\limsup_{t\to\infty}\theta_i(t)-\liminf_{t\to\infty}\theta_i(t)\le \sup_{t\ge t_1}\theta_i(t)-\inf_{t\ge t_1}\theta_i(t)<2\pi$, proving assertion (b) of Theorem \ref{themainthm-IS}.

\end{enumerate}
\end{proof}
Now we prepare for the proof of Theorem \ref{maincor-quant}. We begin with a generalization of Lemma \ref{lem:order-param-conc}.
\begin{lemma}\label{lem:order-param-conc-IS}
Assuming \eqref{c_7}, we have
    \begin{equation*}
m\Big\{\{\theta_i^0\}_{i=1}^N\in [-\pi,\pi]^N:~\frac 1N\sum_{i=1}^N I(\theta_i^0)\le t\Big\}\le
\left(\frac{\Gamma(1/r)}{\pi r^{1-1/r}}\left(\frac{e t}{c_5}\right)^{1/r}\right)^N,\quad t>0.
\end{equation*}

Assuming \eqref{c_4}, we have
\begin{equation*}
m\Big\{\{\theta_i^0\}_{i=1}^N\in [-\pi,\pi]^N:~\frac 1N\sum_{i=1}^N I(\theta_i^0)\le I_*-\delta\Big\}\le
\exp\left(-\frac{2N\delta^2}{\|I\|^2_{\sup}}\right),\quad \delta\in (0,I_*).
\end{equation*}
\end{lemma}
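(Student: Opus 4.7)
The plan is to treat each estimate as an independent Chernoff-type bound on the empirical average $\frac{1}{N}\sum_{i=1}^N I(\theta_i^0)$ when $\theta_i^0$ are i.i.d.\ uniform on $[-\pi,\pi]$, directly invoking the general machinery already developed in Lemma \ref{lem:extreme-close} (for tails near the maximum) for the first bound, and the Hoeffding/Chernoff combination used in the proof of Theorem \ref{maincor} for the second bound.

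For the first assertion, I will bound the moment generating function of $X \coloneqq -I(\theta)$ using \eqref{c_7}. Since $X \le 0$, setting $M=0$ and writing $\theta \mapsto \pi - |\theta|$ on each half gives, for any $\lambda > 0$,
\[
\mathbb{E}\, e^{\lambda X} = \frac{1}{2\pi}\int_{-\pi}^\pi e^{-\lambda I(\theta)}\,d\theta \le \frac{1}{\pi}\int_0^\pi e^{-\lambda c_5 u^r}\,du.
\]
The substitution $v = (\lambda c_5)^{1/r} u$ followed by extending the integral to $[0,\infty)$ yields
\[
\mathbb{E}\,e^{\lambda X} \le \frac{1}{\pi(\lambda c_5)^{1/r}} \int_0^\infty e^{-v^r}\,dv = \frac{\Gamma(1/r)}{\pi r^{1-1/r}}\, \frac{1}{c_5^{1/r}}\,\lambda^{-1/r},
\]
using $\int_0^\infty e^{-v^r}dv = r^{-1}\Gamma(1/r)$. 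This is precisely the hypothesis of Lemma \ref{lem:extreme-close} with $M=0$, $\alpha = 1/r$, and $C = \Gamma(1/r)/(\pi r^{1-1/r} c_5^{1/r})$. Applying the i.i.d.\ version of that lemma to $X_i = -I(\theta_i^0)$ with $\varepsilon = t$ and then simplifying $C(er t)^{1/r}$ gives exactly the stated bound.

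For the second assertion, the key observation is that \eqref{c_4} forces $\mathbb{E}_m[I] \ge I_*$ under the normalized Lebesgue measure $m$ on $[-\pi,\pi]$: integrating \eqref{c_4} over one period and using that $S'$ is $2\pi$-periodic so $\int_{-\pi}^\pi S'\,d\theta = 0$ yields $0 \ge c_4\int_{-\pi}^\pi (I_* - I(\theta))\,d\theta$, i.e., $\frac{1}{2\pi}\int_{-\pi}^\pi I(\theta)\,d\theta \ge I_*$. Since $I$ takes values in $[0,\|I\|_{\sup}]$, Hoeffding's lemma tells us each $\frac{1}{N}I(\theta_i^0)$ is $\|I\|_{\sup}^2/(4N^2)$-subgaussian, so by independence $\frac{1}{N}\sum_{i=1}^N I(\theta_i^0)$ is $\|I\|_{\sup}^2/(4N)$-subgaussian with mean at least $I_*$; the Chernoff inequality \eqref{Chernoff} then gives
\[
m\Big\{\tfrac{1}{N}\textstyle\sum I(\theta_i^0) \le I_*-\delta\Big\} \le \exp\!\Big(-\tfrac{\delta^2}{2 \cdot \|I\|_{\sup}^2/(4N)}\Big) = \exp\!\Big(-\tfrac{2N\delta^2}{\|I\|_{\sup}^2}\Big).
\]
No conceptual obstacle is anticipated; the only care is in the algebraic manipulation reducing $C(er t)^{1/r}$ to the form $\Gamma(1/r)\pi^{-1} r^{-(1-1/r)} (et/c_5)^{1/r}$ and in verifying that \eqref{c_4} really implies the pointwise-integrable lower bound $\mathbb{E}_m[I]\ge I_*$, both of which are routine.
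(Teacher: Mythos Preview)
Your approach is essentially identical to the paper's: for the first bound you use \eqref{c_7} to control the Laplace transform of $-I(\theta)$ and feed it into Lemma \ref{lem:extreme-close}, and for the second you integrate \eqref{c_4} over a period to get $\mathbb{E}_m[I]\ge I_*$ and then apply Hoeffding plus Chernoff. This is exactly what the paper does.

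There is one arithmetic slip: from $\frac{1}{\pi(\lambda c_5)^{1/r}}\int_0^\infty e^{-v^r}\,dv$ with $\int_0^\infty e^{-v^r}\,dv = r^{-1}\Gamma(1/r)$ you should get $C=\dfrac{\Gamma(1/r)}{\pi r\, c_5^{1/r}}$, not $\dfrac{\Gamma(1/r)}{\pi r^{1-1/r} c_5^{1/r}}$. The factor $r^{1-1/r}$ only appears in the \emph{final} bound after multiplying by $(e\varepsilon/\alpha)^\alpha=(ert)^{1/r}$, which contributes the missing $r^{1/r}$. With the correct $C$, the simplification $C(ert)^{1/r}=\dfrac{\Gamma(1/r)}{\pi r^{1-1/r}}\bigl(\tfrac{et}{c_5}\bigr)^{1/r}$ goes through and matches the statement; with your $C$ it would be off by a factor of $r^{1/r}$.
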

\begin{proof}
Assuming \eqref{c_7}, we can compute
\begin{align*}
    \mathbb{E}e^{-\lambda I(\theta_j^0)}=\frac{1}{2\pi}\int_{-\pi}^\pi e^{-\lambda I(\theta)}d\theta \stackrel{\eqref{c_7}}{\le}\frac{1}{2\pi}\int_{-\pi}^\pi e^{-\lambda c_5 (\pi-|\theta|)^r}d\theta\le \frac{1}{\pi}\int_0^\infty e^{-\lambda c_5\theta^r}d\theta = \frac{\Gamma(1/r)}{\pi r c_5^{1/r}}\lambda^{-\frac 1r}
    ,\quad \lambda>0,
\end{align*}
so that by Lemma \ref{lem:extreme-close} with $M=0$,
\[
\mathbb{P}\left[\frac 1N\sum_{i=1}^NI(\theta_i^0)\le t\right]=\mathbb{P}\left[\frac 1N\sum_{i=1}^N(-I(\theta_i^0))\ge -t\right]\le \left(\frac{\Gamma(1/r)}{\pi r^{1-1/r}}\left(\frac{e t}{c_5}\right)^{1/r}\right)^N.
\]

On the other hand, assume \eqref{c_4}. Then
\[
0=\frac{1}{2\pi}\int_{-\pi}^\pi S'(\theta)d\theta \stackrel{\eqref{c_4}}{\ge}\frac{1}{2\pi}\int_{-\pi}^\pi c_4(I_*-I(\theta))d\theta =c_4(I_*-R^*),
\]
so that $I_*\le R^*$, where $R^*$ is defined as in \eqref{mu-pos}:
\[\tag{\ref{mu-pos}}
R^*=\frac{1}{2\pi}\int_{-\pi}^\pi I(\theta)d\theta.
\]
The distribution $m$ means that we are independently and identically sampling $\theta_i^0$ according to the uniform Lebesgue distribution on $[-\pi,\pi]$, so $\frac 1N\sum_{j=1}^N I(\theta_j^0)$ is the sum of the independent random variables $\frac 1N I(\theta_j^0)$, $j=1,\cdots, N$. By the Hoeffding lemma, $\frac 1N I(\theta_j^0)$ is $\frac{\|I\|_{\sup}^2}{4N^2}$-subgaussian, so by independence, $\sum_{j=1}^N \frac 1N I(\theta_j^0)$ is $\frac{\|I\|_{\sup}^2}{4N}$-subgaussian. As $\mathbb{E}\sum_{j=1}^N \frac 1N I(\theta_j^0)=R^*$, we conclude
\begin{align*}
m\left\{\{\theta_i^0\}_{i=1}^N\in [-\pi,\pi]^N:~\frac 1N\sum_{i=1}^N I(\theta_i^0)\le I_*-\delta\right\}&\le
m\left\{\{\theta_i^0\}_{i=1}^N\in [-\pi,\pi]^N:~\frac 1N\sum_{i=1}^N I(\theta_i^0)\le R^*-\delta\right\}\\
&\le \exp\left(-\frac{\delta^2}{2\cdot \|I\|^2_{\sup}/4N}\right)
=\exp\left(-\frac{2N\delta^2}{\|I\|^2_{\sup}}\right).
\end{align*}
\end{proof}
Next we obtain an analogue of Proposition \ref{nodispersed}.
\begin{proposition}\label{prop:nodispersed-IS-quant}
    Consider the Winfree model \eqref{GenWinfree} with interaction functions $I$ and $S$ satisfying \eqref{c_4}, \eqref{c_5}, and \eqref{c_7}.
Let $\kappa>0$, $N$, and $\Omega=(\omega_1,\cdots,\omega_N)$ be fixed, and consider the flow $\Phi_t^{I,S}$ generated by \eqref{vf-1-IS}-\eqref{vf-2-IS}. Then, for any $0<\delta\le I_*/2$ and $T\ge 0$, the open set 
\[
U_{\delta}^{T,I,S}\coloneqq \{\Theta^0\in\bbt^N: R(\Phi_t^{I,S}(\Theta^0))<\delta \mbox{ for all }0\le t \le T\}
\]
has measure
\[
m(U_{\delta}^{T,I,S})\le\left(\max\left\{\frac{c_5\pi^r r^{r-1}}{\Gamma(1/r)^re\delta},\exp\left(\frac{2r(I_*-\delta)^2}{\|I\|^2_{\sup}}\right)\right\}+\frac{c_4c_5\pi^r r^{r-1}}{\Gamma(1/r)^r(1+r/N)}\kappa N(I_*-\delta)T\right)^{-N/r}.
\]
\end{proposition}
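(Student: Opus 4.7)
The plan is to adapt the volumetric argument of Proposition~\ref{nodispersed} to the present generality, with the divergence bound \eqref{div-IS} replacing \eqref{div} and the two concentration estimates of Lemma~\ref{lem:order-param-conc-IS} replacing Lemma~\ref{lem:order-param-conc}. The essential observation is that on the ``low-order-parameter'' set $U_\delta^{0,I,S}$, the hypothesis $\delta\le I_*/2$ gives $I_*-R>I_*-\delta\ge I_*/2$, so the divergence $\nabla\cdot F^{I,S}\ge c_4\kappa N R(I_*-R)$ forces the flow to expand the volume of $\Phi_t^{I,S}(U_\delta^{T,I,S})$ at a rate controlled from below by $\int R\,dm$ over this set.

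Concretely, I would set $y(t)\coloneqq m(\Phi_t^{I,S}(U_\delta^{T,I,S}))$, noting that $\Phi_t^{I,S}(U_\delta^{T,I,S})\subset U_\delta^{0,I,S}$ for $0\le t\le T$ by definition, so that the divergence formula yields
\[
\frac{dy}{dt}\ge c_4\kappa N(I_*-\delta)\int_{\Phi_t^{I,S}(U_\delta^{T,I,S})}R\,dm.
\]
The next step is to turn this into an autonomous differential inequality in $y$ alone. Writing $A=\Phi_t^{I,S}(U_\delta^{T,I,S})$ and $C_r\coloneqq\frac{\Gamma(1/r)}{\pi r^{1-1/r}}(e/c_5)^{1/r}$, the layer-cake formula combined with the power-type estimate $m(U_s^{0,I,S})\le C_r^N s^{N/r}$ from Lemma~\ref{lem:order-param-conc-IS} gives
\[
\int_A R\,dm=\int_0^\delta m(A\cap\{R\ge s\})\,ds\ge\int_0^\delta\max\{0,y-C_r^N s^{N/r}\}\,ds=\frac{y^{1+r/N}}{(1+r/N)C_r^r},
\]
the last equality being valid because $y\le m(U_\delta^{0,I,S})\le C_r^N\delta^{N/r}$. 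Plugging this back in produces
\[
\frac{dy}{dt}\ge\frac{c_4\kappa N(I_*-\delta)}{(1+r/N)C_r^r}\,y^{1+r/N},
\]
which linearizes under the substitution $z=y^{-r/N}$.

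Integrating the linear inequality from $0$ to $T$ yields $y(0)^{-r/N}\ge y(T)^{-r/N}+\frac{r c_4\kappa(I_*-\delta)\,T}{(1+r/N)C_r^r}$, and the terminal estimate $y(T)\le m(U_\delta^{0,I,S})$ combined with \emph{both} bounds of Lemma~\ref{lem:order-param-conc-IS} supplies
\[
y(T)^{-r/N}\ge\max\left\{\frac{1}{C_r^r\delta},\,\exp\left(\frac{2r(I_*-\delta)^2}{\|I\|_{\sup}^2}\right)\right\}.
\]
Taking $(-N/r)$-th powers rearranges this into the announced inequality after identifying $C_r^{-r}=c_5\pi^r r^{r-1}/(\Gamma(1/r)^r e)$. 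I expect the only nontrivial step to be the careful bookkeeping of constants and the verification that the $y\le C_r^N\delta^{N/r}$ hypothesis holds throughout $[0,T]$; structurally the argument is a direct imitation of the proofs of Propositions~\ref{nodispersed} and~\ref{nodispersed-IS-qual}, so no new ideas are required.
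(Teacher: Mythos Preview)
Your proposal is correct and follows essentially the same approach as the paper's own proof: both use the divergence lower bound \eqref{div-IS} together with the containment $\Phi_t^{I,S}(U_\delta^{T,I,S})\subset U_\delta^{0,I,S}$, apply the layer-cake formula with the power-type concentration estimate from Lemma~\ref{lem:order-param-conc-IS} to obtain the autonomous differential inequality $\frac{dy}{dt}\gtrsim y^{1+r/N}$, and then solve with the terminal condition supplied by both bounds of Lemma~\ref{lem:order-param-conc-IS}. Your anticipated check that $y(t)\le C_r^N\delta^{N/r}$ throughout $[0,T]$ is exactly the ingredient the paper uses to evaluate the layer-cake integral in closed form.
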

\begin{proof}
As in the proof of Proposition \ref{nodispersed}, we begin by noting that
\[
U_{\varepsilon}^{0,I,S} =\{\Theta^0\in\bbt^N: R(\Theta^0)<\varepsilon\}, \quad \varepsilon>0,
\]
and estimate its measure to be the following, via Lemma \ref{lem:order-param-conc-IS}:
\[
m(U_\varepsilon^{0,I,S})\le \left(\frac{\Gamma(1/r)}{\pi r^{1-1/r}}\left(\frac{e \varepsilon}{c_5}\right)^{1/r}\right)^N.
\]
By definition, we have
\[
\Phi_t^{I,S}(U_{\delta}^{T,I,S})\subset U_{\delta}^{0,I,S},\quad \forall~ 0\le t\le T.
\]
By the fact that the divergence of a flow determines the rate of change of volume, we have for $0\le t\le T$
\begin{equation*}
\frac{d}{dt}m\left(\Phi_t^{I,S}\left(U_{\delta}^{T,I,S}\right)\right)
=\int_{\Phi_t^{I,S}\left(U_{\delta}^{T,I,S}\right)}(\nabla\cdot F^{I,S})(\Theta)dm(\Theta)
\stackrel{\eqref{div-IS}}{\ge}c_4\kappa N \int_{\Phi_t^{I,S}\left(U_{\delta}^{T,I,S}\right)}R(I_*-R) dm(\Theta).
\end{equation*}

Since $\Phi_t^{I,S}\left(U_{\delta}^{T,I,S}\right)\subset U^{0,I,S}_\delta$,
\begin{align*}
    \frac{d}{dt}m\left(\Phi_t^{I,S}\left(U_{\delta}^{T,I,S}\right)\right)
&\ge c_4\kappa N(I_*-\delta)\int_{\Phi_t^{I,S}\left(U_{\delta}^{T,I,S}\right)}R dm(\Theta),\\
    &=c_4\kappa N(I_*-\delta)\int_{\Phi_t^{I,S}\left(U_{\delta}^{T,I,S}\right)}\int_0^\delta \mathbbm{1}_{R\ge t} dtdm(\Theta)\\
    &=c_4\kappa N(I_*-\delta)\int_0^\delta m\left(\Phi_t^{I,S}\left(U_{\delta}^{T,I,S}\right)\setminus  U^{0,I,S}_t\right) dt\\
    &\ge c_4\kappa N(I_*-\delta)\int_0^\delta \max\left\{0,m\left(\Phi_t^{I,S}\left(U_{\delta}^{T,I,S}\right)\right)-m\left( U^{0,I,S}_t\right)\right\} dt\\
    &\ge c_4\kappa N(I_*-\delta)\int_0^\delta \max\left\{0,m\left(\Phi_t^{I,S}\left(U_{\delta}^{T,I,S}\right)\right)-\left(\frac{\Gamma(1/r)}{\pi r^{1-1/r}}\left(\frac{e t}{c_5}\right)^{1/r}\right)^N\right\} dt\\
    &=c_4\kappa N(I_*-\delta)\frac{\pi^r r^{r-1}c_5}{\Gamma(1/r)^r e}\frac{1}{1+r/N}m\left(\Phi_t^{I,S}\left(U_{\delta}^{T,I,S}\right)\right)^{1+\frac rN},
\end{align*}
where in the last equality we used $m\left(\Phi_t^{I,S}\left(U_{I_*-\delta}^{T,I,S}\right)\right)\le m\left( U^{0,I,S}_\delta\right)\le \left(\frac{\Gamma(1/r)}{\pi r^{1-1/r}}\left(\frac{e \delta}{c_5}\right)^{1/r}\right)^N$.

Solving this differential inequality along with the terminal condition
\[
m\left(\Phi_T^{I,S}\left(U_{\delta}^{T,I,S}\right)\right)\le m(U^{0,I,S}_{\delta})\stackrel{\mathrm{Lemma~}\ref{lem:order-param-conc-IS}}{\le} \min\left\{\left(\frac{\Gamma(1/r)}{\pi r^{1-1/r}}\left(\frac{e\delta}{c_5}\right)^{1/r}\right)^N,\exp\left(-\frac{2N(I_*-\delta)^2}{\|I\|^2_{\sup}}\right)\right\}
\]
gives the stated inequality.
\end{proof}

We are now ready to prove Theorem \ref{maincor-quant}.
\begin{proof}[Proof of Theorem \ref{maincor-quant}]
Under the condition
\begin{equation*}\tag{\ref{eq:kappa-large-maincor-quant}}
\kappa>\max\left\{\frac{4(4c_2)^{p/q}}{c_1c_3}\cdot\frac{\|\Omega\|_\infty}{{I_*}^{1+\frac pq}},\frac{4}{c_1c_3(\pi-\alpha_0)^p}\cdot \frac{\|\Omega\|_\infty}{{I_*}}\right\}.
\end{equation*}
we have from Theorem \ref{mainthm} that
\begin{align*}
&m\Big\{\{\theta_i^0\}_{i=1}^N\in [-\pi,\pi]^N:R(t)> \frac{c_3I_*}{4}~\forall t\ge T \mbox{ and}\\
&\qquad\qquad\qquad\qquad\qquad\qquad\sup_{t\ge T} \theta_i(t)-\inf_{t\ge T}\theta_i(t)<2\pi ~\forall i=1,\cdots,N\Big\}\\
&\ge m\Big\{\{\theta_i^0\}_{i=1}^N\in [-\pi,\pi]^N:~R(t)\ge I_*/2\mathrm{~for~some~}t\in [0,T]\Big\}\\
&= 1-m\left(U_{I_*/2}^{T,I,S}\right)\\
&\ge 1-\left(\exp\left(\frac{rI_*^2}{2\|I\|^2_{\sup}}\right)+\frac{c_4c_5\pi^r r^{r-1}}{\Gamma(1/r)^r(1+r/N)}\kappa N(I_*-\delta)T\right)^{-N/r}
\end{align*}
where in the last inequality we used Proposition \ref{prop:nodispersed-IS-quant} with $\delta = \frac{I_*}{2}$.

Under condition \eqref{eq:kappa-large-maincor-quant-2}, we have
\[
\kappa = \max\left\{\frac{2(2c_2)^{p/q}}{c_1c_3}\frac{\|\Omega\|_\infty}{\varepsilon_0^{1+p/q}},\frac{2}{c_1c_3(\pi-\alpha_0)^p}\frac{\|\Omega\|_\infty}{\varepsilon_0}\right\}
\]
for some $\varepsilon_0\in (0,I_*/2)$ such that $\frac{\Gamma(1/r)}{\pi r^{1-1/r}}\left(\frac{e\varepsilon_0}{c_5}\right)^{1/r}<\frac 1{2^{1/r}}$, and we have
\begin{align*}
&m\left\{\{\theta_i^0\}_{i=1}^N\in [-\pi,\pi]^N:\sup_{t\ge T} \theta_i(t)-\inf_{t\ge T}\theta_i(t)<2\pi ~\forall i=1,\cdots,N\right\}\\
&\stackrel{\mathclap{\mathrm{Theorem~} \ref{mainthm}}}{\ge }\quad  m\Big\{\{\theta_i^0\}_{i=1}^N\in [-\pi,\pi]^N:~R(t)\ge \varepsilon_0\mathrm{~for~some~}t\in [0,T]\Big\}\\
&= 1-m\left(U_{\varepsilon_0}^{T,I,S}\right)\\
&\stackrel{\mathclap{\mathrm{Proposition~}\ref{prop:nodispersed-IS-quant}}}{\ge}\quad  1-
\left(\frac{\Gamma(1/r)}{c_5^{1/r}\pi r^{1-1/r}}\right)^N
\left(e^{-1}\min\left\{\left(\frac{c_1c_3}{2(2c_2)^{p/q}}\right)^{\frac{1}{1+p/q}}\left(\frac{\kappa}{\|\Omega\|_\infty}\right)^{\frac{1}{1+p/q}},\frac{c_1c_3(\pi-\alpha_0)^p}{2}\frac{\kappa}{\|\Omega\|_\infty}\right\}+\frac{c_4\kappa N I_*}{2(1+r/N)}T\right)^{-N/r}.
\end{align*}
\end{proof}

\section{On the equilibria of the Winfree model}\label{sec:equilibria}
The main purpose of this section is to prove Theorem \ref{main_eq_thm}. We will develop a framework to analyze the equilibria of \eqref{Winfree}.

This is similar to a framework developed by Ha, Kim, and the author in \cite{ha2016finiteness} that is used to prove the finiteness of phase-locked states for the Kuramoto model; see Theorem \ref{GlobalFiniteness-Ku}.

\subsection{Description of equilibria of \eqref{Winfree}}\label{subsec:eq-descrip}
In the case $\omega_i=0$ for all $i=1,\cdots,N$, the ODE \eqref{Winfree_orderparam} becomes
\[
\dot{\theta}_i=-\kappa R(t)\sin\theta_i,
\]
and it is clear that the only equilibrium initial data are the ``bipolar states''
\[
\theta_i^0=0\mbox{ or }\pi\mod 2\pi,\quad i=1,\cdots,N.
\]
In the general case where $\omega_i\neq 0$ for some $i$, we can expect the equilibria to be perturbations of the equilibria for the case $\omega_i=0$.

Indeed, fix $\kappa\neq 0$ and $\{\omega_i\}_{i=1}^N$, where $\omega_i\neq 0$ for some $i$. If $\{\theta_i^0\}_{i=1}^N$ is an equilibrium of \eqref{Winfree}, then, by \eqref{order_parameter} and \eqref{Winfree_orderparam}, we must necessarily have
\[
R\coloneqq \frac 1N \sum_{j=1}^N (1+\cos\theta_j^0)=1+\frac 1N \sum_{j=1}^N \cos\theta_j^0
\]
and
\[
0=\omega_i-\kappa R \sin\theta_i^0,\quad i=1,\cdots,N.
\]
Clearly, we must have $R\ge \max_i \frac{|\omega_i|}{|\kappa|}$ (and \textit{a fortiori} $R> 0$). Now, for some choice $\sigma_i\in \{-1,1\}$ of signs,
\[
\sin \theta_i^0=\frac{\omega_i}{\kappa R},~ \cos \theta_i^0=\sigma_i\sqrt{1-\frac{\omega_i^2}{\kappa^2 R^2}},\quad i=1,\cdots,N,
\]
or equivalently
\begin{equation}\label{solution}
\theta_i^0=
\begin{cases}
\sin^{-1}\frac{\omega_i}{\kappa R}\mod 2\pi&\mathrm{ if }~\sigma_i=1,\\
\pi-\sin^{-1}\frac{\omega_i}{\kappa R}\mod 2\pi&\mathrm{ if }~\sigma_i=-1,\\
\end{cases}
\quad i=1,\cdots,N,
\end{equation}
 and
\begin{equation}\label{R-equation}
R=1+\frac 1N \sum_{j=1}^N \sigma_j\sqrt{1-\frac{\omega_j^2}{\kappa^2 R^2}}.
\end{equation}
Conversely, if \eqref{R-equation} holds with some $\sigma_i\in \{-1,1\}$, $i=1,\cdots,N,$ and $R>0$, then it is easy to see that $\{\theta_i^0\}_{i=1}^N$ given as \eqref{solution} is an equilibrium of \eqref{Winfree}.

We can summarize the discussion so far into the following proposition.
\begin{proposition}\label{basic_pls}
Fix $\kappa\neq 0$ and $\{\omega_i\}_{i=1}^N$, where $\omega_i\neq 0$ for some $i$. Given any $R\in [\max_i \frac{|\omega_i|}{|\kappa|},\infty)$ and $\{\sigma_i\}_{i=1}^N\subseteq \{-1,1\}$, the initial data \eqref{solution} is an equilibrium for \eqref{Winfree} if and only if \eqref{R-equation} holds. There are no equilibria of \eqref{Winfree} other than the ones just described.
\end{proposition}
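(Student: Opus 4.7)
The proof is essentially a direct translation of the computation already carried out in the paragraphs preceding the proposition statement, so the plan is simply to organize that computation into a clean ``only if'' and ``if'' argument.

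For the ``only if'' direction, the plan is to start from an equilibrium $\{\theta_i^0\}_{i=1}^N$, set $R\coloneqq\frac{1}{N}\sum_j(1+\cos\theta_j^0)$, and observe from \eqref{Winfree_orderparam} that $\dot\theta_i\equiv 0$ forces the scalar identity
\[
\omega_i=\kappa R\sin\theta_i^0, \qquad i=1,\ldots,N.
\]
From here I would read off three consequences in order: (i) since some $\omega_i\neq 0$, we must have $R\neq 0$, and combined with the fact that $R\ge 0$ by definition of the order parameter, this gives $R>0$; (ii) taking absolute values of the scalar identity and using $|\sin\theta_i^0|\le 1$ gives $R\ge|\omega_i|/|\kappa|$ for every $i$, hence $R\ge\max_i|\omega_i|/|\kappa|$; (iii) solving for $\sin\theta_i^0$ and using $\sin^2+\cos^2=1$ produces a sign ambiguity $\sigma_i\in\{-1,1\}$ with $\cos\theta_i^0=\sigma_i\sqrt{1-\omega_i^2/(\kappa R)^2}$, which after taking arcsines yields the formula \eqref{solution}. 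Plugging this expression for $\cos\theta_j^0$ back into the definition of $R$ gives exactly the consistency equation \eqref{R-equation}.

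For the ``if'' direction, I would proceed in reverse. Given $R\in[\max_i|\omega_i|/|\kappa|,\infty)$ and signs $\{\sigma_i\}\subset\{-1,1\}$ satisfying \eqref{R-equation}, define $\theta_i^0$ by \eqref{solution}. A direct trigonometric check shows $\sin\theta_i^0=\omega_i/(\kappa R)$ and $\cos\theta_i^0=\sigma_i\sqrt{1-\omega_i^2/(\kappa R)^2}$ in both branches of \eqref{solution}, so the average $\frac{1}{N}\sum_j(1+\cos\theta_j^0)$ coincides with the right-hand side of \eqref{R-equation}, which by assumption equals $R$; hence $R$ is the order parameter of this configuration. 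Then $\omega_i-\kappa R\sin\theta_i^0=0$ by construction, so each $\dot\theta_i=0$ and $\{\theta_i^0\}$ is indeed an equilibrium of \eqref{Winfree}.

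Finally, the ``no other equilibria'' clause is not a separate step but simply the observation that the ``only if'' direction exhausts all equilibria: any equilibrium must produce some pair $(R,\{\sigma_i\})$ satisfying \eqref{R-equation} via the procedure above, and \eqref{solution} is the unique reconstruction from that data modulo $2\pi$. I do not anticipate any real obstacle; the only mild subtlety is bookkeeping the modulo $2\pi$ ambiguity in the arcsine formula and keeping track of positivity of $R$, both of which are handled by the hypothesis that at least one $\omega_i$ is nonzero.
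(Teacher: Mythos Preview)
Your proposal is correct and follows exactly the same route as the paper: the proposition is stated as a summary of the computation immediately preceding it, and your write-up simply organizes that same computation into the ``only if'' and ``if'' directions, with the same use of \eqref{Winfree_orderparam} to get $\sin\theta_i^0=\omega_i/(\kappa R)$, the same extraction of $R>0$ from the nonvanishing of some $\omega_i$, and the same substitution back into the definition of $R$ to obtain \eqref{R-equation}. There is nothing to add.
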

Based on the characterization of equilibria provided by Proposition \ref{basic_pls}, one can easily construct equilibria with a given structure (namely, the signs $\sigma_i$'s) using the intermediate value theorem.
\begin{proposition}\label{ivt}
Fix $\kappa\neq 0$ and $\{\omega_i\}_{i=1}^N$. Let $\{\sigma_i\}_{i=1}^N\subseteq \{-1,1\}$, with not every $\sigma_i$ being $-1$, and denote
\[
\rho_0\coloneqq 1+\frac 1N\sum_{j=1}^N \sigma_j>0.
\]
If
\begin{equation}\label{onehalfexpo}
\max_i \frac{|\omega_i|}{|\kappa|}<\frac{\rho_0^{1.5}}{4},
\end{equation}
then there exists $R\in [\frac 12\rho_0,\frac 32 \rho_0]$ such that the initial data \eqref{solution} is an equilibrium to \eqref{Winfree}.
\end{proposition}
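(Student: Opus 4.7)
The strategy is to reduce to Proposition \ref{basic_pls}: I would like to find $R \in [\rho_0/2,\,3\rho_0/2]$ solving the self-consistency equation \eqref{R-equation} and then invoke that proposition to produce the equilibrium via \eqref{solution}. Define
\[
g(R) \coloneqq 1 + \frac{1}{N}\sum_{j=1}^N \sigma_j \sqrt{1 - \frac{\omega_j^2}{\kappa^2 R^2}},
\]
so that the task becomes finding a fixed point of $g$, equivalently a zero of $f(R)\coloneqq R-g(R)$, in that interval. Since $\rho_0\in(0,2]$ and $\rho_0^{1/2}<2$, the hypothesis \eqref{onehalfexpo} forces $\max_i|\omega_i|/|\kappa|<\rho_0/2$, so $g$ is well-defined and continuous on $[\rho_0/2,\,3\rho_0/2]$.

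The main estimate is that $g(R)$ stays close to $\rho_0$ on this interval. Writing
\[
g(R) - \rho_0 = -\frac{1}{N}\sum_{j=1}^N \sigma_j\left(1 - \sqrt{1 - \tfrac{\omega_j^2}{\kappa^2 R^2}}\right)
\]
and using the elementary estimate $0\le 1-\sqrt{1-x}\le x$ for $x\in[0,1]$, the triangle inequality bounds $|g(R)-\rho_0|$ by $\max_j\omega_j^2/(\kappa^2 R^2)$. The hypothesis then yields $|g(R)-\rho_0|<\rho_0^{3}/(16R^{2})$, which at $R\ge\rho_0/2$ is at most $\rho_0/4$. Thus $g$ sends $[\rho_0/2,\,3\rho_0/2]$ into $(3\rho_0/4,\,5\rho_0/4)$.

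The intermediate value theorem then closes the argument: $f(\rho_0/2)<\rho_0/2-3\rho_0/4<0$ and $f(3\rho_0/2)>3\rho_0/2-5\rho_0/4>0$, so some $R$ in the open interval satisfies $R=g(R)$, and because $R\ge \rho_0/2>\max_i|\omega_i|/|\kappa|$, Proposition \ref{basic_pls} applies and delivers the equilibrium \eqref{solution} (the degenerate case $\omega_i\equiv 0$, which falls outside the scope of Proposition \ref{basic_pls}, is handled directly: $R=\rho_0$ together with the bipolar state $\theta_i^0\in\{0,\pi\}$ selected by the signs $\sigma_i$ is trivially an equilibrium). I do not anticipate any substantive obstacle; the only subtlety is the exponent $3/2$ in \eqref{onehalfexpo}, which is calibrated precisely so that the factor $R^{-2}$ in the deviation bound, evaluated at the left endpoint $R=\rho_0/2$, leaves a residual $\rho_0/4$ that the symmetric $\rho_0/4$ buffer at each endpoint of $[\rho_0/2,3\rho_0/2]$ can absorb.
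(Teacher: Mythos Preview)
Your argument is correct and essentially identical to the paper's: both define the same function $f(R)=R-g(R)$, use the same bound $|g(R)-\rho_0|\le \rho_0^3/(16R^2)$ via $0\le 1-\sqrt{1-x}\le x$, and apply the intermediate value theorem at the endpoints of $[\rho_0/2,3\rho_0/2]$. Your explicit handling of the degenerate case $\omega_i\equiv 0$ (which Proposition~\ref{basic_pls} formally excludes) is a small bit of extra care not spelled out in the paper's proof.
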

\begin{proof}
By Proposition \ref{basic_pls}, it is enough to show that there exists a solution $R$ to \eqref{R-equation} in the interval $[\frac 12\rho_0,\frac 32\rho_0]$. Define the function $f:[\max_i \frac{|\omega_i|}{|\kappa|},\infty)\to\bbr$ by
\[
f(r)\coloneqq r-1-\frac 1N \sum_{j=1}^N \sigma_j\sqrt{1-\frac{\omega_j^2}{\kappa^2 r^2}}, \quad r\in [\max_i \frac{|\omega_i|}{|\kappa|},\infty).
\]
It is enough to see that $f$ has a zero in $[\frac 12\rho_0,\frac 32\rho_0]$. Using the fact that $|1-\sqrt{1-x}|\le x$ for $0\le x\le 1$, we have
\begin{equation}\label{eq:f-approx}
|f(r)-r+\rho_0|\le \frac 1N \sum_{j=1}^N \left|1-\sqrt{1-\frac{\omega_j^2}{\kappa^2 r^2}}\right|\le \frac 1N \sum_{j=1}^N \frac{\omega_j^2}{\kappa^2 r^2}\le \frac{\max_i\omega_i^2}{\kappa^2r^2}\stackrel{\eqref{onehalfexpo}}{\le} \frac{\rho_0^3}{16r^2}.
\end{equation}
Observing that $[\frac 12\rho_0,\frac 32\rho_0]\subset [\max_i \frac{|\omega_i|}{|\kappa|},\infty)$,
\[
f(\frac 12 \rho_0)\stackrel{\mathclap{\eqref{eq:f-approx}}}{\le} \frac 12 \rho_0-\rho_0+\frac{\rho_0^3}{4 \rho_0^2}=-\frac 14 \rho_0<0,
\]
and
\[
f(\frac 32 \rho_0)\stackrel{\mathclap{\eqref{eq:f-approx}}}{\ge} \frac 32 \rho_0-\rho_0-\frac{\rho_0^3}{36 \rho_0^2}\ge \frac {17}{36} \rho_0>0,
\]
we conclude by the intermediate value theorem that $f$ does indeed have a zero in $[\frac 12\rho_0,\frac 32\rho_0]$.
\end{proof}
We are now ready to prove our main result on the existence of certain equilibria, namely Theorem \ref{main_eq_thm}.
\begin{proof}[Proof of Theorem \ref{main_eq_thm}]
As $N\ge \frac 2\rho$, we may find $1\le m\le N$ such that
\[
\frac{2m}{N}\le \rho <\frac{2m+2}{N}.
\]
Choose $\sigma_1=\cdots=\sigma_m=1$ and $\sigma_{m+1}=\cdots=\sigma_N=-1$. Then, under the notation of Proposition \ref{ivt}, we have $\rho_0=\frac{2m}N$, so $\rho_0\le \rho<2\rho_0$. Now
\[
\max_i \frac{|\omega_i|}{|\kappa|}<\frac{\rho^{1.5}}{16}<\frac{\rho_0^{1.5}}{4},
\]
so by Proposition \ref{ivt} there exists $R\in [\frac 12 \rho_0,\frac 32 \rho_0]\subset [\frac 14 \rho,\frac 32\rho]$ such that the initial data \eqref{solution} is an equilibrium to \eqref{Winfree}.

Moreover, it is easy to check that
\[
\frac \rho 4<\frac mN\le \frac \rho 2,
\]
while
\[
\frac{2|\omega_i|}{3|\kappa| \rho}\le\frac{|\omega_i|}{|\kappa| R}\le |\theta_i^0|\stackrel{\eqref{solution}}{=}\left|\sin^{-1}\frac{\omega_i}{\kappa R}\right|\le\frac \pi 2  \frac{|\omega_i|}{|\kappa| R}\le \frac{2\pi |\omega_i|}{|\kappa| \rho},\quad i =1,\cdots,m,
\]
and
\[
\frac{2|\omega_i|}{3|\kappa| \rho}\le\frac{|\omega_i|}{|\kappa| R}\le |\pi-\theta_i^0|\stackrel{\eqref{solution}}{=}\left|\sin^{-1}\frac{\omega_i}{\kappa R}\right|\le\frac \pi 2  \frac{|\omega_i|}{|\kappa| R}\le \frac{2\pi |\omega_i|}{|\kappa| \rho},\quad i =m+1,\cdots,N.
\]

\end{proof}


\subsection{The critical coupling strength}

Based on the characterization \eqref{solution}-\eqref{R-equation} of equilibria by Proposition \ref{basic_pls}, we can compute the critical coupling strength for the Winfree model \eqref{Winfree}. Note that the analogous computation for the Kuramoto model \eqref{Ku} has been performed in \cite{verwoerd2009computing}.

\begin{proposition}\label{prop:crit_comp}
Given $\Omega=\{\omega_i\}_{i=1}^N\neq 0$, we have
\[
\kappa_{\mathrm{c}}(\Omega)\coloneqq \frac{Nu_*}{N+\sum_{j=1}^N\sqrt{1-\frac{\omega_j^2}{u_*^2}}},
\]
where $u_*\in [\|\Omega\|_\infty,\frac{2}{\sqrt{3}}\|\Omega\|\infty]$ is the solution to
\[
N+2 \sum_{j=1}^N \sqrt{{1-\frac{\omega_j^2}{u_*^2}}}=\sum_{j=1}^N \frac{1}{\sqrt{1-\frac{\omega_j^2}{u_*^2}}}.
\]
More precisely, given $\Omega$ and $\kappa>0$, \eqref{Winfree} admits an equilibrium initial data $\Theta^0$ if and only if $\kappa\ge \kappa_{\mathrm{c}}(\Omega)$. Also,
\[
\frac{2N\|\Omega\|_\infty}{4N-1}\le\frac{16N\|\Omega\|_\infty}{(6N-3+\sqrt{4N^2-4N+9})\sqrt{5-2N+\sqrt{4N^2-4N+9}}\sqrt{3+2N-\sqrt{4N^2-4N+9}}} \le\kappa_{\mathrm{c}}(\Omega)\le \frac{4}{3\sqrt{3}}\|\Omega\|_\infty,
\]
the equality case for the second inequality being when $\omega_i=0$ for all but at most one $i$, and the equality case for the third inequality being when $|\omega_i|=\|\Omega\|_\infty$ for all $i$.
\end{proposition}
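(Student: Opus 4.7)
My starting point is the characterization from Proposition \ref{basic_pls}: for coupling $\kappa > 0$, an equilibrium exists iff there is $u = \kappa R \ge \|\Omega\|_\infty$ and a sign vector $\sigma \in \{\pm 1\}^N$ satisfying $\kappa = h_\sigma(u)$, where
\[
h_\sigma(u) \coloneqq \frac{u}{1 + \frac{1}{N}\sum_{j=1}^N \sigma_j\, a_j(u)},\qquad a_j(u) \coloneqq \sqrt{1-\omega_j^2/u^2}.
\]
For any fixed $u$, $h_\sigma(u)$ is minimized by $\sigma = (+1,\ldots,+1)$ since this maximizes the denominator. Together with the intermediate value theorem applied to $h_+$ (continuous on $[\|\Omega\|_\infty,\infty)$ with $h_+(u)\to\infty$ as $u\to\infty$), this shows the set of admissible $\kappa$ is $[\min h_+,\infty)$ and so $\kappa_{\mathrm c}(\Omega) = \min_{u\ge\|\Omega\|_\infty} h_+(u)$.

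Next, I compute $h_+'(u)=0$. Writing $D(u) = 1 + \frac{1}{N}\sum_j a_j(u)$, the condition $D(u_*) = u_* D'(u_*)$ expands using $\frac{\omega_j^2/u^2}{a_j} = \frac{1}{a_j}-a_j$ into exactly the stated equation $N + 2\sum_j a_j(u_*) = \sum_j 1/a_j(u_*)$. Define $F(u) \coloneqq \sum_j 1/a_j(u) - N - 2\sum_j a_j(u)$. Since $1/a_j$ is strictly decreasing and $a_j$ is strictly increasing in $u$, $F$ is strictly decreasing; and $F(u) \to +\infty$ as $u \downarrow \|\Omega\|_\infty$ while $F(u) \to -2N$ as $u \to \infty$. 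Hence there is a unique $u_*\in(\|\Omega\|_\infty,\infty)$ with $F(u_*) = 0$, and this is the global minimum of $h_+$, yielding $\kappa_{\mathrm c}(\Omega) = \frac{Nu_*}{N+\sum_j a_j(u_*)}$.

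For the upper bound and the range of $u_*$, I consider $g(x) \coloneqq \frac{1}{\sqrt{1-x}} - 1 - 2\sqrt{1-x}$ on $[0,1)$. Computing $g'(x) = \frac{3-2x}{2(1-x)^{3/2}} > 0$ shows $g$ is strictly increasing, and a direct check gives $g(3/4)=0$. At $u = \tfrac{2}{\sqrt 3}\|\Omega\|_\infty$, each $x_j = \omega_j^2/u^2 \le 3/4$, so $F(\tfrac{2}{\sqrt 3}\|\Omega\|_\infty) = \sum_j g(x_j) \le 0$. Monotonicity of $F$ then forces $u_* \le \tfrac{2}{\sqrt 3}\|\Omega\|_\infty$, with equality iff $|\omega_j|=\|\Omega\|_\infty$ for every $j$; in that equality case $a_j \equiv 1/2$ gives $\kappa_{\mathrm c} = \tfrac{4}{3\sqrt 3}\|\Omega\|_\infty$. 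To upgrade this to a bound for general $\Omega$, I use a monotonicity argument: by the envelope theorem applied at the critical point $u_*$,
\[
\frac{\partial \kappa_{\mathrm c}}{\partial |\omega_j|} = \frac{\partial h_+}{\partial |\omega_j|}\bigg|_{u=u_*} = \frac{|\omega_j|}{N a_j(u_*) u_* D(u_*)^2} \ge 0,
\]
with strict inequality for $|\omega_j|>0$. Thus $\kappa_{\mathrm c}$ is non-decreasing in each $|\omega_j|$, and is maximized subject to $\|\Omega\|_\infty$ fixed when every $|\omega_j| = \|\Omega\|_\infty$, giving the upper bound $\tfrac{4}{3\sqrt 3}\|\Omega\|_\infty$.

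For the lower bound, the same monotonicity shows $\kappa_{\mathrm c}$ is minimized (with $\|\Omega\|_\infty = M$ fixed) by taking a single $|\omega_{i_0}|=M$ and the remaining $\omega_j = 0$. In this case $a_j(u_*) = 1$ for $j \ne i_0$ and $s \coloneqq a_{i_0}(u_*) = \sqrt{1-M^2/u_*^2}$ satisfies $2s^2 + (2N-1)s - 1 = 0$, yielding $s_* = \tfrac{-(2N-1)+\sqrt{4N^2-4N+9}}{4}$. Substituting into $\kappa_{\mathrm c} = \frac{Nu_*}{2N-1+s_*}$ and using $u_* = M/\sqrt{1-s_*^2}$ with the factorizations $2N-1+s_* = \tfrac{6N-3+\sqrt{4N^2-4N+9}}{4}$, $1-s_* = \tfrac{3+2N-\sqrt{4N^2-4N+9}}{4}$, $1+s_* = \tfrac{5-2N+\sqrt{4N^2-4N+9}}{4}$ gives the stated middle expression. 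The cleaner bound $\frac{2N}{4N-1}\|\Omega\|_\infty$ then follows from an algebraic inequality equivalent, after substituting $q = \sqrt{4N^2-4N+9}$ and using $q^2 - (2N-1)^2 = 8$, to a polynomial inequality in $N$ which reduces to showing $64(4N-1)^2 \ge (6N-3+q)^2\bigl[16-(q-(2N-1))^2\bigr]$. The main obstacle is this final algebraic simplification; it can be handled by writing $(q-(2N-1))^2 = 64/(q+(2N-1))^2$ so the right side becomes a rational function in $q$, after which squaring and cross-multiplying reduces the claim to a polynomial identity verifiable in closed form.
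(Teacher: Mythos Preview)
Your argument is correct through the derivation of the formula for $\kappa_{\mathrm c}$, the location and uniqueness of $u_*$, and the monotonicity of $\kappa_{\mathrm c}$ in each $|\omega_j|$. The overall strategy coincides with the paper's, but two steps are handled differently, and one of yours is left incomplete.

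\textbf{Monotonicity in $|\omega_j|$.} You invoke the envelope theorem at the interior critical point $u_*$. This is valid (you have checked $u_*>\|\Omega\|_\infty$ so that $a_j(u_*)>0$ and $F'(u_*)\ne 0$), but the paper proceeds more directly: it works with $1/\kappa_{\mathrm c}=\max_u D(u)/u$ and simply observes that $D(u)/u$ is \emph{pointwise} nonincreasing in each $|\omega_j|$, so the maximum is too. No regularity of $u_*$ as a function of the parameters is needed.

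\textbf{The bound $\kappa_{\mathrm c}\ge \frac{2N}{4N-1}\|\Omega\|_\infty$.} Here your approach diverges from the paper and is not finished. You set out to verify the algebraic inequality between the explicit ``one nonzero frequency'' value and $\frac{2N}{4N-1}\|\Omega\|_\infty$ by squaring and chasing a polynomial identity in $q=\sqrt{4N^2-4N+9}$; you acknowledge this as ``the main obstacle'' and only sketch a plan. The paper sidesteps the algebra entirely: with $M=\|\Omega\|_\infty$,
\[
\frac{1}{\kappa_{\mathrm c}}\;\le\;\max_{u\ge M}\frac{1+\tfrac{N-1}{N}+\tfrac1N\sqrt{1-M^2/u^2}}{u}
\;\le\;\max_{u\ge M}\frac{2N-1}{Nu}\;+\;\frac{1}{NM}\max_{0<x\le 1}\sqrt{x(1-x)}
\;=\;\frac{2N-1}{NM}+\frac{1}{2NM}\;=\;\frac{4N-1}{2NM},
\]
where the first inequality is your monotonicity step (reduce to a single nonzero frequency) and the second just splits the maximand into two terms, each maximized separately over $u\ge M$ (the first at $u=M$, the second at $M^2/u^2=1/2$). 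This two–line split is what your proposal is missing; with it, the last algebraic verification you outline becomes unnecessary.
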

\begin{proof}
We first observe that \eqref{Winfree} has an equilibrium if and only if it has an equilibrium of the form \eqref{solution}-\eqref{R-equation} with $\sigma_i=1$ for all $i$. The if direction is obvious; to see the only if direction, assume \eqref{Winfree} has an equilibrium, which by Proposition \ref{basic_pls} must be of the form \eqref{solution}-\eqref{R-equation} for some choice of $R=R_1\le 2$ and $\sigma_i$'s. It follows that
\[
1+\frac 1N\sum_{j=1}^N\sqrt{1-\frac{\omega_j^2}{\kappa^2R_1^2}}-R_1\ge 1+\frac 1N\sum_{j=1}^N\sigma_i\sqrt{1-\frac{\omega_j^2}{\kappa^2R_1^2}}-R_1=0.
\]
As $1+\frac 1N\sum_{j=1}^N\sqrt{1-\frac{\omega_j^2}{\kappa^2\cdot 2^2}}-2\le 0$, it follows from the intermediate value theorem that there is $R_2\in [R_1,2]$ such that \eqref{R-equation} holds for $R=R_2$ and $\sigma_i=1$ for all $i$. This completes the proof of our observation.

By the transformation $u=\kappa R$, we see that \eqref{Winfree} has an equilibrium if and only if
\begin{equation*}
\frac 1\kappa=\frac{1+\frac 1N \sum_j\sqrt{1-\omega_j^2/u^2}}{u}\quad \mathrm{for~some~}u\in [\|\Omega\|_\infty,\infty).
\end{equation*}
The right-hand side converges to $0$ as $u\to\infty$ and is a unimodal function of $u$, increasing on $[\|\Omega\|_\infty,u_*]$ and decreasing on $[u_*,\infty)$, because it has derivative
\[
\frac{1}{u^2}\left(-1-\frac 2N\sum_{j=1}^N \sqrt{{1-\frac{\omega_j^2}{u^2}}}+ \frac 1N \sum_{j=1}^N \frac{1}{\sqrt{1-\frac{\omega_j^2}{u^2}}}\right),
\]
the term in parentheses being a decreasing function of $u$ and taking the value $\infty$ at $u=\|\Omega\|_\infty$ and taking a value $\le 0$ at $u=\frac{2}{\sqrt{3}}\|\Omega\|_\infty$. This completes the proof that \eqref{Winfree} has an equilibrium if and only if $\kappa\ge \kappa_{\mathrm{c}}(\Omega)$.

We have
\[
\frac 1{\kappa_{\mathrm{c}}(\Omega)}=\max_{u\in [\|\Omega\|_\infty,\infty)}\frac{1+\frac 1N \sum_j\sqrt{1-\omega_j^2/u^2}}{u}.
\]
Given $\|\Omega\|_\infty$, the function to be maximized is pointwise minimized when $|\omega_j|=\|\Omega\|_\infty$ for all $i$ and is maximized when $\omega_i=0$ for all but one $i$. This gives our equality cases. It is easy to compute that in these cases $u_*=\frac{2}{\sqrt{3}}\|\Omega\|_\infty$ and
\[
u_*=4\|\Omega\|_\infty/\sqrt{5-2N+\sqrt{4N^2-4N+9}}\sqrt{3+2N+\sqrt{4N^2-4N+9}},
\]
respectively, and that $\kappa_{\mathrm{c}}(\Omega)$ computes to the above values. The first inequality follows from
\begin{align*}
\frac 1{\kappa_{\mathrm{c}}(\Omega)}&=\max_{u\in [\|\Omega\|_\infty,\infty)}\frac{1+\frac 1N \sum_j\sqrt{1-\omega_j^2/u^2}}{u}\le \max_{u\in [\|\Omega\|_\infty,\infty)}\frac{1+\frac {N-1}N +\frac 1N\sqrt{1-\|\Omega\|_\infty^2/u^2}}{u}\\
&\le \max_{u\in [\|\Omega\|_\infty,\infty)}\frac {2N-1}{Nu}+\max_{u\in [\|\Omega\|_\infty,\infty)}\frac 1{N\|\Omega\|_\infty}\sqrt{\frac{\|\Omega_\infty^2}{u^2}\left(1-\frac{\|\Omega\|_\infty^2}{u^2}\right)}\\
&=\frac{4N-1}{2N\|\Omega\|_\infty}.
\end{align*}

\end{proof}

\subsection{A polynomial description for the equilibria} 

In this subsection, we prove Theorem \ref{GlobalFiniteness}. We provide a method of understanding the behavior of equilibria of the sinusoidal Winfree model \eqref{Winfree} in terms of a single univariate polynomial, which we will denote by $W(\cdot)$. We have seen in subsection \ref{subsec:eq-descrip} that the zero of an algebraic function serves as the order parameter of an equilibrium. If we multiply these functions over all the different signatures $\sigma_i$, this will happen to give us a rational function, with the property that its roots over $[\frac{\|\Omega\|_\infty}{|\kappa|},\infty)$ are precisely the order parameters of the possible equilibria; we will let $W(\cdot)$ be the numerator of this rational function. With some careful analysis, it will turn out that the number of possible different configurations corresponding to a root of $W(\cdot)$ is bounded by the order of the zero with respect to $W(\cdot)$, and by bounding the degree of $W(\cdot)$ we have the bound stated in Theorem \ref{GlobalFiniteness} on the total number of equilibria.

We remark that this polynomial description was first developed by the Ha, Kim, and the author in \cite{ha2016finiteness}, where we proved the following theorem for the Kuramoto model:
\begin{theorem}[{\cite[Theorem 6.1]{ha2016finiteness}}]\label{GlobalFiniteness-Ku}
For fixed data $(N,\{\Omega_i\}_{i=1}^N)$ with $\kappa>0$, there are at most $2^N$ phase-locked states (i.e., relative equilibria) of the  Kuramoto model \eqref{Ku} with positive order parameter, up to modulo $2\pi$ shifts and $\mathrm{U}(1)$ symmetry.
\end{theorem}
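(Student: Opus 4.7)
By Proposition \ref{basic_pls}, each equilibrium of \eqref{Winfree} (modulo $2\pi$) corresponds to a pair $(\sigma,R)$ with $\sigma=(\sigma_1,\ldots,\sigma_N)\in\{\pm 1\}^N$ and $R\ge \|\Omega\|_\infty/|\kappa|$ satisfying
\[
F_\sigma(R) \;:=\; R-1-\frac{1}{N}\sum_{j=1}^N \sigma_j \sqrt{1-\frac{\omega_j^2}{\kappa^2 R^2}} \;=\; 0,
\]
and the only loss of injectivity of this correspondence occurs at boundary values $R=|\omega_j|/|\kappa|$, where $\cos\theta_j=0$ regardless of $\sigma_j$. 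Hence it suffices to bound the number of such pairs $(\sigma,R)$. The degenerate case $\Omega=0$ can be enumerated directly (giving $2^N\le 2^{N+1}$ equilibria from the bipolar configurations $\theta_j\in\{0,\pi\}$), so I would assume throughout that $\|\Omega\|_\infty>0$.

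To bound the pairs, I would introduce $u_j(R):=\sqrt{\kappa^2 R^2-\omega_j^2}$, which satisfies the polynomial identity $u_j^2=\kappa^2R^2-\omega_j^2$, and rewrite $\kappa NR\cdot F_\sigma(R)=\kappa NR(R-1)-\sum_j\sigma_j u_j$. I then define
\[
W(R)\;:=\;\prod_{\sigma\in\{\pm 1\}^N} \Bigl(\kappa NR(R-1)-\sum_{j=1}^N \sigma_j u_j(R)\Bigr).
\]
Each involution $u_j\mapsto -u_j$ permutes the $2^N$ factors, so $W(R)$ lies in $\mathbb{R}[R,u_1^2,\ldots,u_N^2]$; substituting $u_j^2=\kappa^2R^2-\omega_j^2$ makes $W(R)$ a genuine polynomial in $R$. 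To bound $\deg W$, I write a generic monomial of the unreduced product as $A^\beta\prod_j u_j^{2\gamma_j}$ with $A:=\kappa NR(R-1)$ and $\beta+2\sum_j\gamma_j\le 2^N$ (by a total-degree count over the variables $A,u_1,\ldots,u_N$); since $A$ and $u_j^2$ are each polynomials of $R$-degree $2$, the $R$-degree of this monomial is $2\beta+2\sum_j\gamma_j=\beta+(\beta+2\sum_j\gamma_j)\le\beta+2^N\le 2^{N+1}$, so $\deg W\le 2^{N+1}$.

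The final step is a multiplicity count. At any $R_0>\|\Omega\|_\infty/|\kappa|$, each of the $2^N$ factors of $W$ is real-analytic in $R$ near $R_0$; moreover, for such $R_0$ we have $u_j(R_0)>0$ for every $j$, so the relation $\cos\theta_j=\sigma_j u_j(R_0)/(\kappa R_0)$ uniquely recovers $\sigma$ from the equilibrium and distinct pairs $(\sigma,R_0)$ yield distinct equilibria. If $k$ of the factors vanish at such $R_0$, then $W$ vanishes there with multiplicity at least $k$. Summing over all interior $R_0$ gives that the number of equilibria with $R>\|\Omega\|_\infty/|\kappa|$ is at most $\sum_{R_0}\mathrm{mult}_{R_0} W\le \deg W\le 2^{N+1}$.

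The main obstacle I anticipate is the boundary root $R_0=\|\Omega\|_\infty/|\kappa|$, where $u_j$ has a square-root branch point at every index $j\in J:=\{j:|\omega_j|=\|\Omega\|_\infty\}$, so the individual factors cease to be real-analytic in $R$ and the multiplicity argument above does not apply directly. The clean resolution is to observe that flipping $\sigma_j$ for $j\in J$ changes neither $F_\sigma$ nor the equilibrium (both choices force $\sin\theta_j=\omega_j/|\omega_j|$ with $\cos\theta_j=0$), so at the boundary at most $2^{N-|J|}\le 2^{N-1}$ distinct equilibria can arise, leaving ample room inside the bound $2^{N+1}$; alternatively, one may perturb $\{\omega_i\}$ infinitesimally to remove the boundary coincidences, apply the interior bound, and pass to the limit by a standard upper semicontinuity argument for equilibrium counts under continuous deformation of $\Omega$.
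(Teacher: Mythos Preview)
Your proof addresses the wrong statement. Theorem~\ref{GlobalFiniteness-Ku} concerns the \emph{Kuramoto} model~\eqref{Ku} and asserts a bound of $2^N$ on relative equilibria (up to the $\mathrm{U}(1)$ symmetry); you instead invoke Proposition~\ref{basic_pls}, which characterizes equilibria of the \emph{Winfree} model~\eqref{Winfree}, and you arrive at the bound $2^{N+1}$. The paper does not actually prove Theorem~\ref{GlobalFiniteness-Ku}---it is quoted from~\cite{ha2016finiteness}---but it does prove the analogous Theorem~\ref{GlobalFiniteness} for the Winfree model, and what you have written is essentially \emph{that} proof, with minor cosmetic differences (you normalize each factor by $\kappa NR$ rather than multiplying the product by $r^{2^N}$, and you bound the degree by a direct monomial count rather than by invoking Lemma~\ref{L6.1}).

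Even as a proof of Theorem~\ref{GlobalFiniteness}, your boundary treatment has a gap. You bound the interior equilibria by $\deg W\le 2^{N+1}$ and then \emph{separately} bound the boundary equilibria at $R_0=\|\Omega\|_\infty/|\kappa|$ by $2^{N-|J|}$, asserting this ``leaves ample room.'' It does not: the interior count could already exhaust $2^{N+1}$, so adding any boundary contribution would exceed the bound. The paper's Proposition~\ref{correspondence} avoids this by showing that boundary equilibria \emph{also} contribute to the multiplicity of $W$ at $R_0$: it groups the $2^{|J|}$ sign choices over $J$ into a single factor $Q(r,[\Sigma])=\prod_{T\in[\Sigma]}P(r,T)$ and applies Lemma~\ref{L6.1} to the $J$-coordinates, so that $Q$ is differentiable at $R_0$ despite the square-root singularities of the individual $P$'s. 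Each boundary equilibrium class then contributes at least one to $\operatorname{mult}_{R_0}W$, and the total over \emph{all} $R_0\in[\|\Omega\|_\infty/|\kappa|,2]$ is bounded by $\deg W$. Your perturbation alternative is not unreasonable, but the claimed upper semicontinuity of the equilibrium count is not automatic (equilibria can coalesce or vanish under perturbation of $\Omega$) and would itself require justification.
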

The proof method of Theorem \ref{GlobalFiniteness-Ku} presented in \cite{ha2016finiteness} is similar to the proof method of Theorem \ref{GlobalFiniteness} that we present here. We also remark that this proof works only when the Winfree model \eqref{Winfree} has the given specific sinusoidal interaction functions. It may also work (with worse results) when the interaction functions are trigonometric polynomials, or more generally when $I$ and $S$ are algebraically dependent analytic functions. However, it is likely that Theorem \ref{GlobalFiniteness} will fail when we require less than analytic regularity on the interaction functions, say if we only require them to belong in the class $C^2$.

It is trivial that Theorem \ref{GlobalFiniteness} holds when $\Omega=0$ (recall that in the beginning of subsection \ref{subsec:eq-descrip} we fully classified the equilibria into the bipolar states), so we will assume that $\omega_i\neq 0$ for some $i$.

To prove Theorem \ref{GlobalFiniteness}, let us begin by observing the following algebraic result, whose proof we omit.

\begin{lemma}\label{L6.1}
Let $x_1,\ldots,x_{N+1}$ be real variables. Then, the polynomial $S$ defined by
\[
S(x_1,\ldots,x_{N+1})\coloneqq \prod_{(\sigma_1,\ldots,\sigma_N)\in\{-1,1\}^N}\left(\sum_{k=1}^{N}\sigma_{j}x_j+x_{N+1}\right)
\]
is a homogeneous polynomial of degree $2^{N-1}$ in $x_1^2,\ldots,x_{N+1}^2$.
\end{lemma}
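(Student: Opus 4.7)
The plan is to establish the lemma by proving two separate claims: (i) $S$ is homogeneous of total degree $2^N$ in $(x_1,\ldots,x_{N+1})$, and (ii) $S$ is invariant under the sign-flip $x_i\mapsto -x_i$ for each $i=1,\ldots,N+1$ individually. Combined, these yield that $S$ is a homogeneous polynomial of degree $2^{N-1}$ in the squared variables $x_1^2,\ldots,x_{N+1}^2$.

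Claim (i) is immediate: $S$ is a product of $2^N$ linear forms, each homogeneous of degree $1$ in $(x_1,\ldots,x_{N+1})$, so $S$ is homogeneous of total degree $2^N$.

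For claim (ii), I would treat the two cases separately. For $i\in\{1,\ldots,N\}$, the substitution $x_i\mapsto -x_i$ in each factor $\sum_j\sigma_j x_j + x_{N+1}$ is equivalent to flipping the sign of $\sigma_i$ in that factor. Since the product ranges over all $(\sigma_1,\ldots,\sigma_N)\in\{-1,1\}^N$, the involution $\sigma_i\mapsto -\sigma_i$ merely permutes the index set of the product, leaving $S$ unchanged. For $i=N+1$, the substitution $x_{N+1}\mapsto -x_{N+1}$ gives
\[
\prod_{\sigma}\Bigl(\sum_j \sigma_j x_j - x_{N+1}\Bigr) \;=\; (-1)^{2^N}\prod_\sigma \Bigl(-\sum_j\sigma_j x_j + x_{N+1}\Bigr),
\]
and reindexing via $\sigma\mapsto -\sigma$ (again a bijection of $\{-1,1\}^N$) converts the right-hand side into $(-1)^{2^N} S = S$, using $N\ge 1$. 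Thus $S$ is invariant under each coordinate sign flip.

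Since $S$ is invariant under the group $\{\pm 1\}^{N+1}$ acting by coordinatewise sign changes, every monomial appearing in $S$ must have even exponent in each $x_i$. Hence $S$ can be written as a polynomial in $x_1^2,\ldots,x_{N+1}^2$, and the homogeneity from (i) forces its total degree in these squared variables to be exactly $2^{N-1}$. No step here is a serious obstacle; the only thing to check carefully is the sign count $(-1)^{2^N}=+1$ for $N\ge 1$ in the $x_{N+1}$ case, which is what makes the argument go through.
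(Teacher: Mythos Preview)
Your proof is correct and complete. The paper omits the proof of this lemma entirely, so there is no approach to compare against; the symmetry argument you give---invariance of $S$ under each coordinate sign flip forces every monomial to have even exponent in each variable, and the total degree $2^N$ then becomes degree $2^{N-1}$ in the squared variables---is the standard way to see this.
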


For $\Sigma=(\sigma_1,\ldots,\sigma_N)\in\{-1,1\}^N$, we introduce the auxiliary function
\[
P(r,\Sigma) \coloneqq 1+\frac 1N\sum_{j=1}^{N} \sigma_{j}\sqrt{1-\left(\frac{\omega_j}{\kappa r}\right)^2}-r,\quad r\in \mathbb{R}.
\]
This is the function from \eqref{R-equation} whose zeros were the order parameters of equilibria. We denote their normalized product
\[
W(r) \coloneqq r^{2^N}\prod_{\Sigma \in\{-1,1\}^N}P(r,\Sigma).
\]
Then, using Lemma \ref{L6.1} with
\[
x_j=\frac 1N \sqrt{1-\left(\frac{\omega_j}{\kappa r}\right)^2},~j=1,\cdots,N,\quad x_{N+1}=1-r,
\]
we see that $\prod_{\Sigma \in\{-1,1\}^N}P(r,\Sigma)$ is a polynomial of homogeneous degree $2^{N-1}$ in $\frac 1{N^2} \left(1-\left(\frac{\omega_j}{\kappa r}\right)^2\right)$, $j=1,\cdots, N$, and $(1-r)^2$, so $W(r)$ is a polynomial of degree at most $2^{N+1}$ in $r$. It is not hard to see that $W(\cdot)$ has degree exactly $2^{N+1}$.

Note that for any $r \in (0, 2]$, 
\begin{align*}
\begin{aligned}
& \text{$r$~~is the order parameter of some equilibrium $\Theta$} \\
& \hspace{1cm} \Longleftrightarrow \quad  P(r, \Sigma)=0~\text{for some}~\Sigma \in\{-1,1\}^N \quad \Longleftrightarrow \quad  W(r)=0.
\end{aligned}
\end{align*}
In fact, this correspondence can be made more precise, i.e., the number of distinct equilibria with order parameter $r$ is bounded above by the multiplicity of $r$ as a zero of $W$.

\begin{proposition}\label{correspondence}
Suppose that $R \in \Big [ \frac{ \|\Omega\|_\infty}{|\kappa|},2 \Big]$ is the order parameter of $m$ distinct (modulo $2\pi$) equilibria of the system \eqref{Winfree}. Then $R$ is a zero of $W$ of multiplicity at least $m$.
\end{proposition}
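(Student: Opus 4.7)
The plan is to exploit the characterization of equilibria from Proposition \ref{basic_pls}: any equilibrium with order parameter $R$ arises via formula \eqref{solution} from some sign vector $\Sigma \in \{-1,1\}^N$, which must then satisfy $P(R,\Sigma)=0$. To conclude that $R$ is a zero of $W$ of multiplicity at least $m$, I will show that the $m$ distinct equilibria force the analytic factors $P(r,\Sigma)$ entering the product $W(r)=r^{2^N}\prod_\Sigma P(r,\Sigma)$ to contribute zeros at $r=R$ whose multiplicities sum to at least $m$; since $R\neq 0$, the claim for $W$ follows.

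In the strict interior case $R > \|\Omega\|_\infty/|\kappa|$, every $\sqrt{1-(\omega_j/\kappa r)^2}$ is real-analytic and nonvanishing near $r=R$, so each $P(\cdot,\Sigma)$ is analytic near $R$. Moreover, because $|\omega_j|/\kappa R<1$ for all $j$, the two angles $\sin^{-1}(\omega_j/\kappa R)$ and $\pi-\sin^{-1}(\omega_j/\kappa R)$ are distinct modulo $2\pi$, so distinct sign vectors yield distinct equilibria via \eqref{solution}. Thus the $m$ equilibria correspond to $m$ distinct sign vectors $\Sigma_1,\dots,\Sigma_m$ with $P(R,\Sigma_k)=0$, and $(r-R)^m$ divides $\prod_\Sigma P(r,\Sigma)$ in the ring of analytic germs at $R$. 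Since this germ agrees with the polynomial $W(r)/r^{2^N}$ in a neighborhood of $R$, the same divisibility holds for $W$.

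The boundary case $R=\|\Omega\|_\infty/|\kappa|$ is more delicate, because $\sqrt{1-(\omega_j/\kappa r)^2}$ has a branch point at $r=R$ for every $j\in\mathcal{J}\coloneqq\{j:|\omega_j|=\|\Omega\|_\infty\}$, and moreover $\theta_j^{0}=\pm\pi/2$ regardless of $\sigma_j$ for such $j$, so each equilibrium is represented by $2^{|\mathcal{J}|}$ sign vectors differing only on $\mathcal{J}$. Setting $s=r-R$, I would write $\sqrt{1-(\omega_j/\kappa r)^2}=\sqrt{s}\,A_j(s)$ for $j\in\mathcal{J}$, where $A_j$ is analytic and nonzero at $s=0$; then, letting $\tau$ denote the restriction of $\Sigma$ to $\{1,\dots,N\}\setminus\mathcal{J}$ and $\sigma$ its $\mathcal{J}$-restriction, one has $P(r,\Sigma)=F(s,\tau)+\sqrt{s}\,G_\sigma(s)$ with $F$ and $G_\sigma$ analytic in $s$. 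Pairing $\sigma$ with $-\sigma$ yields
\[
H(s,\tau) \coloneqq \prod_{\Sigma \text{ extending }\tau}P(r,\Sigma) = \prod_{[\sigma]}\bigl(F(s,\tau)^{2} - s\,G_\sigma(s)^{2}\bigr),
\]
which is genuinely analytic in $s$. When $F(0,\tau)=P(R,\Sigma)=0$, each paired factor equals $-s\,G_\sigma(0)^{2}+O(s^{2})$, contributing a zero of order at least $1$, so $H(\cdot,\tau)$ has a zero of order at least $2^{|\mathcal{J}|-1}$ at $s=0$. The $m$ distinct equilibria yield $m$ distinct values of $\tau$ with $F(0,\tau)=0$, whence $\prod_\tau H(s,\tau)$ has a zero of order at least $m\cdot 2^{|\mathcal{J}|-1}\ge m$ at $s=0$, and the claim follows for $W$.

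The main obstacle is the bookkeeping in the boundary case: verifying that the $\sqrt{s}$ singularities cancel pairwise to yield analytic $H(\cdot,\tau)$, and that the multiplicity accounting correctly absorbs the $2^{|\mathcal{J}|}$-fold overcounting of sign vectors per equilibrium. A cleaner alternative would be to perturb $R$ slightly inward and take a limit using only the interior case, but the local Puiseux-type expansion above gives a direct and self-contained proof.
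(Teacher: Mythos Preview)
Your proof is correct and follows the same two-case structure as the paper. In the interior case $R>\|\Omega\|_\infty/|\kappa|$ the arguments are identical. In the boundary case, the paper groups sign vectors into classes $[\Sigma]$ exactly as you do (your $\tau$ is the paper's class representative), but establishes analyticity of the class product $Q(r,[\Sigma])=H(s,\tau)$ by invoking Lemma~\ref{L6.1} to recognize it as a polynomial in the squares $1-(\omega_j/\kappa r)^2$ (for $j\in\mathcal{J}$) and $F(s,\tau)^2$; you instead pair $\sigma$ with $-\sigma$ and use the difference-of-squares identity $(F+\sqrt{s}\,G_\sigma)(F-\sqrt{s}\,G_\sigma)=F^2-sG_\sigma^2$ to see analyticity factor by factor. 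Your route is slightly more elementary and, as you observe, yields the sharper multiplicity bound $m\cdot 2^{|\mathcal{J}|-1}$ (the paper records only $m$, which is all that is needed). Both approaches rest on the same structural observation: distinct equilibria at the boundary value $R=\|\Omega\|_\infty/|\kappa|$ are parametrized by the non-$\mathcal{J}$ signs $\tau$, and taking the product over the redundant $\mathcal{J}$-signs removes the $\sqrt{s}$-singularity.
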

\begin{proof}
\noindent $\bullet$~Case A: $\displaystyle R> \frac{\|\Omega\|_\infty}{|\kappa|} $.
In this case
\[
\frac{\|\Omega\|_\infty}{|\kappa| R}<1,
\]
so for each of the $m$ equilibria there exists a unique $\Sigma \in\{-1,1\}^N$ such that the equilibrium is described as in \eqref{solution}. In other words, there are $m$ distinct $\Sigma_1,\ldots,\Sigma_m\in\{-1,1\}^N$ satisfying $P(R,\Sigma_i)=0$, and since $P(r,\Sigma_i)$ is differentiable at $r=R$, we easily see that $R$ is a zero of $W(r)$ of multiplicity at least $m$.

\noindent $\bullet$~Case B: $\displaystyle R= \frac{\|\Omega\|_\infty}{|\kappa|} $.

By rearranging the indices we may assume, for some $1\le a\le N$,
\[
|\omega_i|=\|\Omega\|_\infty \mathrm{~for~} i=1,\cdots,a,\quad |\omega_i|<\|\Omega\|_\infty \mathrm{~for~} i=a+1,\cdots,N.
\]
For $\Sigma=(\sigma_1,\ldots,\sigma_N)$, define the class
\[
[\Sigma]=\{(\tau_1,\ldots,\tau_a,\sigma_{a+1},\ldots,\sigma_N):\tau_1,\ldots,\tau_a\in\{-1,1\}\}\subseteq \{-1,1\}^N.
\]
It is clear that these classes partition $\{-1,1\}^N$. As in Case A, for each equilibrium $\Theta^*$ there exists a unique class $[\Sigma]$, all of whose elements along with $R$ give the equilibrium $\Theta^*$ via \eqref{solution}. Since there are $m$ distinct equilibria with order parameter $R$, there are $m$ distinct classes $[\Sigma_1],\ldots,[\Sigma_m]$, with each element $\Sigma$ of each class $[\Sigma_i]$ satisfying 
 \[ P(R,\Sigma)=0. \]
For $\Sigma=(\sigma_1,\ldots,\sigma_N)\in\{-1,1\}^N$, define
\[
Q(r,[\Sigma]) \coloneqq \prod_{\tau_1,\ldots,\tau_a\in\{-1,1\}}P(r,(\tau_1,\ldots,\tau_a,\sigma_{a+1},\ldots,\sigma_N))=\prod_{T\in[\Sigma]}P(r,T).
\]
This is clearly well-defined, and
\[
Q(R,[\Sigma_1])=\cdots=Q(R,[\Sigma_a])=0.
\]
while again by Lemma \ref{L6.1} we see that $Q(r,[\Sigma_i])$ is a polynomial of 
\[
\frac 1N\left(1-\left(\frac{\omega_1}{\kappa r}\right)^2\right),\cdots,\frac 1N\left(1-\left(\frac{\omega_a}{\kappa r}\right)^2\right),\mbox{ and }\left(1+\frac 1N\sum_{j=a+1}^N\sigma_{j}\sqrt{1-\left(\frac{\omega_j}{\kappa r}\right)^2}-r\right)^2.
\]
Hence, each $Q(r,[\Sigma_i])$ is differentiable at $\|\Omega\|_\infty/|\kappa|$, and we again see that $W(r)$ has a zero of multiplicity at least $m$ at $R$.
\end{proof}

Now, based on the fact that $W$ is a polynomial of degree $2^{N+1}$ in $r$, Theorem \ref{GlobalFiniteness} now follows from Proposition \ref{correspondence}.

Given that the polynomial $W(r)$ encodes the equilibria of \eqref{Winfree}, it seems likely that $W$ carries some physical significance.

\begin{question}
What is the physical significance of the polynomial $W(r)$ to the model \eqref{Winfree}, besides the fact that its zeros encode the order parameters of the equilibria?
\end{question}

\bibliographystyle{myalpha}
\bibliography{bib}

\newcommand{\etalchar}[1]{$^{#1}$}
\begin{thebibliography}{GMN{\etalchar{+}}07}

\bibitem[ABV{\etalchar{+}}05]{acebron2005kuramoto}
J.~A. Acebr{\'o}n, L.~L. Bonilla, C.~J.~P. Vicente, F.~Ritort, and R.~Spigler.
\newblock The {K}uramoto model: A simple paradigm for synchronization
  phenomena.
\newblock {\em Reviews of modern physics}, 77(1):137, 2005.

\bibitem[ALM{\etalchar{+}}04]{angelini2004phase}
L.~Angelini, G.~Lattanzi, R.~Maestri, D.~Marinazzo, G.~Nardulli, L.~Nitti,
  M.~Pellicoro, G.~Pinna, and S.~Stramaglia.
\newblock Phase shifts of synchronized oscillators and the systolic-diastolic
  blood pressure relation.
\newblock {\em Physical Review E}, 69(6):061923, 2004.

\bibitem[AS01]{ariaratnam2001phase}
J.~T. Ariaratnam and S.~H. Strogatz.
\newblock Phase diagram for the {W}infree model of coupled nonlinear
  oscillators.
\newblock {\em Physical Review Letters}, 86(19):4278, 2001.

\bibitem[BB66]{buck1966biology}
J.~Buck and E.~Buck.
\newblock Biology of synchronous flashing of fireflies, 1966.

\bibitem[BDL07]{bolte2007lojasiewicz}
J.~Bolte, A.~Daniilidis, and A.~Lewis.
\newblock The {{\L}}ojasiewicz inequality for nonsmooth subanalytic functions
  with applications to subgradient dynamical systems.
\newblock {\em SIAM Journal on Optimization}, 17(4):1205--1223, 2007.

\bibitem[BU09]{basnarkov2009critical}
L.~Basnarkov and V.~Urumov.
\newblock Critical exponents of the transition from incoherence to partial
  oscillation death in the {W}infree model.
\newblock {\em Journal of Statistical Mechanics: Theory and Experiment},
  2009(10):P10014, 2009.

\bibitem[CEM17]{chen2017hyperbolic}
B.~Chen, J.~R. Engelbrecht, and R.~Mirollo.
\newblock Hyperbolic geometry of {K}uramoto oscillator networks.
\newblock {\em Journal of Physics A: Mathematical and Theoretical},
  50(35):355101, 2017.

\bibitem[DX13]{dong2013synchronization}
J.-G. Dong and X.~Xue.
\newblock Synchronization analysis of {K}uramoto oscillators.
\newblock {\em Communications in Mathematical Sciences}, 11(2):465--480, 2013.

\bibitem[EK90]{ermentrout1990oscillator}
G.~Ermentrout and N.~Kopell.
\newblock Oscillator death in systems of coupled neural oscillators.
\newblock {\em SIAM Journal on Applied Mathematics}, 50(1):125--146, 1990.

\bibitem[GMN{\etalchar{+}}07]{giannuzzi2007phase}
F.~Giannuzzi, D.~Marinazzo, G.~Nardulli, M.~Pellicoro, and S.~Stramaglia.
\newblock Phase diagram of a generalized {W}infree model.
\newblock {\em Physical Review E}, 75(5):051104, 2007.

\bibitem[GMP17]{gallego2017synchronization}
R.~Gallego, E.~Montbri{\'o}, and D.~Paz{\'o}.
\newblock Synchronization scenarios in the {W}infree model of coupled
  oscillators.
\newblock {\em Physical Review E}, 96(4):042208, 2017.

\bibitem[HKM21a]{ha2021collective}
S.-Y. Ha, M.~Kang, and B.~Moon.
\newblock Collective behaviors of a {W}infree ensemble on an infinite cylinder.
\newblock {\em Discrete \& Continuous Dynamical Systems-B}, 26(5):2749, 2021.

\bibitem[HKM21b]{ha2021uniform}
S.-Y. Ha, M.~Kang, and B.~Moon.
\newblock Uniform-in-time continuum limit of the {W}infree model on an infinite
  cylinder and emergent dynamics.
\newblock {\em SIAM Journal on Applied Dynamical Systems}, 20(2):1104--1134,
  2021.

\bibitem[HKPR16]{ha2016emergent}
S.-Y. Ha, D.~Ko, J.~Park, and S.~W. Ryoo.
\newblock Emergent dynamics of {W}infree oscillators on locally coupled
  networks.
\newblock {\em Journal of Differential Equations}, 260(5):4203--4236, 2016.

\bibitem[HKPR17]{ha2017emergence}
S.-Y. Ha, D.~Ko, J.~Park, and S.~W. Ryoo.
\newblock Emergence of partial locking states from the ensemble of {W}infree
  oscillators.
\newblock {\em Quarterly of Applied Mathematics}, 75(1):39--68, 2017.

\bibitem[HKPR21]{ha2021constants}
S.-Y. Ha, D.~Kim, H.~Park, and S.~W. Ryoo.
\newblock Constants of motion for the finite-dimensional {L}ohe type models
  with frustration and applications to emergent dynamics.
\newblock {\em Physica D: Nonlinear Phenomena}, 416:132781, 2021.

\bibitem[HKR16]{ha2016finiteness}
S.-Y. Ha, H.~K. Kim, and S.~W. Ryoo.
\newblock On the finiteness of collisions and phase-locked states for the
  {K}uramoto model.
\newblock {\em Journal of Statistical Physics}, 163(6):1394--1424, 2016.

\bibitem[HLX13]{ha2013formation}
S.-Y. Ha, Z.~Li, and X.~Xue.
\newblock Formation of phase-locked states in a population of locally
  interacting {K}uramoto oscillators.
\newblock {\em Journal of Differential Equations}, 255(10):3053--3070, 2013.

\bibitem[HPR15]{ha2015emergence}
S.-Y. Ha, J.~Park, and S.~W. Ryoo.
\newblock Emergence of phase-locked states for the {W}infree model in a large
  coupling regime.
\newblock {\em Discrete \& Continuous Dynamical Systems-A}, 35(8):3417, 2015.

\bibitem[HR20]{ha2020asymptotic}
S.-Y. Ha and S.~W. Ryoo.
\newblock Asymptotic phase-locking dynamics and critical coupling strength for
  the {K}uramoto model.
\newblock {\em Communications in Mathematical Physics}, 377:811--857, 2020.

\bibitem[Kur75]{kuramoto1975international}
Y.~Kuramoto.
\newblock International symposium on mathematical problems in theoretical
  physics.
\newblock {\em Lecture notes in Physics}, 30:420, 1975.

\bibitem[Kur03]{kuramoto2003chemical}
Y.~Kuramoto.
\newblock {\em Chemical oscillations, waves, and turbulence}.
\newblock Courier Corporation, 2003.

\bibitem[Loj63]{lojasiewicz1963propriete}
S.~Lojasiewicz.
\newblock Une propri{\'e}t{\'e} topologique des sous-ensembles analytiques
  r{\'e}els.
\newblock {\em Les {\'e}quations aux d{\'e}riv{\'e}es partielles}, 117:87--89,
  1963.

\bibitem[Loj82]{lojasiewicz1982trajectoires}
S.~Lojasiewicz.
\newblock Sur les trajectoires du gradient d'une fonction analytique.
\newblock {\em Seminari di geometria}, 1983:115--117, 1982.

\bibitem[LXY15]{li2015lojasiewicz}
Z.~Li, X.~Xue, and D.~Yu.
\newblock On the {{\L}}ojasiewicz exponent of {K}uramoto model.
\newblock {\em Journal of Mathematical Physics}, 56(2):022704, 2015.

\bibitem[OKT17]{oukil2017synchronization}
W.~Oukil, A.~Kessi, and P.~Thieullen.
\newblock Synchronization hypothesis in the {W}infree model.
\newblock {\em Dynamical Systems}, 32(3):326--339, 2017.

\bibitem[OS16]{o2016dynamics}
K.~P. O'Keeffe and S.~H. Strogatz.
\newblock Dynamics of a population of oscillatory and excitable elements.
\newblock {\em Physical Review E}, 93(6):062203, 2016.

\bibitem[OTK19]{oukil2019invariant}
W.~Oukil, P.~Thieullen, and A.~Kessi.
\newblock Invariant cone and synchronization state stability of the mean field
  models.
\newblock {\em Dynamical Systems}, 34(3):422--433, 2019.

\bibitem[PG20]{pazo2020winfree}
D.~Paz{\'o} and R.~Gallego.
\newblock The {W}infree model with non-infinitesimal phase-response curve:
  {O}tt--{A}ntonsen theory.
\newblock {\em Chaos: An Interdisciplinary Journal of Nonlinear Science},
  30(7):073139, 2020.

\bibitem[PR15]{pikovsky2015dynamics}
A.~Pikovsky and M.~Rosenblum.
\newblock Dynamics of globally coupled oscillators: Progress and perspectives.
\newblock {\em Chaos: An Interdisciplinary Journal of Nonlinear Science},
  25(9):097616, 2015.

\bibitem[PRK01]{pikovsky2001synchronization}
A.~Pikovsky, M.~Rosenblum, and J.~Kurths.
\newblock Synchronization - a universal concept in nonlinear sciences.
\newblock In {\em Cambridge Nonlinear Science Series}, 2001.

\bibitem[QRS07]{quinn2007singular}
D.~D. Quinn, R.~H. Rand, and S.~H. Strogatz.
\newblock Singular unlocking transition in the {W}infree model of coupled
  oscillators.
\newblock {\em Physical Review E}, 75(3):036218, 2007.

\bibitem[Str00]{strogatz2000kuramoto}
S.~H. Strogatz.
\newblock From {K}uramoto to {C}rawford: exploring the onset of synchronization
  in populations of coupled oscillators.
\newblock {\em Physica D: Nonlinear Phenomena}, 143(1-4):1--20, 2000.

\bibitem[vH14]{van2014probability}
R.~van Handel.
\newblock Probability in high dimension.
\newblock Technical report, Princeton Univ NJ, 2014.

\bibitem[VHW93]{van1993lyapunov}
J.~Van~Hemmen and W.~Wreszinski.
\newblock Lyapunov function for the {K}uramoto model of nonlinearly coupled
  oscillators.
\newblock {\em Journal of Statistical Physics}, 72(1):145--166, 1993.

\bibitem[VM09]{verwoerd2009computing}
M.~Verwoerd and O.~Mason.
\newblock On computing the critical coupling coefficient for the {K}uramoto
  model on a complete bipartite graph.
\newblock {\em SIAM Journal on Applied Dynamical Systems}, 8(1):417--453, 2009.

\bibitem[Win67]{winfree1967biological}
A.~T. Winfree.
\newblock Biological rhythms and the behavior of populations of coupled
  oscillators.
\newblock {\em Journal of theoretical biology}, 16(1):15--42, 1967.

\bibitem[Win02]{winfree2002emerging}
A.~T. Winfree.
\newblock On emerging coherence.
\newblock {\em Science}, 298(5602):2336--2337, 2002.

\bibitem[WS94]{watanabe1994constants}
S.~Watanabe and S.~H. Strogatz.
\newblock Constants of motion for superconducting {J}osephson arrays.
\newblock {\em Physica D: Nonlinear Phenomena}, 74(3-4):197--253, 1994.

\end{thebibliography}
\appendix

\section{Proof of inequality \eqref{technical2}}\label{app:ineq}

We need to verify
\[
K_c\ge\frac{1}{\rho \sqrt{\mu(2-\mu)}},\quad 0<R_0\le 2.
\]

For $1\le R_0\le 2$, this is just the inequality
\[
2(2-R_0)+\frac{4}{3\sqrt{3}}(R_0-1)\ge \frac{16}{\left(3R_0-3+\sqrt{R_0^2-2R_0+9}\right)\sqrt{3+R_0-\sqrt{R_0^2-2R_0+9}}\sqrt{5-R_0+\sqrt{R_0^2-2R_0+9}}},
\]
and since the right-hand side is $2$ when $R_0=1$ and $\frac{4}{3\sqrt{3}}$ when $R_0=2$, it is enough to see that the right-hand side is convex. Indeed, its second derivative with respect to $R_0$ is
\[
\frac{1024 \left((R_0-1)(-3R_0^4+12R_0^3-32R_0^2+40R_0+99)+(3R_0^4-4R_0^3+20R_0^2-16R_0+33)\sqrt{R_0^2-2 R_0+9}\right)}{\sqrt{R_0^2-2 R_0+9} \left(\left(-\sqrt{R_0^2-2 R_0+9}+R_0+3\right)
\left(\sqrt{R_0^2-2 R_0+9}-R_0+5\right)\right)^{5/2} \left(\sqrt{R_0^2-2 R_0+9}+3 R_0-3\right)^3}
\]
which is easily seen to be positive when $1\le R_0\le 2$, as each term in brackets in the above expression is positive.

For $0<R_0\le 1$, the inequality is equivalent to
\[
1\ge \frac{\left(3-3R_0+\sqrt{R_0^2-2R_0+9}\right)\sqrt{3+R_0+\sqrt{R_0^2-2R_0+9}}}{2\sqrt{2}(2-R_0)\sqrt{5-R_0+\sqrt{R_0^2-2R_0+9}}},
\]
and since the right-hand side is 1 when $R_0=1$, it is enough to see that the right-hand side is increasing in $R_0$. Indeed, its derivative with respect to $R_0$ is
\[
\frac{\sqrt{2} \left((R_0-3)^2+ 3(1-R_0)\sqrt{R_0^2-2 R_0+9}\right)}{(2-R_0) \sqrt{R_0^2-2 R_0+9} \left(\sqrt{R_0^2-2
R_0+9}-R_0+5\right)^{3/2} \sqrt{\sqrt{R_0^2-2 R_0+9}+R_0+3}}>0,\quad 0<R_0<1.
\]

\end{document}